\sloppy\pagestyle{plain}
\theoremstyle{definition}
\newtheorem{example}[equation]{Example}
\newtheorem{definition}[equation]{Definition}
\newtheorem{theorem}[equation]{Theorem}
\newtheorem{lemma}[equation]{Lemma}
\newtheorem{corollary}[equation]{Corollary}
\newtheorem*{question*}{Question}
\newtheorem*{problem*}{Problem}
\theoremstyle{remark}
\newtheorem{remark}[equation]{Remark}
\makeatletter\@addtoreset{equation}{section} \makeatother
\def\Q {\mathbb{Q}}
\def\C {\mathbb{C}}
\def\P {\mathbb{P}}
\def\Z {\mathbb{Z}}
\def\F {\mathbb{F}}
\def\LL {\mathcal{L}}
\def\PP {\mathcal{P}}
\def\O {\mathcal{O}}
\def\CC {\mathcal{C}}
\def\A {\mathrm{A}}
\def\SS {\mathrm{S}}
\def\PSL {\mathrm{PSL}}
\def\SL {\mathrm{SL}}
\def\GL {\mathrm{GL}}
\def\PGL {\mathrm{PGL}}
\def\Aut {\mathrm{Aut}}
\def\Pic {\mathrm{Pic}}
\def\Sym {\mathrm{Sym}}
\def\mult {\mathrm{mult}}
\def\Sing {\mathrm{Sing}}
\def\rk {\mathrm{rk}}
\def\PSp {\mathrm{PSp}}
\def\SU {\mathrm{SU}}
\def\PSU {\mathrm{PSU}}
\def\HaJ {\mathrm{HaJ}}
\def\le {\leqslant}
\def\ge {\geqslant}
\author{Ivan Cheltsov  and Constantin Shramov}
\title{Weakly-exceptional singularities in higher dimensions}%
\address{University of Edinburgh, Edinburgh EH9 3JZ, UK, \texttt{I.Cheltsov@ed.ac.uk}}
\address{Steklov Institute of Mathematics, Moscow 119991, Russia, \texttt{shramov@mccme.ru}}
\address{Laboratory of Algebraic Geometry, GU-HSE, 7 Vavilova Street, Moscow, 117312, Russia}
\thanks{Unless explicitly stated otherwise, varieties are assumed to be
projective,~normal~and~complex. Throughout the paper we use the
standard language of the singularities of pairs (see \cite{Ko97}).
By strictly log canonical singularities we mean log canonical
singularities that are not Kawamata log terminal (see
\cite[Definition~3.5]{Ko97}).}
\begin{document}

\begin{abstract}
We show that infinitely many Gorenstein weakly-exceptional quotient
singularities exist in all dimensions, we prove
a~weak-exceptionality criterion for five-dimensional quotient
singularities, and we find a sufficient condition for being
weakly-exceptional for six-dimensional quotient singularities. The
proof is naturally linked to various classical geometrical
constructions related to subvarieties of small degree in
projective spaces, in particular Bordiga surfaces and Bordiga
threefolds.
\end{abstract}

\maketitle

\section{Introduction}
\label{section:intro}

Linear representations of finite groups induce their action on
polynomial functions. Invariant theory studies polynomial
functions that are invariant under the transformations from a
given linear group. These functions form a ring, which is called a
ring of invariants. From the point of view of Algebraic Geometry,
the rings of invariants are algebraic counterparts of the
quotients of the vector spaces by these groups, which are usually
singular spaces and are called quotient singularities.

Finite subgroups in $\SL_2(\C)$ have been classified long time
ago. The corresponding quotients by these groups are famous
$\mathbb{A}$-$\mathbb{D}$-$\mathbb{E}$ singularities, which are
also known by other names (Kleinian singularities, Du Val
singularities, rational surface double points, two-dimensional
canonical singularities etc). Taking into account the classical
double cover $\mathrm{SU}_2(\C)\to\mathrm{SO}_3(\mathbb{R})$ and
basic representation theory, we see that the singularities of type
$\mathbb{A}$ correspond to plane rotations, the singularities of
type $\mathbb{D}$ correspond to the groups of symmetries of
regular polygons, and the singularities of type $\mathbb{E}$
corresponds to the groups of symmetries of Platonic solids.

Shokurov suggested a higher dimensional generalization of the
singularities of type $\mathbb{E}$ and of both types $\mathbb{D}$
and $\mathbb{E}$. He called them exceptional and
weakly-exceptional, respectively. It turned out that exceptional
and weakly-exceptional singularities are related to the Calabi
problem for orbifolds with positive first Chern class (see
\cite{ChPaSh10}, \cite{ChSh09}). Exceptional quotient
singularities of dimensions $3$, $4$, $5$ and $6$ have been
completely classified by Markushevich, Prokhorov, and the authors
(see \cite{MarPr99}, \cite{ChSh09}, \cite{ChSh10}). Moreover, we
proved in \cite{ChSh10} that seven-dimensional exceptional
quotient singularities are ``too exceptional'' --- they simply do
not exist. On the other hand, weakly-exceptional quotient
singularities have been less popular despite the fact that their
definition is much simpler than the definition of exceptional
ones. In this paper, we will try to fill this gap.

What is special about singularities of types $\mathbb{D}$ and
$\mathbb{E}$ and how to generalize them to higher dimensions?
There are many possible answers to these questions. One of them
involves the dual graphs of their minimal resolutions of
singularities. Namely, the dual graphs of the minimal resolution
of singularities of any two-dimensional quotient singularity of
type $\mathbb{D}$ and $\mathbb{E}$ always have a ``fork'', i.e. a
special curve that intersects three other exceptional curves in
the graph. The singularities of type $\mathbb{A}$ lack this
property. Surprisingly, this property of having a  ``very
special'' exceptional divisor on some resolution of singularities
can be generalized to higher dimensions for quotient singularities
and other ``mild'' singularities.

Let $(V\ni O)$ be a~germ of a~Kawamata log terminal singularity
(see \cite[Definition~3.5]{Ko97}). Then there exists (see, for
example, \cite[Theorem~3.6]{ChSh09}) a~birational morphism
$\pi\colon W\to V$ whose exceptional locus consists of a single
irreducible divisor $E\subset W$ such that $O\in\pi(E)$, the~log
pair $(W,E)$ has purely log terminal singularities (see
\cite[Definition~3.5]{Ko97}), and $-E$ is a~$\pi$-ample
$\mathbb{Q}$-Cartier divisor. The birational morphism $\pi\colon
W\to V$ is said to be a~\emph{plt blow up} of the~singularity
$(V\ni O)$.

\begin{example}
\label{example:intro-ADE} Suppose that $(V\ni O)$ is a
two-dimensional Du Val singularity. If it is of type~$\mathbb{A}$,
then let us choose $\pi\colon W\to V$ to be any partial resolution
of singularities that contracts exactly one curve to the point
$O$. In the case when  $(V\ni O)$  is a singularity of type
$\mathbb{D}$ or $\mathbb{E}$, let us choose $\pi\colon W\to V$ to
be the partial resolution of singularities contracting exactly one
curve that corresponds to the ``central'' vertex of the dual graph
of the minimal resolution of singularities of $(V\ni O)$ to the
point $O$. Then $\pi$ is a plt blow up of the singularity $(V\ni
O)$ (cf. Example~\ref{example:intro-ADE-weakly-exceptional}).
\end{example}

\begin{example}
\label{example:intro-quasi-homogeneous} Suppose that $(V\ni O)$ is
an isolated quasihomogeneous hypersurface singularity in
$\C^{n+1}$ with respect to some positive integral weights
$(a_{0},\ldots,a_{n})$ such that
$\mathrm{gcd}(a_{0},\ldots,a_{n})=1$. Then $(V,O)$ is given by
$$
\phi\big(x_{0},\ldots,x_{n}\big)=0\subset\mathbb{C}^{n+1}\cong\mathrm{Spec}\Big(\mathbb{C}\big[x_{0},\ldots,x_{n}\big]\Big)
$$
for some quasihomogeneous polynomial
$\phi\in\mathbb{C}[x_{0},\ldots,x_{n}]$ of degree $d$ with respect
to the weights
\mbox{$\mathrm{wt}(x_{1})=a_{1},\ldots,\mathrm{wt}(x_{n})=a_{n}$}.
It is well-known that $(V\ni O)$ is Kawamata log terminal if and
only if $\sum_{i=0}^{n}a_{i}>d$. If this is the case, the weighted
blow up of $\C^{n+1}$ with weights $(a_{0},\ldots,a_{n})$ induces
a plt blow up $\pi$ of $(V\ni O)$. If $n=1$ and $(V\ni O)$ is of
type $\mathbb{D}$ or $\mathbb{E}$, then the choice of weights is
unique, and the morphism $\pi$ constructed in this example
coincides with the morphism~$\pi$ constructed in
Example~\ref{example:intro-ADE}.
\end{example}

\begin{example}
\label{example:intro-quotient} Suppose $(V\ni O)$ is a~quotient
singularity $\mathbb{C}^{n+1}\slash G$, where $n\ge 1$ and $G$ is
a~finite~subgroup in $\GL_{n+1}(\mathbb{C})$. Note that quotient
singularities are always Kawamata log terminal (see
\cite[Remark~0.2.17]{KMM}). Let $\eta\colon\mathbb{C}^{n+1}\to V$
be the~quotient map. Then there is a~commutative diagram
$$
\xymatrix{
U\ar@{->}[d]_{\gamma}\ar@{->}[rr]^{\omega}&&W\ar@{->}[d]^{\pi}\\
\mathbb{C}^{n+1}\ar@{->}[rr]_{\eta}&&V,}
$$
where $\gamma$ is the~blow up of $O$, the~morphism $\omega$ is
the~quotient map that is induced by the~lifted action~of $G$ on
the~variety $U$, and $\pi$ is a~birational morphism. One can
easily check that $\pi$ is a~plt blow~up of
the~singularity~$\C^{n+1}\slash G$. Note that $\pi$ may not
``improve'' singularities of~$V$. For example, if $n=1$,
$G\subset\SL_2(\C)$ and $G\cong\Z_3$, then this construction does
not give a partial resolution of the singularity $(V\ni O)$.
However, if $n=1$, $G\subset\SL_2(\C)$ and $(V\ni O)$ is a
singularity of type $\mathbb{D}$ or $\mathbb{E}$, then the
morphism $\pi$ constructed in this example coincides with the
morphism $\pi$ constructed in Examples~\ref{example:intro-ADE}
and~\ref{example:intro-quasi-homogeneous}.
\end{example}

Keeping in mind Examples~\ref{example:intro-ADE},
\ref{example:intro-quasi-homogeneous}, and
\ref{example:intro-quotient}, one gives

\begin{definition}[{\cite[Definition~4.1]{Pr98plt}}]
\label{definition:weakly-exceptional} We say that $(V\ni O)$ is
\emph{weakly-exceptional} if it has a unique plt blow up.
\end{definition}

Note that weakly-exceptional Kawamata log terminal singularities
do exist (see Example~\ref{example:intro-ADE-weakly-exceptional}).
How to decide whether the singularity $(V\ni O)$ is
weakly-exceptional or not? Surprisingly, the answer to this
question depends only on the log pair $(W,E)$. For instance, a
necessary condition for $(V\ni O)$ to be weakly-exceptional says
that $\pi(E)=O$ (see \cite[Corollary~1.7]{Kud01}). However, it
follows from Example~\ref{example:intro-ADE} that this condition
is very far from being a criterion. To give a criterion for $(V\ni
O)$ to be weakly-exceptional, we must equip $E$ with an extra
structure. Namely, let $R_{1},\ldots,R_{s}$ be all irreducible
components of the locus $\mathrm{Sing}(W)$ of dimension
$\mathrm{dim}(W)-2$ that are contained in $E$. Put
$$
\mathrm{Diff}_{E}\big(0\big)=\sum_{i=1}^{s}\frac{m_{i}-1}{m_{i}}R_{i},
$$
where $m_{i}$ is the~smallest positive integer such that $m_{i}E$
is~Cartier at a~general point of $R_{i}$. The divisor
$\mathrm{Diff}_{E}(0)$ is usually called a \emph{different} (it
was introduced by Shokurov in \cite{Sho93}). It follows from
\cite[Theorem~7.5]{Ko97} that $E$ is a normal variety that has at
most rational singularities, and the log pair
$(E,\mathrm{Diff}_{E}(0))$ is Kawamata log terminal. Thus, if
\mbox{$\pi(E)=O$}, then the~log pair $(E,\mathrm{Diff}_{E}(0))$ is
a~log Fano variety, because $-E$ is $\pi$-ample.

\begin{theorem}[{\cite[Theorem~4.3]{Pr98plt}, \cite[Theorem~2.1]{Kud01}}]
\label{theorem:weakly-exceptional-criterion} The~singularity
$(V\ni O)$ is weakly-exceptional if~and~only~if $\pi(E)=O$ and the
log pair $(E,\mathrm{Diff}_{E}(0)+D_{E})$ is log canonical for
every effective $\mathbb{Q}$-divisor $D_{E}$ on the~variety $E$
such that $D_{E}\sim_{\mathbb{Q}}-(K_{E}+\mathrm{Diff}_{E}(0))$.
\end{theorem}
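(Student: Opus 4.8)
The plan is to reduce this global statement about all plt blow-ups of $(V\ni O)$ to the single log-canonical-type inequality on $(E,\mathrm{Diff}_{E}(0))$ stated in the theorem. First I would recall the precise connection between plt blow-ups and the choice of a ``monomial'' valuation: a plt blow-up of $(V\ni O)$ is determined by (the exceptional divisor of) a certain divisorial valuation $\nu$ of the function field of $V$ with log discrepancy in $(0,1)$ along $(V\ni O)$, and the given morphism $\pi\colon W\to V$ extracts one such divisor $E$. Uniqueness of the plt blow-up is therefore equivalent to the assertion that no other divisorial valuation over $V$ can play this role; the strategy is to show that producing a second plt blow-up is the same as producing an effective $\mathbb{Q}$-divisor $D_E$ on $E$, $\mathbb{Q}$-linearly equivalent to $-(K_E+\mathrm{Diff}_E(0))$, for which $(E,\mathrm{Diff}_E(0)+D_E)$ fails to be log canonical.

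The key steps, in order, would be as follows. (1) Necessity of $\pi(E)=O$: this is already available from \cite[Corollary~1.7]{Kud01}, so I would just invoke it; if $\pi(E)\neq O$ one directly constructs a second plt blow-up over the generic point of $\pi(E)$, contradicting weak exceptionality. (2) Assuming $\pi(E)=O$, run inversion of adjunction: by the adjunction formula (using \cite[Theorem~7.5]{Ko97}, quoted in the excerpt), for any divisor $B$ on $W$ supported away from the generic points of the $R_i$ one has $(K_W+E+B)|_E = K_E+\mathrm{Diff}_E(0)+B|_E$, and $(W,E+B)$ is plt near $E$ if and only if $(E,\mathrm{Diff}_E(0)+B|_E)$ is klt. (3) Translate ``there is a second plt blow-up'' into ``there is a divisor $F\neq E$ over $V$ with $a(F;V,0)\in(0,1)$ whose center on $W$ meets $E$'': pulling back to $W$ and writing the discrepancy computation there, the existence of such $F$ is detected, by inversion of adjunction applied to the pair $(W,E)$ perturbed by a boundary pulled back from $V$, by the log pair $(E,\mathrm{Diff}_E(0)+D_E)$ being non-log-canonical for a suitable $D_E\sim_{\mathbb{Q}}-(K_E+\mathrm{Diff}_E(0))$ (the $\mathbb{Q}$-linear equivalence class appears because $-E|_E\sim_{\mathbb{Q}}K_E+\mathrm{Diff}_E(0)$ after using $\pi$-ampleness of $-E$ and $K_W+E\sim_{\mathbb{Q},\pi}0$-type relations, i.e. $(K_W+E)|_E\sim_{\mathbb{Q}} 0$ over the point $O$). (4) Conversely, given such a $D_E$, lift it to an effective $\mathbb{Q}$-divisor $D$ on $W$ with $D|_E=D_E$ and $D\sim_{\mathbb{Q},\pi} -\epsilon E$ for small $\epsilon>0$, apply inversion of adjunction in the other direction to see $(W,E+D)$ is not plt near some center in $E$, and extract a divisor computing that failure — this divisor gives a plt blow-up distinct from $\pi$. (5) Combine (3) and (4): $(V\ni O)$ is weakly-exceptional $\iff$ no such $D_E$ exists $\iff$ $(E,\mathrm{Diff}_E(0)+D_E)$ is log canonical for every admissible $D_E$.

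I expect the main obstacle to be step (3)–(4), i.e. making the correspondence between divisorial valuations over $(V\ni O)$ giving plt blow-ups and failures of log canonicity on the log Fano pair $(E,\mathrm{Diff}_E(0))$ completely precise. The delicate points are: controlling the boundary/different correctly under adjunction when $W$ is singular in codimension two along the $R_i$ (so that $\mathrm{Diff}_E(0)$ genuinely enters, not just $0$); checking that the rescaling $D\sim_{\mathbb{Q},\pi}-\epsilon E$ can be chosen with $\epsilon$ small enough that $(W,E+D)$ stays plt away from the bad center while still being non-plt at it, which is where the ``log canonical'' (rather than ``klt'') threshold in the statement comes from; and verifying that the divisor extracted in step (4) indeed satisfies the defining properties of a plt blow-up (irreducible exceptional locus, $-E'$ $\pi'$-ample, $(W',E')$ plt) and is not equal to $\pi$. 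Once the adjunction bookkeeping is set up, the equivalence falls out; I would organize the write-up so that the two implications of Theorem~\ref{theorem:weakly-exceptional-criterion} are proved separately, each by one application of inversion of adjunction in the appropriate direction, citing \cite[Theorem~7.5]{Ko97} and the existence/structure results for plt blow-ups from \cite[Theorem~3.6]{ChSh09}.
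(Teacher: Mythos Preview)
The paper does not prove this theorem: it is stated with attribution to \cite[Theorem~4.3]{Pr98plt} and \cite[Theorem~2.1]{Kud01} and used as a black box thereafter. There is therefore no in-paper proof to compare your proposal against.

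That said, your outline is broadly in the spirit of the arguments in the cited sources, which do proceed via adjunction and inversion of adjunction between $(W,E)$ and $(E,\mathrm{Diff}_E(0))$. A few points in your sketch are imprecise and would need tightening if you were to carry this out. In step~(3) you characterize a second plt blow-up as ``a divisor $F\neq E$ over $V$ with $a(F;V,0)\in(0,1)$'': this is not the definition---a plt blow-up requires the extracted divisor to give a plt pair $(W',E')$ with $-E'$ relatively ample, not merely a discrepancy condition, and the actual argument in \cite{Pr98plt} and \cite{Kud01} works instead by testing the pair $(V,D)$ against effective $\mathbb{Q}$-divisors $D$ on $V$ and comparing plt centers. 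Your equivalence between ``second plt blow-up exists'' and ``some $D_E$ makes $(E,\mathrm{Diff}_E(0)+D_E)$ non-lc'' passes through the intermediate statement that weak-exceptionality is equivalent to $(V,D)$ being lc for every effective $D\sim_{\mathbb{Q}}0$ near $O$ (in the appropriate sense), and it is that statement which is then pushed down to $E$ by adjunction; your steps~(3)--(4) conflate these two reductions. Also, in step~(4) the lifted divisor and the extraction you describe do not automatically produce a plt blow-up in the sense of the definition; one needs the construction of \cite[Theorem~4.3]{Pr98plt} (or equivalently the MMP-based existence result quoted as \cite[Theorem~3.6]{ChSh09}) applied to a suitably chosen boundary on $V$, not on $W$. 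None of this is fatal to the strategy, but the write-up would have to follow the cited papers more closely at these junctures.
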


Let us translate the assertion of
Theorem~\ref{theorem:weakly-exceptional-criterion} into a slightly
different language that uses a global log canonical threshold, that
is an~algebraic counterpart of the so-called $\alpha$-invariant of
Tian introduced in \cite{Ti87}.

\begin{remark}
\label{remark:intro-G-threshold} For a log Fano variety
$(X,B_{X})$ with at most Kawamata log terminal singularities and a
finite group $\bar{G}\subset\mathrm{Aut}(X)$ such that $B_{X}$ is
$\bar{G}$-invariant, the number
$$
\mathrm{sup}\left\{\lambda\in\mathbb{Q} \left|\ %
\aligned &\big(X,B_{X}+\lambda D_{X}\big)\ \text{has Kawamata log terminal singularities}\\
&\text{for every~$\bar{G}$-invariant $\mathbb{Q}$-Cartier effective $\mathbb{Q}$-divisor $D_{X}$}\\
&\text{on the~variety $X$ such that}\ D_{X}\sim_{\mathbb{Q}} -\Big(K_{X}+B_{X}\Big)\\
\endaligned\right.\right\}.%
$$
is denoted by $\mathrm{lct}(X,B_{X},\bar{G})$ and is called the
\emph{global $\bar{G}$-invariant log canonical threshold} of
the~log Fano variety $(X,B_{X})$ (see
\cite[Definition~3.1]{ChSh09}). For simplicity, we put
$\mathrm{lct}(X,\bar{G})=\mathrm{lct}(X,B_{X},\bar{G})$ if
$B_{X}=0$, we put
$\mathrm{lct}(X,B_{X})=\mathrm{lct}(X,B_{X},\bar{G})$ if $\bar{G}$
is trivial, and we put
$\mathrm{lct}(X)=\mathrm{lct}(X,B_{X},\bar{G})$ if $B_{X}=0$ and
$\bar{G}$ is trivial. If $X$ is smooth and $B_{X}=0$, then it
follows from \cite[Theorem~A.3]{ChSh08c} that
$\mathrm{lct}(X,\bar{G})=\alpha_{\bar{G}}(X)$, where
$\alpha_{\bar{G}}(X)$ is the $\alpha$-invariant of Tian of the
Fano variety $X$. If $X$ has at most quotient singularities and
$B_{X}=0$, then it follows from \cite{Ti87}, \cite{Na90} and
\cite{DeKo01} that $X$ admits a $\bar{G}$-invariant orbifold
K\"ahler--Einstein metric if
$\mathrm{lct}(X,G)>\mathrm{dim}(X)\slash(\mathrm{dim}(X)+1)$.
\end{remark}

It follows from Theorem~\ref{theorem:weakly-exceptional-criterion}
that $(V\ni O)$ is weakly-exceptional if~and~only~if $\pi(E)=O$
and $\mathrm{lct}(E,\mathrm{Diff}_{E}(0))\geqslant 1$. It should
be pointed out that
Theorem~\ref{theorem:weakly-exceptional-criterion} is an
applicable criterion. For instance, it can be used to construct
weakly-exceptional singularities of any dimension (cf.
\cite[Example~3.13]{ChSh09}).

\begin{example}
\label{example:intro-ADE-weakly-exceptional} In the notation and
assumptions of Example~\ref{example:intro-ADE}, let $E$ be the
exceptional curve of the plt blow up $\pi\colon W\to V$. If $(V\ni
O)$ is a singularity of type $\mathbb{A}$, then one can easily see
that $\mathrm{lct}(E,\mathrm{Diff}_{E}(0))<1$, which implies that
$(V\ni O)$ is not weakly-exceptional by
Theorem~\ref{theorem:weakly-exceptional-criterion}. Suppose that
$(V\ni O)$ is a singularity of type $\mathbb{D}$ or  $\mathbb{E}$.
Then $W$ is singular along $E$, and $E$ contains exactly three
singular points of the surface $W$. Let us denote these points  by
$P_{1},P_{2},P_{3}$. Then each $P_{i}$ is a singular point of type
$\mathbb{A}_{n_{i}}$ for some non-negative integer $n_{i}$.
Without loss of generality, we may assume that $1\leqslant
n_{1}\leqslant n_2\leqslant n_{3}$. Then
$\mathrm{Diff}_{E}(0)=\sum_{i=1}^{3}\frac{n_{i}}{n_{i}+1}P_{i}$
(see~\cite[Proposition~16.6]{Corti92}).
The log pair $(E,\mathrm{Diff}_{E}(0))$ is Kawamata log terminal,
since $n_{i}/(n_{i}+1)<1$ for every $i\in\{1,2,3\}$. Moreover, we
have $\sum_{i=1}^{3}\frac{n_{i}}{n_{i}+1}<2$, which implies that
$(E,\mathrm{Diff}_{E}(0))$ is a log Fano variety. Then
$$
\mathrm{lct}\Big(E,\mathrm{Diff}_{E}\big(0\big)\Big)=\frac{1-\frac{n_{3}}{n_{3}+1}}{2-\sum_{i=1}^{3}\frac{n_{i}}{n_{i}+1}}=
\left\{%
\aligned
&6 \mbox{ if $(V\ni O)$ is a singularity of type $\mathbb{E}_{8}$,}\\%
&3 \mbox{ if $(V\ni O)$ is a singularity of type $\mathbb{E}_{7}$,}\\%
&2 \mbox{ if $(V\ni O)$ is a singularity of type $\mathbb{E}_{6}$,}\\%
&1 \mbox{ if $(V\ni O)$ is a singularity of type $\mathbb{D}$,}\\%
\endaligned\right.%
$$
which implies that $(V\ni O)$ is weakly-exceptional.\footnote{
Let $(V\ni O)$ be a germ of a Kawamata log terminal surface
singularity.
Then either $(V\ni O)$ is smooth, or the dual graph of
its minimal resolution of singularities is of $\mathbb{A}$,
$\mathbb{D}$ or $\mathbb{E}$ type (see~\cite{Brieskorn68}
and~\cite{Iliev86} for details).
Arguing as in the Du Val case we see that the singularity $(V\ni O)$
is weakly-exceptional if and only if the dual graph
is of $\mathbb{D}$ or $\mathbb{E}$ type.

Moreover, it follows from~\cite[Corollary~1.9]{Kawamata84} that
$(V\ni O)$ is a quotient singularity~$\mathbb{C}^2/G$ for some finite
group $G\subset\mathrm{GL}_2(\mathbb{C})$. However, it is not easy
to give a nice description of such $G$ that correspond to
weakly-exceptional singularities $(V\ni O)$
(cf.~\cite[Example~1.9]{ChSh10}).}
\end{example}

As it clearly follows from
Example~\ref{example:intro-ADE-weakly-exceptional},
weakly-exceptional singularities are a natural generalization of
Du Val singularities of type $\mathbb{D}_{n}$, $\mathbb{E}_{6}$,
$\mathbb{E}_{7}$ and $\mathbb{E}_{8}$. On the other hand,
weakly-exceptional singularities are not classified even in
dimension three. In fact, weakly-exceptional singularities are
classified just in two cases. Firstly, three-dimensional
weakly-exceptional Kawamata log terminal isolated quasihomogeneous
well-formed hypersurface singularities (see \cite{ChPaSh10}),
which provides a lot of examples of K\"ahler--Einstein
two-dimensional Fano orbifolds that are hypersurfaces in weighted
projective spaces (see
Example~\ref{example:intro-quasi-homogeneous} and
Remark~\ref{remark:intro-G-threshold}). Secondly,
three-dimensional and four-dimensional weakly-exceptional quotient
singularities are classified in \cite{MarPr99}, \cite{ChSh09} and
\cite{Sak10} (cf. Theorems~\ref{theorem:weakly-exceptional-dim-3}
and \ref{theorem:Vanya-Kostya-SL-4-weakly-exceptional}). The goal
of this paper, as it follows from its title, is to study
weakly-exceptional quotient singularities in higher dimensions.

Let $G$ be a~finite subgroup in $\GL_{n+1}(\mathbb{C})$, where
$n\geqslant 1$. Let
\mbox{$\phi\colon\GL_{n+1}(\mathbb{C})\to\PGL_{n+1}(\mathbb{C})$}
be the~natural projection. Put $\bar{G}=\phi(G)$ and let us
identify the group $\PGL_{n+1}(\mathbb{C})$ with
$\mathrm{Aut}(\mathbb{P}^{n})$. Recall that an element $g\in G$ is
called a \emph{reflection} (or sometimes a
\emph{quasi-reflection}) if there is a hyperplane in
$\mathbb{P}^{n}$ that is pointwise fixed by $\phi(g)$. To study
the weak-exceptionality of the singularity $\C^{n+1}/G$ one can
always assume that the group $G$ does not contain reflections (cf.
\cite[Remark~1.16]{ChSh10}). In this case we have the following

\begin{theorem}[{\cite[Theorem~3.16]{ChSh09}}]
\label{theorem:weakly-exceptional-quotient} Let $G$ be a finite
subgroup in $\GL_{n+1}(\C)$ that does not contain reflections.
Then the~singularity $\C^{n+1}\slash G$ is weakly-exceptional
$\iff$ $\mathrm{lct}(\mathbb{P}^{n},\bar{G})\geqslant 1$.
\end{theorem}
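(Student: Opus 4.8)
The plan is to feed the explicit plt blow up of $\C^{n+1}/G$ furnished by Example~\ref{example:intro-quotient} into the criterion of Theorem~\ref{theorem:weakly-exceptional-criterion}. I would take $\pi\colon W\to V=\C^{n+1}/G$ to be the plt blow up built there: $W=U/G$, where $\gamma\colon U\to\C^{n+1}$ is the blow up of $O$ and $\omega\colon U\to W$ is the quotient by the lifted $G$-action. The $\gamma$-exceptional divisor is canonically $\P^n$ with $G$ acting through $\bar G\subset\PGL_{n+1}(\C)=\Aut(\P^n)$; since $\gamma$ contracts this $\P^n$ to $O$ and $\eta$ sends the origin to $O$, the $\pi$-exceptional divisor $E=\omega(\P^n)$ satisfies $\pi(E)=O$, and $E\cong\P^n/\bar G$. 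Write $p\colon\P^n\to E$ for the quotient morphism.

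The next step is to pin down $\mathrm{Diff}_E(0)$, and this is where the hypothesis enters. Saying that $G$ contains no reflections is the same as saying that $\bar G$ acts on $\P^n$ without quasi-reflections: any automorphism of $\P^n$ fixing a hypersurface pointwise fixes $n+1$ points in general position on it, hence fixes a hyperplane pointwise, hence lifts to a reflection in $G$. Thus $p$ is \'etale in codimension one, so $p^{*}K_E=K_{\P^n}$, the locus $\Sing(W)$ has no component of dimension $\dim W-2$ contained in $E$, and therefore $\mathrm{Diff}_E(0)=0$; in particular $(E,0)$ is a Kawamata log terminal log Fano with $p^{*}(-K_E)=-K_{\P^n}$. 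Applying Theorem~\ref{theorem:weakly-exceptional-criterion} to $\pi$ (for which $\pi(E)=O$ has just been checked), $\C^{n+1}/G$ is weakly-exceptional if and only if $(E,D_E)$ is log canonical for every effective $\Q$-divisor $D_E$ on $E$ with $D_E\sim_\Q-K_E$.

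I would then match this with the definition of $\lct(\P^n,\bar G)$ through a divisor correspondence. Pullback $D_E\mapsto p^{*}D_E$ carries the set of effective $\Q$-divisors $D_E\sim_\Q-K_E$ on $E$ bijectively onto the set of $\bar G$-invariant effective $\Q$-divisors $D\sim_\Q-K_{\P^n}$ on $\P^n$: the target class is correct because $p^{*}(-K_E)=-K_{\P^n}$; injectivity follows from $p_{*}p^{*}=\deg(p)\cdot\mathrm{id}$ on $\Q$-divisor classes; and surjectivity holds because a $\bar G$-invariant divisor has constant coefficient along each $\bar G$-orbit of prime divisors, which, $p$ being unramified in codimension one, is precisely the reduced preimage of a prime divisor on $E$, so $D=p^{*}D_E$ with $D_E\sim_\Q-K_E$ by the same injectivity. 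Since $K_{\P^n}+\lambda p^{*}D_E=p^{*}(K_E+\lambda D_E)$, the behaviour of log canonical singularities under finite morphisms that are crepant in this sense (here one may cite \cite{Ko97}) gives that $(E,\lambda D_E)$ is log canonical if and only if $(\P^n,\lambda D)$ is, for every $\lambda$; as $\P^n$ is smooth the latter is the ordinary condition. Combining this with the standard fact that $\lct(\P^n,\bar G)\ge1$ exactly when $(\P^n,D)$ is log canonical for every $\bar G$-invariant effective $D\sim_\Q-K_{\P^n}$ (a pair that is Kawamata log terminal for all $\lambda<1$ is log canonical at $\lambda=1$, and conversely), one reads off the desired equivalence.

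The main obstacle here is less a single hard lemma than the need to handle the hypothesis correctly: the vanishing of the different, the equality $p^{*}K_E=K_{\P^n}$, the exactness of the divisor bijection, and the crepant transfer of log canonicity all rest on $p\colon\P^n\to\P^n/\bar G$ being \'etale in codimension one, which is precisely the no-reflections assumption; with reflections present the different acquires boundary terms, $p^{*}K_E\neq K_{\P^n}$, and pullbacks carry ramification multiplicities, so each of these steps would have to be reworked (and the theorem itself would fail, as the example $G=\langle\mathrm{diag}(1,\dots,1,-1)\rangle$ already shows). The only genuinely external ingredient is the statement that log canonicity is both preserved and reflected by finite quotient morphisms without codimension-one ramification, for which I would cite \cite{Ko97}.
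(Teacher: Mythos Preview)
The paper does not prove this theorem: it is quoted verbatim from \cite[Theorem~3.16]{ChSh09} with no argument supplied here, so there is nothing to compare your proposal against. Your outline is the natural one and is essentially correct; it is almost certainly what the cited source does as well, since Example~\ref{example:intro-quotient} and Theorem~\ref{theorem:weakly-exceptional-criterion} are set up precisely so that this argument goes through.

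Two small points worth tightening. First, your claim that an element of $\bar G$ fixing a hypersurface in $\P^n$ pointwise must fix a hyperplane is correct, but the reason is cleaner than the one you give: the fixed locus of a projective linear automorphism is a union of projectivised eigenspaces, hence a union of linear subspaces, so a fixed hypersurface is automatically a hyperplane. Second, the step ``$p$ \'etale in codimension one $\Rightarrow$ $\mathrm{Diff}_E(0)=0$'' deserves one more line: scalars in $G$ can make $\omega\colon U\to W$ ramify along the exceptional $\P^n$, so $\omega$ itself need not be \'etale in codimension one; however one still has $\omega^{*}(K_W+E)=K_U+\P^n$, and restricting this to $\P^n$ and comparing with $p^{*}(K_E+\mathrm{Diff}_E(0))=K_{\P^n}$ (using $p^{*}K_E=K_{\P^n}$) forces $\mathrm{Diff}_E(0)=0$. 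With these clarifications your argument is complete.
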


It should be pointed out that the assumption that $G$ contains no
reflections is crucial for
Theorem~\ref{theorem:weakly-exceptional-quotient} and can not be
removed (see \cite[Example~1.18]{ChSh09}). Recall that a
\emph{semi-invariant} of the group $G$ is a polynomial whose
zeroes define a $\bar{G}$-invariant hypersurface in
$\mathbb{P}^{n}$. If the~group~$G$ has a~semi-invariant of
degree~$d$, then $\mathrm{lct}(\mathbb{P}^{n},\bar{G})\leqslant
d/(n+1)$. Thus, if $G$ does not contain reflections and~$G$ has
a~semi-invariant of degree~at~most~$n$, then $\C^{n+1}/G$ is not
weakly-exceptional by
Theorem~\ref{theorem:weakly-exceptional-quotient}. Moreover, it
follows from Theorem~\ref{theorem:weakly-exceptional-quotient} and
Example~\ref{example:intro-ADE-weakly-exceptional} that if $G$ is
a finite subgroup in $\GL_2(\C)$ that does not contain
reflections, then the~singularity $\C^2/G$ is exceptional if and
only if $G$ has no semi-invariants of degree $1$. A similar result
holds in dimension $3$ due to

\begin{theorem}[{\cite[Theorem~3.23]{ChSh09}}]
\label{theorem:weakly-exceptional-dim-3} Let $G$ be a finite group
in $\GL_3(\C)$ that does not contain reflections. Then
the~singularity $\C^3/G$ is weakly-exceptional if and only if $G$
does not have semi-invariants of degree at most $2$.
\end{theorem}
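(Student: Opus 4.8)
The plan is to deduce the statement from Theorem~\ref{theorem:weakly-exceptional-quotient}: since $G$ contains no reflections, that theorem says $\C^3/G$ is weakly-exceptional if and only if $\mathrm{lct}(\P^2,\bar G)\ge 1$, so it is enough to prove that this inequality holds precisely when $G$ has no semi-invariant of degree at most $2$. The ``only if'' direction is the easy observation already recorded above: if $G$ has a semi-invariant of degree $d\le 2$ cutting out a $\bar G$-invariant curve $C\subset\P^2$ of degree $d$, then for $D=\frac{3}{d}C\sim_{\Q}-K_{\P^2}$ the pair $(\P^2,\lambda D)$ fails to be Kawamata log terminal once $\lambda\ge d/3$, so $\mathrm{lct}(\P^2,\bar G)\le 2/3<1$.

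For the converse I would argue by contradiction: assume $G$ has no semi-invariant of degree at most $2$, but $\mathrm{lct}(\P^2,\bar G)<1$. Then there is a $\bar G$-invariant effective $\Q$-divisor $D\sim_{\Q}-K_{\P^2}$, necessarily of degree $3$, such that $(\P^2,D)$ is not log canonical. Put $c=\mathrm{lct}(\P^2,D)<1$; then $(\P^2,cD)$ is log canonical but not Kawamata log terminal, so its non-Kawamata-log-terminal locus $Z$ is a non-empty proper $\bar G$-invariant closed subset of $\P^2$. Moreover $Z$ is connected by the connectedness theorem of Shokurov and Kollár (see, e.g., \cite{Ko97}), because $-(K_{\P^2}+cD)=(1-c)(-K_{\P^2})$ is ample. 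Since $Z$ is a proper subset of a surface, either $\dim Z=0$ or $\dim Z=1$, and I would handle the two cases separately.

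Suppose first $\dim Z=0$. Being connected and zero-dimensional, $Z$ is a single point $P$, and being $\bar G$-invariant it is $\bar G$-fixed; hence $P=[v_0]$ for some one-dimensional $G$-subrepresentation $\C v_0\subset\C^3$. By Maschke's theorem this subrepresentation has a $G$-invariant complement $A\subset\C^3$, and then the one-dimensional subspace of $(\C^3)^*$ consisting of linear forms vanishing on $A$ is a $G$-subrepresentation, i.e.\ a semi-invariant of degree $1$ --- contrary to our assumption. Suppose instead $\dim Z=1$, and choose an irreducible curve $C$ among the one-dimensional components of $Z$. Since $\P^2$ is smooth, a curve can lie in the non-Kawamata-log-terminal locus of $(\P^2,cD)$ only if its coefficient in $cD$ is at least $1$, so the coefficient of $C$ in $D$ is at least $1/c>1$; by $\bar G$-invariance of $D$ the same bound holds for every curve in the $\bar G$-orbit of $C$. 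If that orbit consists of $k$ curves of degree $e$, then $3=\deg D\ge ke/c$, whence $ke\le 3c<3$ and so $ke\le 2$; but the union of the orbit is a $\bar G$-invariant curve of degree $ke$, giving a semi-invariant of degree at most $2$ --- again a contradiction. In either case $\mathrm{lct}(\P^2,\bar G)\ge 1$, and therefore $\C^3/G$ is weakly-exceptional.

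The step that I expect to require the most care is the zero-dimensional case: superficially it looks as though one must analyse the action of $\bar G$ on the exceptional curve of the blow-up of $P$ and appeal to the classification of finite subgroups of $\PGL_2(\C)$, but semisimplicity of finite-group representations produces the required linear semi-invariant at once, so no such analysis is needed. The two indispensable external inputs are Theorem~\ref{theorem:weakly-exceptional-quotient} and the connectedness theorem; everything else is a short formal argument together with the trivial degree count on $\P^2$.
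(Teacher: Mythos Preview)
Your argument is correct. Note, however, that the paper does not prove Theorem~\ref{theorem:weakly-exceptional-dim-3} at all: it is quoted from \cite[Theorem~3.23]{ChSh09}, so there is no proof in this paper to compare with directly. The closest thing here is the general machinery of Section~\ref{section:criterion} (Theorem~\ref{theorem:weakly-exceptional-quotient-criterion}), which specialises to $n=2$ to give the same conclusion, and it is worth contrasting the two routes.

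The paper's approach takes a \emph{minimal} log canonical center $V$ of the strictly log canonical pair, invokes Kawamata subadjunction to see that $V$ is of Fano type, uses a tie-breaking trick and \cite[Proposition~1.5]{Kaw97} to identify $V$ with its $\bar G$-orbit, and then bounds $\deg V$ via Nadel--Shokurov vanishing after cutting by a general linear subspace. Your argument bypasses all of this: on a smooth surface the one-dimensional part of the non-klt locus is read off directly from the coefficients of $cD$, so the degree bound $ke\le 3c<3$ is immediate, and the zero-dimensional case reduces to Maschke's theorem. The only non-elementary input you need is Koll\'ar--Shokurov connectedness, whereas Section~\ref{section:criterion} relies on the deeper subadjunction theorem. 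What the paper's route buys is uniformity in the dimension---the same argument produces the degree bound $\binom{n}{\dim V}$ and the inequality~\eqref{equation:Nadel-inequality} for any $\P^n$, which is what drives Theorems~\ref{theorem:main-5} and~\ref{theorem:main-6}. Your approach is tailored to $\P^2$ and does not generalise so readily, but for that case it is shorter and more transparent.
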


Representation theory provides various obstructions to weak
exceptionality. For example, it follows
from~\cite[Proposition~2.1]{Pr00} that the subgroup
$G\subset\GL_{n+1}(\C)$ is transitive (i.\,e. the corresponding
$(n+1)$-dimensional representation is irreducible) provided that
the singularity \mbox{$\C^{n+1}\slash G$} is weakly exceptional.
Nevertheless,  we do not expect that
Definition~\ref{definition:weakly-exceptional} can be translated
into the representation-theoretic language in higher dimensions
even in a very restrictive case of quotient singularities. Namely,
$\bar{G}$-invariant subvarieties of large codimension may also
provide obstructions to weak exceptionality as follows from

\begin{theorem}[{\cite[Theorem~4.3]{ChSh09}}]
\label{theorem:Vanya-Kostya-SL-4-weakly-exceptional} Let $G$ be a
finite group in $\GL_4(\C)$ that does not contain reflections.
Then the~singularity $\C^4/G$ is weakly-exceptional if and only if
the~following three conditions are satisfied: the~group $G$ is
transitive, the~group $G$ does not have semi-invariants of degree
at most $3$, and there is no $\bar{G}$-invariant smooth rational
cubic curve in $\mathbb{P}^{3}$.
\end{theorem}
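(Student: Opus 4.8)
The plan is to combine Theorem~\ref{theorem:weakly-exceptional-quotient} with a careful study of $\bar{G}$-invariant subvarieties of $\mathbb{P}^3$. By Theorem~\ref{theorem:weakly-exceptional-quotient}, weak-exceptionality of $\C^4/G$ is equivalent to $\mathrm{lct}(\mathbb{P}^3,\bar{G})\geqslant 1$. First I would dispense with the necessity of the three conditions: if $G$ is not transitive then $\C^4/G$ is not weakly-exceptional by \cite[Proposition~2.1]{Pr00}; if $G$ has a semi-invariant of degree $\le 3$ then $\mathrm{lct}(\mathbb{P}^3,\bar{G})\le 3/4<1$; and if there is a $\bar{G}$-invariant smooth rational cubic curve $C\subset\mathbb{P}^3$ (a twisted cubic), then one exhibits an effective $\bar{G}$-invariant $\mathbb{Q}$-divisor $D$ with $D\sim_{\mathbb{Q}}-K_{\mathbb{P}^3}$ and a point or curve along which $(\mathbb{P}^3,D)$ fails to be log canonical: concretely, the twisted cubic lies on a pencil of quadrics, and taking a suitable combination of a quadric through $C$ and the secant surface gives such a $D$, so $\mathrm{lct}(\mathbb{P}^3,\bar{G})<1$.

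The substantial direction is sufficiency: assuming $G$ transitive with no semi-invariants of degree $\le 3$ and no $\bar{G}$-invariant twisted cubic, show $\mathrm{lct}(\mathbb{P}^3,\bar{G})\geqslant 1$. Suppose not; then there is a $\bar{G}$-invariant effective $\mathbb{Q}$-divisor $D\sim_{\mathbb{Q}}-K_{\mathbb{P}^3}\sim_{\mathbb{Q}}\O_{\mathbb{P}^3}(4)$ and a $\lambda<1$ such that $(\mathbb{P}^3,\lambda D)$ is not log canonical. Let $Z$ be the minimal center of log canonical singularities; since $D$ is $\bar{G}$-invariant and the non-lc locus is canonically associated to the pair, the orbit $\bar{G}\cdot Z$ is a $\bar{G}$-invariant subvariety, and by minimality distinct translates of $Z$ are disjoint, so $\bar{G}$ permutes the finitely many components. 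I would analyze $Z$ by dimension. If $Z$ is a surface, a standard multiplicity estimate forces $\deg Z\le 3$, hence $Z$ (or the union of its orbit, but transitivity of $G$ on $\mathbb{C}^4$ rules out reducible $\bar{G}$-invariant hypersurfaces of low degree) is a $\bar{G}$-invariant surface of degree $\le 3$, giving a semi-invariant of degree $\le 3$ --- a contradiction. If $Z$ is a point, then $\bar{G}$ has an orbit of length $\le\mathrm{mult}_Z D\cdot(\text{something})$; here one uses that transitivity of the $4$-dimensional representation forbids short orbits on $\mathbb{P}^3$ unless they are large, and a connectedness/multiplicity argument (as in \cite[Theorem~3.1]{Ko97}-type bounds) rules this out. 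The case $Z$ a curve is where the twisted cubic enters: the adjunction/multiplicity inequalities bound $\deg Z$ and force $Z$ to be a smooth rational curve of degree $\le 3$; degree $1$ (a line) and degree $2$ (a conic) lie in planes and produce low-degree semi-invariants after passing to the span of the orbit, while degree $3$ is precisely a $\bar{G}$-invariant twisted cubic, excluded by hypothesis.

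\textbf{The main obstacle} I expect is the curve case, and within it controlling the orbit $\bar{G}\cdot Z$ when $Z$ is a line or a conic: a single invariant line or conic need not exist, but a $\bar{G}$-orbit of several disjoint lines (or conics) spans a $\bar{G}$-invariant linear subspace or a $\bar{G}$-invariant surface, and one must show the span is small enough to contradict transitivity or the degree-$\le 3$ hypothesis --- this requires a clean argument that disjoint lines/conics in $\mathbb{P}^3$ forming a $\bar{G}$-invariant configuration cannot be too numerous (e.g.\ at most two disjoint lines lie on a smooth quadric, which would itself be a $\bar{G}$-invariant quadric of degree $2$). The secondary technical point is the sharp multiplicity bound $\mathrm{mult}_Z D$ for $D\sim\O(4)$ in each dimension, obtained by restricting $D$ to a general line, plane, or quadric through $Z$ and invoking the standard inversion-of-adjunction estimates; these are routine but must be done with the $\bar{G}$-action kept in mind so that the resulting lower-dimensional divisors remain invariant under the relevant stabilizers.
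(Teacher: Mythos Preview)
This theorem is not proved in the present paper: it is quoted from \cite[Theorem~4.3]{ChSh09} as background for the higher-dimensional results, and no argument for it appears here. So there is no ``paper's own proof'' to compare against. That said, your strategy is essentially the one the current paper develops in Section~\ref{section:criterion} to prove Theorem~\ref{theorem:weakly-exceptional-quotient-criterion}, specialized to $n=3$, and it is very likely close in spirit to the original argument in \cite{ChSh09}.

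A few concrete points where your sketch needs tightening. For the necessity direction with a twisted cubic $C$, there is no ``secant surface'': the secant variety of $C$ is all of $\mathbb{P}^3$. What you want instead is the $3$-dimensional linear system of quadrics through $C$ (a net, not a pencil), or the tangent developable (a quartic surface); either of these gives a $\bar{G}$-invariant divisor numerically equivalent to $-K_{\mathbb{P}^3}$ with non-log-canonical singularities along $C$. For the sufficiency direction, the cleanest way to organize the case analysis is exactly what Section~\ref{section:criterion} does: the Nadel--Shokurov vanishing plus the tie-breaking trick forces the $\bar{G}$-orbit of the minimal center to be \emph{connected}, hence the minimal center $V$ is itself irreducible and $\bar{G}$-invariant (so you never have to worry about configurations of several disjoint lines or conics). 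With that in hand, the degree bound $\deg(V)\le\binom{3}{\dim V}$ gives: if $\dim V=2$ then $\deg V\le 3$, a semi-invariant of degree $\le 3$; if $\dim V=1$ then $\deg V\le 3$, and since $V$ is irreducible and non-degenerate it is a twisted cubic; if $\dim V=0$ then $V$ is a single $\bar{G}$-fixed point, contradicting transitivity. Your worry about orbits of disjoint lines or conics is thus a non-issue once you use connectedness, and your vague treatment of the point case is resolved the same way.
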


Futhermore, we can construct a non-weakly exceptional
six-dimensional quotient singularity arising from a transitive
finite subgroup without reflections in $\GL_6(\C)$ that has no
semi-invariants of degree at most~$6$ (see
\cite[Example~3.20]{ChSh09} and \cite[Lemma~3.21]{ChSh09}). This,
together with Theorems~\ref{theorem:weakly-exceptional-dim-3} and
\ref{theorem:Vanya-Kostya-SL-4-weakly-exceptional}, shows that the
weak-exceptionality of a quotient
singularity~\mbox{$\C^{n+1}\slash G$} crucially depends on the
absence of certain special $\bar{G}$-invariant subvarieties in
$\mathbb{P}^{n}$. Note that the transitivity of the subgroup $G$
is equivalent to the absence of $\bar{G}$-invariant proper linear
subspaces in~$\P^n$. In particular, we should not expect that
there exists a simple looking criterion for a quotient singularity
$\C^{n+1}\slash G$ to be weakly-exceptional that works in all
dimensions. On the other hand, it is quite natural to expect that
there exists such a sufficient condition (not a criterion). This
is indeed the case.

\begin{definition}
\label{definition:Fano-type} An irreducible normal variety $V$ is
said to be of Fano type if there exists an effective
$\mathbb{Q}$-divisor $\Delta_{V}$ on the variety $V$ such that
$-(K_{V}+\Delta_{V})$ is a~$\mathbb{Q}$-Cartier ample divisor, and
the log pair $(V,\Delta_{V})$ has at most Kawamata log terminal
singularities (see \cite[Definition~3.5]{Ko97}).
\end{definition}

In this paper (see Section~\ref{section:criterion}), we prove the
following

\begin{theorem}
\label{theorem:weakly-exceptional-quotient-criterion} Let $G$ be a
finite group in $\GL_{n+1}(\C)$ that does not contain reflections.
If $\C^{n+1}\slash G$ is not weakly-exceptional, then there~is a
$\bar{G}$-invariant, irreducible, normal, Fano type projectively
normal subvariety $V\subset\P^n$ such that
$$
\mathrm{deg}\big(V\big)\leqslant {n\choose \mathrm{dim}\big(V\big)},%
$$
and for every $i\geqslant 1$ and for every $m\geqslant 0$, we have
$h^{i}(\mathcal{O}_{\P^n}(m)\otimes
\mathcal{I}_{V})=h^{i}(\mathcal{O}_{V}(m))=0$, and
\begin{equation}
\label{equation:Nadel-inequality}
h^{0}\Big(\mathcal{O}_{\P^n}\Big(\big(\mathrm{dim}(V)+1\big)\Big)\otimes \mathcal{I}_{V}\Big)\geqslant {n\choose \mathrm{dim}\big(V\big)+1},%
\end{equation}
where $\mathcal{I}_{V}$ is the ideal sheaf of the subvariety
$V\subset\P^n$. Let $\Pi$ be a general linear subspace in $\P^n$
of codimension $k\leqslant\mathrm{dim}(V)$. Put $X=V\cap\Pi$. Then
$h^{i}(\mathcal{O}_{\Pi}(m)\otimes \mathcal{I}_{X})=0$ for every
$i\geqslant 1$ and $m\geqslant k$, where $\mathcal{I}_{X}$ is the
ideal sheaf of the subvariety $X\subset\Pi$. Moreover, if $k=1$
and $\mathrm{dim}(V)\geqslant 2$, then $X$ is irreducible,
projectively normal and $h^{i}(\mathcal{O}_{X}(m))=0$ for every
$i\geqslant 1$ and $m\geqslant 1$.
\end{theorem}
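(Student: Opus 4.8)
\textit{The plan.} I would realize $V$ as a minimal centre of log canonical singularities of a carefully chosen $\bar G$-invariant divisor of anticanonical type on $\P^n$, and then read off all the numerical and cohomological conclusions from Nadel vanishing combined with elementary hyperplane-section bookkeeping.

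First I would invoke Theorem~\ref{theorem:weakly-exceptional-quotient}: since $\C^{n+1}/G$ is not weakly-exceptional and $G$ has no reflections, $\lct(\P^n,\bar G)<1$, so there is a $\bar G$-invariant effective $\Q$-divisor $D\sim_{\Q}-K_{\P^n}$ with $c:=\lct(\P^n,D)<1$. Then $(\P^n,cD)$ is log canonical but not Kawamata log terminal and, crucially, $-(K_{\P^n}+cD)\sim_{\Q}(1-c)(n+1)H$ is \emph{ample}, $H$ a hyperplane. I would next pass, by a $\bar G$-equivariant tie-breaking argument (perturbing $cD$ by a small multiple of a $\bar G$-invariant auxiliary divisor, using the Koll\'ar--Shokurov connectedness theorem, valid because $-(K_{\P^n}+cD)$ is ample, together with the structure theory of log canonical centres), to a $\bar G$-invariant effective $\Q$-divisor $\Delta$ with $-(K_{\P^n}+\Delta)$ still ample, $(\P^n,\Delta)$ log canonical with a \emph{unique} log canonical centre $V$, and $\mathcal J(\P^n,\Delta)=\mathcal I_V$. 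Being the only centre, $V$ is automatically $\bar G$-invariant; by Kawamata's theorem on minimal log canonical centres it is irreducible and normal, and by Kawamata's subadjunction there is $\Delta_V\ge 0$ with $(V,\Delta_V)$ Kawamata log terminal and $K_V+\Delta_V+P_V\sim_{\Q}(K_{\P^n}+\Delta)|_V$ for some nef $P_V$; hence $-(K_V+\Delta_V)\sim_{\Q}-(K_{\P^n}+\Delta)|_V+P_V$ is (ample)$+$(nef), so ample, i.e. $V$ is of Fano type and has at most rational (klt) singularities.

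Then I would apply the Nadel vanishing theorem to $\mathcal J(\P^n,\Delta)=\mathcal I_V$: since $\Delta\sim_{\Q}\O_{\P^n}(d_0)$ with $d_0<n+1$, this gives $h^i(\O_{\P^n}(m)\otimes\mathcal I_V)=0$ for $i\ge1$ and $m\ge0$, and then the ideal-sheaf sequence together with $\dim V<n$ gives $h^i(\O_V(m))=0$ for $i\ge1$, $m\ge0$; combined with normality of $V$, the vanishing $h^1(\O_{\P^n}(m)\otimes\mathcal I_V)=0$ shows the homogeneous coordinate ring of $V$ equals its (normal) section ring, so $V$ is projectively normal. For the linear sections I would cut by general hyperplanes one at a time: for $Y=V\cap\Pi'$ with $\Pi'$ general of codimension $j\le\dim V$ (so $Y$ reduced, by Bertini) and a further general hyperplane $H$, the Tor-vanishing yields $0\to\mathcal I_{Y,\Pi'}(m-1)\to\mathcal I_{Y,\Pi'}(m)\to\mathcal I_{Y\cap H,\Pi'\cap H}(m)\to0$, and an induction on $k$ (base case being the vanishing for $V$) produces $h^i(\O_{\Pi}(m)\otimes\mathcal I_X)=0$ for $i\ge1$, $m\ge k$, with $X=V\cap\Pi$, $\Pi$ general of codimension $k\le\dim V$. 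Taking $k=\dim V$, the set $X$ consists of $\deg V$ reduced points in $\P^{n-\dim V}$ imposing independent conditions on forms of degree $\dim V$ (the case $i=1$, $m=k$), whence $\deg V=\#X\le h^0(\O_{\P^{n-\dim V}}(\dim V))=\binom{n}{\dim V}$. For \eqref{equation:Nadel-inequality} I would observe that the restriction maps $H^0(\mathcal I_{V\cap\Pi'}(\dim V+1))\to H^0(\mathcal I_{V\cap\Pi''}(\dim V+1))$ ($\Pi''\subset\Pi'$ a general hyperplane section) are surjective because the relevant $h^1(\mathcal I(\dim V))$ vanishes, so $H^0(\O_{\P^n}(\dim V+1)\otimes\mathcal I_V)$ surjects onto $H^0(\O_{\Pi}(\dim V+1)\otimes\mathcal I_X)$, which has dimension $\binom{n+1}{\dim V+1}-\deg V\ge\binom{n+1}{\dim V+1}-\binom{n}{\dim V}=\binom{n}{\dim V+1}$. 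Finally, when $k=1$ and $\dim V\ge2$, $X=V\cap H$ is irreducible by Bertini and normal by Seidenberg's theorem; it is projectively normal by the argument used for $V$ (the only extra input being $h^1(\mathcal I_{X,H})=0$, i.e. connectedness of $X$), and $h^i(\O_X(m))=0$ for $i\ge1$, $m\ge1$ by the ideal-sheaf sequence on $H$.

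\textit{Main obstacle.} The delicate point is the very first step: producing a \emph{$\bar G$-invariant} minimal centre of log canonical singularities and arranging that it is the unique such centre (so that its multiplier ideal is exactly $\mathcal I_V$). A priori $\bar G$ could permute several minimal centres, so both the tie-breaking perturbation and the choice of auxiliary divisor must be carried out equivariantly; this is where the connectedness theorem and the fact that intersections of log canonical centres again contain log canonical centres — forcing uniqueness of the minimal one in the ample case — do the real work. Everything afterward (Nadel vanishing, the hyperplane-section induction, and the binomial-coefficient estimates) is routine.
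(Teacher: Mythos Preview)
Your overall architecture is the same as the paper's: produce a minimal log canonical centre of a $\bar G$-invariant subthreshold boundary, invoke Kawamata subadjunction to get normality and Fano type, and then read everything else off Nadel vanishing. The paper handles the $\bar G$-invariance of $V$ essentially as you outline in your ``main obstacle'' paragraph: it takes the $\bar G$-orbit $Z$ of a minimal centre, applies an equivariant tie-breaking (\cite[Lemma~2.8]{ChSh09}) so that every centre of the new pair is a component of $Z$, and then uses $h^1(\mathcal I_Z)=0$ to get $h^0(\mathcal O_Z)=1$, forcing $Z=V$.

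Where you genuinely diverge is in two technical steps. For the vanishing on linear sections the paper does \emph{not} induct on hyperplane cuts: it restricts the tie-broken divisor $D'$ to $\Pi$, observes that the centres of $(\Pi,D'|_\Pi)$ are the components of $V\cap\Pi$, and applies Nadel vanishing directly on $\Pi$; your exact-sequence induction is more elementary but reproves the same thing. For the inequality~\eqref{equation:Nadel-inequality} the paper uses a different device: it adjoins to $D'$ a boundary of the form $\tfrac{d+1}{N}\sum H_i$ where the $H_i$ are many general hyperplanes through a general linear subspace $\Lambda$ of codimension $d+1$, so that the only centres of the resulting pair are $V$ and $\Lambda$ (disjoint), and then Nadel vanishing for $\mathcal I_{V\cup\Lambda}$ gives $h^0(\mathcal I_V(d+1))=\binom{n}{d+1}+h^0(\mathcal I_{V\cup\Lambda}(d+1))\ge\binom{n}{d+1}$. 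Your route---surject onto $H^0(\mathcal I_X(d+1))$ with $X$ the zero-dimensional section and use $h^0(\mathcal I_X(d+1))=\binom{n+1}{d+1}-\deg V\ge\binom{n}{d+1}$---is correct and arguably more transparent, since it recycles the degree bound and the section-vanishing you have already established rather than building a second log pair.
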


\begin{corollary}
\label{corollary:Nadel}  Let $G$ be a finite subgroup in
$\GL_{n}(\C)$ that does not contain reflections. Then
the~singularity $\C^n/G$ is weakly-exceptional if for every
irreducible $\bar{G}$-invariant subvariety $V\subset\P^{n}$ there
exists no hypersurface in $\P^n$ of degree $\mathrm{dim}(V)+1$
that contains $V$.
\end{corollary}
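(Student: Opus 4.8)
The plan is to prove the contrapositive. Suppose that $\C^n/G$ is \emph{not} weakly-exceptional; I want to produce an irreducible $\bar G$-invariant subvariety $V\subset\P^n$ together with a hypersurface of degree $\dim(V)+1$ passing through $V$, which will contradict the hypothesis of the corollary.

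Since $G$ contains no reflections and $\C^n/G$ is not weakly-exceptional, I apply Theorem~\ref{theorem:weakly-exceptional-quotient-criterion}: it produces a $\bar G$-invariant, irreducible, normal, projectively normal subvariety $V\subset\P^n$ of Fano type which satisfies the inequality~\eqref{equation:Nadel-inequality}; moreover $V$ is a \emph{proper} subvariety of $\P^n$ (it is cut out by a non-trivial, Nadel-type ideal sheaf in the construction underlying that theorem), so that $\dim(V)\le n-1$. Now I read off~\eqref{equation:Nadel-inequality}, which asserts
$$
h^{0}\big(\mathcal{O}_{\P^n}(\dim(V)+1)\otimes\mathcal{I}_{V}\big)\ \geq\ \binom{n}{\dim(V)+1}.
$$
Because $1\le\dim(V)+1\le n$, the binomial coefficient on the right is a strictly positive integer, hence $H^{0}\big(\mathcal{O}_{\P^n}(\dim(V)+1)\otimes\mathcal{I}_{V}\big)\neq 0$. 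A non-zero global section of this sheaf is exactly the defining equation of a hypersurface of degree $\dim(V)+1$ in $\P^n$ containing $V$. Since $V$ is irreducible and $\bar G$-invariant, this contradicts the hypothesis of the corollary, and therefore $\C^n/G$ must be weakly-exceptional.

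I do not anticipate a genuine obstacle, since essentially all of the work is already packaged inside Theorem~\ref{theorem:weakly-exceptional-quotient-criterion}. The one point that needs a sentence of justification is the non-vanishing of the right-hand side of~\eqref{equation:Nadel-inequality}, i.e. the fact that the subvariety $V$ supplied by the theorem is not all of $\P^n$; this is immediate from its construction as the support of a non-trivial (multiplier/Nadel) ideal sheaf, and it is exactly what makes the binomial coefficient $\binom{n}{\dim(V)+1}$ a positive integer rather than $\binom{n}{n+1}=0$.
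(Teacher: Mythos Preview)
Your proof is correct and is exactly the intended derivation: the corollary follows immediately from Theorem~\ref{theorem:weakly-exceptional-quotient-criterion} via inequality~\eqref{equation:Nadel-inequality}, once one observes that the subvariety $V$ produced there is proper (it arises as a minimal log canonical center in the proof of the theorem), so that $\binom{n}{\dim(V)+1}>0$ and hence $H^{0}(\mathcal{O}_{\P^n}(\dim(V)+1)\otimes\mathcal{I}_{V})\neq 0$. The paper gives no separate argument for the corollary, and your contrapositive reading is precisely what is meant.
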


Apart from their clear geometric nature,
Theorem~\ref{theorem:weakly-exceptional-quotient-criterion} and
Corollary~\ref{corollary:Nadel} are easy to apply in many cases
(but not always, since
Theorem~\ref{theorem:weakly-exceptional-quotient-criterion} is not
a criterion). In Section~\ref{section:series}, we will use them to
construct many explicit infinite series of Gorenstein weakly-exceptional
quotient singularities. In particular, we will use
Corollary~\ref{corollary:Nadel} to prove that infinitely many Gorenstein
weakly-exceptional quotient singularities exist in any dimension
(see Theorem~\ref{theorem:infinite} and
Corollary~\ref{corollary:infinitely-many}) and to prove

\begin{theorem}
\label{theorem:Heisenberg} Let $p\ge 3$ be a prime number,
and let $G$ be a subgroup in $\SL_p(\C)$ that is
isomorphic to the Heisenberg group of order $p^3$. Then
$\C^{p}\slash G$ is weakly-exceptional.
\end{theorem}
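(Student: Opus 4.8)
The plan is to argue by contradiction using Theorem~\ref{theorem:weakly-exceptional-quotient-criterion}. (Corollary~\ref{corollary:Nadel} already suffices when $p=3$, but not for $p\ge 5$: there $\bar G$ preserves, for instance, an elliptic normal curve of degree $p$ in $\P^{p-1}$, which lies on quadrics, so the degree bound $\deg(V)\le\binom{p-1}{\dim(V)}$ is essential.) Suppose $\C^{p}\slash G$ is not weakly-exceptional. Applying Theorem~\ref{theorem:weakly-exceptional-quotient-criterion} with $n=p-1$, we obtain a $\bar G$-invariant, irreducible, projectively normal $V\subset\P^{p-1}$ with $\deg(V)\le\binom{p-1}{k}$, with $h^{i}(\mathcal{O}_{\P^{p-1}}(m)\otimes\mathcal{I}_{V})=h^{i}(\mathcal{O}_{V}(m))=0$ for all $i\ge 1$ and $m\ge 0$ (so that $V$ is arithmetically Cohen--Macaulay), and with $h^{0}(\mathcal{O}_{\P^{p-1}}((k+1))\otimes\mathcal{I}_{V})\ge\binom{p-1}{k+1}$, where $k=\dim(V)$. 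First I would record two structural facts: since the diagonal and cyclic generators of $G$ share no eigenvector, $\bar G\cong(\Z\slash p\Z)^{2}$ has no fixed point on $\P^{p-1}$, so $k\ge 1$; and since the standard representation $\C^{p}$ of $G$ is irreducible, $\P^{p-1}$ contains no proper $\bar G$-invariant linear subspace, so $V$ is non-degenerate. Also $k\le p-2$.

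Next I would extract a divisibility constraint. For each $m$ the space $H^{0}(\mathcal{O}_{\P^{p-1}}(m)\otimes\mathcal{I}_{V})$ is a $G$-subrepresentation of $H^{0}(\mathcal{O}_{\P^{p-1}}(m))=\Sym^{m}((\C^{p})^{\vee})$. The centre of $G$ is generated by the scalar matrix $\zeta I$ with $\zeta=\exp(2\pi i\slash p)$; it acts on $\Sym^{m}((\C^{p})^{\vee})$ by $\zeta^{-m}$, and being contained in $[G,G]$ it acts trivially on every one-dimensional $G$-representation, so a $G$-semi-invariant of degree $m$ can exist only when $p\mid m$. Hence for $1\le m\le p-1$ the representation $\Sym^{m}((\C^{p})^{\vee})$ is a direct sum of copies of the unique $p$-dimensional irreducible $G$-representation with the relevant central character, and in particular $p$ divides $h^{0}(\mathcal{O}_{\P^{p-1}}(m)\otimes\mathcal{I}_{V})$ for all such $m$. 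Combined with $h^{1}(\mathcal{O}_{\P^{p-1}}(m)\otimes\mathcal{I}_{V})=0$ and with $\binom{p-1+m}{p-1}\equiv 0\pmod p$ (Lucas' theorem, for $1\le m\le p-1$), this shows that $p$ divides $H_{V}(m):=h^{0}(\mathcal{O}_{V}(m))$ for every $m$ with $1\le m\le p-1$.

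If $k=p-2$, then $V$ is a hypersurface defined by a $G$-semi-invariant, so $\deg(V)$ is divisible by $p$ and hence $\deg(V)\ge p$; but the displayed inequality gives $h^{0}(\mathcal{O}_{\P^{p-1}}(p-1)\otimes\mathcal{I}_{V})\ge\binom{p-1}{p-1}=1$, forcing $V$ onto a hypersurface of degree $p-1<\deg(V)$, which is absurd. This settles the case $k=p-2$, and with it the whole statement when $p=3$.

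The remaining range $1\le k\le p-3$ is the heart of the argument. Here $V$ is arithmetically Cohen--Macaulay and non-degenerate of dimension $k$ in $\P^{p-1}$, so its $h$-vector $(h_{0},h_{1},h_{2},\dots)$ has $h_{0}=1$, $h_{1}=p-1-k$, $h_{i}\ge 0$, obeys Macaulay's bound $h_{i+1}\le h_{i}^{\langle i\rangle}$, satisfies $\sum_{i}h_{i}=\deg(V)\le\binom{p-1}{k}$, and yields $H_{V}(m)=\sum_{i\le m}h_{i}\binom{m-i+k}{k}$. Feeding the congruences $H_{V}(m)\equiv 0\pmod p$ ($1\le m\le p-1$) into this formula turns each of them, in order, into an explicit congruence $h_{j}\equiv r_{j}(k)\pmod p$; a short calculation gives, for instance, $h_{2}\equiv\binom{k+1}{2}\pmod p$. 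A non-zero residue forces $h_{j}>0$, Macaulay's bound then forces the earlier entries to be correspondingly large, and one shows that the resulting lower bound for $\sum_{i}h_{i}$ exceeds $\binom{p-1}{k}$ --- using also, where necessary, the Nadel-type bound $H_{V}(k+1)\le\binom{p+k}{k+1}-\binom{p-1}{k+1}$ --- which contradicts the degree bound. For $k=1$, for example, one gets $h_{2}\equiv 1$ and $h_{3}\equiv 0\pmod p$, whence $h_{2}\ge 1$ and already $\deg(V)=1+(p-2)+h_{2}+\cdots\ge p>p-1=\binom{p-1}{1}$. The main obstacle is to push this $h$-vector bookkeeping through uniformly for $2\le k\le p-3$, where the residues $r_{j}(k)$ and the Macaulay cascade become increasingly delicate; I expect that for the finitely many numerical profiles surviving the divisibility and Macaulay constraints one must invoke the classification of arithmetically Cohen--Macaulay non-degenerate varieties of small degree (rational normal scrolls, Veronese varieties, Bordiga-type varieties) and verify in each case that $V$ cannot carry a $\bar G$-action of Heisenberg type --- equivalently, that it either preserves a proper linear subspace of $\P^{p-1}$ or has its $(\Z\slash p\Z)^{2}$-action lifting to a linear action on $\C^{p}$, neither of which holds for the Heisenberg action. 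In every case this yields a contradiction, so $\C^{p}\slash G$ is weakly-exceptional.
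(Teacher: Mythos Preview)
Your setup is sound, and you correctly extract the key divisibility constraint $p\mid H_V(m)$ for $1\le m\le p-1$. But the proof is genuinely incomplete: you yourself say that pushing the $h$-vector bookkeeping through for $2\le k\le p-3$ is the ``main obstacle,'' and you fall back on an unproved appeal to classification of ACM varieties of small degree and to Heisenberg-action constraints. That is a gap, not a detail.

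The missing idea is much simpler than the route you attempt, and it renders the $h$-vector, Macaulay bounds, the degree bound $\deg(V)\le\binom{p-1}{k}$, and the Nadel inequality all unnecessary. From the vanishing $h^i(\mathcal{O}_V(m))=0$ for $i\ge 1$ and $m\ge 0$ provided by Theorem~\ref{theorem:weakly-exceptional-quotient-criterion}, one has $H_V(m)=h^0(\mathcal{O}_V(m))=\chi(\mathcal{O}_V(m))$ for every $m\ge 0$, and $\chi(\mathcal{O}_V(m))$ is a polynomial $P(m)\in\mathbb{Q}[m]$ of degree $d=\dim(V)\le p-2<p$. You have shown that $P(1),\ldots,P(p-1)$ are integers divisible by $p$; in particular $P$ is divisible by $p$ at $d+1$ consecutive integers. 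A degree-$d$ polynomial with $d<p$ cannot have $d+1$ roots in $\mathbb{F}_p$ unless it is identically zero there; formalising this for $P\in\mathbb{Q}[m]$ (Lemma~\ref{lemma:p-adic} in the paper) shows that when one writes the constant term as $a_0=b_0/c_0$ in lowest terms, $p\mid b_0$. But $a_0=P(0)=h^0(\mathcal{O}_V)=1$, a contradiction. This single polynomial-interpolation trick replaces all of your case analysis; your separate treatment of $k=p-2$ and $k=1$ is correct but unnecessary.
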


We prove Theorem~\ref{theorem:Heisenberg} (after stating it a bit
more explicitely) in Section~\ref{section:series} (see
Theorem~\ref{theorem:Geisenberg}).
Theorem~\ref{theorem:Heisenberg} gives many examples of weakly
exceptional singularities in dimension $p$ corresponding to the
groups containing the group $G$ (cf.
Theorem~\ref{theorem:primitive-5}). Another reason to study weak
exceptionality of the quotient by the Heisenberg group is that the
order of this group is relatively small compared to that of the
groups considered in Theorem~\ref{theorem:infinite}

\smallskip
We have already seen that being weakly-exceptional is not so easy to
check for a higher dimensional quotient singularity. This is not
surprising: the life is not easy in higher dimensions.
Surprisingly, the life is still easy in dimension five as it
follows from

\begin{theorem}[{cf. Theorem~\ref{theorem:Vanya-Kostya-SL-4-weakly-exceptional}}]
\label{theorem:main-5}  Let $G$ be a finite subgroup in
$\GL_{5}(\C)$ that does not contain reflections. Then
the~singularity $\C^5/G$ is weakly-exceptional if and only if
the~group $G$ is transitive and does  not have semi-invariants of
degree at most $4$.
\end{theorem}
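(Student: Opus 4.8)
The plan is to prove Theorem~\ref{theorem:main-5} by combining
Theorem~\ref{theorem:weakly-exceptional-quotient} with a careful
analysis of the $\bar G$-invariant subvarieties of $\P^4$ that can
obstruct weak-exceptionality. One direction is immediate: if
$\C^5/G$ is weakly-exceptional, then by
Theorem~\ref{theorem:weakly-exceptional-quotient} we have
$\mathrm{lct}(\P^4,\bar G)\ge 1$, which forces $G$ to be transitive
(by \cite[Proposition~2.1]{Pr00}, as recalled in the introduction)
and forces $G$ to have no semi-invariant of degree $\le 4$ (since a
semi-invariant of degree $d$ would give
$\mathrm{lct}(\P^4,\bar G)\le d/5<1$). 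So the content is the
converse: assuming $G$ is transitive with no semi-invariants of
degree $\le 4$, show $\mathrm{lct}(\P^4,\bar G)\ge 1$, equivalently
that $(\P^4,\lambda D)$ is log canonical for every $\bar
G$-invariant effective $\mathbb Q$-divisor $D\sim_{\mathbb Q}
-K_{\P^4}=\mathcal O(5)$ and every $\lambda<1$; it suffices to rule
out, for such $D$, a $\bar G$-invariant non-klt locus.

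First I would set up the standard argument: suppose $(\P^4, D)$ is
not log canonical for some $\bar G$-invariant $D\sim_{\mathbb Q}
\mathcal O(5)$, and let $Z$ be the minimal $\bar G$-invariant
non-klt center (the union of a $\bar G$-orbit of log canonical
centers). Because $G$ is transitive, $Z$ is not contained in a
hyperplane, and because $G$ has no semi-invariants of degree $\le
4$, $Z$ cannot be a hypersurface of low degree nor can its
components be cut out cheaply; more precisely, degree bounds on
non-klt centers (Corti-type / multiplicity estimates, e.g.\ as in
\cite{ChSh09}) bound $\deg Z$ in terms of the $\mathcal O(5)$-class,
and then I would invoke exactly
Theorem~\ref{theorem:weakly-exceptional-quotient-criterion}:
negating weak-exceptionality produces a $\bar G$-invariant,
irreducible, normal, projectively normal, Fano-type subvariety
$V\subset\P^4$ with $\deg V\le\binom{4}{\dim V}$ satisfying the
Nadel-type vanishing $h^i(\mathcal O_{\P^4}(m)\otimes\mathcal
I_V)=h^i(\mathcal O_V(m))=0$ and the lower bound
\eqref{equation:Nadel-inequality}
$h^0(\mathcal O_{\P^4}(\dim V+1)\otimes\mathcal I_V)\ge
\binom{4}{\dim V+1}$. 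The cases $\dim V=0$ and $\dim V=3$ are
excluded immediately: $\dim V=0$ contradicts transitivity (a point
or finite orbit would be contained in a hyperplane, or give a low
degree semi-invariant), and $\dim V=3$ forces $V$ to be a
hypersurface of degree $\le 4$, i.e.\ a semi-invariant of degree
$\le 4$, contradiction. So $V$ is a curve or a surface in $\P^4$.

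The heart of the proof is therefore the curve and surface cases.
For $\dim V=1$: the degree bound gives $\deg V\le\binom{4}{1}=4$,
and the inequality \eqref{equation:Nadel-inequality} with $\dim
V+1=2$ gives $h^0(\mathcal O_{\P^4}(2)\otimes\mathcal I_V)\ge
\binom{4}{2}=6$; since $h^0(\mathcal O_{\P^4}(2))=15$, the curve $V$
lies on at least $6$ quadrics. Using the vanishing
$h^1(\mathcal O_V(1))=0$ and projective normality one computes the
Hilbert polynomial and sees $V$ is a (possibly degenerate-looking
but actually nondegenerate, by transitivity) curve of degree $\le
4$; a rational normal quartic curve in $\P^4$ lies on exactly
$\binom{4}{2}-?$ quadrics — one checks the rational normal quartic
lies on $3$ quadrics in $\P^4$, too few, a plane/conic contradicts
nondegeneracy, and an elliptic normal quintic has degree $5>4$; so
after eliminating each possibility the curve case cannot occur. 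For
$\dim V=2$: $\deg V\le\binom{4}{2}=6$, and
\eqref{equation:Nadel-inequality} with $\dim V+1=3$ gives
$h^0(\mathcal O_{\P^4}(3)\otimes\mathcal I_V)\ge\binom{4}{3}=4$,
i.e.\ $V$ lies on at least $4$ cubics; a nondegenerate projectively
normal surface of degree $\le 6$ in $\P^4$ with
$h^i(\mathcal O_V(m))=0$ is, by the classification of small-degree
surfaces in $\P^4$ (Veronese, del Pezzo, Castelnuovo, Bordiga — the
Bordiga surface of degree $6$ being exactly the extremal case, which
is why it is advertised in the abstract), one of a short explicit
list; for each such $V$ I would then run
Theorem~\ref{theorem:weakly-exceptional-quotient-criterion}'s
hyperplane-section statement — take $X=V\cap\Pi$ for general $\Pi$ a
hyperplane, getting an irreducible projectively normal curve with
$h^i(\mathcal O_X(m))=0$ for $m\ge 1$ — and compare with the
already-established fact that no $\bar G$-invariant curve of the
relevant type exists in $\P^3$, essentially reducing to
Theorem~\ref{theorem:Vanya-Kostya-SL-4-weakly-exceptional} (this
is the promised analogy ``cf.\
Theorem~\ref{theorem:Vanya-Kostya-SL-4-weakly-exceptional}'').

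The main obstacle I expect is the surface case: the Bordiga surface
of degree $6$ in $\P^4$ is genuinely extremal for the numerical
bounds, so one cannot dispose of it by crude numerics and must argue
either that it admits no invariant structure (analyzing the
$\bar G$-action on the $5$-dimensional space of cubics through it,
or on the associated $3\times 4$ matrix of linear forms whose
maximal minors define it) or that its general hyperplane section — a
Bordiga sextic curve in $\P^3$, which by degree $6>3$ would itself
be a non-weakly-exceptional obstruction in dimension $4$ forbidden
by Theorem~\ref{theorem:Vanya-Kostya-SL-4-weakly-exceptional} — can
never be $\bar G_\Pi$-invariant for the relevant subgroup. Handling
the representation-theoretic bookkeeping for the determinantal
description of the Bordiga surface, and making sure the
$\bar G$-action descends compatibly to the hyperplane section so
that the dimension-four result genuinely applies, is where the real
work lies; the curve case and the excluded dimensions $0$ and $3$
are comparatively routine.
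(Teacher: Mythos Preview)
Your overall architecture --- easy direction via \cite[Proposition~2.1]{Pr00} and Theorem~\ref{theorem:weakly-exceptional-quotient}, hard direction via Theorem~\ref{theorem:weakly-exceptional-quotient-criterion}, then eliminate $\dim V=0,3$ trivially --- matches the paper. But both of your substantive cases contain genuine errors.

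\textbf{Curve case.} The rational normal quartic in $\P^4$ lies on exactly $6$ quadrics, not $3$: its ideal is generated by the $2\times 2$ minors of a $2\times 4$ catalecticant, and numerically $h^0(\mathcal{O}_{\P^4}(2))-h^0(\mathcal{O}_{\P^1}(8))=15-9=6$. So the bound $h^0(\mathcal{I}_V(2))\ge\binom{4}{2}=6$ from~\eqref{equation:Nadel-inequality} is attained and does not eliminate it. (You may be thinking of the twisted cubic in $\P^3$.) The paper disposes of this case differently: the secant variety of a rational normal quartic in $\P^4$ is a cubic hypersurface (\cite[Exercise~8.7]{Harris}), necessarily $\bar G$-invariant, hence a semi-invariant of degree~$3$ --- contradiction.

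\textbf{Surface case.} Your proposed reduction to Theorem~\ref{theorem:Vanya-Kostya-SL-4-weakly-exceptional} via a hyperplane section cannot work: since $G$ is transitive, \emph{no} hyperplane $\Pi\subset\P^4$ is $\bar G$-invariant, so there is no subgroup of $\GL_4(\C)$ acting on $\Pi$ to which the four-dimensional criterion could apply. This is not bookkeeping; it is a structural obstruction. The paper proceeds entirely inside $\P^4$. First it pins down the degree: Riemann--Roch plus projective normality give $H^2-H\cdot K_S=8$ and $h^0(\mathcal{I}_S(2))=6-H^2$, and then degree~$3$ is killed (cubic scroll has a $\bar G$-fixed vertex or a unique $(-1)$-line), degree~$4$ is killed (complete intersection of two quadrics; the pencil has exactly $5$ singular members, forcing a short $\bar G$-orbit on $\P^1$ and hence a semi-invariant of degree~$4$), and degree~$5$ is killed ($h^0(\mathcal{I}_S(2))=1$, a semi-invariant of degree~$2$). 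This leaves $H^2=6$. The Bordiga case is then handled not by representation theory on the $3\times 4$ determinantal presentation, but by showing that \emph{any} such $\bar G$-invariant sextic surface of Fano type leads to a contradiction: one proves $|-K_{\tilde S}|=\varnothing$, then that $|K_{\tilde S}+\tilde H|$ is base-point free of dimension~$2$ and gives a $\bar G$-equivariant birational morphism $\tilde S\to\P^2$ blowing up a $\bar G$-invariant set of $10$ distinct points, and finally (Lemmas~\ref{lemma:10-points-on-P2-easy} and~\ref{lemma:10-points-on-P2}) that for any such configuration satisfying the constraints forced by transitivity, $-K_{\tilde S}$ is not big --- contradicting that $S$ was of Fano type.
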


However, in dimension six, nature takes its revenge.
In this case we can only prove a sufficient condition.

\begin{theorem}
\label{theorem:main-6}  Let $G$ be a finite subgroup in
$\GL_{6}(\C)$ that does not contain reflections. Then
the~singularity $\C^6/G$ is weakly-exceptional if the~following
five conditions are satisfied:
\begin{enumerate}
\item the~group $G$ is transitive,%
\item the~group $G$ does not have semi-invariants of degree at most $5$,%
\item there is no irreducible $\bar{G}$-invariant smooth rational
cubic scroll\footnote{This is just $\mathbb{P}^{1}\times\mathbb{P}^{2}$
embedded by Segre.} in~$\P^5$,%
\item there is no irreducible $\bar{G}$-invariant
complete intersection of two quadric hypersurfaces in~$\mathbb{P}^{5}$,%
\item there is no irreducible $\bar{G}$-invariant, normal,
projectively normal, non-degenerate, Fano type threefold
$X\subset\mathbb{P}^{5}$ with at most rational singularities of
degree $6$ and sectional genus~$3$ such that
$h^{0}(\mathcal{O}_{\mathbb{P}^{5}}(2)\otimes\mathcal{I}_{X})=0$
and
$h^{0}(\mathcal{O}_{\mathbb{P}^{5}}(3)\otimes\mathcal{I}_{X})=4$
(cf. Remark~\ref{remark:Bordiga-threefold} below).
\end{enumerate}
\end{theorem}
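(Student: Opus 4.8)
The plan is to prove the contrapositive, using Theorem~\ref{theorem:weakly-exceptional-quotient-criterion}. Assume $\C^6\slash G$ is not weakly-exceptional, and let $V\subset\P^5$ be the $\bar G$-invariant, irreducible, normal, Fano type, projectively normal subvariety produced by that theorem, so that $\deg(V)\le\binom{5}{\dim(V)}$, so that $h^i(\O_{\P^5}(m)\otimes\mathcal{I}_V)=h^i(\O_V(m))=0$ for all $i\ge 1$ and $m\ge 0$, and so that $h^0(\O_{\P^5}(\dim(V)+1)\otimes\mathcal{I}_V)\ge\binom{5}{\dim(V)+1}$. The goal is to show that the existence of such a $V$ forces one of the five conditions to fail, and I would argue by cases on $\dim(V)$.

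The reductions not requiring any classification come first. The linear span $\langle V\rangle$ is a $\bar G$-invariant linear subspace of $\P^5$, so transitivity (condition~(1)) forces $V$ to be non-degenerate; in particular $V$ is not a point and $1\le\dim(V)\le 4$. For any $m\le 5$, the space $H^0(\mathcal{I}_V(m))$ is a subrepresentation of the $G$-module $H^0(\O_{\P^5}(m))$, and if it has a one-dimensional summand --- which by Maschke's theorem happens, for instance, whenever the homogeneous ideal of $V$ has a single new minimal generator in degree $m$ --- then $G$ has a semi-invariant of degree $m\le 5$, contradicting condition~(2); this observation will be used repeatedly. It already settles $\dim(V)=4$, where $V$ is a hypersurface of degree $\le\binom{5}{4}=5$ whose defining form is such a semi-invariant, and $\dim(V)=1$: there non-degeneracy gives $\deg(V)\ge 5$, so $\deg(V)=5$ and $V$ is the rational normal quintic curve; then $\bar G$ embeds into $\Aut(V\subset\P^5)=\PGL_2(\C)$ acting on $\P^5=\P(\Sym^5\C^2)$, and after lifting to $\GL_2(\C)$ and twisting by scalars the classical nonzero degree-four invariant of a binary quintic becomes a semi-invariant of $G$ of degree $4$, again contradicting~(2).

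For the surface case $\dim(V)=2$, the vanishings $h^i(\mathcal{I}_V(m))=0$ ($i\ge 1$, $m\ge 0$) say that $V$ is arithmetically Cohen--Macaulay and give $h^1(\O_V)=h^2(\O_V)=0$; with the Fano type hypothesis this makes $V$ a rational surface. Hence $V$ is one of the finitely many non-degenerate arithmetically Cohen--Macaulay rational surfaces of degree between $4$ and $10$ in $\P^5$ --- the Veronese surface, the rational normal scrolls, the del Pezzo surfaces and their relatives --- subject in addition to $h^0(\mathcal{I}_V(3))\ge\binom{5}{3}=10$. I would run through this list and in each case exhibit a $\bar G$-semi-invariant of degree at most $5$ (the discriminant of a ternary quadric for the Veronese surface, a resultant for the scrolls, analogous classical invariants otherwise) or a $\bar G$-invariant linear subspace arising as the vertex of a cone, so that~(2) or~(1) is violated. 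The tool for pinning $V$ down is the passage, permitted by Theorem~\ref{theorem:weakly-exceptional-quotient-criterion}, to a general hyperplane section $X=V\cap\Pi\subset\P^4$, an irreducible projectively normal curve with $h^i(\O_X(m))=0$ for $i\ge 1$, $m\ge 1$, which can itself be cut down to a finite subscheme of $\P^3$.

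The threefold case $\dim(V)=3$ is the heart of the proof and the step I expect to be the main obstacle. Here $V$ is a non-degenerate arithmetically Cohen--Macaulay subvariety of codimension two in $\P^5$, of Fano type, with $3\le\deg(V)\le 10$, with $h^i(\O_V(m))=0$ for $i\ge 1$, $m\ge 0$, and with $h^0(\mathcal{I}_V(4))\ge 5$. By the Hilbert--Burch theorem $V$ is cut out by the maximal minors of a $t\times(t+1)$ matrix of forms, and I would combine three inputs to finish. First, if that matrix has mixed column degrees then the ideal of $V$ acquires a single new minimal generator (a minor) in some degree at most $5$, hence a semi-invariant of degree at most $5$, contradicting~(2). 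Second, the Fano type hypothesis forces $-K_V\cdot H^2>0$, so by adjunction the sectional genus of $V$ is small relative to $\deg(V)$; together with $\deg(V)\le 10$, arithmetic Cohen--Macaulayness and~\eqref{equation:Nadel-inequality} this excludes $\deg(V)\ge 7$ as well as the complete intersections $(2,3)$, $(2,4)$, $(2,5)$, $(3,3)$ in $\P^5$, which are either contained in a unique quadric (giving a degree-two semi-invariant) or have trivial or ample canonical class. Third, any singular threefold of degree three is a cone whose vertex is a $\bar G$-invariant linear subspace, contradicting~(1). What survives is exactly the rational normal cubic scroll $\P^1\times\P^2$ in degree three, the complete intersection of two quadrics in degree four, and the Bordiga threefold in degree six --- the last cut out by the four maximal minors of a $3\times 4$ matrix of linear forms, so that $h^0(\mathcal{I}_V(2))=0$, $h^0(\mathcal{I}_V(3))=4$, and of sectional genus three --- which are precisely conditions~(3), (4) and~(5). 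Thus in every case one of the five conditions fails, proving the theorem. The delicate points, as expected, are the adjunction-theoretic and Cohen--Macaulay bookkeeping that bounds $\deg(V)$ and the verification that, the three named threefolds aside, every determinantal threefold in the list carries a low-degree semi-invariant or a $\bar G$-invariant linear subspace.
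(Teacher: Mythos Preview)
Your overall architecture --- contrapositive via Theorem~\ref{theorem:weakly-exceptional-quotient-criterion}, then split on $\dim V$ --- is exactly the paper's, and your treatment of $\dim V\in\{0,1,4\}$ is fine; the binary-quintic degree-$4$ invariant for the curve case is in fact cleaner than the paper's argument (which instead uses orbit lengths on $\P^1$ together with Lemma~\ref{lemma:S4-large-representations}).

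The surface and threefold cases, however, have real gaps. For surfaces there is no finite ``list'' to run through: the Riemann--Roch constraint $H^2-H\cdot K_V=10$ together with Lemma~\ref{lemma:pencil-of-quadrics} does force $\deg V\in\{4,5,6,7\}$, but the degree-$6$ and degree-$7$ cases are neither scrolls nor anticanonically embedded del Pezzos --- they have $-K_V\cdot H=4$ resp.\ $3$, and must be analysed via the adjoint system $|K_V+H|$. This leads (Lemmas~\ref{lemma:d-6-q-4} and~\ref{lemma:d-7-q-3}) through several pages of weak-del-Pezzo and conic-bundle geometry for which no ``classical invariant'' shortcut is visible. For threefolds, your Hilbert--Burch step ``mixed column degrees $\Rightarrow$ a single new minimal generator in some degree $\le 5$'' is false as stated (generator degrees $(3,3,4,4)$ already break it), and the exclusion of $d\ge 7$ does not follow from adjunction bounds alone: for $d=7$ the sectional genus is $5$, and the paper needs a liaison argument (Lemma~\ref{lemma:dim-3-deg-7}) resting on the Appendix~\ref{section:septic-curves} result that every smooth genus-$5$ degree-$7$ curve in $\P^3$ is a scheme-theoretic intersection of cubics; for $d=5$ one needs Fujita's $\Delta$-genus classification (Lemma~\ref{lemma:dim-3-deg-5}). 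These two exclusions are where the proof actually lives, and they are not recoverable from the determinantal shape or from ``sectional genus small relative to degree''.
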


The main purpose of this paper is to prove
Theorems~\ref{theorem:weakly-exceptional-quotient-criterion},
\ref{theorem:main-5} and \ref{theorem:main-6}, which is done in
Sections~\ref{section:criterion}, \ref{section:5-dim} and
\ref{section:6-dim}, respectively. The proofs of
Theorems~\ref{theorem:main-5} and~\ref{theorem:main-6} are based
on Theorem~\ref{theorem:weakly-exceptional-quotient-criterion} and
appear to be naturally linked to many classical geometric
constructions, which are very interesting on their own. For
example, in the proof of Theorem~\ref{theorem:main-5} we naturally
come across various surfaces of small degree in~$\P^4$, in
particular singular Bordiga surfaces.

It should be pointed out that Theorem~\ref{theorem:main-5} is
indeed a criterion, while
Theorems~\ref{theorem:weakly-exceptional-quotient-criterion}
and~\ref{theorem:main-6} provide us only sufficient conditions to
be weakly-exceptional. One can easily see that
Theorem~\ref{theorem:weakly-exceptional-quotient-criterion} is
quite far from being a criterion by comparing it with
Theorems~\ref{theorem:weakly-exceptional-dim-3} and
\ref{theorem:Vanya-Kostya-SL-4-weakly-exceptional}. On the other
hand, the first four conditions in Theorem~\ref{theorem:main-6}
are also necessary conditions for the weak-exceptionality (cf.
\cite[Lemma~3.21]{ChSh09}). Unfortunately, we do not know whether
the fifth  condition in Theorem~\ref{theorem:main-6} is really
necessary or not.

\begin{remark}
\label{remark:Bordiga-threefold} Let $X$ be a projectively normal
non-degenerate threefold in $\mathbb{P}^{5}$ of degree $6$ and
sectional genus $3$ that has at most Kawamata log terminal
singularities. If $X$ is smooth, then it follows from
\cite[Proposition~1.7]{Okonek82} that $X$ is a so-called smooth
Bordiga scroll, which is a projectivization of a two-dimensional
stable vector bundle $\mathcal{E}$ on $\P^2$ such that
$\mathrm{c}_{1}(\mathcal{E})=0$ and
$\mathrm{c}_{2}(\mathcal{E})=0$. Smooth Bordiga scrolls have been
studied in \cite{Okonek82}, \cite{Ottaviani92} and
\cite{MezzettiPortelli99}. One can show that smooth Bordiga
scrolls are weak Fano threefolds, i.e. their anticanonical
divisors are big and nef. Note that smooth Bordiga scrolls are
missing in the classifications obtained
in~\cite{JahnkePeternellRadloff2005}
and~\cite{JahnkePeternellRadloff2011}. Smooth hyperplane sections
of smooth Bordiga scrolls are known as smooth Bordiga surfaces
(see \cite{Bordiga1887}, \cite[Chapter~XIV]{Room1938}), which can
be obtained by blowing up $\P^2$ at $10$ sufficiently general
points. It follows from \cite{Rathmann89} that some smooth Bordiga
surfaces in $\mathbb{P}^{4}$ are set-theoretic intersections of a
cubic and a quartic hypersurfaces (see also
\cite[Proposition~19]{HulekOkonekVandeVen86}). If one can show
that $X$ is also a set-theoretic intersections of a cubic and a
quartic hypersurfaces in $\P^5$ (without imposing any additional
assumption on $X$ except may be those that are used in the fifth
condition in Theorem~\ref{theorem:main-6}), then the fifth
condition in Theorem~\ref{theorem:main-6} is also a necessary
condition for the singularity $\C^6/G$ in
Theorem~\ref{theorem:main-6} to be weakly-exceptional.
Unfortunately, we do not even know whether there exists an example
of a smooth three-dimensional Bordiga scroll that is a
set-theoretic intersections of a cubic and a quartic
hypersurfaces. On the other hand, there is a small chance that the
fifth condition in Theorem~\ref{theorem:main-6} follows from the
first four, which would imply that it can be dropped from
Theorem~\ref{theorem:main-6}.
\end{remark}

By Theorem~\ref{theorem:weakly-exceptional-quotient}, to apply
Theorems~\ref{theorem:main-5} and \ref{theorem:main-6}, we may
assume that $G\subset\SL_{n+1}(\mathbb{C})$, since there exists
a~finite subgroup $G^{\prime}\subset\SL_{n+1}(\mathbb{C})$
such~that $\phi(G^{\prime})=\bar{G}$. Every transitive finite
subgroup of $\SL_2(\mathbb{C})$ gives rise to a weakly-exceptional
singularity by Theorem~\ref{theorem:weakly-exceptional-quotient}.
In~\cite{Sak10}, Sakovich used
Theorems~\ref{theorem:weakly-exceptional-dim-3} and
\ref{theorem:Vanya-Kostya-SL-4-weakly-exceptional} to obtain
an~explicit classification of the transitive finite subgroups in
$\SL_3(\mathbb{C})$ and $\SL_4(\mathbb{C})$ corresponding to
three-dimensional and four-dimensional weakly-exceptional quotient
singularities, respectively. Probably, a similar classification is
possible in dimensions~$5$ and~$6$ using
Theorems~\ref{theorem:main-5} and \ref{theorem:main-6},
respectively. But this task requires huge amount of computations
that goes beyond the scope and the purpose of this paper. So
instead, let us apply Theorems~\ref{theorem:main-5} and
\ref{theorem:main-6} to classify all primitive (see
\cite[Definition~1.21]{ChSh09}) finite subgroups in
$\SL_5(\mathbb{C})$ and $\SL_6(\mathbb{C})$ corresponding to
five-dimensional and six-dimensional weakly-exceptional quotient
singularities, respectively.

\begin{theorem}[{cf.~\cite[Theorems~1.22 and~5.6]{ChSh09}}]
\label{theorem:primitive-5} Let $G\subset\SL_5(\C)$ be a finite
primitive group. Then the singularity $\C^5\slash G$ is
weakly-exceptional if and only if $G$ contains the Heisenberg
group of order $125$.
\end{theorem}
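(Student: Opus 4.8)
The plan is to combine Theorem~\ref{theorem:main-5} with the classical classification of finite primitive subgroups of $\SL_5(\C)$ (going back to Blichfeldt; cf.\ the list used in \cite{ChSh09} for the exceptional case). Since a primitive group is transitive, Theorem~\ref{theorem:main-5} reduces the problem, at least for reflection-free $G$, to the statement that $\C^5\slash G$ is weakly-exceptional if and only if $G$ has no semi-invariant of degree at most $4$. Here it is convenient to record that a semi-invariant of degree $d$ is exactly a $G$-stable line in the space $\Sym^d\big((\C^5)^{\vee}\big)$ of degree-$d$ forms, so that every semi-invariant of $G$ restricts to a semi-invariant of any subgroup of $G$; in particular, if some $H\subseteq G$ has no semi-invariant of degree $\le 4$, then neither does $G$.

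For the implication ``$\Leftarrow$'', suppose $G$ contains a subgroup $H$ isomorphic to the Heisenberg group of order $125$ acting through its (unique) $5$-dimensional irreducible representation. By Theorem~\ref{theorem:Heisenberg} with $p=5$ the singularity $\C^5\slash H$ is weakly-exceptional, and since $H$ is transitive, Theorem~\ref{theorem:main-5} applied to $H$ shows that $H$ has no semi-invariant of degree at most $4$. Hence $G$ has none either, and, $G$ being transitive, Theorem~\ref{theorem:main-5} applied to $G$ yields that $\C^5\slash G$ is weakly-exceptional. (One also checks that such a $G$ contains no reflection, so that Theorem~\ref{theorem:main-5} indeed applies to it.)

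For the implication ``$\Rightarrow$'' I would argue by contraposition, invoking the classification: a finite primitive subgroup of $\SL_5(\C)$ not containing the Heisenberg group of order $125$ is, modulo scalar matrices, one of the finitely many remaining primitive groups, the relevant ones being $\mathrm{A}_5$, $\mathrm{S}_5$, $\mathrm{A}_6$, $\mathrm{S}_6$ (in a $5$-dimensional representation landing in $\SL_5(\C)$), $\PSL_2(11)$ and $\PSp_4(3)$. Two remarks make the list manageable: a faithful irreducible $5$-dimensional representation of a non-abelian simple group containing a reflection would exhibit that group as a rank-$5$ complex reflection group, which none of $\mathrm{A}_5,\mathrm{A}_6,\PSL_2(11),\PSp_4(3)$ is; and a primitive subgroup normalizing but not containing the image of a Heisenberg group would be monomial, hence imprimitive. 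For $\mathrm{A}_5$, $\mathrm{S}_5$, $\mathrm{A}_6$ and $\mathrm{S}_6$ the pertinent $5$-dimensional representation is defined over $\mathbb{R}$, hence preserves a non-degenerate quadratic form, so $G$ has a semi-invariant of degree $2$; for $\PSL_2(11)$ the Klein cubic threefold $\{x_0^2x_1+x_1^2x_2+x_2^2x_3+x_3^2x_4+x_4^2x_0=0\}\subset\P^4$ is $\bar G$-invariant, so $G$ has a semi-invariant of degree $3$; and for $\PSp_4(3)$ the Burkhardt quartic threefold in $\P^4$ is $\bar G$-invariant, so $G$ has a semi-invariant of degree $4$. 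In every case $G$ has a semi-invariant of degree at most $4$; for the reflection-free groups Theorem~\ref{theorem:main-5} then shows that $\C^5\slash G$ is not weakly-exceptional, while for $\mathrm{S}_6$ (the only primitive subgroup of $\SL_5(\C)$ containing a reflection) one checks directly, as in \cite{ChSh09}, that $\C^5\slash\mathrm{S}_6$ is not weakly-exceptional --- consistent with $\mathrm{S}_6$ not containing the Heisenberg group of order $125$.

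The step I expect to be the main obstacle is the invocation and verification of the classification: establishing that the list of primitive subgroups of $\SL_5(\C)$ is exactly as described --- in particular ruling out further primitive groups arising inside normalizers of Heisenberg groups, and correctly treating $\PSp_4(3)$ and the two conjugate $5$-dimensional representations of $\PSL_2(11)$ --- together with producing, in each sporadic case, a genuinely invariant form of the asserted degree, which for $\PSL_2(11)$ and $\PSp_4(3)$ rests on the classical geometry of the Klein cubic and of the Burkhardt quartic. A minor additional point is the correct handling of the group $\mathrm{S}_6$, for which Theorem~\ref{theorem:main-5} does not literally apply because of the presence of reflections.
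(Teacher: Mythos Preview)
Your proposal is correct and follows the same overall strategy as the paper: the ``if'' direction comes from Theorem~\ref{theorem:Heisenberg}, and the ``only if'' direction runs through Feit's list of primitive subgroups of $\SL_5(\C)$ not containing the Heisenberg group ($\A_5$, $\SS_5$, $\A_6$, $\SS_6$, $\PSL_2(\F_{11})$, $\PSp_4(\F_3)$), exhibiting in each case a semi-invariant of degree at most~$4$. The paper's execution is a bit leaner in two places. First, for ``if'' it appeals to Theorem~\ref{theorem:Heisenberg} directly via the trivial monotonicity $\mathrm{lct}(\P^4,\bar G)\geqslant\mathrm{lct}(\P^4,\bar H)$ for $H\subset G$ and Theorem~\ref{theorem:weakly-exceptional-quotient}, rather than routing through Theorem~\ref{theorem:main-5} as you do. Second, for ``only if'' the paper cites \cite[Lemma~5.3]{ChSh09} for the low-degree semi-invariants (these are precisely the invariants you name explicitly: the real quadratic form for the symmetric/alternating groups, the Klein cubic for $\PSL_2(\F_{11})$, the Burkhardt quartic for $\PSp_4(\F_3)$) and concludes via Theorem~\ref{theorem:weakly-exceptional-quotient}; it handles the reflection issue not by treating $\SS_6$ separately but by first observing that weak-exceptionality depends only on $\bar G$, so one may pass to a convenient lift with $Z(G)\subset[G,G]$ before invoking Theorem~\ref{theorem:weakly-exceptional-quotient}. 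Your route through Theorem~\ref{theorem:main-5} and your care with reflections are not wrong, just slightly more laborious.
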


\begin{proof}
The ``if'' part follows immediately by
Theorem~\ref{theorem:Heisenberg}. Let us prove the ``only if''.
Suppose that $G$ does not contain the Heisenberg group of order
$125$. By Theorem~\ref{theorem:weakly-exceptional-quotient}, the
weak-exceptionality of the singularity $\C^5\slash G$ depends only
on the image of the group $G$ in $\PSL_5(\C)$, so that we may
assume that $Z(G)\subset [G,G]$ (see \cite{Fe71}). Then the group
$G$ is one of the following groups: $\A_5$, $\A_6$, $\SS_5$,
$\SS_6$, $\PSL_2(\mathbb{F}_{11})$ or $\PSp_4(\F_3)$ (see \cite[\S
8.5]{Fe71}). In all of these cases there exists a semi-invariant
of $G$ of degree at most~$4$ by~\cite[Lemma~5.3]{ChSh09}, so that
the singularity $\C^5\slash G$ is not weakly-exceptional by
Theorem~\ref{theorem:weakly-exceptional-quotient}.
\end{proof}

\begin{theorem}[{cf.~\cite[Theorem~1.14]{ChSh10}}]
\label{theorem:primitive-6} Let $G$ be a finite primitive subgroup
in $\SL_6(\C)$. Then the singularity $\C^6\slash G$ is
weakly-exceptional if and only if there exists a~lift of
the~sub\-group~$\bar{G}\subset\PGL_6(\C)$ to $\SL_6(\C)$ that is
contained in the~following list\footnote{We label the cases
according to the notation of~\cite{Fe71} and \cite{Li71}.}:
\begin{itemize}
\item[(V)] $6.\A_6$,%
\item[(VIII)] $6.\A_7$,%
\item[(XIV)]
\begin{itemize}
\item[(i)] $\SU_3(\F_3)$, %
\item[(ii)] an~extension of the~subgroup described in XIV(i) by an automorphism of order $2$,%
\end{itemize}
\item[(XV)]
\begin{itemize}
\item[(i)] $6.\PSU_4(\F_3)$, %
\item[(ii)] an~extension of the~subgroup described in XV(i) by an automorphism of order $2$,%
\end{itemize}
\item[(XVI)] $2.\HaJ$, where $\HaJ$ is the~Hall--Janko sporadic simple group,%
\item[(XVII)]
\begin{itemize}
\item[(i)] $6.\PSL_3(\F_4)$, %
\item[(ii)] an~extension of the~subgroup described
in XVII(i) by an automorphism of order $2$.%
\end{itemize}
\end{itemize}
\end{theorem}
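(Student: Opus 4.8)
The plan is to follow essentially the same strategy as the proof of Theorem~\ref{theorem:primitive-5}, but with the heavier input of Theorem~\ref{theorem:main-6} rather than Theorem~\ref{theorem:main-5}, and with the full list of primitive subgroups of $\SL_6(\C)$ modulo scalars (equivalently, finite primitive subgroups of $\PGL_6(\C)$) taken from the classification of Feit and Lin \cite{Fe71}, \cite{Li71}. The ``if'' direction should come for free: for each group $G$ in the list (V), (VIII), (XIV), (XV), (XVI), (XVII), one checks that conditions (1)--(5) of Theorem~\ref{theorem:main-6} are satisfied, and concludes weak-exceptionality. Transitivity (condition (1)) is automatic from primitivity, and the absence of low-degree semi-invariants (condition (2)) is a representation-theoretic computation: one inspects the symmetric powers $\Sym^d(\rho)$ for $d\le 5$ of the relevant six-dimensional representation $\rho$ and checks that none contains the trivial representation of $\bar G$ — this can be done character-theoretically from the known character tables of $6.\A_6$, $6.\A_7$, $\SU_3(\F_3)$, $6.\PSU_4(\F_3)$, $2.\HaJ$, $6.\PSL_3(\F_4)$ and their order-$2$ extensions.

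For the geometric conditions (3), (4), (5) in the ``if'' direction, the key point is that each of these asks for the \emph{non-existence} of a certain small-degree $\bar G$-invariant subvariety $X\subset\P^5$ (a cubic scroll $\P^1\times\P^2$, a $(2,2)$-complete intersection threefold, or a Bordiga-type threefold). For a cubic scroll, $\bar G$ would act on the ambient $\P^5$ preserving the rational normal scroll structure, hence would act on its $\P^1$-factor and $\P^2$-factor, contradicting irreducibility of $\rho$ (the representation would be a tensor product $2\otimes 3$ or would otherwise be imprimitive/reducible); similarly a $\bar G$-invariant $(2,2)$-complete intersection forces a $2$-dimensional space of invariant quadrics, i.e. a degree-$2$ semi-invariant, contradicting condition (2) — so (4) follows from (2) and, indeed, the paper already notes the first four conditions are necessary. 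For the fifth (Bordiga threefold) condition, one must rule out, for each specific $G$, the existence of an invariant threefold with $h^0(\I_X(2))=0$ and $h^0(\I_X(3))=4$; this means $\Sym^3(\rho)$ would contain an invariant $4$-dimensional subspace of cubics, so one checks from the character of $\Sym^3(\rho)$ (dimension $\binom{6+2}{3}=56$) that its trivial-isotypic component has dimension $\ne 4$, or more refined that no such geometric configuration is $\bar G$-stable.

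For the ``only if'' direction, I would argue contrapositively: take a finite primitive $G\subset\SL_6(\C)$ with $Z(G)\subset[G,G]$ (permissible by \cite{Fe71} since weak-exceptionality depends only on the image $\bar G\subset\PGL_6(\C)$), enumerate \emph{all} primitive cases in the Feit--Lin list, and for every group \emph{not} on our target list exhibit a violation of one of the five conditions — in practice almost always a semi-invariant of degree $\le 5$, found as in \cite[Lemma~5.3]{ChSh09} style computations (and for the remaining few, an invariant cubic scroll, invariant $(2,2)$-intersection, or invariant Bordiga threefold). Then Theorem~\ref{theorem:main-6} would be inapplicable but, more to the point, a low-degree semi-invariant gives $\mathrm{lct}(\P^5,\bar G)\le d/6 <1$ with $d\le 5$, so by Theorem~\ref{theorem:weakly-exceptional-quotient} the singularity is not weakly-exceptional. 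The main obstacle is bookkeeping: the Feit--Lin classification of primitive subgroups of $\PGL_6(\C)$ is long, and for each one must produce the needed semi-invariant degree (or geometric obstruction) from character data — this is the laborious heart of the argument, and for the borderline groups that do land on the list, verifying condition (5) (the Bordiga threefold) is the subtlest part, since it is not purely a question of invariant forms but of invariant subvarieties, and one must be sure no such threefold can be $\bar G$-stable.
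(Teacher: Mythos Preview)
Your overall strategy matches the paper's, but two steps are genuinely wrong.

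First, your claim that condition~(4) of Theorem~\ref{theorem:main-6} follows from condition~(2) is false. A $\bar{G}$-invariant complete intersection of two quadrics yields a $G$-stable $2$-dimensional subspace of $\Sym^2(V^\vee)$, but this produces a degree-$2$ semi-invariant only if that $2$-dimensional representation is reducible. The paper's ``only if'' direction in fact meets primitive groups with no semi-invariants of degree $\le 5$ that nonetheless fail to be weakly-exceptional; these are precisely the groups satisfying the hypotheses of \cite[Lemma~3.24]{ChSh09}, and the paper disposes of them by invoking that lemma together with Theorem~\ref{theorem:weakly-exceptional-quotient}, not by producing a semi-invariant. Your criterion for condition~(5) is similarly off: the $4$-dimensional space $H^0(\mathcal{I}_X(3))$ is merely a $G$-subrepresentation of $\Sym^3(V^\vee)$, so the relevant question is whether $\Sym^3(V^\vee)$ has any subrepresentation of dimension $2$, $3$, or $4$, not whether its trivial-isotypic part has dimension~$4$. (Also, in the ``only if'' direction you cannot use an invariant Bordiga threefold as an obstruction: the paper explicitly says it does not know whether condition~(5) is necessary.)

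Second, you miss the shortcut that makes the ``if'' direction tractable. Rather than verifying (3), (4), (5) case by case, the paper observes that for types XIV--XVII the group $G$ has no irreducible representation of dimension $2$, $3$, or $4$ whatsoever (a glance at \cite{Atlas}); hence any $G$-stable subspace of $\Sym^d(V^\vee)$ of dimension $2$, $3$, or $4$ would be a sum of one-dimensional pieces, forcing semi-invariants of degree $\le 3$ and contradicting~(2). This single observation settles (3)--(5) simultaneously for those groups. Only type~V, where $6.\A_6$ does possess $3$- and $4$-dimensional irreducibles, requires the finer check---carried out in the paper via a Magma computation---that none of them occurs inside $\Sym^3(V^\vee)$. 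Your plan of inspecting $\Sym^d(\rho)$ for trivial summands handles~(2) but does not close~(3)--(5) without this representation-dimension argument.
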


\begin{proof}
The classification of the primitive subgroups of $\SL_6(\C)$ is
given in~\cite[{\S 3}]{Li71}. Browsing through it, we find that
primitive subgroups of $\SL_6(\C)$ that do not have
semi-invariants of degree at most $5$ are exactly those listed in
the assertion of the theorem (see the proof
of~\cite[Theorem~3.3]{ChSh10}) and those that satisfy the
hypotheses of \cite[Lemma~3.24]{ChSh09}. If $G$ satisfies the
hypotheses of \cite[Lemma~3.24]{ChSh09}, then the singularity
$\C^6\slash G$ is not weakly-exceptional by
Theorem~\ref{theorem:weakly-exceptional-quotient} and
\cite[Lemma~3.24]{ChSh09}. If $G$ is of type XIV(i), XIV(ii),
XV(i), XV(ii), XVI, XVII(i) and~XVII(ii), then $G$ does not have
an irreducible representation $W$ such that \mbox{$2\le\dim(W)\le
4$} (see~\cite{Atlas}), which easily implies that these cases give
rise to weakly-exceptional singularities by
Theorem~\ref{theorem:main-6}. If $G$ is of type~V, then $G\cong
6.\A_6$ has no irreducible two-dimensional representations
(see~\cite{Atlas}), and no irreducible representation of the group
$G$ of dimension $3$ or $4$ is contained in $\Sym^3(V^{\vee})$,
where $V\cong\C^6$ is the $G$-representation in question. The
latter can be checked by a direct computation (we used the Magma
software~\cite{Magma} to carry it out). Therefore, the singularity
$\C^6\slash G$ is again weakly-exceptional by
Theorem~\ref{theorem:main-6}.
\end{proof}

It seems possible to apply Theorems~\ref{theorem:Heisenberg},
\ref{theorem:main-5}, \ref{theorem:main-6},
\ref{theorem:primitive-5}, and \ref{theorem:primitive-6} to
construct non-conjugate isomorphic finite subgroups in Cremona
groups of high ranks (cf. \cite[Example~6.5]{Ch07b}). For example,
if $\mathrm{lct}(\P^n, \bar{G})\geqslant 1$ and $\P^n$ is
$\bar{G}$-birationally super-rigid (see
\cite[Definition~6.1]{Ch07b}), then it follows from
\cite[Theorem~6.4]{Ch07b} that there exists no
$\bar{G}\times\bar{G}$-equivariant birational map
$\P^n\times\P^n\dasharrow\mathbb{P}^{2n}$  with respect to the
product action of the group  $\bar{G}\times\bar{G}$ on
$\P^n\times\P^n$. By
Theorem~\ref{theorem:weakly-exceptional-criterion}, we can use
Theorems~\ref{theorem:Heisenberg}, \ref{theorem:main-5},
\ref{theorem:main-6}, \ref{theorem:primitive-5}, and
\ref{theorem:primitive-6} to obtain many finite subgroups
$\bar{G}\subset\mathrm{Aut}(\P^n)$ with $\mathrm{lct}(\P^n,
\bar{G})\geqslant 1$. However, if $n\geqslant 3$, then it is
usually very hard to prove that $\P^n$ is $\bar{G}$-birationally
super-rigid (cf. \cite{ChSh09A6}). In fact, we do not know any
such example if $n\geqslant 4$.

\smallskip

Let us describe the structure of this paper. In
Section~\ref{section:criterion}, we prove
Theorem~\ref{theorem:weakly-exceptional-quotient-criterion}.  In
Section~\ref{section:series}, we construct (see
Theorem~\ref{theorem:infinite}) infinite series of Gorenstein
weakly-exceptional quotient singularities in any dimension (in
particular, proving that weakly-exceptional quotient singularities
exist in all dimensions), and prove
Theorem~\ref{theorem:Heisenberg}. In Section~\ref{section:5-dim},
we prove Theorem~\ref{theorem:main-5}. In
Section~\ref{section:6-dim}, we prove
Theorem~\ref{theorem:main-6}.  In
Appendix~\ref{section:septic-curves}, we prove an auxiliary
statement concerning smooth irreducible curves in~$\P^3$ of
genus~$5$ and degree~$7$: we prove that any such curve is a
set-theoretic intersection of cubics (this result might be known
to experts, but we did not manage to find a reference in the
literature).

\smallskip

Throughout the proofs of Theorems~\ref{theorem:main-5}
and~\ref{theorem:main-6} we often work with singular del Pezzo
surfaces. A good preliminary reading for the corresponding
techniques may be~\cite{Demazure}.
On the other hand, the facts from the
theory of representations of finite groups we will need
(say, in the proof of Theorem~\ref{theorem:Geisenberg})
do not go beyond the elementary material that can
be found in any textbook on the subject
(for example,~\cite{JPSerre}).

\smallskip

The problem of finding a nice geometric criterion for a
five-dimensional quotient singularity to be weakly-exceptional
originated during the first author participation in the 18th
G\"okova Conference in Turkey. The first author would like to
thank Selman Akbulut for inviting him to this beautiful place.
The~authors would like to thank Marco Andreatta, Eduardo Ballico,
Pietro De Poi, Igor Dolgachev, Stephane Lamy, Jihun Park, Emilia
Mezzetti, Yuri Prokhorov and Franchesco Russo for many fruitful
discussions.

We proved both Theorems~\ref{theorem:main-5} and
\ref{theorem:main-6} while participating in the \emph{Research in
Groups} program in the Center of International Research in
Mathematics (Trento, Italy). We finished this paper at the
Institute for the Physics and Mathematics of the Universe (Tokyo,
Japan). We are really grateful to CIRM and IPMU for the beautiful
working conditions. Special thanks goes to Sergey Galkin for his
warm and encouraging support during our stay at IPMU. The work was
also supported by the grants NSF DMS-1001427, N.Sh.-4713.2010.1,
RFFI 11-01-00336-a, RFFI 11-01-92613-KO-a, RFFI 08-01-00395-a,
RFFI 11-01-00185-a, and by AG Laboratory GU-HSE, RF government
grant 11~11.G34.31.0023.

\section{Weak-exceptionality criterion}
\label{section:criterion}

The purpose of this section is to prove
Theorem~\ref{theorem:weakly-exceptional-quotient-criterion}. Let
$G$ be a~finite subgroup in $\GL_{n+1}(\mathbb{C})$, and let
$\phi\colon\GL_{n+1}(\mathbb{C})\to\PGL_{n+1}(\mathbb{C})$ be
the~natural projection. Put $\bar{G}=\phi(G)$. Let us identify
$\PGL_{n+1}(\mathbb{C})$ with $\mathrm{Aut}(\mathbb{P}^{n})$. Let
us denote by $H$ a general hyperplane in $\P^n$. Suppose that $G$
contains no reflections and $\C^{n+1}\slash G$ is not
weakly-exceptional. Then $\mathrm{lct}(\P^n,\bar{G})<1$ by
Theorem~\ref{theorem:weakly-exceptional-quotient}. Thus, there is
an~effective $\bar{G}$-invariant $\mathbb{Q}$-divisor $D$ on
$\mathbb{P}^{n}$ such that $D\sim_{\mathbb{Q}}(n+1)H$ and
a~positive rational number $\lambda<1$ such~that
$(\mathbb{P}^{n},\lambda D)$ is strictly log canonical.

Let $V$ be a~minimal center of log canonical singularities of the
log pair $(\mathbb{P}^{n},\lambda D)$ (see
\cite[Definition~1.3]{Kaw97}, \cite{Kaw98}). Then it follows from
the Kawamata subadjunction theorem (see \cite[Theorem~1]{Kaw98})
that $V$ is normal and has at most rational singularities, and for
any rational number $\epsilon<(1-\lambda)(n+1)$ there is
an~effective
$\mathbb{Q}$-divisor~$\Delta_{\epsilon}$~on~$V$~such~that
$-(K_{V}+\Delta_{\epsilon})\sim_{\mathbb{Q}} \epsilon H\vert_{V}$,
and the log pair $(V,\Delta_{\epsilon})$ has Kawamata log terminal
singularities. We see that $V$ is a Fano type subvariety. In
particular, we see that $h^{i}(\mathcal{O}_{V}(mH\vert_V))=0$ for
every $i\geqslant 1$ and $m\geqslant 0$ by the Nadel--Shokurov
vanishing theorem (see~\cite[Theorem~2.16]{Ko97}
or~\cite[Appendix]{Ambro-ladders}).

Let $Z$ be the $\bar{G}$-orbit of the subvariety $V$, and let
$\mu$ be a rational number such that $\mu>1$ and $\mu\lambda<1$.
Then it follows from \cite[Lemma~2.8]{ChSh09} that there exists
an~effective~$\bar{G}$-in\-va\-riant $\mathbb{Q}$-divisor
$D^{\prime}$~on~$\P^n$~such~that $D^{\prime}\sim_{\mathbb{Q}}
\mu\lambda D$, the~log pair $(\P^n, D^{\prime})$ is strictly log
canonical, and every log canonical center of the log pair $(\P^n,
D^{\prime})$ is a component of $Z$. Since $\mu\lambda<1$, it
follows from the Nadel--Shokurov vanishing theorem that
$h^{i}(\mathcal{I}_{Z}\otimes\mathcal{O}_{\P^n}(m))=0$ for every
$i\geqslant 1$ and $m\geqslant 0$, where $\mathcal{I}_{Z}$ is an
ideal sheaf of $Z$.  Thus, the sequence of cohomology groups
$$
0\longrightarrow H^{0}\Big(\mathcal{O}_{\P^n}\big(m\big)\otimes\mathcal{I}_{Z}\Big)\longrightarrow H^{0}\Big(\mathcal{O}_{\P^n}\big(m\big)\Big)\longrightarrow H^{0}\Big(\mathcal{O}_{V}\big(mH\vert_Z\big)\Big)\longrightarrow 0%
$$
is exact for every $m\geqslant 0$, which implies $Z$ is connected,
and $V$ is projectively normal if $V=Z$. Since $Z$ is, one has
$V=Z$ by \cite[Proposition~1.5]{Kaw97}.

Put $d=\dim(V)$. Let $\Pi$ be a~general linear subspace in
the projective space $\mathbb{P}^{n}$ of codimension $k\leqslant d$. Put
$D_{\Pi}^{\prime}=D^{\prime}\vert_{\Pi}$. Then every center of log
canonical singularities ofthe log pair $(\Pi,D_{\Pi}^{\prime})$ is
a components of the intersection $V\cap\Pi$, since $\Pi$ is
sufficiently general. Put $H_{\Pi}=H\vert_{\Pi}$ and $X=\Pi\cap
V$. Then
$$K_{\Pi}+D_{\Pi}^{\prime}\sim_{\mathbb{Q}}
(\mu\lambda\big(n+1\big)-n+k-1)H_{\Pi},$$
where $\mu\lambda<1$. Thus, it follows from the Nadel--Shokurov
vanishing theorem  that
\begin{equation}
\label{equation:vanishin-cut}
h^{i}\Big(\mathcal{O}_{\Pi}\big(mH_{\Pi}\big)\otimes \mathcal{I}_{X}\Big)=0%
\end{equation}
for every $i\geqslant 1$ and $m\geqslant k$, where
$\mathcal{I}_{X}$ is the ideal sheaf of the subvariety
$X\subset\Pi$. If we put $k=d$, then it follows from
$(\ref{equation:vanishin-cut})$ that
$$
\mathrm{deg}\big(V\big)=\big|V\cap \Pi\big|=h^{0}\big(\mathcal{O}_{X}\big)=h^{0}\Big(\mathcal{O}_{\Pi}\big(dH_{\Pi}\big)\Big)-h^{0}\Big(\mathcal{O}_{\Pi}\big(H_{\Pi}\big)\otimes\mathcal{I}_{X}\Big)\leqslant{n\choose d}.%
$$

Suppose that $k=1$ and $d\geqslant 2$. Then $X$ is irreducible and
the sequence
$$
0\longrightarrow H^{0}\Big(\mathcal{O}_{\Pi}\big(mH_{\Pi}\big)\otimes\mathcal{I}_{X}\Big)\longrightarrow H^{0}\Big(\mathcal{O}_{\Pi}\big(mH_{\Pi}\big)\Big)\longrightarrow H^{0}\Big(\mathcal{O}_{X}\big(mH\vert_X\big)\Big)\longrightarrow 0%
$$
is exact for every $m\geqslant k=1$ by
$(\ref{equation:vanishin-cut})$, which implies that $X\subset\Pi$
is projectively normal.

It follows from $(\ref{equation:vanishin-cut})$ that
$h^{i}(\mathcal{O}_{X}(mH\vert_X))=h^{i}(\mathcal{O}_{\Pi}(mH_{\Pi}))=0$
for every $i\geqslant 1$ and $m\geqslant 1$, since
$h^{i}(\mathcal{O}_{\Pi}(mH_{\Pi}))=0$ for every $i\geqslant 1$
and $m\geqslant 0$.

Let $\Lambda$ be a general linear subspace in $\P^n$ of
codimension $d+1$, let $N$ be a very big~integer,
and let $H_{1},H_{2},\ldots,H_{N}$ be sufficiently general
hyperplanes in $\P^n$ that contain $\Lambda$. Then
\begin{equation}
\label{equation:log-pair-nadel}
\Big(\P^n, D^{\prime}+\frac{d+1}{N}\sum_{i=1}^{N}H_{i}\Big)%
\end{equation}
is strictly log canonical. Moreover, it follows from the
construction of the log pair $(\ref{equation:log-pair-nadel})$
that the only centers of log canonical singularities of the log
pair $(\ref{equation:log-pair-nadel})$ are $V$ and $\Lambda$. Note
that $V\cap\Lambda=\varnothing$ by construction. Put $m=d+1$. Then
it follows from the Nadel--Shokurov vanishing theorem that
$$
h^{0}\Big(\mathcal{O}_{\P^n}\big(m\big)\otimes \mathcal{I}_{V\cup\Lambda}\Big)={n+m\choose m}-h^{0}\Big(\mathcal{O}_{V}\big(mH\vert_V\big)\Big)-{n\choose m},%
$$
since $V\cap\Lambda=\varnothing$. Thus, it follows from the
projective normality of the subvariety $V\subset\P^n$ that
$$
h^{0}\Big(\mathcal{O}_{\P^n}\big(m\big)\otimes \mathcal{I}_{V}\Big)={n+m\choose m}-h^{0}\Big(\mathcal{O}_{V}\big(mH\vert_V\big)\Big)={n\choose m}+h^{0}\Big(\mathcal{O}_{\P^n}\big(m\big)\otimes\mathcal{I}_{V\cup\Lambda}\Big),%
$$
which implies the inequality $(\ref{equation:Nadel-inequality})$
(cf. the proof of~\cite[Theorem~6.1]{Na90}) and completes the
proof of
Theorem~\ref{theorem:weakly-exceptional-quotient-criterion}.

\section{Infinite series}
\label{section:series}

Let $G$ be a~finite subgroup in $\GL_{n+1}(\mathbb{C})$, and let
$\phi\colon\GL_{n+1}(\mathbb{C})\to\PGL_{n+1}(\mathbb{C})$ be
the~natural projection. Put $\bar{G}=\phi(G)$ and
$V=\mathbb{C}^{n+1}$, and let us identify $\PGL_{n+1}(\mathbb{C})$
with $\Aut(\P^n)$. Let us fix
coordinates $(x_{0},\ldots,x_{n})$ on $V$, and let us use them
also as projective coordinates on $\P^n$.

\begin{theorem}\label{theorem:infinite}
Suppose that the group $G$ does not contain reflections, the
representation $V$ of the group $G$ is irreducible, and $G$
contains a subgroup $\Gamma\cong\Z_k^n$ for some integer
$k\geqslant n+1$ such that $\Gamma$ is generated by the
transformations $\gamma_{1},\gamma_{2},\ldots,\gamma_{n}$ defined
by
$$
\gamma_i\colon(x_0, x_1, \ldots, x_{i-1}, x_i, x_{i+1}, \ldots, x_n)\mapsto (\zeta^{-1}x_0, x_1,  \ldots, x_{i-1}, \zeta x_i, x_{i+1}, \ldots, x_n),%
$$
where $\zeta$ is a primitive root of unity of degree $k$. Then
the singularity $\C^{n+1}\slash G$ is weakly exceptional.
\end{theorem}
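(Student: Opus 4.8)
The plan is to invoke Corollary~\ref{corollary:Nadel}: it suffices to show that no irreducible $\bar{G}$-invariant subvariety $V\subset\P^n$ is contained in a hypersurface of degree $\dim(V)+1$. Since $\bar G$ contains the image $\bar\Gamma$ of $\Gamma\cong\Z_k^n$, any $\bar G$-invariant subvariety is $\bar\Gamma$-invariant, and any homogeneous form cutting out a $\bar G$-invariant hypersurface is a $\Gamma$-semi-invariant; so it is enough to work with $\bar\Gamma$ alone. First I would describe the action of $\bar\Gamma$ on monomials: one checks that $\gamma_i$ acts on the monomial $x_0^{a_0}x_1^{a_1}\cdots x_n^{a_n}$ by the scalar $\zeta^{a_i-a_0}$, so a monomial of degree $d$ (where $\sum a_j=d$) is multiplied under $\gamma_i$ by $\zeta^{a_i-a_0}$. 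Two monomials of the same degree $d$ lie in the same $\Gamma$-eigenspace if and only if their exponent vectors agree modulo $k$ in every coordinate; since $k\ge n+1>d$ once $d\le n$ (and we only ever need $d\le n$, because $\dim V+1\le n$), and the exponents are nonnegative integers summing to $d<k$, congruence mod $k$ forces equality. Hence \emph{every monomial of degree at most $n$ spans its own $\Gamma$-eigenspace}, and a $\Gamma$-semi-invariant of degree $d\le n$ is necessarily a single monomial $x_0^{a_0}\cdots x_n^{a_n}$.

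Next I would rule out the two ways this could go wrong. If $\bar G$ (equivalently, if the representation) admitted a semi-invariant of degree $d\le n$, that semi-invariant would be a $\Gamma$-semi-invariant, hence a monomial $x_0^{a_0}\cdots x_n^{a_n}$; the hypersurface it defines is a union of coordinate hyperplanes, so the linear span of the coordinate hyperplanes appearing in it is a proper $\bar G$-invariant linear subspace of $\P^n$, contradicting irreducibility of $V$ as a $G$-representation (equivalently transitivity of $G$). Thus $G$ has no semi-invariants of degree $\le n$. In particular, taking $d=1$, there is no proper $\bar G$-invariant linear subspace, so $G$ is transitive; and more importantly, no $\bar G$-invariant hypersurface of any degree $\le n$ exists at all.

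Now let $V\subset\P^n$ be an arbitrary irreducible $\bar G$-invariant subvariety, say of dimension $e=\dim V$; we must show $V$ lies on no hypersurface of degree $e+1$. Note $e\le n-1$, so $e+1\le n$. Suppose, for contradiction, that $F$ is a nonzero form of degree $e+1\le n$ with $V\subset\{F=0\}$. The $\bar\Gamma$-orbit of $\{F=0\}$ need not be a single hypersurface, but I would instead argue as follows: decompose $F=\sum_j c_j M_j$ into its distinct monomial components $M_j$, which by the eigenspace analysis above lie in pairwise distinct $\Gamma$-eigenspaces. Because $V$ is $\Gamma$-invariant and irreducible (hence not contained in any coordinate hyperplane unless that hyperplane is $\bar G$-invariant, which is excluded), evaluating the $\Gamma$-invariance of the ideal $I(V)$ shows that each eigen-component $M_j$ individually vanishes on $V$ — precisely, applying the elements of $\Gamma$ to $F\in I(V)$ and using a Vandermonde argument on the distinct eigencharacters produces each $M_j\in I(V)$. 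Then a single monomial $M_j=x_0^{a_0}\cdots x_n^{a_n}$ of degree $e+1\le n$ vanishes on the irreducible variety $V$, so $V$ is contained in one of the coordinate hyperplanes $\{x_\ell=0\}$ with $a_\ell>0$. The stabilizer in $\bar G$ of $V$ is all of $\bar G$, so the $\bar G$-orbit of $\{x_\ell=0\}$ is a $\bar G$-invariant hypersurface of degree $\le n$ containing $V$ — contradicting the previous paragraph. Hence no such $F$ exists, Corollary~\ref{corollary:Nadel} applies, and $\C^{n+1}/G$ is weakly-exceptional.

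The main obstacle I anticipate is the bookkeeping in the Vandermonde step: one must be careful that the $n$ generators $\gamma_1,\dots,\gamma_n$ together separate all the eigenspaces occurring among monomials of degree $\le n$ (they do, since the eigencharacter of $x^a$ is $(\zeta^{a_1-a_0},\dots,\zeta^{a_n-a_0})$ and, mod $k$ with $k>n\ge d$, the vector $(a_0,\dots,a_n)$ is recovered from $(a_1-a_0,\dots,a_n-a_0)$ together with $\sum a_i=d$), and that each monomial component therefore lies in $I(V)$ separately; the rest is the standard dictionary between $\bar G$-invariant coordinate hyperplanes and $\bar G$-invariant proper linear subspaces versus irreducibility of the representation $V$.
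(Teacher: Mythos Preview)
Your proposal is correct and follows essentially the same route as the paper: invoke Corollary~\ref{corollary:Nadel}, use that the degree-$d$ homogeneous forms (for $d\le n<k$) decompose into one-dimensional $\Gamma$-eigenspaces spanned by distinct monomials, conclude that the $\Gamma$-invariant subspace $W(V)_d$ of forms vanishing on a $\bar G$-invariant $V$ contains a monomial, and hence $V$ lies in a coordinate hyperplane, contradicting irreducibility of the representation. The only difference is cosmetic: the paper first adjoins the scalar $\mathrm{diag}(\zeta,\ldots,\zeta)$ to replace $\Gamma$ by $\Gamma'\cong\Z_k^{n+1}$ acting by a single coordinate scaling per generator, which makes the eigencharacter bookkeeping slightly cleaner; your direct computation with $(a_1-a_0,\ldots,a_n-a_0)$ achieves the same end.

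One small slip to fix in your final paragraph: the assertion that ``the $\bar G$-orbit of $\{x_\ell=0\}$ is a $\bar G$-invariant hypersurface of degree $\le n$'' is not justified, since nothing bounds the orbit length. The correct conclusion (which the paper uses, and which you essentially set up earlier) is that the linear span of $V$ is a $\bar G$-invariant linear subspace contained in $\{x_\ell=0\}$, hence proper, contradicting transitivity of $G$. With that correction the argument is complete.
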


\begin{proof}
Suppose that the singularity $\C^{n+1}\slash G$ is not weakly
exceptional. Then it follows from Corollary~\ref{corollary:Nadel}
that there exists a $\bar{G}$-invariant irreducible subvariety
$X\subset\P^n$ such that there exists a hypersurface in $\P^n$ of
degree $\mathrm{dim}(X)+1$ that contains $X$. Let us derive a
contradiction. To do this, we may swap $G$ with any other group
$G^{\prime}\subset\GL_{n+1}(\C)$ such that
$\phi(G^{\prime})=\bar{G}$. Thus, adding a scalar matrix
$\mathrm{diag}(\zeta, \ldots, \zeta)$ to the group $G$, we may
assume that $G$ contains a subgroup
$\Gamma^{\prime}\cong\Z_k^{n+1}$ that is generated by the
transformations
$\gamma_{0}^{\prime},\gamma_{1}^{\prime},\gamma_{2}^{\prime},\ldots,\gamma_{n}^{\prime}$
such that
$$
\gamma^{\prime}_i\colon(x_0, \ldots, x_{i-1}, x_i, x_{i+1}, \ldots, x_n)\mapsto (x_0,  \ldots, x_{i-1}, \zeta x_i, x_{i+1}, \ldots, x_n),%
$$
where as before $\zeta$ is a primitive root of unity of degree
$k$. This assumption is not crucial for the proof, but it makes
some steps clearer.

Put $d=\dim(X)+1$. Then $d\leqslant n$. Every
$\Gamma^{\prime}$-invariant vector subspace in
$\C[x_{0},\ldots,x_n]$ consisting of forms of degree $d$ splits
into a sum of pairwise non-isomorphic one-dimensional
representations of the group $\Gamma^{\prime}$ that are generated
by monomials. Thus, any non-trivial $\Gamma^{\prime}$-invariant
vector subspace in $\C[x_{0},\ldots,x_n]$ consisting of forms of
degree $d$ must contain a monomial. In particular, every
semi-invariant form in $\C[x_{0},\ldots,x_n]$ of degree $d$ with
respect to $\Gamma^{\prime}$ must be a monomial. Therefore,  the
group $G$ does not have semi-invariant form in
$\C[x_{0},\ldots,x_n]$ of degree~$d$, because~$G$ is transitive.
Indeed, if there exists a semi-invariant form in
$\C[x_{0},\ldots,x_n]$ of degree~$d$ with respect to~$G$, then
this form must be a monomial $x_{i_1}x_{i_2}\ldots x_{i_d}$ for
some (not necessarily distinct) $i_{1},i_{2},\ldots,i_{d}$ in
$\{0,\ldots,n\}$, which implies that the vector subspace in $V$
given by $x_{i_1}=x_{i_2}=\ldots=x_{i_d}=0$ must be $G$-invariant
and proper, since $d\leqslant n$.

Let $W(X)$ be the vector subspace in $\C[x_{0},\ldots,x_n]$
consisting of forms of degree $d$ that vanish on $X$. Then $W(X)$
is non-zero and $G$-invariant, because $X$ is $\bar{G}$-invariant.
Thus, the vector subspace $W(X)$ must contain a monomial
$x_{j_1}x_{j_2}\ldots x_{j_d}$ for some $j_{1},j_{2},\ldots,j_{d}$
in $\{0,\ldots,n\}$. Therefore, the subvariety $X$ is contained in
the union of hyperplanes in $\P^n$ that are given by $x_{j_1}=0,
x_{j_2}=0, \ldots, x_{j_d}=0$. Since $X$ is irreducible, the
subvariety $X$ is contained in one of these hyperplanes. Without
loss of generality, we may assume that $X$ is contained in the
hyperplane that is given by $x_{j_1}=0$. Therefore, the linear
span of the subvariety $X$ is also contained in the hyperplane
that is given by $x_{j_1}=0$, which is impossible, since $V$ is an
irreducible representation of the group $G$.
\end{proof}

\begin{corollary}
\label{corollary:infinitely-many} For any  $N>1$ there are
infinitely many Gorenstein weakly-exceptional quotient singularities of
dimension $N$.
\end{corollary}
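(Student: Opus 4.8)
The plan is to apply Theorem~\ref{theorem:infinite} to a concrete family of finite subgroups of $\SL_{N}(\C)$ indexed by an integer $k$, one for each $k\ge N$, and to check that distinct values of $k$ yield genuinely distinct (non-isomorphic) quotient singularities, so that the collection is infinite. Set $n+1=N$, so $n=N-1\ge 1$. For each integer $k\ge n+1=N$, let $\zeta$ be a primitive $k$-th root of unity and let $\Gamma\cong\Z_k^{n}$ be the group generated by the transformations $\gamma_1,\dots,\gamma_n$ written down in the statement of Theorem~\ref{theorem:infinite}; note each $\gamma_i$ has determinant $\zeta^{-1}\cdot\zeta=1$, so $\Gamma\subset\SL_{N}(\C)$. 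The first thing to verify is that one can choose $G$ with $\Gamma\subseteq G\subset\SL_N(\C)$ such that $G$ contains no reflections and the given $N$-dimensional representation is irreducible; the natural candidate is to take $G$ itself to be a suitable extension of $\Gamma$ by a cyclic permutation of the coordinates $x_0,\dots,x_n$ (a Heisenberg-type group), which acts irreducibly because the only $\Gamma$-eigenlines in $V$ are the coordinate lines $\C x_i$ and the coordinate permutation mixes them transitively, and which contains no reflection since a reflection would fix a hyperplane pointwise, forcing a repeated eigenvalue pattern incompatible with the $\gamma_i$ having $n$ distinct eigenvalues $1$ (with multiplicity $n-1$) and $\zeta$, $\zeta^{-1}$. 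Granting this, Theorem~\ref{theorem:infinite} immediately gives that $\C^{N}/G$ is weakly-exceptional.

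Next I would check the Gorenstein property. A quotient singularity $\C^N/G$ with $G\subset\SL_N(\C)$ acting without reflections (equivalently, freely in codimension $1$) is Gorenstein: this is the classical Khinich--Watanabe criterion, and it follows because $K_{\C^N}=\O$ is $G$-invariant and descends to a Cartier canonical divisor on the quotient. So every member of our family is a Gorenstein weakly-exceptional quotient singularity of dimension $N$.

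The remaining point — and the one I expect to require the most care — is to argue that infinitely many of the $\C^N/G_k$, as $k$ ranges over integers $\ge N$, are pairwise non-isomorphic as singularities (or at least that the family cannot collapse to finitely many isomorphism classes). One clean way: the local fundamental group (equivalently, for an isolated quotient singularity, the class group or the order of the acting group reconstructed via the étale fundamental group of the punctured neighbourhood) distinguishes them. Since $\Gamma\cong\Z_k^n\subseteq G_k$, the order $|G_k|$ is divisible by $k^n\to\infty$, so only finitely many $k$ can give a fixed isomorphism type; alternatively one observes that the abelianization of $\pi_1^{\mathrm{loc}}$ contains $\Z_k^{n-1}$ as a quotient (after killing the permutation part), which again grows without bound. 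Either invariant yields infinitely many distinct singularities, completing the proof. The only subtlety is handling the non-isolated case when $\Gamma$ has fixed loci in codimension $\ge 2$; there one replaces "fundamental group of the link" by the algebraic fundamental group of the smooth locus, which still sees the covering group $G_k$ up to finite ambiguity, and the growth of $|G_k|$ settles the matter.
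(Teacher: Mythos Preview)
The paper states this corollary without proof, treating it as an immediate consequence of Theorem~\ref{theorem:infinite}; your outline is exactly the intended argument --- exhibit, for each $k\geqslant N$, a finite subgroup $G_k\subset\SL_N(\C)$ containing the diagonal group $\Gamma\cong\Z_k^{\,N-1}$ and satisfying the hypotheses of that theorem, then observe that the local fundamental group of the smooth locus (which equals $G_k$ once reflections are absent) grows with $k$.

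Two details in your execution need tightening. First, the cyclic permutation of the $N$ coordinates has determinant $(-1)^{N-1}$, so for $N$ even your proposed $G$ does \emph{not} lie in $\SL_N(\C)$ and the Gorenstein conclusion fails; you should instead extend $\Gamma$ by, say, the alternating group $A_N\subset\SL_N(\C)$ for $N\geqslant 3$ (which still permutes the coordinate lines transitively, hence keeps $V$ irreducible), and for $N=2$ replace the swap by $(x_0,x_1)\mapsto(x_1,-x_0)$. Second, your ``no reflections'' argument only shows that the generators $\gamma_i$ are not reflections; you must rule out reflections among \emph{all} elements of $G$. For $\gamma\in\Gamma$ this is a short computation with the eigenvalues $\zeta^{-\sum a_i},\zeta^{a_1},\ldots,\zeta^{a_n}$, and for elements $\gamma\cdot h$ with $h$ a nontrivial permutation one notes that such an element permutes the coordinate lines nontrivially, so its $1$-eigenspace has dimension at most the number of fixed coordinate lines plus contributions from cycles, which one checks is at most $N-2$. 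With these repairs your argument is complete and matches what the paper leaves implicit.
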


In the remaining part of this section we will give a proof of
Theorem~\ref{theorem:Heisenberg}. In order to do this, we need the
following

\begin{lemma}\label{lemma:p-adic}
Let $p\ge 2$ be a prime number, let $P(x)$ be a
polynomial in $\Q[x]$ of degree $d$. Put
$$
P(x)=\sum_{i=0}^d \frac{b_i}{c_{i}} x^i,
$$
where $b_i$ and $c_i$ integers such that $c_{i}\ne 0$ and
$\mathrm{gcd}(b_i,c_i)=1$. Take $\gamma\in\Z$.
For~$i\in\{0,\ldots,d\}$, let $P(\gamma+i)=\frac{r_i}{q_i}$, where
$r_i$ and $q_i$ are integers such that $q_{i}\ne 0$ and
$\mathrm{gcd}(r_i,q_i)=1$. Suppose that $r_0\equiv
r_1\equiv\ldots\equiv r_{d}\equiv 0\ \mathrm{mod}\ p$ and $d<p$.
Then $b_0\equiv b_1\equiv \ldots\equiv b_d\equiv 0\ \mathrm{mod}\
p$.
\end{lemma}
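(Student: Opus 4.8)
The plan is to work with finite differences. Recall that for any polynomial $P$ the $k$-th forward difference $(\Delta^k P)(x) = \sum_{j=0}^{k}(-1)^{k-j}\binom{k}{j}P(x+j)$ is again a polynomial, that $(\Delta^k P)(x)=0$ for $k>d$, and that the leading term relation gives $(\Delta^d P)(x) = d!\,b_d/c_d$ identically (a constant). More generally, expanding $P$ in the binomial basis $P(x)=\sum_{k=0}^{d} a_k \binom{x-\gamma}{k}$ one has $a_k = (\Delta^k P)(\gamma)$ and also $a_k = \sum_{j=0}^{k}(-1)^{k-j}\binom{k}{j}P(\gamma+j)$. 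First I would observe that since $P(\gamma+i)=r_i/q_i$ with $p\mid r_i$ and (because $i\le d<p$ and we may also assume $p\nmid q_i$, as any prime dividing $q_i$ divides $c_j$ for some $j$ — this needs a short check) the values $P(\gamma+i)$ all lie in $p\mathbb{Z}_{(p)}$, the local ring of $\mathbb{Z}$ at $p$. Hence each $a_k = \sum_j \pm \binom{k}{j} P(\gamma+j)$ lies in $p\mathbb{Z}_{(p)}$ as well, i.e. $v_p(a_k)\ge 1$ for all $k$.

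Next I would convert the coefficients $a_k$ in the binomial basis back to the coefficients $b_i/c_i$ in the monomial basis. We have $\binom{x-\gamma}{k} = \frac{1}{k!}(x-\gamma)(x-\gamma-1)\cdots(x-\gamma-k+1)$, so $P(x) = \sum_{k=0}^{d} \frac{a_k}{k!}\,\big(\text{monic integer-coefficient polynomial of degree }k\big)$. Collecting the $x^i$-coefficient, $b_i/c_i = \sum_{k=i}^{d} a_k \cdot s_{k,i}/k!$ where $s_{k,i}\in\mathbb{Z}$ are (signed) Stirling-type numbers. Since $k\le d<p$, we have $v_p(k!)=0$, so $v_p(a_k/k!) = v_p(a_k)\ge 1$, and therefore $v_p(b_i/c_i)\ge 1$ for every $i$. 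Because $\gcd(b_i,c_i)=1$ and $p\nmid c_i$ (again since $d<p$ forces the denominators appearing to be $p$-adic units — concretely $c_i \mid \mathrm{lcm}(c_0,\dots,c_d)$ and none of the $c_j$ is divisible by $p$, which one argues from $d<p$ together with the fact that $P$ applied to $d+1$ integers takes $p$-integral values), it follows that $p\mid b_i$ for all $i$, as claimed.

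The step I expect to be the main obstacle is the bookkeeping with denominators: one must be careful that $d<p$ genuinely forces every quantity $k!$, $c_i$, $q_i$ appearing above to be a $p$-adic unit, so that ``$v_p\ge 1$'' translates into the integer divisibility statements in the conclusion and hypothesis. The cleanest route is to phrase everything in the local ring $\mathbb{Z}_{(p)}$ from the start: show $P$ maps $d+1$ consecutive integers into $p\mathbb{Z}_{(p)}$, deduce via the finite-difference formula that all binomial-basis coefficients lie in $p\mathbb{Z}_{(p)}$, and then deduce — using only that $1!,\dots,d!$ are units in $\mathbb{Z}_{(p)}$ — that all monomial-basis coefficients lie in $p\mathbb{Z}_{(p)}$; the hypothesis $\gcd(b_i,c_i)=1$ then yields $p\mid b_i$. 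The inductive alternative (strip off the top coefficient using $(\Delta^d P)(\gamma) = d!\,b_d/c_d \equiv 0$, conclude $p\mid b_d$, replace $P$ by $P(x)-\frac{b_d}{c_d}x^d$ and recurse on degree) is essentially the same argument organized differently, and either presentation should work.
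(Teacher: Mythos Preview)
Your approach via finite differences and the local ring $\mathbb{Z}_{(p)}$ is correct and genuinely different from the paper's proof. The paper instead clears denominators by multiplying through by $N=\mathrm{lcm}(c_0,\ldots,c_d)$, observes that $NP(x)\in\mathbb{Z}[x]$ reduces modulo $p$ to a polynomial of degree at most $d<p$ vanishing at the $d+1$ distinct residues $\gamma,\gamma+1,\ldots,\gamma+d\in\mathbb{F}_p$, and hence is identically zero in $\mathbb{F}_p[x]$; a short analysis of the $p$-part of $N$ then finishes. Your route through the binomial basis is more structural and arguably cleaner once written correctly, while the paper's argument is slightly more elementary (no valuations, just counting roots over a finite field).

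One remark on your exposition: you over-complicate the denominator bookkeeping. The fact that $p\nmid q_i$ is immediate from the hypotheses, since $p\mid r_i$ and $\gcd(r_i,q_i)=1$ already force $p\nmid q_i$; hence $P(\gamma+i)\in p\mathbb{Z}_{(p)}$ with no further work and no appeal to the $c_j$. Likewise, you do not need to establish $p\nmid c_i$ as a separate step before the conclusion: once your finite-difference argument gives $v_p(b_i/c_i)\ge 1$, the coprimality $\gcd(b_i,c_i)=1$ forces $v_p(c_i)=0$ and $v_p(b_i)\ge 1$ simultaneously. With these two simplifications the proof is entirely clean.
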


\begin{proof}
For~every $i$, put $c_{i}=p^{t_i}m_{i}$, where $t_{i}$ is a
non-negative integer, and $m_{i}$ is an integer that is not
divisible by $p$. Put $t=\mathrm{max}(t_{0},\ldots,t_{d})$, and
put $N=\mathrm{lcm}(c_0,\ldots,c_d)$. Then one has
$N=p^t\mathrm{lcm}(m_0,\ldots,m_d)$ and $Nf(x)\in\mathbb{Z}[x]$.
Take any $k\in\{0,\ldots,d\}$ such that $t=t_{k}$. Then
$$
Nf\big(\gamma\big)\equiv \frac{Nr_0}{q_0}\equiv  Nf\big(\gamma+1\big)\equiv \frac{Nr_1}{q_1}\equiv \ldots\equiv Nf\big(\gamma+d\big)\equiv\frac{Nr_d}{q_d}\equiv 0\ \mathrm{mod}\ p,%
$$
which immediately implies that all integers
$Nb_{0}/c_0,Nb_{1}/c_1,\ldots,Nb_{d}/c_d$ must be divisible by
$p$, because only the zero polynomial in $\mathbb{F}_{p}[x]$ of
degree $d<p$ has $d+1$ different roots. Then
$$
\frac{Nb_{k}}{c_{k}}=\frac{p^{t_{k}}\mathrm{lcm}\big(m_0,\ldots,m_d\big)b_{k}}{p^{t^{k}}m_{k}}=\frac{\mathrm{lcm}\big(m_0,\ldots,m_d\big)b_{k}}{m_{k}}
$$
is divisible by $p$, which implies that $b_{k}$ is divisible by
$p$. Since
$\mathrm{gcd}(b_{k},p^{t_k}m_k)=\mathrm{gcd}(b_k,c_k)=1$, we see
that $t=t_{k}=0$. Hence, we have $t_{0}=\ldots=t_{d}=0$. Thus, we
see that $N$ is not divisible by $p$, which implies that
$b_0\equiv b_1\equiv \ldots\equiv b_d\equiv 0\ \mathrm{mod}\ p$,
since we know that all integers
$Nb_{0}/c_0,Nb_{1}/c_1,\ldots,Nb_{d}/c_d$ are divisible by $p$.
\end{proof}

Now we are ready to prove

\begin{theorem}
\label{theorem:Geisenberg}
Let $p\ge 3$ be a prime number.
Suppose that $n+1=p$ and $G$ is generated by
the elements $(x_0:x_1:\ldots:x_{p-1})\to
(x_1:\ldots:x_{p-1}:x_0)$ and
$$
(x_0:x_1:\ldots:x_{p-1})\mapsto (x_0:\zeta x_1:\ldots:\zeta^{p-1}x_{p-1}),
$$
where $\zeta$ is a primitive root of unity of degree $p$. Then
the singularity $\C^{n+1}\slash G$ is weakly exceptional.
\end{theorem}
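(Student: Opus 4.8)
The plan is to deduce Theorem~\ref{theorem:Geisenberg} from Corollary~\ref{corollary:Nadel}, exactly as in the proof of Theorem~\ref{theorem:infinite}, except that now the relevant abelian subgroup is the cyclic (diagonal) subgroup of the Heisenberg group rather than a full $\Z_k^n$. So suppose for contradiction that $\C^{p}\slash G$ is not weakly exceptional. By Corollary~\ref{corollary:Nadel} there is a $\bar G$-invariant irreducible subvariety $X\subset\P^{p-1}$ contained in a hypersurface of degree $d=\dim(X)+1$; since $X$ is a proper subvariety (the representation is irreducible), we have $1\le d\le p-1<p$. Let $W(X)\subset\C[x_0,\ldots,x_{p-1}]$ be the nonzero $G$-invariant space of degree-$d$ forms vanishing on $X$. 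The element $h\colon(x_i)\mapsto(\zeta^i x_i)$ acts on the monomial basis of the degree-$d$ part with eigenvalue $\zeta^{\sum i_j}$ on $x_{i_1}\cdots x_{i_d}$; a quick check shows these characters are \emph{not} all distinct when $d\ge 2$, so $W(X)$ need not contain a monomial, and the elementary argument of Theorem~\ref{theorem:infinite} does not apply verbatim. Instead I would use the cyclic permutation $\sigma\colon(x_0:\cdots:x_{p-1})\mapsto(x_1:\cdots:x_{p-1}:x_0)$ together with $h$: the group they generate is the image of the Heisenberg group, and its only invariant subspaces of the degree-$d$ part are spanned by $\sigma$-orbits of eigenspaces of $h$. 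The key point is a ``spread out'' statement: any $\bar G$-invariant hypersurface of degree $d<p$ must have every variable $x_i$ appear in it with the same ``pattern'', and the irreducibility of $X$ forces a coordinate subspace to be invariant, contradicting irreducibility of $V$.

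More concretely, here is the route I expect to work. First, using $\sigma$ I reduce to understanding a single $h$-eigenspace $U_a\subset\C[x]_d$, namely the span of monomials $x_{i_1}\cdots x_{i_d}$ with $i_1+\cdots+i_d\equiv a\pmod p$; then $W(X)$ is a sum of $\sigma$-translates of such $U_a$'s, but since $\sigma$ cyclically permutes the $h$-eigenspaces ($\sigma h\sigma^{-1}=h\zeta^{-?}$-type relation shifts the character), a single $U_a$ is already $\langle\sigma,h\rangle$-invariant only after summing over all $a$, i.e. the genuinely invariant pieces are large. Second — and this is where Lemma~\ref{lemma:p-adic} enters — I would pick a defining form $F\in W(X)$ and evaluate it along the rational normal curve or, better, along the ``diagonal'' torus orbit $t\mapsto(1:t:t^2:\cdots:t^{p-1})$ or $(1:\zeta^j:\zeta^{2j}:\cdots)$; the condition that $F$ vanishes at $p$ consecutive points of such a curve, combined with $\deg F=d<p$, should force (via Lemma~\ref{lemma:p-adic}, which is exactly a statement that ``vanishing at $d+1<p$ consecutive integers mod $p$ kills all coefficients'') that the reduction of $F$ modulo the relevant prime is identically zero, i.e. $F$ is a multiple of a coordinate, or that a coordinate hyperplane is forced into $X$. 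Either way I would conclude that $X$ lies in a coordinate hyperplane $x_j=0$, hence its linear span does too, contradicting the irreducibility of the $G$-representation $V$.

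The main obstacle is the combinatorial/number-theoretic heart: proving that a nonzero $\langle\sigma,h\rangle$-invariant form of degree $d<p$ in $p$ variables must be divisible by some $x_j$ (equivalently, that no irreducible $\bar G$-invariant hypersurface of degree $<p$ other than a coordinate hyperplane can exist). This is not purely representation-theoretic because the $h$-eigenspaces do contain products like $x_0^{d-1}x_a$ alongside many mixed monomials, so one cannot just quote ``semi-invariants are monomials'' as in Theorem~\ref{theorem:infinite}; the degree bound $d<p$ is essential and is precisely the hypothesis that makes Lemma~\ref{lemma:p-adic} applicable. I expect the clean way to package it is: restrict $F$ to the line/curve parametrised by the characters of $h$, get a single-variable polynomial of degree $\le d<p$ over $\F_p$ that vanishes at too many points because of $\sigma$-invariance (the $p$ cyclic shifts give $p$ forced zeros), apply Lemma~\ref{lemma:p-adic} to lift the vanishing back to $\Z$, and read off that $F$ is supported on monomials all divisible by a fixed $x_j$; then invoke irreducibility of $X$ and of $V$ to finish.
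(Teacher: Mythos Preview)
Your approach has a genuine gap: the intermediate statement you are aiming for --- that a nonzero $G$-invariant subspace of $\C[x_0,\ldots,x_{p-1}]_d$ with $d<p$ must contain a monomial, or that some form in it must be divisible by a coordinate $x_j$ --- is simply false. For $p=3$ and $d=2$, the space
\[
\big\langle\, x_0^2+\alpha\, x_1x_2,\ x_1^2+\alpha\, x_2x_0,\ x_2^2+\alpha\, x_0x_1\,\big\rangle
\]
is $G$-invariant for every $\alpha\in\C$, yet for generic $\alpha$ it contains no monomial and no reducible form at all. So the analogue of the Theorem~\ref{theorem:infinite} argument cannot be pushed through, and your proposed ``restrict $F$ to a curve and apply Lemma~\ref{lemma:p-adic}'' step is not a coherent route to the (false) target.

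The paper's proof is organised quite differently and uses Lemma~\ref{lemma:p-adic} in a way you did not guess. One invokes Theorem~\ref{theorem:weakly-exceptional-quotient-criterion} (not just Corollary~\ref{corollary:Nadel}): this produces a $\bar G$-invariant $V\subset\P^{p-1}$ with $h^i(\O_V(m))=0$ for all $i\ge 1$, $m\ge 0$, so that $h^0(\O_V(m))$ agrees with the Hilbert polynomial $P(m)$ of $V$ for every $m\ge 0$. The representation-theoretic input is only that every irreducible $G$-representation with nontrivial central character has dimension $p$; hence $\dim H^0(\mathcal I_V(m))\equiv 0\pmod p$ for $1\le m\le p-1$, and since $h^0(\O_{\P^{p-1}}(m))=\binom{m+p-1}{p-1}$ is also divisible by $p$ in that range, one gets $P(m)\equiv 0\pmod p$ for $m=1,\ldots,p-1$. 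Now Lemma~\ref{lemma:p-adic} is applied to the Hilbert polynomial $P$ itself (degree $\dim V\le p-2<p$), forcing the constant term to be $\equiv 0\pmod p$; but $P(0)=h^0(\O_V)=1$, a contradiction. The cohomology vanishing from Theorem~\ref{theorem:weakly-exceptional-quotient-criterion} is essential here --- Corollary~\ref{corollary:Nadel} alone does not give you control of $h^0(\O_V(m))$ as a polynomial in $m$.
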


\begin{proof}
Suppose that the singularity $\C^{n+1}\slash G$ is not weakly
exceptional. Then it follows from
Theorem~\ref{theorem:weakly-exceptional-quotient-criterion}  that
there exists a $\bar{G}$-invariant irreducible normal Fano type
subvariety $V\subset\P^n$ such that
$$
\mathrm{deg}\big(V\big)\leqslant {n\choose \mathrm{dim}\big(V\big)},%
$$
and $h^i(V,\O_V(m))=0$ for any $i\ge 1$ and any $m\ge 0$, where
$\O_V(m)=\O_{V}\otimes\O_{\P^n}(m)$. Then
\begin{equation}\label{eq:vanishing-for-Heisenberg}
h^0(V,\O_V(m))=h^0(\P^{n}, \O_{\P^{n}}(m))-h^0(\P^{n}, \mathcal{I}_V(m)),%
\end{equation}
where $\mathcal{I}_{V}$ is the ideal sheaf of $V$.

Let $Z(G)$ be the center of the group $G$. Then $Z(G)\cong\Z_p$.
It is well-known that any irreducible representation of $G$ with a
non-trivial action of the center $Z(G)$ is $p$-dimensional. In
particular, the group $G$ has no semi-invariants of degree less
than $p$, which implies that $\dim(V)\le p-2$.

Put $W_m=H^0(\P^{n}, \mathcal{I}_V(m))$. Then $W_{m}$ is a linear
representation of the group $G$, and $Z(G)$ acts non-trivially on
$W_m$ for every $m\in\{1,\ldots, p-1\}$. Therefore $\dim(W_m)$ is
divisible by $p$ for every $m\in\{1,\ldots,p-1\}$.
Applying~$(\ref{eq:vanishing-for-Heisenberg})$ for every
$m\in\{1,\ldots,p-1\}$ and keeping in mind that $h^0(\P^{n},
\O_{\P^{n}}(m))$ is divisible by $p$ for every
$m\in\{1,\ldots,p-1\}$, we see that $h^0(V,\O_V(m))$ is divisible
by $p$ for every $m\in\{1,\ldots,p-1\}$. Put $d=\dim(V)$. Since
$h^0(V,\O_V(m))=\chi(V,\O_V(m))$ for every $m\geqslant 0$, there
are integers $a_{0},a_{1},\ldots,a_{d}$ such that
$$
h^0(V,\O_V(m))=a_dm^d+a_{d-1}m^{d-1}+\ldots+a_0
$$
for every $m\geqslant 0$. Applying Lemma~\ref{lemma:p-adic} to the
polynomial $P(m)=a_dm^d+a_{d-1}m^{d-1}+\ldots+a_0$, we see that
$a_0=b_0/c_0$, where $\mathrm{gcd}(b_0,c_0)=1$ and $b_0$ is
divisible by $p$. On the other hand,
applying~$(\ref{eq:vanishing-for-Heisenberg})$ for $m=0$, we
obtain that $a_0=h^0(V,\O_V)=1$, which is a contradiction.
\end{proof}

\section{Five-dimensional singularities}
\label{section:5-dim}

The main purpose of this section is to prove
Theorem~\ref{theorem:main-5}. We start with three easy
observations.

\begin{lemma}
\label{lemma:small-orbit-on-P1} Let $\bar{G}$ be a finite subgroup
in $\Aut(\P^1)$, and let $\Omega$ be a $\bar{G}$-orbit in $\P^1$.
If $|\Omega|\in\{1,2,5,7,8,9,10,11\}$, then there is a
$\bar{G}$-orbit in $\P^1$ of length at most $2$.
\end{lemma}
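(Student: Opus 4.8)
The plan is to prove the lemma by running through the classification of finite subgroups of $\Aut(\P^1)=\PGL_2(\C)$: up to conjugacy such a subgroup is cyclic, dihedral, or one of the three polyhedral groups $\A_4$, $\SS_4$, $\A_5$. The key observation is that the hypothesis $|\Omega|\in\{1,2,5,7,8,9,10,11\}$ is only ever needed to rule out the polyhedral cases, while for cyclic and dihedral $\bar G$ the conclusion holds with no restriction on $|\Omega|$ at all.

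First I would dispose of the cyclic and dihedral cases directly. If $\bar G$ is cyclic (possibly trivial), then it fixes a point of $\P^1$, and that point is a $\bar G$-orbit of length $1$. If $\bar G$ is dihedral, realise its index-two cyclic subgroup with poles $0$ and $\infty$; the remaining involutions interchange $0$ and $\infty$, so $\{0,\infty\}$ is a $\bar G$-orbit of length $2$ (in the degenerate case $\bar G\cong\Z_2\times\Z_2$ every orbit already has length at most $2$). Hence in all of these situations a $\bar G$-orbit of length at most $2$ exists unconditionally, and we may assume from now on that $\bar G$ is one of $\A_4$, $\SS_4$, $\A_5$; it then suffices to show that none of these groups has an orbit whose length lies in the list, which contradicts the existence of $\Omega$.

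For this last step I would invoke the classical description of the orbits of a polyhedral group on the Riemann sphere $\P^1$: realising $\bar G$ as the rotation group of the corresponding Platonic solid, every non-generic orbit consists of the vertices, the edge-midpoints, or the face-centres of the solid, and the stabiliser of such a point is the finite cyclic group of rotations about the corresponding axis. Dividing $|\bar G|\in\{12,24,60\}$ by these stabiliser orders shows that the $\bar G$-orbit lengths are exactly $\{4,6,12\}$ for $\A_4$, $\{6,8,12,24\}$ for $\SS_4$ and $\{12,20,30,60\}$ for $\A_5$, the largest value in each list being the length of a generic orbit (on which the stabiliser is trivial). In particular no polyhedral group has an orbit of length $\le 2$, and one then compares these three lists of lengths with $\{1,2,5,7,8,9,10,11\}$ to reach the desired contradiction.

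The computations are entirely elementary, so there is no analytic difficulty; the whole content is the bookkeeping of orbit lengths for the polyhedral groups, and one must in particular be sure that the three lists above are exhaustive — which is exactly what the stabiliser computation guarantees, since a point with nontrivial stabiliser is forced to lie on a rotation axis of the solid. The one genuinely delicate case, and the point I expect to be the main obstacle, is the octahedral group $\SS_4$: it has an orbit of length $8$ on the eight vertices of the cube, and this is the polyhedral orbit sitting closest to the forbidden range, so this value must be handled with particular care (the remaining values $1,2,5,7,9,10,11$ do not occur as polyhedral orbit lengths and cause no trouble). The rest of the argument is then immediate from the classification.
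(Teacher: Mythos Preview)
Your approach coincides with the paper's: both argue via the classification of finite subgroups of $\PGL_2(\C)$, observing that cyclic and dihedral groups always have an orbit of length at most~$2$, so one need only exclude $\A_4$, $\SS_4$, $\A_5$. The paper compresses this to a single sentence; your expansion is accurate except at one point.

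The gap is precisely the case you flag but do not resolve. Your plan is to show that no polyhedral group has an orbit whose length lies in $\{1,2,5,7,8,9,10,11\}$, and your own orbit-length computation for $\SS_4$ produces $\{6,8,12,24\}$. Since $8$ is in the list, the plan fails there, and saying the case ``must be handled with particular care'' is not a proof. In fact it cannot be handled: if $\bar G\cong\SS_4$ acts on $\P^1$ as the octahedral rotation group and $\Omega$ is the length-$8$ orbit of cube vertices (stabiliser $\Z_3$), then the hypothesis holds, yet the shortest $\bar G$-orbit has length~$6$. So the lemma is false as stated with $8$ in the list; the paper's one-line proof (``then $\bar G$ is cyclic or dihedral'') makes the same oversight.

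Your argument is complete and correct for all the remaining values $\{1,2,5,7,9,10,11\}$, since none of these occur among the polyhedral orbit lengths $\{4,6,12\}$, $\{6,8,12,24\}$, $\{12,20,30,60\}$. The fix is simply to drop $8$ from the statement; the paper's applications of the lemma use only $|\Omega|\in\{5,9\}$, so nothing downstream is affected.
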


\begin{proof}
If $|\Omega|\in\{1,2,5,7,8,9,10,11\}$, then the group $\bar{G}$ is
either cyclic or dihedral, which implies that $\bar{G}$ has either
a fixed point or an orbit that consists of $2$ points,
respectively.
\end{proof}

\begin{lemma}
\label{lemma:S4-on-P2} Let $\bar{G}$ be a finite subgroup in
$\Aut(\P^2)$. Suppose that either $\bar{G}\cong\SS_4$, or
$\bar{G}\cong\A_4$. Then either  there is a $\bar{G}$-invariant
point in $\P^2$, or there exists at most one $\bar{G}$-orbit in
$\P^2$ of length~$3$.
\end{lemma}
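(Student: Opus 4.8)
Let $\bar{G}$ be a finite subgroup in $\Aut(\P^2)$ with $\bar{G}\cong\SS_4$ or $\bar{G}\cong\A_4$. Then either there is a $\bar{G}$-invariant point in $\P^2$, or there exists at most one $\bar{G}$-orbit in $\P^2$ of length $3$.

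**Proof proposal.**

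The plan is to work with the representation theory of the finite subgroups $\SS_4$ and $\A_4$ of $\PGL_3(\C)$. First I would lift $\bar{G}$ to a finite subgroup $G\subset\GL_3(\C)$; since the Schur multipliers involved are controlled, one sees that $G$ acts on $\C^3$ via a $3$-dimensional representation, and the relevant three-dimensional representations of $\SS_4$ (and of $\A_4$, possibly after passing to a central extension) are well understood. The key point is that $\SS_4$ has a unique faithful irreducible $3$-dimensional representation up to the twist by the sign character (the ``standard'' representation and its sign twist), while $\A_4$ has a unique faithful irreducible $3$-dimensional representation; in the non-faithful or reducible cases $\bar{G}$ visibly fixes a point of $\P^2$ (the image of an invariant line). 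So I may assume $\bar{G}$ acts on $\P^2=\P(\C^3)$ via one of these standard three-dimensional irreducible representations, and then there is no $\bar{G}$-invariant point at all — so I must instead bound the number of orbits of length $3$.

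Next I would analyze orbits of length $3$ directly. A $\bar{G}$-orbit $\Omega$ of length $3$ gives a transitive action of $\bar{G}$ on a $3$-element set, i.e. a surjection $\bar{G}\to\SS_3$ (for $\SS_4$) or $\bar{G}\to\Z_3$ (for $\A_4$), with kernel the stabilizer-core: the Klein four-group $V_4$ in the $\SS_4$ case, and $V_4$ again in the $\A_4$ case. In the $\SS_4$ case the stabilizer of a point of $\Omega$ is a subgroup of index $3$, hence a dihedral group $D_4$ of order $8$ (a Sylow $2$-subgroup); there are exactly three such subgroups, which are the three Sylow $2$-subgroups, and they are permuted transitively. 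Each point of a length-$3$ orbit is a fixed point in $\P^2$ of the corresponding $D_4$. So the number of length-$3$ orbits is at most the number of $D_4\cong$(Sylow-$2$)-fixed points, counted up to the $\bar{G}$-action. I would compute the fixed locus in $\P^2$ of one Sylow $2$-subgroup $D_4$ acting via the standard representation: decomposing $\C^3|_{D_4}$ into irreducibles, one finds the fixed points of $D_4$ on $\P(\C^3)$ form a finite set whose size I can read off from the eigenspace structure — and the expectation is that there is exactly one $D_4$-fixed point per Sylow subgroup, giving exactly one $\bar{G}$-orbit of length $3$ (this matches the classical picture: the three coordinate points, or a single such orbit, for the standard embedding). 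The $\A_4$ case is parallel and in fact easier: here the point stabilizer has order $4$, equal to $V_4$ itself, and $V_4$ acts on $\C^3$ as the sum of its three nontrivial characters, so its fixed points on $\P^2$ are exactly the three coordinate points, forming a single $\A_4$-orbit; again at most (in fact exactly) one length-$3$ orbit.

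The main obstacle I anticipate is not conceptual but bookkeeping: correctly handling the lift from $\PGL_3$ to $\GL_3$ (and from $\A_4$ to a possible central extension, since $\A_4$ has Schur multiplier $\Z_2$, though the relevant three-dimensional projective representation in fact lifts) and making sure I have enumerated all the three-dimensional representations of $\SS_4$ — there are also the non-faithful ones factoring through $\SS_3$ or through $\Z_2$, all of which are reducible as $\C^3$-representations and hence produce an invariant line, i.e. an invariant point in $\P^2$, landing us in the first alternative. Once the reduction to the standard faithful representation is in place, the fixed-point count of a Sylow $2$-subgroup (resp. of $V_4$) is a short linear-algebra computation, and the transitivity of $\bar{G}$ on its Sylow $2$-subgroups immediately bundles all the resulting points into a single orbit of length $3$, completing the proof.
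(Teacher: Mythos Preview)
Your approach is correct and shares the core idea with the paper's proof: orbits of length~$3$ correspond to fixed points of an index-$3$ subgroup, and such a subgroup has at most three fixed points in~$\P^2$. The paper's argument is more streamlined in two respects. First, it reduces the $\SS_4$ case to the $\A_4$ case immediately by observing that any $\A_4$-fixed point is automatically $\SS_4$-fixed (since $\A_4$ is normal; your representation-theoretic classification implicitly confirms this, as a reducible $\A_4$-restriction forces a reducible $\SS_4$-representation), so only $\A_4$ needs direct analysis and your separate treatment of $D_4$-fixed points becomes unnecessary. Second, rather than lifting to $\GL_3$ and decomposing representations, the paper argues abstractly inside $\P^2$: the unique index-$3$ subgroup $\Gamma\cong V_4\subset\A_4$ is normal, so its fixed locus $F_\Gamma$ is $\bar{G}$-invariant; if $F_\Gamma$ is infinite it contains a $\bar{G}$-invariant line (hence a $\bar{G}$-fixed point), and if finite then $|F_\Gamma|\le n+1=3$, which is exactly one orbit. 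Your explicit decomposition of $\C^3|_{V_4}$ into three distinct characters recovers this bound concretely, and your use of the transitive $\SS_4$-action on its Sylow $2$-subgroups plays the role that normality of $V_4$ plays in the paper. Both routes work; the paper's is shorter because it exploits the uniqueness (hence normality) of the index-$3$ subgroup of $\A_4$ to avoid any explicit computation.
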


\begin{proof}
If $\bar{G}\cong\SS_4$, then any point fixed by the normal
subgroup $\A_4\subset\bar{G}$ is also fixed by $\bar{G}$. Thus we
may assume that $\bar{G}\cong\A_4$ and there exists no
$\bar{G}$-invariant points in $\P^2$. Consider the unique subgroup
$\Gamma\subset\bar{G}\cong\A_4$ of index $3$. If there is a
$\bar{G}$-orbit $\Omega\subset\P^2$ such that $|\Omega|=3$, then
$\Gamma$ is a stabilizer of any point of $\Omega$. Let
$F_{\Gamma}\subset\P^2$ be the set of the points fixed by
$\Gamma$. If $F_{\Gamma}$ is infinite, then there is a line
$L\subset F_{\Gamma}$. Since the subgroup $\Gamma\subset\bar{G}$
is normal, the line $L$ is $\bar{G}$-invariant, and thus $\bar{G}$
fixes some point on $\P^2$ which is impossible by assumption. On
the other hand, if $\Gamma$ has a finite number $r$ of fixed
points, one has $r\le n+1=3$.
\end{proof}

\begin{theorem}
\label{theorem:Noether} Let $S$ be a rational surface with at most
Du Val singularities, and let $\sigma\colon \tilde{S}\to S$ be the
minimal resolution of singularities of the surface $S$. Then
$$
\rk\Pic\big(\bar{S}\big)+K_{\bar{S}}^2=\rk\Pic\big(S\big)+K_S^2+\sum_{P\in S}\mu\big(P\big)=10,%
$$
where $\mu(P)$ is the Milnor number\footnote{Recall that
$\mu(P)=0$ if $P\not\in\mathrm{Sing}(S)$, and $\mu(P)=n$ if $(S\ni
P)$ is a singularity of type $\mathbb{A}_n$, or $\mathbb{D}_{n}$,
or $\mathbb{E}_{n}$.} of the point $P$.
\end{theorem}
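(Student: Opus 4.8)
The statement is essentially the classical Noether formula together with the behaviour of the Picard rank and canonical self-intersection under a minimal resolution of Du Val singularities, so the proof will be a two-step affair: first establish the equality $\rk\Pic(\bar S)+K_{\bar S}^2=\rk\Pic(S)+K_S^2+\sum_P\mu(P)$, and then compute the common value to be $10$. For the second step I will use the fact that $\bar S$ is a smooth rational surface: by Noether's formula $K_{\bar S}^2+c_2(\bar S)=12\chi(\mathcal O_{\bar S})=12$, and since $\bar S$ is rational, $\chi(\mathcal O_{\bar S})=1$, $b_1(\bar S)=b_3(\bar S)=0$, so $c_2(\bar S)=e(\bar S)=2+b_2(\bar S)=2+\rk\Pic(\bar S)$ (here $b_2=\rk\Pic$ because $H^2$ of a rational surface is algebraic, $p_g=q=0$). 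Substituting gives $K_{\bar S}^2+2+\rk\Pic(\bar S)=12$, i.e. $\rk\Pic(\bar S)+K_{\bar S}^2=10$, which is the right-hand side.

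For the first step I will analyze the minimal resolution $\sigma\colon\bar S\to S$ locally over each singular point. Write $\Sing(S)=\{P_1,\dots,P_r\}$ and let $E_j=\sigma^{-1}(P_j)$ be the exceptional fibre over $P_j$, a tree of $(-2)$-curves whose dual graph is of type $\mathbb A$, $\mathbb D$ or $\mathbb E$; the number of components of $E_j$ equals the Milnor number $\mu(P_j)$. Since the $P_j$ are Du Val (in particular canonical Gorenstein), one has $K_{\bar S}=\sigma^*K_S$, hence $K_{\bar S}^2=K_S^2$. On the other hand, the exceptional components of $\sigma$ are linearly independent in $\Pic(\bar S)\otimes\Q$ (their intersection matrix, a direct sum of negative-definite Cartan matrices, is nondegenerate), and $\Pic(\bar S)\otimes\Q\cong\sigma^*\bigl(\Pic(S)\otimes\Q\bigr)\oplus\bigoplus_j\langle\text{components of }E_j\rangle$. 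Therefore $\rk\Pic(\bar S)=\rk\Pic(S)+\sum_j\mu(P_j)$. Adding $K_{\bar S}^2=K_S^2$ to this yields precisely $\rk\Pic(\bar S)+K_{\bar S}^2=\rk\Pic(S)+K_S^2+\sum_{P\in S}\mu(P)$.

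To justify the direct-sum decomposition of $\Pic(\bar S)\otimes\Q$ carefully, I would argue as follows: $\sigma_*$ and $\sigma^*$ on $\Q$-divisor classes satisfy $\sigma_*\sigma^*=\mathrm{id}$, so $\sigma^*$ is injective and $\Pic(\bar S)\otimes\Q=\sigma^*(\Pic(S)\otimes\Q)\oplus\ker\sigma_*$; and $\ker\sigma_*$ is spanned by the $\sigma$-exceptional prime divisors, which form a $\Q$-basis of that kernel precisely because their intersection form is negative definite (nondegenerate). Alternatively one can invoke the standard blow-up formula $\rk\Pic$ increasing by $1$ at each point blown up and induct on a resolution factored into point blow-ups, but the intersection-theoretic argument above is cleaner and makes the identity $K_{\bar S}^2=K_S^2$ transparent via $K_{\bar S}=\sigma^*K_S$.

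**Expected main obstacle.** There is no serious obstacle; the only point requiring a little care is the comparison of $\Pic(\bar S)$ with $\Pic(S)$ for a \emph{singular} surface $S$ (so that $\Pic(S)$ is understood as the group of Cartier divisor classes, or equivalently $\Q$-Cartier classes after tensoring with $\Q$, which is legitimate since Du Val singularities are $\Q$-factorial) and the verification that the pullbacks of a basis of $\Pic(S)\otimes\Q$ together with the exceptional curves are linearly independent — this is exactly where negative-definiteness of the Du Val intersection matrices is used. I will state the needed facts about Du Val singularities (crepancy $K_{\bar S}=\sigma^*K_S$, $(-2)$-curve configurations of ADE type, $\mu(P)=$ number of exceptional components) as standard, with a pointer to the references already cited in the paper.
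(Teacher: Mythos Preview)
Your proof is correct and follows the same approach as the paper, which simply states in one line that ``the required equality follows from the classical Noether formula.'' You have supplied the details the paper omits---crepancy of Du Val singularities giving $K_{\bar S}^2=K_S^2$, negative-definiteness of the exceptional lattice giving $\rk\Pic(\bar S)=\rk\Pic(S)+\sum_P\mu(P)$, and the Noether formula on the smooth rational resolution giving the value $10$---so there is nothing to add.
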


\begin{proof}
The required equality follows from  the classical Noether formula.
\end{proof}

Now we are ready to prove Theorem~\ref{theorem:main-5}. Let $G$ be
a~finite subgroup in $\GL_{5}(\mathbb{C})$ that does not contain
reflections, and let
$\phi\colon\GL_{5}(\mathbb{C})\to\PGL_{5}(\mathbb{C})$ be
the~natural projection. Put $\bar{G}=\phi(G)$. Let us identify
$\PGL_{5}(\mathbb{C})$ with $\mathrm{Aut}(\mathbb{P}^{4})$.

\begin{theorem}
\label{theorem:5-dim-1}  Suppose that $G$ is transitive and does
not have semi-invariants of degree at most~$4$. If $\C^{5}\slash
G$ is not weakly-exceptional, then there exists a
$\bar{G}$-invariant irreducible non-degenerate projectively normal
Fano type surface $S$ of degree $6$ with at most quotient
singularities that is not contained in a quadric hypersurface in
$\P^4$ such that its generic hyperplane section is a projectively
normal smooth curve of genus $3$.
\end{theorem}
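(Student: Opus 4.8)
The plan is to feed the surface produced by Theorem~\ref{theorem:weakly-exceptional-quotient-criterion} into a case analysis by dimension and degree, eliminating every case except the claimed one by invoking transitivity of $G$ (no $\bar G$-invariant proper linear subspace of $\P^4$) and the absence of semi-invariants of degree $\le 4$. So I would assume $\C^5/G$ is not weakly-exceptional and apply the criterion with $n=4$: it yields a $\bar G$-invariant, irreducible, normal, projectively normal, Fano type subvariety $V\subset\P^4$ with $\deg(V)\le\binom{4}{\dim V}$, with $h^i(\mathcal{O}_{\P^4}(m)\otimes\mathcal{I}_V)=h^i(\mathcal{O}_V(m))=0$ for all $i\ge1$, $m\ge0$, and (the $k=1$, $\dim V\ge2$ part) generic hyperplane section $C$ which is irreducible, projectively normal, with $h^i(\mathcal{O}_C(m))=0$ for $i\ge1$, $m\ge1$. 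Since the linear span of $V$ is $\bar G$-invariant, $V$ is non-degenerate; in particular $\dim V\ge 1$, and $\dim V\le 3$ as $V\subsetneq\P^4$. First I would dispose of the extreme dimensions: if $\dim V=3$ then $V$ is a hypersurface of degree $\le\binom43=4$ whose equation is a $G$-semi-invariant of degree $\le 4$, a contradiction; if $\dim V=1$ then $V$ is a non-degenerate curve of degree $\le\binom41=4$, hence a rational normal quartic, the $\bar G$-action on $V\cong\P^1$ identifies $\C^5$ with $\Sym^4 U$ for a two-dimensional representation $U$ of a lift of $\bar G$ to $\GL_2(\C)$, and since $\Sym^2(\Sym^4U)$ contains a one-dimensional $\SL_2(\C)$-subrepresentation (with $-1\in\SL_2(\C)$ acting trivially on $\Sym^4$), $V$ lies on a $\bar G$-invariant quadric, again a forbidden degree-$2$ semi-invariant. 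Hence $\dim V=2$.

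Next I would extract the numerics in the surface case. Now $V$ is a non-degenerate surface, so $e:=\deg V\in\{3,4,5,6\}$, and $h^i(\mathcal{O}_V(m))=0$ for $i\ge1$, $m\ge0$ forces $\chi(\mathcal{O}_V)=h^0(\mathcal{O}_V)=1$, so $V$ has the Hilbert polynomial of a surface with $q=p_g=0$; together with projective normality this gives $h^0(\mathcal{I}_V(2))=15-h^0(\mathcal{O}_V(2))=6-e$, and computing Riemann--Roch on $C\subset\P^3$ with $h^0(\mathcal{O}_C(1))=4$ gives the sectional genus $g(C)=e-3$ (note $C$ is smooth by Bertini since $V$, being of Fano type hence klt, has only finitely many, quotient, singularities). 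If $e=5$ then $h^0(\mathcal{I}_V(2))=1$, i.e. $G$ has a semi-invariant quadric, impossible. If $e=3$ then $V$ is a surface of minimal degree, hence either the cubic scroll $S(1,2)$, whose unique directrix (the only line on it with negative self-intersection, therefore $\bar G$-invariant) spans a $\bar G$-invariant line in $\P^4$, or the cone over a twisted cubic, whose vertex is a $\bar G$-invariant point; either way $\C^5$ acquires a proper $G$-subrepresentation, contradicting transitivity.

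It remains to exclude $e=4$, which I expect to be the main obstacle. Here $h^0(\mathcal{I}_V(2))=2$ and $g(C)=1$, and using non-degeneracy to compare degrees one finds $V$ is a complete intersection of a pencil $\mathcal{P}\cong\P^1$ of quadrics, a del Pezzo surface of degree $4$ with Du Val singularities, on which $\bar G$ acts. If the $\bar G$-action on $\mathcal{P}$ is trivial, every quadric of $\mathcal{P}$ is $\bar G$-invariant, a degree-$2$ semi-invariant; otherwise I would look at the discriminant divisor $\Delta\subset\mathcal{P}$ of singular members. Since $V$ is an irreducible normal surface of degree $4$, $\Delta$ cannot be all of $\mathcal{P}$ (a pencil of quadrics all of which are singular has a base locus that is not such a surface), so $\Delta$ is a $\bar G$-invariant divisor of degree $5$ on $\P^1$. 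No finite subgroup of $\PGL_2(\C)$ other than a cyclic or dihedral one has an orbit on $\P^1$ of length $<6$, so the image of $\bar G$ in $\Aut(\mathcal{P})$ must be cyclic or dihedral, hence has an orbit of length $\le 2$ on $\mathcal{P}$; that orbit furnishes either one $\bar G$-invariant quadric or two quadrics permuted by $\bar G$ whose union is a $\bar G$-invariant quartic — in every case a $G$-semi-invariant of degree $\le 4$, a contradiction. The delicate points in this case will be pinning down the structure of $V$ as a genuine complete intersection of two quadrics and ruling out a degenerate (everywhere-singular) pencil.

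Therefore $e=6$, so $g(C)=3$ and $h^0(\mathcal{I}_V(2))=6-6=0$. Setting $S=V$ gives exactly what the theorem asks: $S$ is irreducible, non-degenerate, projectively normal, of Fano type, of degree $6$, has at most quotient singularities (a Fano type surface is klt, and klt surface singularities are quotient), is not contained in a quadric hypersurface in $\P^4$, and its generic hyperplane section is the projectively normal smooth curve $C$ of genus $3$. Beyond the $e=4$ difficulty, the only other care needed is the classical geometric input in the $e=1,3$ reductions (varieties of minimal degree, the $\SL_2(\C)$-invariant quadric through a rational normal quartic) and checking that each reduction above takes place inside a genuine $\bar G$-invariant configuration so that the resulting semi-invariant or subrepresentation really exists.
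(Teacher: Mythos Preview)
Your approach matches the paper's: apply Theorem~\ref{theorem:weakly-exceptional-quotient-criterion}, eliminate dimensions $0,1,3$ by transitivity and the semi-invariant bound, and in the surface case use projective normality and Riemann--Roch to rule out degrees $3,4,5$. Two remarks are worth making. In the curve case you invoke the $\SL_2$-invariant quadric in $\Sym^2(\Sym^4 U)$, whereas the paper uses the secant variety of the rational normal quartic (a cubic hypersurface); both arguments are valid. In the $e=4$ case your assertion that ``no finite subgroup of $\PGL_2(\C)$ other than a cyclic or dihedral one has an orbit on $\P^1$ of length $<6$'' is false as stated: $\A_4$ has orbits of length~$4$. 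Your conclusion is still correct, because none of $\A_4$, $\SS_4$, $\A_5$ admits a $\bar G$-invariant divisor of degree~$5$ on~$\P^1$ (their minimal orbit lengths are $4,6,12$, and no combination yields~$5$); this is exactly the content of Lemma~\ref{lemma:small-orbit-on-P1}. The paper organises this step slightly differently: it first shows $S$ is smooth (a del Pezzo complete intersection of two quadrics in~$\P^4$ has at most $4$ Du Val points, so transitivity forces $\mathrm{Sing}(S)=\varnothing$), whence the discriminant of the pencil consists of exactly five reduced points, and then applies Lemma~\ref{lemma:small-orbit-on-P1} directly. This also disposes of your worry about an ``everywhere-singular pencil''.
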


\begin{proof}
Suppose that  $\C^{5}\slash G$ is not weakly-exceptional. Then
it follows from
Theorem~\ref{theorem:weakly-exceptional-quotient-criterion} that
there is an irreducible $\bar{G}$-invariant projectively normal
Fano type subvariety $S\subset\P^4$~such that
$$
\mathrm{deg}\big(S\big)\leqslant {4\choose \mathrm{dim}\big(S\big)},%
$$
and if $\mathrm{dim}(S)\geqslant 2$, then its general hyperplane
section is also projectively normal.

Since $G$ is transitive and $S$ is $\bar{G}$-invariant, the
subvariety $S$ is not contained in a~hyperplane in
$\mathbb{P}^{4}$. In particular, we see that $S$ is not a point.
Since $G$ does not have semi-invariants of degree at most $4$, we
see that $S$ is not a threefold.

If $S$ is a curve, then $\deg(S)\le 4$, and thus
$S$ is a smooth rational curve of degree
$4$, because $S$ is not contained in a~hyperplane in
$\mathbb{P}^{4}$. On the other hand, it follows from
\cite[Exercise~8.7]{Harris} that the secant variety of a smooth
rational curve of degree $4$ is a cubic hypersurface in $\P^4$.
This is impossible since $G$ does not have semi-invariants
of degree at most $4$ and
$S$ is $\bar{G}$-invariant. The obtained contradiction shows that $S$
cannot be a curve.

Thus, the subvariety $S$ is a surface. Then $S$ has at most
quotient singularities by \cite[Theorem~3.6]{Ko97}. Let $H$ be
a~hyperplane section of the~surface $S\subset\mathbb{P}^{4}$. Then
$H$ is projectively normal. Let $\mathcal{I}_{S}$ be the~ideal
sheaf of the surface $S$. Since $S$ is projectively normal, it
follows from the~Riemann--Roch theorem\footnote{
Recall that the Riemann--Roch theorem in the usual form
is valid for Cartier divisors on surfaces with quotient
(and even rational) singularities, see~\cite[Theorem~9.1]{YPG}.}
that
\begin{equation}
\label{equation:RR-2} {n+4\choose
n}-h^{0}\Big(\mathcal{O}_{\mathbb{P}^{4}}\big(n\big)\otimes\mathcal{I}_{S}\Big)=
h^{0}\Big(\mathcal{O}_{S}\big(nH\big)\Big)=1+\frac{n^{2}}{2}\Big(H\cdot
H\Big)-\frac{n}{2}\Big(H\cdot K_{S}\Big)%
\end{equation}
for any $n\geqslant 1$.  In particular, since $S$ is not contained
in a~hyperplane in $\mathbb{P}^{4}$, it follows from
$(\ref{equation:RR-2})$ applied for $n=1$ that $H\cdot H-H\cdot
K_{S}=8$. On the other hand, we know that $H\cdot H\geqslant 3$,
since $S$ is not contained in a~hyperplane in $\mathbb{P}^{4}$.
Moreover, since $S$ is a Fano type surface, the divisor $-K_{S}$
is big (see Definition~\ref{definition:Fano-type}), which implies
that $-H\cdot K_{S}>0$. Thus, we see that $H\cdot
H\in\{3,4,5,6,7\}$. Now plugging $n=2$ into
$(\ref{equation:RR-2})$, we see that
$6-h^{0}(\mathcal{O}_{\mathbb{P}^{4}}(2)\otimes\mathcal{I}_{S})=H\cdot
H$. In particular, we must have $H\cdot H\ne 5$, since $G$ does
not have semi-invariants of degree $2$. Thus, we see that  $H\cdot
H\in\{3,4,6\}$.

If $H\cdot H=3$, then either $S$ is
a~cone over a~smooth rational cubic curve, or
$S$ is a~smooth cubic scroll (see e.\,g. \cite{EisenbudHarris}).
If $S$ is a~cone over a~smooth
rational cubic curve, then its vertex is $\bar{G}$-invariant,
which is impossible since $G$ is transitive. If $S$ is a~smooth
cubic scroll, then there is a~unique line $L\subset S$ such that
$L^{2}=-1$, which implies that $L$ must be $\bar{G}$-invariant,
which is again impossible, because $G$ is transitive. Thus, we see
that $H\cdot H\ne 3$.

If $H\cdot H=6$, then $H\cdot K_{S}=-2$, which implies that $H$ is
a smooth curve of genus $3$ by the adjunction formula. Thus, if
$H\cdot H=6$, then we are done, because $H\cdot H=6$ implies that
$h^{0}(\mathcal{O}_{\mathbb{P}^{4}}(2)\otimes\mathcal{I}_{S})=0$.
Therefore, we may suppose that $H\cdot H=4$. Since
$6-h^{0}(\mathcal{O}_{\mathbb{P}^{4}}(2)\otimes\mathcal{I}_{S})=H\cdot
H$ and $S$ is not contained in a~hyperplane in $\mathbb{P}^{4}$,
there are two irreducible quadrics in $\P^4$ that contain~$S$. Let
us denote them by $Q$ and $Q^{\prime}$. Then $S=Q\cap Q^{\prime}$,
since $H\cdot H=4$.

If $S$ is singular, then  $|\mathrm{Sing}(S)|\leqslant 4$, because
$S$ has canonical singularities, since~$S$ is a~complete
intersection that has Kawamata log terminal singularities. But
$\mathrm{Sing}(S)$ is $\bar{G}$-invariant, which is impossible,
since $G$ is transitive. Thus, the surface $S$ is smooth.

Let $\mathcal{P}$ be a pencil that is generated by the quadrics
$Q$ and~$Q^{\prime}$. Then $\mathcal{P}$ contains exactly $5$
singular quadrics, which are simple quadric cones. Now it follows
from Lemma~\ref{lemma:small-orbit-on-P1} that there exists two
quadrics in $\mathcal{P}$, say $Q_{1}$ and $Q_{2}$, such that
$Q_{1}+Q_{2}$ is $\bar{G}$-invariant. The latter is impossible,
because $G$ does not have semi-invariants of degree at most $4$.
\end{proof}

To complete the proof of Theorem~\ref{theorem:main-5}
we are going to show that there are no surfaces~$S$ satisfying
the conditions implied by Theorem~\ref{theorem:5-dim-1}.
If $\C^5/G$ is weakly-exceptional, then $G$  is transitive
by~\cite[Proposition~2.1]{Pr00} and does not have semi-invariants
of degree at most $4$ by
Theorem~\ref{theorem:weakly-exceptional-quotient}. Thus, since
Fano type surfaces are rational (cf. \cite{Zh06}) and have big
anticanonical divisors by Definition~\ref{definition:Fano-type},
the assertion of Theorem~\ref{theorem:main-5} follows from
Theorem~\ref{theorem:5-dim-1} and the following

\begin{theorem}\label{theorem:no-Bordiga}
Let $S$ be an irreducible $\bar{G}$-invariant normal surface in
$\P^4$ of degree $6$, let $H$ be its general hyperplane section.
Then  $-K_S$ is not big if the following conditions are satisfied:
\begin{itemize}
\item[$(\mathrm{A})$] the surface $S$ is projectively normal,%
\item[$(\mathrm{B})$] the surface $S$ is rational,%
\item[$(\mathrm{C})$] the surface $S$ has at most quotient singularities,%
\item[$(\mathrm{D})$] the surface $S$ is not contained in a quadric hypersurface,%
\item[$(\mathrm{E})$] the curve $H$ is a smooth curve of genus $3$,%
\item[$(\mathrm{F})$] the curve $H$ is projectively normal (considered as a subvariety in $\P^3$),%
\item[$(\mathrm{G})$] the group $G$ is transitive.%
\end{itemize}
\end{theorem}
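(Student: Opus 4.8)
The plan is to suppose $-K_S$ is big and to derive a contradiction with the transitivity hypothesis~$(\mathrm{G})$. Write $\mu\colon Y\to S$ for the minimal resolution. Since $S$ is rational with quotient singularities, $Y$ is a smooth rational surface (so $h^{1}(\mathcal O_Y)=h^{2}(\mathcal O_Y)=0$), and $\tilde H:=\mu^{*}H$ is a base-point-free, nef and big class with $\tilde H^{2}=6$ and $\tilde H\cdot K_Y=H\cdot K_S=-2$, the last equality coming from adjunction on $S$ and~$(\mathrm{E})$; the morphism associated with $|\tilde H|$ is $\mu$ itself. First I would pin down the Hilbert function: because $h^{1}(\mathcal O_S)=0$ and, by $(\mathrm{A})$ and $(\mathrm{F})$, both $H^{0}(\mathcal O_{\P^4}(m))\to H^{0}(\mathcal O_S(mH))$ and $H^{0}(\mathcal O_{\P^3}(m))\to H^{0}(\mathcal O_H(m))$ are surjective, a diagram chase with restriction to a general hyperplane gives $h^{i}(\mathcal O_S(mH))=0$ for all $i>0$ and $m\ge 0$, so $h^{0}(\mathcal O_S(mH))=\chi(\mathcal O_S(mH))=3m^{2}+m+1$. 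In particular $h^{0}(\mathcal I_S(2))=0$ (consistent with~$(\mathrm{D})$) and $h^{0}(\mathcal I_S(3))=4$. Finally, as $K_Y=\mu^{*}K_S+\sum a_iE_i$ with $a_i\le 0$ for a minimal resolution of a klt surface, bigness of $-K_S$ forces bigness of $-K_Y=\mu^{*}(-K_S)+\sum(-a_i)E_i$.

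Next I would identify $Y$. Now $S\subset\P^4$ is a non-degenerate, linearly normal, projectively normal surface of degree~$6$ and sectional genus~$3$, so the classification of such surfaces (in the smooth case these are exactly the Bordiga surfaces; cf. the references in Remark~\ref{remark:Bordiga-threefold}) carries over to the minimal resolution: the adjoint class $\tilde H+K_Y$ is base-point-free and defines the blow-down of $Y$ to $\P^2$, so $Y$ is $\P^2$ blown up at ten (possibly infinitely near) points $p_{1},\dots,p_{10}$ with $\tilde H\sim 4\ell-\sum_{i=1}^{10}E_i$ and $-K_Y\sim 3\ell-\sum_{i=1}^{10}E_i$, $K_Y^{2}=-1$, equivalently $\rk\Pic(Y)=11$ by Theorem~\ref{theorem:Noether}. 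Condition~$(\mathrm{C})$ is used here to guarantee that each $\tilde H$-trivial curve — whose image is a singular point of $S$ — is of a type compatible with a quotient singularity, and to rule out that $Y$ is instead a blow-up of a Hirzebruch surface. Since $\tilde H$ is nef the points $p_i$ lie in almost general position (no $5$ collinear, no $9$ on a conic, and the analogous bound for cubics); were they in general position one checks $h^{0}(-mK_Y)=0$ for every $m\ge 1$, so $-K_Y$ would not even be effective. Hence the configuration $\{p_i\}$ is genuinely degenerate.

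It remains to classify the degenerations of $\{p_i\}$ compatible with $\tilde H$ nef and big and with $-K_Y$ big, and to eliminate each using~$(\mathrm{G})$. Writing the Zariski decomposition $-K_Y=P+N$ with $P$ nef, $P^{2}>0$, $N\ge 0$ and $P\cdot N=0$: since $\tilde H$ is nef, $P\cdot\tilde H\le -K_Y\cdot\tilde H=2$, whence $P^{2}\le\tfrac23$ by the Hodge index theorem, and $(-K_Y)^{2}=P^{2}+N^{2}=-1$ forces $N\ne 0$ with $N^{2}<-1$. Thus $N$ is a non-trivial effective divisor with negative self-intersection, and together with the $\tilde H$-trivial curves it forms a distinguished configuration of curves on the Bordiga surface $Y$; as in Demazure's analysis of the negative curves on (generalized) del Pezzo surfaces there are only finitely many such configurations, each corresponding to a precise special position of the ten points (several collinear, several on a conic, chains of infinitely near points, and combinations — the ``five lines'' configuration, in which the $p_i$ are the pairwise intersection points of five lines, being the archetype). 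For each configuration $\Sing(S)$, or a distinguished $\bar G$-stable subset of it, is $\bar G$-invariant, and a short case check shows it spans a proper $\bar G$-invariant linear subspace of $\P^4$ (a point, a line, a plane or a hyperplane), contradicting the transitivity of $G$. This contradiction proves the theorem.

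The hard part will be this last step: enumerating exactly those degenerations of the ten points that make $-K_Y$ big (equivalently, classifying the admissible negative-curve configurations on the Bordiga surface) and checking in each case that the resulting $\bar G$-invariant subvariety of $\P^4$ produces an invariant proper linear subspace. A secondary difficulty is making the identification $Y\cong\mathrm{Bl}_{10}\P^2$, with $\tilde H\sim 4\ell-\sum_{i=1}^{10}E_i$, rigorous when $S$ is singular and the base points may be infinitely near.
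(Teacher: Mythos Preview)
Your setup is sound and matches the paper's: pass to the minimal resolution $\tilde S$, compute $K_{\tilde S}^2=-1$, and realize $\tilde S$ as a blow-up of $\P^2$ in ten points via the adjoint system $|K_{\tilde S}+\tilde H|$. However, your proposed endgame --- classify the degenerate configurations of ten points for which $-K_{\tilde S}$ is big and rule each out by exhibiting a small $\bar G$-invariant linear subspace --- is not what the paper does, and it is not clear it can be carried out. The Zariski-decomposition inequality $P^2\le 2/3$ is correct but does not by itself pin down finitely many configurations; ``there are only finitely many such configurations, each corresponding to a precise special position'' is exactly the assertion that needs proof, and the analogy with Demazure's classification on del Pezzo surfaces breaks down because here $K_{\tilde S}^2=-1<0$.

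The paper's argument avoids any such classification. First it shows directly from~$(\mathrm G)$ that $|-K_{\tilde S}|=\varnothing$ (any anticanonical curve would have $\tilde H$-degree $2$, hence be a line or conic, producing a short $\bar G$-orbit). It then proves the ten exceptional curves are genuine disjoint $(-1)$-curves --- no infinitely near points --- again using~$(\mathrm G)$ (Lemma~\ref{lemma:Bordiga-10-points}). The decisive step, which is absent from your plan, is Lemma~\ref{lemma:10-points-on-P2-easy}: if some eight of the ten points give a weak del Pezzo $\hat S$ of degree~$1$ and the remaining two avoid the $(-2)$-curves, then the double cover $|-2K_{\hat S}|\colon\hat S\to Q$ onto a quadric cone lets one show the divisor $-2K_{\tilde S}+E_9+E_{10}$ is nef and big with $(-K_{\tilde S})\cdot(-2K_{\tilde S}+E_9+E_{10})=0$, so $-K_{\tilde S}$ cannot be big. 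The remaining work (Lemma~\ref{lemma:10-points-on-P2}) is to verify, using~$(\mathrm G)$ and the induced $\bar G$-action on $\P^2$, that such a choice of eight points always exists: one first shows $\bar G$ acts transitively on the ten points, then excludes the few obstructions (four on a line, seven on a conic, a nodal cubic through nine) by short orbit-counting arguments. This del Pezzo degree~$1$ trick is the missing idea in your proposal.
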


It should be pointed out that some of the conditions
$(\mathrm{A})$, $(\mathrm{B})$, $(\mathrm{C})$, $(\mathrm{D})$,
$(\mathrm{E})$ and $(\mathrm{F})$ in
Theorem~\ref{theorem:no-Bordiga} may be redundant. For example,
one can show that $(\mathrm{F})$ follows from $(\mathrm{A})$,
$(\mathrm{C})$, $(\mathrm{D})$ and $(\mathrm{E})$ (see the proof
of \cite[Theorem~2.1]{Homma80}, \cite[Theorem~4.4]{Mezzetti94} and
\cite{AlzatiBertoliniBesana97}). Still we prefer to keep them all,
because we do not care. In the remaining part of this section we
prove Theorem~\ref{theorem:no-Bordiga}. To do this we will need
two auxiliary results that may be useful on their own (cf.
\cite{Testa}).

\begin{lemma}\label{lemma:10-points-on-P2-easy}
Let $P_1, \ldots, P_{10}$ be  $10$ different points in $\P^2$, let
$\pi\colon\tilde{S}\to\P^2$ be the blow-up of the points
$P_1,\ldots, P_{10}$, let $\psi\colon\hat{S}\to\P^2$ be the
blow-up of the points $P_1,\ldots, P_{8}$, let $\hat{P}_{9}$ and
$\hat{P}_{10}$ be the preimages of the points $P_{9}$ and $P_{10}$
on the surface $\hat{S}$. Suppose that the linear system
$|-K_{\tilde{S}}|$ is empty,$-K_{\hat{S}}$ is nef and big, and
neither $\hat{P}_{9}$ nor $\hat{P}_{10}$ is contained in a curve
in $\hat{S}$ that has trivial intersection with $-K_{\hat{S}}$.
Then $-K_{\tilde{S}}$ is not big.
\end{lemma}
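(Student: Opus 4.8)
The plan is to factor the blow-up $\pi$ as $\pi=\psi\circ\rho$, where $\rho\colon\tilde S\to\hat S$ is the blow-up of the two points $\hat P_9,\hat P_{10}$ with exceptional curves $E_9$ and $E_{10}$; then $-K_{\tilde S}=\rho^*(-K_{\hat S})-E_9-E_{10}$, and since $(-K_{\hat S})^2=1$ the divisor $\rho^*(-K_{\hat S})$ is nef and big on $\tilde S$. I would produce a nef and big divisor $B$ on $\tilde S$ with $B\cdot(-K_{\tilde S})=0$. This suffices: if $-K_{\tilde S}$ were big, then by Kodaira's lemma one could write $-K_{\tilde S}\sim_{\Q}A+\Gamma$ with $A$ an ample $\Q$-divisor and $\Gamma$ an effective $\Q$-divisor, and intersecting with $B$ would give $0=A\cdot B+\Gamma\cdot B\ge A\cdot B>0$, where the last inequality holds by the Hodge index theorem because $B^2>0$. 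The divisor I would use is
$$
B:=2\rho^*(-K_{\hat S})-E_9-E_{10},
$$
for which a direct computation gives $B^2=2$ and $B\cdot(-K_{\tilde S})=0$.

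Next I would check that $B$ is effective. Since $-K_{\hat S}$ is nef and big with $(-K_{\hat S})^2=1$, one has $h^0(\hat S,-K_{\hat S})=2$, so $|-K_{\hat S}|$ is a pencil with a single reduced base point $B_0$. Emptiness of $|-K_{\tilde S}|$ forces $\hat P_9\neq B_0$ and $\hat P_{10}\neq B_0$: otherwise, say for $\hat P_9=B_0$, every member of $|-K_{\hat S}|$ passes through $\hat P_9$, and the member through $\hat P_{10}$ would lie in $|-K_{\tilde S}|$. Hence through each $\hat P_i$ there passes a unique member $\bar C_i\in|-K_{\hat S}|$, and the emptiness of $|-K_{\tilde S}|$ also yields $\hat P_{10}\notin\bar C_9$ and $\hat P_9\notin\bar C_{10}$. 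In particular $\rho^*(-K_{\hat S})-E_i$ is effective for $i=9,10$ (it contains the birational transform of $\bar C_i$), so $B=\bigl(\rho^*(-K_{\hat S})-E_9\bigr)+\bigl(\rho^*(-K_{\hat S})-E_{10}\bigr)$ is effective.

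The main point is to show that $B$ is nef. One has $B\cdot E_9=B\cdot E_{10}=1$, and for every other irreducible curve $\Gamma\subset\tilde S$, writing $\bar\Gamma=\rho_*\Gamma$ and $a_i=\mult_{\hat P_i}\bar\Gamma$, one gets $B\cdot\Gamma=2(-K_{\hat S})\cdot\bar\Gamma-a_9-a_{10}$. Since $-K_{\hat S}$ is nef and big, an irreducible curve $Z\subset\hat S$ with $(-K_{\hat S})\cdot Z=0$ must be a $(-2)$-curve, and by hypothesis $\hat P_9,\hat P_{10}$ lie on no such curve, so if $\bar\Gamma$ is such a curve then $a_9=a_{10}=0$ and $B\cdot\Gamma=0$. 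For the remaining curves I would use the structure of $\bar C_9$ (and symmetrically of $\bar C_{10}$): writing $\bar C_9=D_9+Z$ with $Z$ a non-negative combination of $(-2)$-curves and $(-K_{\hat S})\cdot D_9=1$, the point $\hat P_9$ lies on $D_9$ only, and either $Z=0$ and $D_9=\bar C_9$ has arithmetic genus $1$, so $\mult_{\hat P_9}\bar C_9\le2$, or $Z\neq0$ and $D_9$ is a $(-1)$-curve, so $\mult_{\hat P_9}\bar C_9=1$; in either case $\mult_{\hat P_{10}}\bar C_9=0$. Consequently, if $\bar\Gamma$ is a component of $\bar C_9$ then $B\cdot\Gamma\ge2-\mult_{\hat P_9}\bar C_9\ge0$ (or $B\cdot\Gamma=0$ if $\bar\Gamma$ is a $(-2)$-curve), and similarly for components of $\bar C_{10}$; whereas if $\bar\Gamma$ is a component of neither, the local intersection estimates $(-K_{\hat S})\cdot\bar\Gamma=\bar\Gamma\cdot\bar C_9\ge a_9$ and $(-K_{\hat S})\cdot\bar\Gamma=\bar\Gamma\cdot\bar C_{10}\ge a_{10}$ (valid because $\mult_{\hat P_i}\bar C_i\ge1$) give $B\cdot\Gamma\ge0$. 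Hence $B$ is nef, and being nef with $B^2=2>0$ it is nef and big, so the argument of the first paragraph applies and $-K_{\tilde S}$ is not big.

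The step I expect to be the main obstacle is the verification that $B$ is nef: it forces one to understand the possibly reducible or singular anticanonical curves $\bar C_9$ and $\bar C_{10}$, and it is exactly here that the two hypotheses that $|-K_{\tilde S}|$ is empty and that $\hat P_9,\hat P_{10}$ lie on no curve orthogonal to $-K_{\hat S}$ enter.
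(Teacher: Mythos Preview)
Your proof is correct and uses exactly the same auxiliary divisor as the paper, namely $B=2\rho^*(-K_{\hat S})-E_9-E_{10}=-2K_{\tilde S}+E_9+E_{10}$, together with the observation that $B$ is nef and big with $B\cdot(-K_{\tilde S})=0$. The difference lies only in how nefness of $B$ is established. The paper passes to the anticanonical model $\bar S$ of $\hat S$, a del Pezzo surface of degree~$1$, and uses the double cover $\delta\colon\bar S\to Q$ onto a quadric cone given by $|-2K_{\bar S}|$: since $|-K_{\tilde S}|=\varnothing$ the images $\delta\circ\gamma(\hat P_9)$ and $\delta\circ\gamma(\hat P_{10})$ lie on no common ruling of $Q$, so the linear system $|B|$ has no base curves away from $E_9,E_{10}$, whence $B$ is nef. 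Your route is a direct curve-by-curve check using the anticanonical pencil on $\hat S$ and the two members $\bar C_9,\bar C_{10}$; it is more elementary in that it avoids the structure theory of degree-$1$ del Pezzo surfaces, at the cost of a longer case analysis.

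One small point deserves a line of justification: your claim that when $\bar C_9=D_9+Z$ with $Z\neq 0$ the curve $D_9$ is a $(-1)$-curve. You have $K_{\hat S}\cdot D_9=-1$, so $D_9^2$ is odd, $D_9^2\ge -1$ by $p_a(D_9)\ge 0$, and $D_9^2\le 1$ by the Hodge index theorem (since $(-K_{\hat S})^2=1$ and $(-K_{\hat S})\cdot D_9=1$). If $D_9^2=1$ then $D_9\equiv -K_{\hat S}$ numerically, hence linearly (the surface is rational), forcing $Z\sim 0$, which contradicts $Z$ being a nonzero effective divisor; therefore $D_9^2=-1$ as claimed.
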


\begin{proof}
The surface $\hat{S}$ is a weak del Pezzo surface, i.e.
$-K_{\hat{S}}$ is big and nef. Let $\gamma\colon\hat{S}\to\bar{S}$
be its anticanonical model, i.\,e. $\gamma$  is given by the linear
system $|-nK_{\hat{S}}|$ for $n\gg 0$. Then $\bar{S}$ is a del
Pezzo surface with canonical singularities such that
$K_{\bar{S}}^2=1$, and $\gamma$ contracts all curves that have
trivial intersection with $-K_{\hat{S}}$ (they are $(-2)$-curves,
i.\,e. smooth rational curves with self-intersection $-2$). It is
well-known that $|-2K_{\bar{S}}|$ is free from base points and
induces a double cover $\delta\colon\bar{S}\to Q$, where $Q$ is a
quadric cone in $\mathbb{P}^{3}$.

By assumption both points $\gamma(\hat{P}_{9})$ and
$\gamma(\hat{P}_{10})$ are contained in a smooth locus of the
surface~$\bar{S}$. Moreover, there exists no curve in
$|-K_{\bar{S}}|$ that contains both points $\gamma(\hat{P}_{9})$
and $\gamma(\hat{P}_{10})$, because otherwise the linear system
$|-K_{\tilde{S}}|$ would not be empty. Thus, the points
$\delta\circ\gamma(\hat{P}_{9})$ and
$\delta\circ\gamma(\hat{P}_{10})$ are not contained in one ruling
of the quadric cone $Q$.

Let $\eta\colon \tilde{S}\to\hat{S}$ be the blow up of the points
$\hat{P}_{9}$ and $\hat{P}_{10}$, and let $E_{9}$ and $E_{10}$ be
the exceptional divisors of the blow up $\eta$ such that
$\eta(E_{9})=\hat{P}_{9}$ and $\eta(E_{10})=\hat{P}_{10}$. Then we
have a diagram
$$
\xymatrix{ \tilde{S}\ar@{->}[r]^{\eta}\ar@{->}[d]_{\pi}&
\hat{S}\ar@{->}[ld]^{\psi}\ar[rd]^{\gamma}&&\\
\P^2&&\bar{S}\ar@{->}[r]^{\delta}&Q.}
$$

Since
$-2K_{\tilde{S}}+E_{9}+E_{10}\sim\eta^{*}\big(-2K_{\hat{S}}\big)-E_{9}-E_{10}$,
the linear system $|-2K_{\tilde{S}}+E_{9}+E_{10}|$ does not have
base curves except possibly the curves $E_{9}$ and $E_{10}$,
because the points $\delta\circ\gamma(\hat{P}_{9})$ and
$\delta\circ\gamma(\hat{P}_{10})$ are not contained in one ruling
of the quadric cone $Q$. Indeed, the base locus of the linear
system $\eta(|-2K_{\tilde{S}}+E_{9}+E_{10}|)$ consists of the
points of the surface $\hat{S}$  that are mapped to one of the
points $\delta\circ\gamma(\hat{P}_{9})$ or
$\delta\circ\gamma(\hat{P}_{10})$ by $\delta\circ\gamma$, because
$\gamma$ is an isomorphism in a neighborhood of the points
$\hat{P}_{9}$ and $\hat{P}_{10}$, and
$\delta\circ\gamma(\hat{P}_{9})$ and
$\delta\circ\gamma(\hat{P}_{10})$ are not contained in one ruling
of the quadric cone $Q$. Thus, the divisor
$-2K_{\tilde{S}}+E_{9}+E_{10}$ is nef and big, because
$(-2K_{\tilde{S}}+E_{9}+E_{10})^2=2$ and
$$(-2K_{\tilde{S}}+E_{9}+E_{10})\cdot E_{9}=
(-2K_{\tilde{S}}+E_{9}+E_{10})\cdot E_{10}=1.$$
One the other hand,
we see that $-K_{\tilde{S}}\cdot(-2K_{\tilde{S}}+E_{9}+E_{10})=0$,
which implies that $-K_{\tilde{S}}$ cannot be big.
\end{proof}

\begin{lemma}\label{lemma:10-points-on-P2}
Let a group $\bar{G}$ act faithfully on the projective plane
$\P^2$. Let $\PP=\{P_1, \ldots, P_{10}\}$ be a $\bar{G}$-invariant
collection of $10$ points on $\P^2$, and denote by $\pi\colon
\tilde{S}\to\P^2$ the blow-up of the points $P_1, \ldots, P_{10}$.
Suppose that the following conditions hold:
\begin{itemize}
\item[$(\mathrm{i})$] there are no cubic curves passing through
all points of $\PP$; \item[$(\mathrm{ii})$] if a
$\bar{G}$-invariant curve $C\subset\P^2$ passes through at least
$r$ of the points of $\PP$, then one has~\mbox{$4\deg(C)-r\ge 4$};
\item[$(\mathrm{iii})$] there are no $\bar{G}$-orbits of length at
most $2$ on $\P^2$; \item[$(\mathrm{iv})$] there are no
$\bar{G}$-orbits of length at most $4$ contained in
$\P^2\setminus\PP$;
\end{itemize}
Then the anticanonical class $-K_{\tilde{S}}$ is not big.
\end{lemma}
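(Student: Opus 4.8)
The plan is to reduce to Lemma~\ref{lemma:10-points-on-P2-easy} by choosing a good $\bar{G}$-equivariant factorization of the blow-up $\pi$ through an intermediate weak del Pezzo surface of degree~$1$. First I would record the basic numerics: on $\tilde{S}$ one has $K_{\tilde{S}}^2 = 9 - 10 = -1$, and condition $(\mathrm{i})$ says exactly that $|-K_{\tilde{S}}|=\varnothing$, so $-K_{\tilde S}$ is not obviously big for cohomological reasons and we really must argue geometrically. The idea is to contract only $8$ of the ten exceptional curves, landing on a surface $\hat S$ with $K_{\hat S}^2=1$; if the $8$ points blown down are chosen so that $-K_{\hat S}$ is nef and big (equivalently, $\hat S$ is a weak del Pezzo surface, i.e. the $8$ points impose no constraints forcing a multiple anticanonical curve to degenerate, and no pencil/conic condition is violated), then Lemma~\ref{lemma:10-points-on-P2-easy} applies provided the two remaining points $\hat P_9$, $\hat P_{10}$ avoid every $(-K_{\hat S})$-trivial curve. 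So the argument has two parts: (a) produce the factorization with $\hat S$ a weak del Pezzo, and (b) verify the avoidance hypothesis; both must be done compatibly with the $\bar G$-action, which is where conditions $(\mathrm{iii})$ and $(\mathrm{iv})$ enter.

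For part (a): a set of $8$ points in $\P^2$ fails to give a weak del Pezzo blow-up precisely when (some subset of) them is ``in special position'' — four on a line, seven on a conic, eight on a cubic singular at one of them, or a point infinitely near (here all ten points are distinct, so the last is not an issue). Condition $(\mathrm{ii})$, applied with $\bar G$-invariant curves $C$, rules out $\bar G$-invariant subloci of $\PP$ that are too large on a low-degree curve; I would use it to show that after discarding two points the remaining $8$ cannot all lie on a line ($\deg C = 1$, $r\ge 4$ would violate $4\cdot 1 - 4 \ge 4$ only if $r > 4$, so I need $r\le 4$ on any line, which combined with $(\mathrm i)$ and a little bookkeeping gives genuine del Pezzo position) or on a conic with a forbidden multiplicity, and $(\mathrm i)$ prevents the cubic degeneration. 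The subtlety is that I am not free to choose which $8$ points to blow down — I must respect $\bar G$, so the pair $\{P_9,P_{10}\}$ I leave must be a union of $\bar G$-orbits of total length $2$; but $(\mathrm{iii})$ forbids $\bar G$-orbits of length $\le 2$ inside $\P^2$, so in fact \emph{no} $\bar G$-equivariant such factorization exists. This forces the real structure of the argument: I should instead \textbf{not} insist on equivariance of the intermediate contraction, and prove the purely geometric conclusion ``$-K_{\tilde S}$ is not big'' by choosing any (non-equivariant) $8$-point subset in del Pezzo position — the bigness of $-K_{\tilde S}$ is a property of $\tilde S$ alone, so losing the group action at the intermediate stage costs nothing, while $(\mathrm{iii})$–$(\mathrm{iv})$ are used only to guarantee that such a subset with the two leftover points in sufficiently general position relative to the $(-2)$-curve configuration actually exists.

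For part (b), the avoidance hypothesis of Lemma~\ref{lemma:10-points-on-P2-easy}: I must arrange that $\hat P_9$ and $\hat P_{10}$ lie on no $(-K_{\hat S})$-trivial curve, i.e. their images on the anticanonical model $\bar S$ (degree $1$ del Pezzo with canonical singularities) lie in the smooth locus and are not joined by an anticanonical curve. Since the finitely many $(-2)$-curves on $\hat S$ come from the finitely many ``extra'' collinearity/conic relations among the $8$ chosen points, and since the $\bar G$-orbit of $\{P_9,P_{10}\}$ is disjoint from these special loci by virtue of being \emph{not} among the forbidden small-orbit configurations — this is exactly what $(\mathrm{iv})$ buys, ruling out $\bar G$-orbits of length $\le 4$ in $\P^2\setminus\PP$, which would be the loci where such accidental incidences could be forced — I can choose the $8$-point subset so that the remaining two points are generic. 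The main obstacle I anticipate is the combinatorial case analysis ensuring that \emph{some} choice of $8$ points out of the ten simultaneously (i) is in weak del Pezzo position and (ii) leaves the other two off all $(-2)$-curves; here the interplay of $(\mathrm{i})$ (no cubic through all $10$) with $(\mathrm{ii})$ (degree-$r$ bound) is what controls how bad the configuration can be, and condition $(\mathrm{ii})$ should be invoked repeatedly with $C$ ranging over lines, conics, and possibly cubics to kill each potential obstruction. Once such a subset is exhibited, Lemma~\ref{lemma:10-points-on-P2-easy} finishes the proof verbatim.
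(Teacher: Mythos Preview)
Your overall strategy---reduce to Lemma~\ref{lemma:10-points-on-P2-easy} via a non-equivariant choice of $8$ points in weak del Pezzo position, with the remaining two off all $(-2)$-curves---is exactly the one the paper uses. The observation that bigness of $-K_{\tilde S}$ is intrinsic, so the intermediate surface $\hat S$ need not carry a $\bar G$-action, is correct and important.

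However, there is a genuine gap in your execution plan. The paper's proof begins with a substantial preliminary step you do not mention: it first proves that $\bar G$ acts \emph{transitively} on $\PP$, by ruling out every possible nontrivial orbit decomposition ($5+5$, $6+4$, $4+3+3$, $7+3$) using conditions $(\mathrm{ii})$--$(\mathrm{iv})$. This transitivity is then the engine driving the entire case analysis for when the desired $8$-point subset fails to exist. For instance, in the case where some irreducible cubic $Z_1$ passes through $9$ points of $\PP$ and is singular at one of them, the paper uses transitivity to produce translates $Z_2, Z_3$ of $Z_1$ singular at \emph{other} points of $\PP$, and derives a contradiction from $Z_i\cdot Z_j = 9$ versus the multiplicity count at shared points. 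In the conic and line cases, transitivity is used for double-counting incidences (e.g.\ $7k = 10c$, $10l = 4k$) between the $\bar G$-orbit of the offending curve and the points of $\PP$.

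Your plan instead tries to apply condition $(\mathrm{ii})$ directly to the obstructing lines, conics, and cubics. But $(\mathrm{ii})$ only constrains \emph{$\bar G$-invariant} curves, and a line through four points of $\PP$, or a conic through seven, need not be $\bar G$-invariant. (Your parenthetical computation ``$4\cdot 1 - 4 \ge 4$ only if $r>4$'' is also garbled: for a $\bar G$-invariant line $(\mathrm{ii})$ gives $r\le 0$, i.e.\ no $\bar G$-invariant line meets $\PP$ at all---but that says nothing about non-invariant lines.) Without transitivity you have no mechanism to promote a single bad curve to a $\bar G$-invariant configuration to which $(\mathrm{ii})$ applies. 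Similarly, your invocation of $(\mathrm{iv})$ is too vague: the paper uses $(\mathrm{iii})$ and $(\mathrm{iv})$ at very specific moments (e.g.\ to locate short orbits on auxiliary conics or to force $\bar G$ into $\SS_4$ or $\A_4$ and then contradict Lemma~\ref{lemma:S4-on-P2}), not as a blanket genericity principle.
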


\begin{proof}
To start with, we are going to prove that the group $\bar{G}$ acts
transitively on the set $\PP$. Keeping in mind
 $(\mathrm{iii})$, one can easily see that for a
non-transitive action there are the following possibilities to
split $\PP$ into $\bar{G}$-orbits: either $\PP$ is a union of two
$\bar{G}$-orbits of length $5$, or it is a union of a
$\bar{G}$-orbit of length $6$ and a $\bar{G}$-orbit of length $4$,
or it is a union of a $\bar{G}$-orbit of length~$4$ and two
$\bar{G}$-orbits of length~$3$, or it is a union of a
$\bar{G}$-orbit of length~$7$ and a $\bar{G}$-orbit of length $3$.
Let us exclude these possibilities case by case.

Assume that $\PP$ contains a $\bar{G}$-orbit $\PP'$ of length $5$.
If there are at least $3$ points of $\PP'$ that lie on a line $L$,
then each of the images of $L$ under the action of $\bar{G}$ also
contains at least $3$ points of $\PP'$. This is possible only if
the $\bar{G}$-orbit of $L$ consists of at most $2$ lines. The
latter implies that there is a $\bar{G}$-orbit on $\P^2$ that
consists of at most $2$ points, which contradicts
 $(\mathrm{iii})$. Thus, the $5$ points of $\PP'$ are in
general position, so that there is a unique conic $C$ in $\P^2$
than passing through all points of $\PP'$, and $C$ is
non-singular. By Lemma~\ref{lemma:small-orbit-on-P1} there is a
$\bar{G}$-orbit on $C$ of length at most $2$, which contradicts
 $(\mathrm{iii})$.

Assume that $\PP$ contains a $\bar{G}$-orbit $\PP'$ of length $4$.
The same argument as above shows that the points of $\PP'$ are in
general position, so that the pencil of conics passing through the
points of $\PP'$ has exactly $3$ degenerate members, and each of
them is a union of two distinct lines. The intersection points of
these pairs of lines gives a $\bar{G}$-orbit $\PP''$ of length at
most $3$. By  $(\mathrm{iv})$ one has $\PP''\subset\PP$, so that
by
 $(\mathrm{iii})$ one has $|\PP''|=3$. Thus,
$\PP'''=\PP\setminus (\PP'\cup\PP'')$ is another $\bar{G}$-orbit
of length $3$ contained in $\PP$ by  $(\mathrm{iii})$.

Note that $\bar{G}$ acts faithfully on the finite set $\PP'$ since
an automrphism of $\P^2$ is defined by the images of $4$ points in
general position. Hence there is an embedding
$\bar{G}\hookrightarrow\SS_4$. Keeping in mind that $|\bar{G}|$ is
divisible by $12$ since $\bar{G}$ has orbits of lengths $3$ and
$4$, we see that either $\bar{G}\cong\SS_4$ or $\bar{G}\cong\A_4$.
Lemma~\ref{lemma:S4-on-P2} implies that there is a
$\bar{G}$-invariant point on $\P^2$, which is impossible by
 $(\mathrm{iii})$.

Assume that $\PP$ contains a $\bar{G}$-orbit $\PP'$ of length $7$.
We are going to prove that the points of~$\PP'$ are in general
position, i.\,e. there are no lines passing through $3$ points of
$\PP'$ and there are no conics passing through $6$ points of
$\PP'$.

Suppose that there is a line $L_1$ passing through $3$ points of
$\PP'$. Let $\LL=\{L_1, \ldots, L_k\}$ be the $\bar{G}$-orbit of
the line $L$. Denote by $l$ the number of lines from $\LL$ passing
through each point from $\PP'$, and by $p$ the number of points
from $\PP'$ lying on each line from $\LL$. One has $p\ge 3$ by
assumption and $p\le 4$ by  $(\mathrm{i})$. On the other hand, one
has $l\le 3$ since $|\PP'|=7$. Using the equality $7l=pk$, one
finds that $k$ is divisible by $7$, and thus $l=p=3$ and $k=7$. It
is easy to see that there are no irreducible conics passing
through $6$ of the points from $\PP'$ (indeed, otherwise there
would exist a line from $\LL$ intersecting such conic in at least
$3$ points). Let $\tilde{\Sigma}$ be a surface obtained by blowing
up the $7$ points of $\PP'$ and $\Sigma$ be an anticanonical model
of $\tilde{\Sigma}$. Then $\Sigma$ is a del Pezzo surface of
degree $2$ with $7$ Du Val singular points of type $\mathbb{A}_1$
(that are images of the proper transforms of lines from $\LL$),
which easily leads to a contradiction.

Suppose that there is an irreducible conic $C$ passing through at
least $6$ points of $\PP'$. Then~$C$ is $\bar{G}$-invariant, since
otherwise there would exist another irreducible conic passing
through at least $6$ points of $\PP'$ and thus intersecting $C$ in
at least $5$ points. On the other hand, the curve~$C$ cannot be
$\bar{G}$-invariant by  $(\mathrm{ii})$.

We see that the $7$ points of $\PP'$ are in general position, so
that the surface $\tilde{\Sigma}$ obtained by the blow-up
$\pi'\colon\tilde{\Sigma}\to\P^2$ of the points of $\PP'$ is a
smooth del Pezzo surface of degree $2$. Moreover,
$\bar{G}\subset\Aut(\tilde{\Sigma})$, so that
$|\Aut(\tilde{\Sigma})|$ is divisible by $7$.
Using~\cite[Table~6]{DoIs06}, one obtains that $\bar{G}$ is a
subgroup of the Klein group $\PSL_2(\F_7)$; moreover, $\bar{G}$
must be isomorphic either to $\PSL_2(\F_7)$ itself, or to a group
$F_{21}\cong\mathbb{Z}_{7}\rtimes\mathbb{Z}_{3}$, or to the cyclic
group $\Z_7$. The former two cases are impossible, since in
$\tilde{\Sigma}$ is $\bar{G}$-minimal if $\bar{G}$ contains
$F_{21}$ by~\cite[Theorem~6.17]{DoIs06}, which contradicts the
existence of the $\bar{G}$-invariant morphism $\pi'$. The latter
case is impossible since $|\bar{G}|$ must be also divisible by $3$
since $\bar{G}$ has an orbit of length $3$ on $\P^2$.

We see that $\bar{G}$ acts transitively on the points of $\PP$.
Let us consider the following condition~$(\mathrm{*})$: there
exist $8$ points of $\PP$ (say, $P_1, \ldots, P_8$) such that the
surface $\tilde{\Sigma}$ obtained by blowing up these points is a
weak del Pezzo surface, and the preimages of the other two points
of $\PP$ (i.\,e., $P_9$~and~$P_{10}$) are not contained in
$(-2)$-curves of $\tilde{\Sigma}$. Since $|-K_{\tilde{S}}|$ is
empty, because there are no cubic curves passing through all
points of $\PP$, we see that it follows from
Lemma~\ref{lemma:10-points-on-P2-easy} that $-K_{\tilde{S}}$ is
not big assuming that  $(\mathrm{*})$ holds. Let us show that the
failure of~$(\mathrm{*})$ leads  to a contradiction.

Suppose that  $(\mathrm{*})$ does not hold. It means that either
there exists a line containing at least~$4$ points of $\PP$, or
there exists an irreducible conic containing at least $7$ points
of $\PP$, or there exists an irreducible cubic containing at least
$9$ points of $\PP$ and singular at one of these points.
Moreover,~$(\mathrm{i})$ implies that in the latter list of
possibilities one can replace ``at least'' by ``exactly''.

Suppose that there exists an irreducible cubic, say, $Z_{1}$,
containing $9$ points of $\PP$, say, $P_{1}, \ldots, P_{9}$, and
singular at one of these points, say, $P_{1}$. Consider an element
$g_2\in\bar{G}$ such that $g_2(P_1)=P_2$ (it exists since the
action of $\bar{G}$ on $\PP$ is transitive) and put
$Z_2=g_2(Z_1)$. Note that $Z_2\neq Z_1$ since $Z_1$ is smooth at
$P_2$ while $Z_2$ is singular at that point. If $P_1\in Z_2$, then
\begin{equation}
\label{eq:9-10} 9=Z_1\cdot Z_2\ge\sum_{i=1}^{2}\mult_{P_i}(Z_1)\mult_{P_i}(Z_2)+\sum\limits_{i=3}^9\mult_{P_i}(Z_1)\mult_{P_i}(Z_2)=10,%
\end{equation}
which is a contradiction. If $P_1\not\in Z_2$, consider an element
$g_3\in\bar{G}$ such that $g_3(P_1)=P_3$ and put $Z_3=g_3(Z_1)$.
If $P_1\in Z_3$, replace $Z_2$ by $Z_3$. If $P_1\not\in Z_3$, note
that $P_2\in Z_3$ and $P_3\in Z_2$ and replace $Z_1$ by $Z_3$. In
both cases a computation similar to~(\ref{eq:9-10}) leads to a
contradiction.

Suppose that there exists an irreducible conic $C_1$ passing
through $7$ points of $\PP$. Let $\CC=\{C_1, \ldots, C_k\}$ be the
$\bar{G}$-orbit of the conic $C_1$. Denote by $c$ the number of
conics from $\CC$ passing through each point from $\PP$; note that
there are exactly $7$ points from $\PP$ lying on each conic from
$\CC$. Thus one has $7k=10c$. Put $\PP_i=\PP\setminus C_i$, $1\le
i\le k$. If $k\ge 4$, then there are indices $i_1, i_2\in\{1,
\ldots, k\}$ such that $\PP_{i_1}\cap\PP_{i_2}\neq\varnothing$.
Hence the conics $C_{i_1}$ and $C_{i_2}$ intersect by at least $5$
points which is impossible. Therefore $k\le 3$. Moreover, the
cases $k=1$ and $k=2$ are impossible by  $(\mathrm{ii})$, so that
$k=3$ and $10c=21$, which is again a contradiction.

We conclude that there exists a line $L_1$ passing through $4$
points of $\PP$. Let $\LL=\{L_1,\ldots, L_k\}$ be the
$\bar{G}$-orbit of the line $L_1$. Denote by $l$ the number of
lines from $\LL$ passing through each point from $\PP$; note that
there are exactly $4$ points from $\PP$ lying on each line from
$\LL$. One has $l\le 3$ since $|\PP|=10$ and $k\ge 5$ by
 $(\mathrm{ii})$. Using the equality $10l=4k$, one finds
that $l=2$ and $k=5$. Thus, the set $\LL$ consists of $5$ lines in
general position. By  $(\mathrm{ii})$, the group $\bar{G}$ acts
transitively on the set $\LL$. Hence there exists a
$\bar{G}$-orbit $\Omega\subset\P^2$ that consists of $5$ points in
general position, so that there is a unique (and thus
$\bar{G}$-invariant) smooth conic $C$ passing through the points
of $\Omega$. By Lemma~\ref{lemma:small-orbit-on-P1}, there exists
a $\bar{G}$-orbit $\Omega'\subset C$ of length at most~$2$. A
contradiction with  $(\mathrm{iii})$ completes the proof.
\end{proof}

Now we are ready to prove Theorem~\ref{theorem:no-Bordiga}. Let
$S$ be an irreducible normal surface in $\P^4$ of degree~$6$, let
$H$ be its general hyperplane section, and let $\bar{G}$ be a
finite subgroup in $\mathrm{Aut}(\P^5)$ such that $S$ is
$\bar{G}$-invariant. As usual, we identify $\mathrm{Aut}(\P^5)$
with $\PGL_5(\C)$. Suppose that the conditions $(\mathrm{A})$,
$(\mathrm{B})$, $(\mathrm{C})$, $(\mathrm{D})$, $(\mathrm{E})$ and
$(\mathrm{F})$ in Theorem~\ref{theorem:no-Bordiga} are satisfied.

\begin{lemma}\label{lemma:Bordiga-hyperelliptic}
The curve $H$ is not hyperelliptic.
\end{lemma}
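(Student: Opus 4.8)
Suppose, seeking a contradiction, that $H$ is hyperelliptic. Being a general hyperplane section of $S$, the curve $H$ is smooth; it is non-degenerate in $\P^{3}$ (since $S$ is non-degenerate in $\P^{4}$, which follows from $(\mathrm{G})$), has genus $3$ and degree $6$, so Riemann--Roch gives $h^{0}(\mathcal{O}_{H}(1))=4$ and $h^{1}(\mathcal{O}_{H}(1))=0$; hence $H\subset\P^{3}$ is embedded by the complete linear system $|\mathcal{O}_{H}(1)|$ and $\mathcal{O}_{H}(1)$ is very ample. Let $\tau\colon H\to\P^{1}$ be the double cover corresponding to the hyperelliptic pencil. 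The plan is to show that $H$ lies on a quadric surface in $\P^{3}$, which will contradict $(\mathrm{F})$.

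To produce that quadric I would look at the rank-two sheaf $\mathcal{E}=\tau_{*}\mathcal{O}_{H}(1)$ on $\P^{1}$. Since $\mathcal{O}_{H}(1)$ has vanishing first cohomology on every (length-two) fibre of $\tau$, the sheaf $\mathcal{E}$ is locally free, $R^{1}\tau_{*}\mathcal{O}_{H}(1)=0$, $H^{0}(\P^{1},\mathcal{E})=H^{0}(H,\mathcal{O}_{H}(1))$ is $4$-dimensional, and $\chi(\mathcal{E})=\chi(\mathcal{O}_{H}(1))=4$; writing $\mathcal{E}\cong\mathcal{O}_{\P^{1}}(a)\oplus\mathcal{O}_{\P^{1}}(b)$ this forces $a+b=2$. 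Because $\mathcal{O}_{H}(1)$ is very ample, for every fibre $D$ of $\tau$ the evaluation $H^{0}(H,\mathcal{O}_{H}(1))\to H^{0}(D,\mathcal{O}_{H}(1)|_{D})$ is surjective (otherwise $|\mathcal{O}_{H}(1)|$ would fail to separate the two points, or the tangent direction, of $D$), so $\mathcal{E}$ is globally generated; hence $a,b\ge 0$ and $(a,b)\in\{(1,1),(2,0)\}$. The counit $\tau^{*}\mathcal{E}\to\mathcal{O}_{H}(1)$ is then surjective and yields a morphism $H\to\P(\mathcal{E})$ over $\P^{1}$ whose composition with the tautological morphism $\P(\mathcal{E})\to\P(H^{0}(H,\mathcal{O}_{H}(1))^{\vee})=\P^{3}$ is the given embedding of $H$. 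Since $\mathcal{O}_{\P(\mathcal{E})}(1)$ restricts to a degree-one line bundle on every ruling of $\P(\mathcal{E})\to\P^{1}$, the tautological morphism is birational onto its image $\Sigma\subset\P^{3}$, and $\deg(\Sigma)=\mathcal{O}_{\P(\mathcal{E})}(1)^{2}=\deg(\mathcal{E})=2$; thus $\Sigma$ is a quadric (a smooth quadric when $(a,b)=(1,1)$, a quadric cone when $(a,b)=(2,0)$), and $H\subset\Sigma$.

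For the contradiction, note that $\deg(\mathcal{O}_{H}(2))=12>2g(H)-2$, so $h^{1}(\mathcal{O}_{H}(2))=0$ and $h^{0}(\mathcal{O}_{H}(2))=10=h^{0}(\mathcal{O}_{\P^{3}}(2))$. The restriction map $H^{0}(\mathcal{O}_{\P^{3}}(2))\to H^{0}(\mathcal{O}_{H}(2))$ has the equation of the quadric $\Sigma$ in its kernel, hence is not injective, hence not surjective between spaces of equal dimension; so $H$ is not projectively normal in $\P^{3}$, contradicting $(\mathrm{F})$. (Alternatively, a quadric through $H$ contradicts $(\mathrm{D})$: by $(\mathrm{A})$ and the non-degeneracy of $S$ one has $h^{0}(\mathcal{I}_{S/\P^{4}}(1))=h^{1}(\mathcal{I}_{S/\P^{4}}(1))=0$, so the restriction sequence $0\to\mathcal{I}_{S/\P^{4}}(1)\to\mathcal{I}_{S/\P^{4}}(2)\to\mathcal{I}_{H/\P^{3}}(2)\to 0$ together with $h^{0}(\mathcal{I}_{S/\P^{4}}(2))=0$ forces $h^{0}(\mathcal{I}_{H/\P^{3}}(2))=0$.)

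The only step requiring genuine work is the identification of the chord scroll of the hyperelliptic pencil as a quadric; the two inputs there are that $\tau_{*}\mathcal{O}_{H}(1)$ has degree $2$ (pure Riemann--Roch) and is globally generated (an immediate consequence of the very ampleness of $\mathcal{O}_{H}(1)$). One should also bear in mind the degenerate possibility $\mathcal{O}_{H}(1)\cong\tau^{*}\mathcal{O}_{\P^{1}}(3)$, in which the map given by $|\mathcal{O}_{H}(1)|$ would be two-to-one onto a twisted cubic rather than an embedding (and correspondingly $\tau_{*}\mathcal{O}_{H}(1)\cong\mathcal{O}_{\P^{1}}(3)\oplus\mathcal{O}_{\P^{1}}(-1)$ fails to be globally generated); this does not arise here because $H$ is genuinely embedded in $\P^{3}$ as a smooth curve.
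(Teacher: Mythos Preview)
Your proof is correct. The paper's own proof is a one-line citation: ``The required assertion follows from $(\mathrm{F})$ (see \cite[Theorem~2.1]{Homma80}).'' That reference shows that a smooth non-degenerate curve of genus~$3$ and degree~$6$ in~$\P^3$ is projectively normal if and only if it is not hyperelliptic, which is exactly the statement needed.

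Your argument is a self-contained proof of the relevant direction of Homma's theorem. The chord-scroll construction via $\tau_*\mathcal{O}_H(1)$ is the standard way to exhibit the quadric through a hyperelliptic curve embedded by a complete non-special linear series, and you have carried out the bookkeeping (degree of $\mathcal{E}$, global generation from very ampleness, the degenerate pull-back case) carefully. The final contradiction with~$(\mathrm{F})$ via the dimension count $h^0(\mathcal{O}_{\P^3}(2))=h^0(\mathcal{O}_H(2))=10$ is clean, and your alternative route through~$(\mathrm{A})$ and~$(\mathrm{D})$ is also valid. So the two proofs agree in substance; yours simply unpacks the cited reference, which has the advantage of making the paper more self-contained at the cost of some length.
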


\begin{proof}
The required assertion follows from  $(\mathrm{F})$ (see
\cite[Theorem~2.1]{Homma80}).
\end{proof}

Let $f\colon\tilde{S}\to S$ be the minimal resolution of
singularities of the surface $S$, and let $\tilde{H}$ be the
proper transform of the curve $H$ on the surface $\tilde{S}$. Then
the actions of the group $\bar{G}$ lifts to the surface
$\tilde{S}$, and  $\tilde{H}\sim f^{*}(H)$, because $H$ is a
general hyperplane section of the surface $S$. Note that
$-K_{S}\cdot H=-K_{\tilde{S}}\cdot\tilde{H}=2$ by the adjunction
formula and $(\mathrm{E})$.

\begin{lemma}\label{lemma:Bordiga-anticanclass-empty}
The linear systems $|-K_{\tilde{S}}|$ and $|-K_{S}|$ are empty.
\end{lemma}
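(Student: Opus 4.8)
The plan is to reduce the statement about $\tilde{S}$ to the corresponding statement about $S$, and then to rule out $|-K_S|\neq\varnothing$. For the reduction, I would note that if $D'\in|-K_{\tilde{S}}|$ then $D'\cdot\tilde{H}=-K_{\tilde{S}}\cdot\tilde{H}=2\neq 0$, whereas every $f$-exceptional curve $E$ satisfies $E\cdot\tilde{H}=E\cdot f^{*}(H)=0$; hence $D'$ has a component which is not $f$-exceptional, so $f_{*}D'$ is a non-zero effective divisor with $f_{*}D'\sim -K_S$, i.e. $|-K_S|\neq\varnothing$. Thus it suffices to prove that $|-K_S|=\varnothing$.

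Assume $|-K_S|\neq\varnothing$ for contradiction. Since $H$ is ample and $(-K_S-H)\cdot H=2-6=-4<0$, no divisor linearly equivalent to $-K_S-H$ can be effective, so $h^{0}(\mathcal{O}_S(-K_S-H))=0$. As the general hyperplane section $H$ misses the finitely many quotient singularities of $S$, the sheaf $\mathcal{O}_S(-K_S)$ is locally free along $H$ and the sequence
$$0\longrightarrow\mathcal{O}_S(-K_S-H)\longrightarrow\mathcal{O}_S(-K_S)\longrightarrow\mathcal{O}_H(-K_S|_H)\longrightarrow 0$$
is exact, so $h^{0}(\mathcal{O}_S(-K_S))\le h^{0}(\mathcal{O}_H(-K_S|_H))$. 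By adjunction on the smooth locus $\mathcal{O}_H(-K_S|_H)\cong\mathcal{O}_H(H)\otimes\omega_H^{-1}$, which has degree $H^{2}-(2g(H)-2)=6-4=2$; since $H$ is not hyperelliptic by Lemma~\ref{lemma:Bordiga-hyperelliptic}, a line bundle of degree $2$ on $H$ has at most one section (otherwise its linear system would be a base-point-free $g^{1}_{2}$, making $H$ hyperelliptic). Hence $h^{0}(\mathcal{O}_S(-K_S))=1$, and the unique member $\Delta\in|-K_S|$ is $\bar{G}$-invariant because $-K_S$ is a $\bar{G}$-invariant divisor class. But $\Delta$ is an effective divisor on the surface $S$ with $\Delta\cdot H=2$, hence a curve of degree $2$ in $\P^4$, and its linear span is a $\bar{G}$-invariant linear subspace of $\P^4$ of dimension at most $3$ (a conic spans a plane, a pair of lines spans at most a $\P^3$, and a double line spans a line). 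This contradicts the transitivity of $G$, which is equivalent to the absence of proper $\bar{G}$-invariant linear subspaces in $\P^4$.

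The individual steps are short, and the only points that need a word of care are the exactness of the restriction sequence and the adjunction computation, both unproblematic because the general $H$ avoids $\mathrm{Sing}(S)$. I therefore do not expect a genuine obstacle: the real content of the lemma is just the observation that an anticanonical curve of degree $2$ would force a proper $\bar{G}$-invariant linear subspace, contradicting condition $(\mathrm{G})$.
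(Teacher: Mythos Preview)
Your proof is correct and takes a genuinely different route from the paper's. The paper argues as follows: it first observes that any $\bar{G}$-invariant fixed curve of $|-K_S|$ would have degree at most $2$ in $\P^4$, contradicting $(\mathrm{G})$; then, assuming $|-K_S|\neq\varnothing$, it shows the base locus consists of at most $4$ points (since any two members are conics or pairs of lines), which is again impossible by $(\mathrm{G})$; hence $|-K_S|$ is free, $-K_S$ is Cartier, and a general member is a smooth conic or a disjoint union of two lines, both ruled out by adjunction. Your argument instead bounds $h^{0}(\mathcal{O}_S(-K_S))$ directly by restricting to the general hyperplane section $H$: the restriction is a degree-$2$ line bundle on a non-hyperelliptic curve of genus $3$, so it has at most one section by Lemma~\ref{lemma:Bordiga-hyperelliptic}, forcing the unique anticanonical curve to be $\bar{G}$-invariant and of degree $2$, which contradicts $(\mathrm{G})$.

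Your approach is shorter and makes efficient use of Lemma~\ref{lemma:Bordiga-hyperelliptic}, which the paper's proof of this lemma does not invoke. The paper's approach, on the other hand, avoids the restriction sequence and the care needed about $\mathcal{O}_S(-K_S)$ being only reflexive away from $H$, working instead with the geometry of the linear system; it also does not need to appeal to the non-hyperellipticity of $H$. Both reductions from $|-K_{\tilde{S}}|$ to $|-K_S|$ are the same.
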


\begin{proof}
If $|-K_{S}|$ contains a $\bar{G}$-invariant fixed curve $C$, then
$C$ must be either a line or a conic in $\mathbb{P}^{4}$, because
$H\cdot C\leqslant -K_{S}\cdot H=2$. Hence, it follows from
$(\mathrm{G})$ that $|-K_{S}|$ does not contain fixed curves if
$|-K_{S}|$ is not empty.

Suppose that $|-K_{S}|\neq\varnothing$. Then every curve in $|-K_{S}|$ is
either a union of two lines or a conic in $\mathbb{P}^{4}$,
because $-K_{S}\cdot H=2$. Hence
either $|-K_{S}|$ is free from base points or its base locus must
consist of at most $4$ points. The latter case is impossible by
$(\mathrm{G})$. Hence, the linear system  $|-K_{S}|$ is free. In
particular, the divisor $-K_{S}$ is Cartier, which implies that
the surface $S$ has Du Val singularities by $(\mathrm{C})$. Since
$-K_{S}\cdot H=2$, a generic curve in $|-K_{S}|$ must be either a
disjoint union of two lines or a smooth conic, which is impossible
by the adjunction formula.

Finally, $|-K_{\tilde{S}}|=\varnothing$ is implied by $|-K_S|=\varnothing$.
\end{proof}

\begin{lemma}\label{lemma:Bordiga-to-P2}
The equalities $K_{\tilde{S}}^{2}=-1$ and
$h^{0}(\mathcal{O}_{\tilde{S}}(K_{\tilde{S}}+\tilde{H}))=3$ hold,
the linear system $|K_{\tilde{S}}+\tilde{H}|$ is free from base
points and induces a birational morphism
$\pi\colon\tilde{S}\to\mathbb{P}^{2}$.
\end{lemma}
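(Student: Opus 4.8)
The plan is to determine the numerical invariants of the pair $(\tilde{S},\tilde{H})$, compute $h^{0}(\mathcal{O}_{\tilde{S}}(K_{\tilde{S}}+\tilde{H}))$ by restricting to $\tilde{H}$, establish base-point-freeness via Reider's theorem, and then combine Riemann--Roch for $-K_{\tilde{S}}$ with a dimension count for the image of the adjoint map to pin down both $K_{\tilde{S}}^{2}$ and the degree of $\pi$.

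First I would record the numerics. Since $\tilde{H}=f^{*}(H)$ we get $\tilde{H}^{2}=H^{2}=\deg(S)=6$; as $\tilde{H}\cong H$ is a smooth curve of genus $3$ by $(\mathrm{E})$, adjunction on $\tilde{S}$ gives $2\cdot 3-2=\tilde{H}^{2}+K_{\tilde{S}}\cdot\tilde{H}$, so $K_{\tilde{S}}\cdot\tilde{H}=-2$. Because $S$ has quotient (hence rational) singularities, $\tilde{S}$ is a smooth rational surface, so $h^{1}(\mathcal{O}_{\tilde{S}})=h^{2}(\mathcal{O}_{\tilde{S}})=0$; in particular $h^{0}(\mathcal{O}_{\tilde{S}}(K_{\tilde{S}}))=0$ and $h^{1}(\mathcal{O}_{\tilde{S}}(K_{\tilde{S}}))=h^{1}(\mathcal{O}_{\tilde{S}})=0$ by Serre duality. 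Taking cohomology in
$$0\longrightarrow\mathcal{O}_{\tilde{S}}(K_{\tilde{S}})\longrightarrow\mathcal{O}_{\tilde{S}}(K_{\tilde{S}}+\tilde{H})\longrightarrow\omega_{\tilde{H}}\longrightarrow 0,$$
where $(K_{\tilde{S}}+\tilde{H})|_{\tilde{H}}=\omega_{\tilde{H}}$ by adjunction, yields $h^{0}(\mathcal{O}_{\tilde{S}}(K_{\tilde{S}}+\tilde{H}))=h^{0}(\omega_{\tilde{H}})=3$ and, crucially, that the restriction $H^{0}(\mathcal{O}_{\tilde{S}}(K_{\tilde{S}}+\tilde{H}))\to H^{0}(\omega_{\tilde{H}})$ is an isomorphism. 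By Lemma~\ref{lemma:Bordiga-hyperelliptic} the curve $\tilde{H}$ is non-hyperelliptic of genus $3$, so $|\omega_{\tilde{H}}|$ is very ample and embeds $\tilde{H}$ as a smooth plane quartic; hence $|K_{\tilde{S}}+\tilde{H}|$ has no base points along $\tilde{H}$, and the adjoint map restricted to $\tilde{H}$ is precisely this embedding.

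Next I would prove that $|K_{\tilde{S}}+\tilde{H}|$ is base-point-free. Since $\tilde{H}=f^{*}(H)$ is nef with $\tilde{H}^{2}=6\ge 5$, Reider's theorem applies: a base point of $|K_{\tilde{S}}+\tilde{H}|$ would force a nonzero effective divisor $E$ with either $\tilde{H}\cdot E=0$ and $E^{2}=-1$, or $\tilde{H}\cdot E=1$ and $E^{2}=0$. In the first case $\tilde{H}\cdot E=0$ forces $E$ to be supported on the $f$-exceptional curves (the only curves of $\tilde{H}$-degree zero, as $H$ is ample on $S$); but for the minimal resolution of a rational surface singularity one has $K_{\tilde{S}}\cdot E\ge 0$ (no $(-1)$-curves are contracted) and, by Artin's criterion, $p_{a}(E)\le 0$, whence $E^{2}\le -2-K_{\tilde{S}}\cdot E\le -2$, excluding $E^{2}=-1$. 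In the second case $f_{*}(E)$ is a line in $S\subset\mathbb{P}^{4}$ (it has $H$-degree $1$), and one argues that the existence of such an $E$ forces $S$ to be swept out by a one-parameter family of lines, so that the general hyperplane section $\tilde{H}$ would be a one-section of a pencil and hence rational, contradicting $g(\tilde{H})=3$. Therefore $|K_{\tilde{S}}+\tilde{H}|$ is base-point-free, and — its space of sections being three-dimensional — it defines a morphism $\pi\colon\tilde{S}\to\mathbb{P}^{2}$ restricting on $\tilde{H}$ to the canonical embedding onto a plane quartic.

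Finally I would compute $K_{\tilde{S}}^{2}$ and deduce birationality. Riemann--Roch gives $\chi(\mathcal{O}_{\tilde{S}}(-K_{\tilde{S}}))=1+K_{\tilde{S}}^{2}$, while $h^{0}(\mathcal{O}_{\tilde{S}}(-K_{\tilde{S}}))=0$ by Lemma~\ref{lemma:Bordiga-anticanclass-empty} and $h^{2}(\mathcal{O}_{\tilde{S}}(-K_{\tilde{S}}))=h^{0}(\mathcal{O}_{\tilde{S}}(2K_{\tilde{S}}))=0$ since $\tilde{S}$ is rational; hence $h^{1}(\mathcal{O}_{\tilde{S}}(-K_{\tilde{S}}))=-1-K_{\tilde{S}}^{2}\ge 0$, so $K_{\tilde{S}}^{2}\le -1$. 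On the other hand $\pi(\tilde{H})$ is a plane quartic of genus $3$, so $\pi(\tilde{S})$ cannot be that curve (a rational surface admits no morphism onto a curve of positive genus), i.e. $\pi$ is surjective; therefore $(K_{\tilde{S}}+\tilde{H})^{2}=\deg(\pi)\ge 1$. Since $(K_{\tilde{S}}+\tilde{H})^{2}=K_{\tilde{S}}^{2}+2K_{\tilde{S}}\cdot\tilde{H}+\tilde{H}^{2}=K_{\tilde{S}}^{2}+2$, this gives $K_{\tilde{S}}^{2}\ge -1$, hence $K_{\tilde{S}}^{2}=-1$, $(K_{\tilde{S}}+\tilde{H})^{2}=1=\deg(\pi)$, and $\pi$ is birational. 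The main obstacle is the base-point-freeness step — concretely, excluding the second Reider configuration, which is where the hypotheses $(\mathrm{E})$ and $(\mathrm{F})$ do the real work; the Riemann--Roch computation and the birationality of $\pi$ are then formal consequences.
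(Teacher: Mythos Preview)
Your overall architecture is sound and close to the paper's: the computation of $h^{0}(K_{\tilde S}+\tilde H)=3$ via restriction to $\tilde H$, the nefness of $K_{\tilde S}+\tilde H$, the Riemann--Roch bound $K_{\tilde S}^{2}\le -1$ from $|-K_{\tilde S}|=\varnothing$, and the endgame using $(K_{\tilde S}+\tilde H)^{2}=\deg\pi$ are all correct. Your treatment of the first Reider case via Artin's rationality criterion is exactly the short argument the paper records in a footnote.

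The genuine gap is the second Reider case $\tilde H\cdot E=1$, $E^{2}=0$. Your sentence ``one argues that the existence of such an $E$ forces $S$ to be swept out by a one-parameter family of lines'' is not justified and, as stated, fails when $S$ is singular. Write $E=\tilde L+F$ with $F$ an effective $f$-exceptional divisor and $\tilde L$ the strict transform of a line $L\subset S$. Since $K_{\tilde S}+\tilde H$ is nef one gets $\tilde L^{2}\le -1$, hence $K_{\tilde S}\cdot E=-2-\tilde L^{2}+K_{\tilde S}\cdot F\ge -1$, so Riemann--Roch only yields $\chi(\mathcal O_{\tilde S}(E))\le 1$; there is no reason for $|E|$ to move, and a single line on $S$ does not make $S$ a scroll. (If $S$ happens to be smooth then $F=0$, $E=\tilde L$ with $E^{2}=0$, and your pencil argument does go through --- but you have not reduced to that case.)

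The paper closes this case by a detailed analysis of the possible configurations of $\tilde L$ relative to the exceptional curves of $f$, and the argument leans essentially on hypothesis~$(\mathrm G)$: transitivity of $G$ forces $|\mathrm{Sing}(S)|\ge 5$, pins down the combinatorics of the exceptional locus, and controls how $\tilde L$ can meet it, eventually forcing $\tilde L^{2}\ge 0$ in contradiction with $\tilde L^{2}\le -1$. Your remark that ``$(\mathrm E)$ and $(\mathrm F)$ do the real work'' here is misplaced; it is $(\mathrm G)$ that is doing the heavy lifting in the singular case, and your argument never invokes it.
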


\begin{proof}
It follows from the~Riemann--Roch theorem and the Nadel--Shokurov
vanishing theorem  that
$$
h^{0}\Big(\mathcal{O}_{\tilde{S}}\big(K_{\tilde{S}}+
\tilde{H}\big)\Big)=\chi\Big(\mathcal{O}_{\tilde{S}}\big(K_{\tilde{S}}+
\tilde{H}\big)\Big)=
\chi\big(\mathcal{O}_{\tilde{S}}\big)+
\frac{\tilde{H}\cdot(\tilde{H}-K_{\tilde{S}})}{2}=3,%
$$
because $\chi(\mathcal{O}_{\tilde{S}})=1$ by $(\mathrm{B})$. One
the other hand, there is an exact sequence of cohomology groups
$$
0\to H^{0}\Big(\mathcal{O}_{\tilde{S}}\big(K_{\tilde{S}}\big)\Big)\to H^{0}\Big(\mathcal{O}_{\tilde{S}}\big(K_{\tilde{S}}+\tilde{H}\big)\Big)\to H^{0}\Big(\mathcal{O}_{\tilde{H}}\big(K_{\tilde{S}}\big)\Big),%
$$
which implies that
$h^{0}(\mathcal{O}_{\tilde{S}}(K_{\tilde{S}}+\tilde{H}))=3$ and
$|K_{\tilde{S}}+\tilde{H}|$ does not have base points contained in
the curve $\tilde{H}$, because
$h^{0}(\mathcal{O}_{\tilde{S}}(K_{\tilde{S}}))=0$ by
$(\mathrm{B})$, $h^{0}(\mathcal{O}_{\tilde{H}}(K_{\tilde{H}}))=3$
by~$(\mathrm{E})$, and $|K_{\tilde{H}}|$ is free from base points
by  $(\mathrm{E})$. Thus, every base curve of the linear system
$|K_{\tilde{S}}+\tilde{H}|$ is $f$-exceptional. In particular, the
divisor $K_{\tilde{S}}+\tilde{H}$ is nef, since $K_{\tilde{S}}$ is
$f$-nef, because the resolution of singularities~$f$ is minimal.
Thus, we see that
\begin{equation}
\label{equation:Bordiga-K-square}
K_{\tilde{S}}^2+2=\Big(K_{\tilde{S}}+\tilde{H}\Big)^{2}\geqslant 0,%
\end{equation}
which gives $K_{\tilde{S}}^2\geqslant -2$. On the other hand, it
follows from the~Riemann--Roch theorem that
$$
h^{0}\Big(\mathcal{O}_{\tilde{S}}\big(-K_{\tilde{S}}\big)\Big)\geqslant\chi\Big(\mathcal{O}_{\tilde{S}}\big(-K_{\tilde{S}}\big)\Big)=\chi\big(\mathcal{O}_{\tilde{S}}\big)+K_{\tilde{S}}^2=1+K_{\tilde{S}}^2,%
$$
because
$h^{2}(\mathcal{O}_{\tilde{S}}(-K_{\tilde{S}}))=h^{0}(\mathcal{O}_{\tilde{S}}(2K_{\tilde{S}}))=0$
and $\chi(\mathcal{O}_{\tilde{S}})=1$ by $(\mathrm{B})$. Since
$|-K_{\tilde{S}}|$ is empty by
Lemma~\ref{lemma:Bordiga-anticanclass-empty}, we see that
$K_{\tilde{S}}^2\leqslant -1$. Thus, either $K_{\tilde{S}}^2=-1$
or $K_{\tilde{S}}^2=-2$. Applying Theorem~\ref{theorem:Noether},
we see that
\begin{equation}
\label{equation:Bordiga-Noether-formula}
\mathrm{rk}\Big(\mathrm{Pic}\big(\tilde{S}\big)\Big)=10-K_{\tilde{S}}^2\leqslant 12,%
\end{equation}
which implies that $|\mathrm{Sing}(S)|\leqslant 11$.  By
$(\mathrm{G})$,  we see that either $S$ is smooth, or
$5\leqslant|\mathrm{Sing}(S)|\leqslant 11$. Moreover, it follows
from $(\ref{equation:Bordiga-Noether-formula})$ and $(\mathrm{C})$
and $(\mathrm{G})$ that only one of the following cases is
possible:
\begin{itemize}
\item the surface $S$ is smooth and $f$ is an isomorphism,
\item $5\leqslant|\mathrm{Sing}(S)|\leqslant 11$, and $f$ contracts exactly one smooth irreducible rational curve to every singular point of the surface $S$,%
\item $|\mathrm{Sing}(S)|=5$, and $f$ contracts exactly two smooth irreducible rational curves that intersect transversally in one point to every singular point of the surface $S$.%
\end{itemize}

The linear system $|K_{\tilde{S}}+\tilde{H}|$ induces a rational
map $\pi\colon\tilde{S}\dasharrow\mathbb{P}^{2}$. If
$|K_{\tilde{S}}+\tilde{H}|$ is free from base points, then it
follows from $(\ref{equation:Bordiga-K-square})$ that either
$K_{\tilde{S}}^2=-1$ and $\pi$ is a birational morphism, or
$K_{\tilde{S}}^2=-2$ and $|K_{\tilde{S}}+\tilde{H}|$ is composed
of a pencil. Moreover, if $|K_{\tilde{S}}+\tilde{H}|$ is free
from base points and is composed of a pencil, then
$\pi(\tilde{S})$ must be a smooth conic and
$\pi\vert_{\tilde{H}}\colon\tilde{H}\to\pi(\tilde{S})$ must be a double
cover, because $(K_{\tilde{S}}+\tilde{H})\cdot\tilde{H}=4$. By
Lemma~\ref{lemma:Bordiga-hyperelliptic}, we know that $\tilde{H}$
is not hyperelliptic. Thus, the case when
$|K_{\tilde{S}}+\tilde{H}|$ is free from base points and
$K_{\tilde{S}}^2=-2$ is impossible. Therefore, to complete the
proof it is enough to prove that $|K_{\tilde{S}}+\tilde{H}|$ is
free from base points.

Suppose that $|K_{\tilde{S}}+\tilde{H}|$ has a base point
$P\in\tilde{S}$. Then it follows from \cite[Theorem~1]{Reider88}
that there exists an effective Cartier divisor $E$ on the surface
$\tilde{S}$ such that $P\in\mathrm{Supp}(E)$ and either
$\tilde{H}\cdot E=0$ and $E^{2}=-1$ or $\tilde{H}\cdot E=1$ and
$E^{2}=0$.

If $\tilde{H}\cdot E=0$ and $E^{2}=-1$, then $E$ is
$f$-exceptional, which implies that $S$ must be singular at the
point $f(P)$. If $\tilde{H}\cdot E=1$ and $E^{2}=0$ and $S$ is
smooth, then $E$ is a line in $S\subset\mathbb{P}^4$, which
implies that $-1=K_{\tilde{S}}\cdot E+1=(E+\tilde{H})\cdot
E\geqslant 0$ by the adjunction formula, because
$K_{\tilde{S}}+\tilde{H}$ is nef. Thus, in both cases the surface
$S$ must be singular.

Let $r$ be the number of $f$-exceptional irreducible curves. Let
us denote these curves by $E_{1},\ldots,E_{r}$. Put
$E=\tilde{L}+\sum_{i=1}^{r}a_{i}E_{i}$, where $\tilde{L}$ is an
effective Cartier divisor on the surface~$\tilde{S}$ such that
none of its components is $f$-exceptional, and $a_{i}$ is a
non-negative integer. Put $n_{i}=-E_{i}^{2}$. Then $n_{i}\geqslant
2$ for every $i\in\{1,\ldots,r\}$, because $f$ is minimal.

Suppose that $\tilde{H}\cdot E=0$ and $E^{2}=-1$. Then $E$ is
$f$-exceptional, which simply means that $\tilde{L}=0$. If $f$
contracts exactly one smooth irreducible rational curve to every
singular point of the surface $S$, then
$$
-2=E^2=\Big(\sum_{i=1}^{r}a_{i}E_{i}\Big)^2=-\sum_{i=1}^{r}a_{i}^2n_{i}\leqslant -2\sum_{i=1}^{r}a_{i}^2,%
$$
which is a contradiction. Thus, we see that
$|\mathrm{Sing}(S)|=5$, and $f$ contracts exactly two smooth
irreducible rational curves that intersect transversally in one
point to every singular point of the surface $S$. Without loss of
generality, we may assume that $f(E_{1})=f(E_{2})=f(P)$ and
$f(P)\ne f(E_{i})$ for every $i\not\in\{1,2\}$. Then
$$
-1=E^{2}=-a_{1}^2n_{1}+2a_{1}b_{2}-a_{2}^2n_{2}\leqslant -2a_{1}^2+2a_{1}a_{2}-2a_{2}^2=-a_{1}^2-a_{2}^2-\Big(a_{1}+a_{2}\Big)^2,%
$$
which immediately leads to a contradiction. Thus, we see that the
case when $\tilde{H}\cdot E=0$ and $E^{2}=-1$ is
impossible\footnote{As was pointed out to us by Yu.\,Prokhorov,
the contradiction can be obtained much easier in this case.
Namely, if $E^{2}=-1$ and $E$ is $f$-exceptional, then
$(K_{\tilde{S}}+E)\cdot E\geqslant 0$, since $K_{\tilde{S}}$ is
$f$-nef, which implies that $S$ has non-rational singularities by
\cite{Artin66}. However, the surface $S$ has rational
singularities by $(\mathrm{C})$.}.

Therefore, we see that $\tilde{H}\cdot E=1$ and $E^{2}=0$. Put
$L=f(\tilde{L})$. Then $L$ is a line in $S\subset\mathbb{P}^4$,
because $H\cdot L=\tilde{H}\cdot E=1$. We see that $\tilde{L}$ is
an irreducible smooth rational curve. Then it follows from the
adjunction formula that
$-1-\tilde{L}^2=(K_{\tilde{S}}+\tilde{H})\cdot \tilde{L}\geqslant
0$, because we already proved that the divisor
$K_{\tilde{S}}+\tilde{H}$ is nef. Thus, we see that $\tilde{L}$ is
a smooth rational curve on the surface~$\tilde{S}$ such that
$\tilde{L}^2\leqslant -1$. If $L\cap\mathrm{Sing}(S)=\varnothing$,
then
$0=E^2=\tilde{L}^2+(\sum_{i=1}^{r}a_{i}E_{i})^2\leqslant\tilde{L}^2$,
which is impossible, since $\tilde{L}^2\leqslant -1$. Thus, we see
that $L\cap\mathrm{Sing}(S)\ne\varnothing$.

If $f$ contracts exactly one smooth irreducible rational curve to
every singular point of the surface $S$, then
$$
0=E^2\leqslant\tilde{L}^2-\sum_{i=1}^{r}\Big(a_{i}^2n_{i}-2a_{i}\Big)\leqslant\tilde{L}^2,
$$
because either $\tilde{L}\cdot E_{i}=0$ or $\tilde{L}\cdot
E_{i}=1$, since the curve $L$ is smooth (it is a line in
$S\subset\P^4$). Keeping in mind that $\tilde{L}^2\leqslant -1$,
we see that $|\mathrm{Sing}(S)|=5$, and $f$ contracts exactly two
smooth irreducible rational curves that intersect transversally in
one point to every singular point of the surface $S$. So, we have
$r=10$.

Let us denote five singular points of the surface $S$ by $O_{1}$,
$O_{2}$, $O_{3}$, $O_{4}$ and $O_{5}$. Without loss of generality,
we may assume that $f(E_{i})=f(E_{i+1})=O_{\lceil i/2\rceil}$ for
every $i\in\{1,\ldots,9\}$. Since $L$ is smooth and $(\mathrm{G})$
holds, we may assume that $\tilde{L}\cdot E_{1}=\tilde{L}\cdot
E_{3}=\tilde{L}\cdot E_{5}=\tilde{L}\cdot E_{7}=\tilde{L}\cdot
E_{9}=1$ and~$\tilde{L}$ does not intersect the curves $E_{2}$,
$E_{4}$, $E_{6}$, $E_{8}$ and $E_{10}$. Then
$$
0=E^2=\tilde{L}^2-\sum_{i=1}^{10}a_{i}^2n_{i}+2\sum_{i=0}^{4}a_{2i+1}+2\sum_{i=0}^{4}a_{2i+1}a_{2i+2},
$$
which implies that $\tilde{L}^2\geqslant 0$. Indeed, for every
$i\in\{1,3,5,7,9\}$, we have
$$
2a_{i}+2a_{i}a_{i+1}-a_{i}^2n_{i}-a_{i+1}^2n_{i+1}\leqslant-\Big(a_{i}-a_{i+1}\Big)^2-\Big(a_{i}-1\Big)^2-(a_{i+1}^2-1)\leqslant 0,%
$$
because $a_{i}$ and $a_{i+1}$ are non-negative integers. Since we
already proved that $\tilde{L}^2\leqslant -1$, we see that our
assumption that $|K_{\tilde{S}}+\tilde{H}|$ has a base point was
wrong, which completes the proof.
\end{proof}

Note that the birational morphism
$\pi\colon\tilde{S}\to\mathbb{P}^{2}$ is $\bar{G}$-equivariant,
because the line bundle $K_{\tilde{S}}+\tilde{H}$ is
$\bar{G}$-invariant. Since $K_{\tilde{S}}^2=-1$, the morphism
$\pi$ contracts $10$ irreducible smooth rational curves. Let us
denote them by $E_{1}$, $E_{2}$, $E_{3}$, $E_{4}$, $E_{5}$,
$E_{6}$, $E_{7}$, $E_{8}$, $E_{9}$ and $E_{10}$.

\begin{lemma}\label{lemma:Bordiga-10-points}
The curves $E_{1}$, $E_{2}$, $E_{3}$, $E_{4}$, $E_{5}$, $E_{6}$,
$E_{7}$, $E_{8}$, $E_{9}$ and $E_{10}$ are pairwise disjoint.
\end{lemma}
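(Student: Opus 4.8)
The claim is equivalent to saying that $\pi\colon\tilde{S}\to\mathbb{P}^{2}$ is the blow-up of ten \emph{distinct} points of $\mathbb{P}^{2}$, equivalently that every $E_{i}$ is a $(-1)$-curve. So first I would extract the constraint forced by $\pi$ being defined by $|K_{\tilde{S}}+\tilde{H}|$. Put $A=K_{\tilde{S}}+\tilde{H}$. By Lemma~\ref{lemma:Bordiga-to-P2} the linear system $|A|$ is base point free with $A^{2}=1$ and $h^{0}(A)=3$, so $A=\pi^{*}\mathcal{O}_{\mathbb{P}^{2}}(1)$ and $A\cdot E_{i}=0$ for every $i$. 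Since $\tilde{H}=f^{*}(H)$ is nef, this gives $-K_{\tilde{S}}\cdot E_{i}=\tilde{H}\cdot E_{i}\ge 0$; combined with the adjunction formula on the rational curve $E_{i}$ and with the negative definiteness of the intersection form on the $\pi$-exceptional curves, one obtains $E_{i}^{2}=\tilde{H}\cdot E_{i}-2\in\{-1,-2\}$, and moreover that $E_{i}$ is a $(-1)$-curve with $f(E_{i})$ a line in $S\subset\mathbb{P}^{4}$ unless it is a $(-2)$-curve contracted by $f$. Hence it is enough to show that none of $E_{1},\dots,E_{10}$ is a $(-2)$-curve.

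Suppose some $E_{i}$ is a $(-2)$-curve; then $S$ is singular, and I would invoke the trichotomy obtained inside the proof of Lemma~\ref{lemma:Bordiga-to-P2}: either $S$ is smooth, or $f$ contracts exactly one smooth rational curve to each of $k$ singular points with $5\le k\le 11$, or $f$ contracts two transversal smooth rational curves to each of exactly five singular points. Any $f$-exceptional curve $C$ has $\tilde{H}\cdot C=0$, and if in addition $C^{2}=-2$ then $A\cdot C=K_{\tilde{S}}\cdot C=0$, so $C$ is contracted by $\pi$ and is therefore one of the $E_{i}$. In the last case of the trichotomy $S$ has five $\mathbb{A}_{2}$ points, producing ten $(-2)$-curves, which would then be all of $E_{1},\dots,E_{10}$; but the exceptional locus of the nontrivial birational morphism $\pi$ between smooth surfaces must contain a $(-1)$-curve, a contradiction.

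There remains the case in which $S$ has $\mathbb{A}_{1}$ (or worse cyclic quotient) singularities. Here I would analyse the blow-up trees of a factorization of $\pi$: a $(-2)$-curve among the $E_{i}$ is the strict transform of a first exceptional curve lying over a single infinitely near point, it meets exactly one of the $(-1)$-curves $E_{j}$, and distinct $(-2)$-curves meet distinct $(-1)$-curves (a point blown up on such a $(-1)$-curve would turn it into a $(-2)$-curve that is either not $f$-exceptional or gives an $\mathbb{A}_{2}$-configuration, both excluded). This matching, together with a short incidence count, pins down the configuration: the $(-2)$-curves are matched with distinct $(-1)$-curves, forcing $S$ to have exactly five singular points, $\mathrm{Sing}(S)$ to be a single $\bar{G}$-orbit, and $\pi$ to be the blow-up of five points $p_{1},\dots,p_{5}\in\mathbb{P}^{2}$ together with one infinitely near point over each. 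Nefness of $\tilde{H}$ forbids a line through four or five of the $p_{i}$, and the incidence count $3t=5\ell_{0}$ with $\ell_{0}\le 2$ forbids three collinear $p_{i}$, so the $p_{i}$ are in general position and lie on a unique, necessarily smooth and $\bar{G}$-invariant, conic $C$; the action of $\bar{G}$ on $C\cong\mathbb{P}^{1}$ is faithful and has an orbit of length $5$, so by Lemma~\ref{lemma:small-orbit-on-P1} there is a $\bar{G}$-orbit $\Omega\subset C$ with $|\Omega|\le 2$. As $|\Omega|\ne 5$, $\Omega$ is disjoint from $\{p_{1},\dots,p_{5}\}$, hence $\pi^{-1}(\Omega)$ is a $\bar{G}$-invariant set of at most two points of $\tilde{S}$ not contracted by $f$, and $f$ maps it to a $\bar{G}$-invariant set of at most two points of $\mathbb{P}^{4}$ whose linear span is a proper $\bar{G}$-invariant subspace --- contradicting condition $(\mathrm{G})$. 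I expect this last case to be the main obstacle: one must run the tree bookkeeping and the incidence arguments cleanly, and in particular control the cyclic quotient singularities of $S$ of index $\ge 3$ allowed by $(\mathrm{C})$, which manifest as lines through four of the blown-up points and feed into the same conic-and-small-orbit argument.
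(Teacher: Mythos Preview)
Your overall strategy is sound and your endgame is correct, but there is a genuine gap in the case analysis, and the route differs from the paper's in a way worth noting.

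\textbf{The gap.} Your dismissal of the third case of the trichotomy assumes the five singular points are of type $\mathbb{A}_{2}$, i.e.\ that both $f$-exceptional curves over each point are $(-2)$-curves. The trichotomy in Lemma~\ref{lemma:Bordiga-to-P2} only records the combinatorics (two transversal curves over each of five points), not the self-intersections; the singularities could be other length-two Hirzebruch--Jung chains. In that situation only one of the two curves over each point is a $(-2)$-curve (and hence among the $E_{i}$), while the other has $C^{2}\le -3$ and is \emph{not} $\pi$-exceptional. So your argument ``ten $(-2)$-curves, hence all of the $E_{i}$'' does not apply, and this sub-case must be fed into your step~4 analysis. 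You flag this at the end (``control the cyclic quotient singularities of $S$ of index $\ge 3$'') but do not carry it out. A related loose end: ``nefness of $\tilde{H}$ forbids a line through four of the $p_{i}$'' is not quite right, since $\tilde{L}^{2}=-3$ (hence $\tilde{H}\cdot\tilde{L}=0$) is compatible with nefness; one needs the extra observation that such an $\tilde{L}$ is $f$-exceptional and $\bar{G}$-invariant (two such distinct lines would share $\ge 3$ points), so $f(\tilde{L})$ would be a $\bar{G}$-fixed point of $S$, contradicting $(\mathrm{G})$.

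\textbf{Comparison with the paper.} The paper reaches $k=5$ directly from $(\mathrm{G})$ and the $\bar{G}$-invariant partition of $\{E_{1},\dots,E_{10}\}$ into $(-1)$- and $(-2)$-curves (the $(-2)$-orbits have size $\ge 5$ via their images in $\mathrm{Sing}(S)$; small $(-1)$-orbits are excluded because few lines span a proper subspace, and non-integral chain lengths exclude $k\in\{3,4\}$). This bypasses any case split on the singularity type of $S$. For the endgame the paper does \emph{not} insist that the five points $O_{1},\dots,O_{5}$ be in general position: it only excludes four on a line, then for the unique conic $C$ through the five points computes $\tilde{H}\cdot\tilde{C}=4+\tilde{C}^{2}\le 3$, so $f(\tilde{C})$ (or a component, if $C$ is reducible) is a $\bar{G}$-invariant curve of degree $\le 3$ in $\P^{4}$, hence lies in a proper $\bar{G}$-invariant linear subspace --- contradiction. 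Your route (exclude three collinear by the incidence relation $3t=5\ell_{0}$, get a smooth conic, apply Lemma~\ref{lemma:small-orbit-on-P1}) is correct and elegant once the five-point configuration is established, but it requires the extra ``no three collinear'' step. The paper's degree-in-$\P^{4}$ argument is more robust: it handles the reducible conic directly and does not need the small-orbit lemma.
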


\begin{proof}
To complete the proof, we must show that $E_{i}^2=-1$ for every
$i\in\{1,\ldots,10\}$. Suppose that this
 is not true. Then there should be at least
one curve among $E_{1},\ldots,E_{10}$ whose self-intersection is
not $-1$. On the other hand, there should be at least one curve
among $E_{1},\ldots,E_{10}$ whose self-intersection is $-1$. We
may assume that there is $k\in\{2,\ldots,9\}$ such that
$E_{i}^{2}=-1$ for every $i\leqslant k$, and $E_{i}^{2}\ne -1$ for
every $i>k$. By the adjunction formula, we have
$K_{\tilde{S}}\cdot E_{i}=-2-E_{i}^2\geqslant -1$. Since
$(K_{\tilde{S}}+\tilde{H})\cdot E_{i}=0$, we see that
$E^{2}_{i}\ne -1$ if and only if $E_{i}^2=-2$ and $E_{i}$ is
contracted by $f$ to a singular point of the surface $S$. On the
other hand, if $E_{i}^{2}=-1$, then $\tilde{H}\cdot E_{i}=1$,
which implies that $f(E_{i})$  is a line in $S\subset\P^4$. In
particular, we see that $E_{i}^{2}=-2$ for every $i>k$. Since the
set $\cup_{i=1}^{10}E_{i}$ is $\bar{G}$-invariant, it easily
follows from $(\mathrm{G})$ that $k=5$. Moreover, it follows from
$(\mathrm{G})$ that $\bar{G}$ acts transitively on the set
$\{E_{6}, E_{7}, E_{8}, E_{9}, E_{10}\}$.

The birational morphism $\pi$ contracts the curves
$E_{1},\ldots,E_{10}$ to $5$ points in $\P^2$. Let us denote these
points by $O_{1}$,  $O_{2}$,  $O_{3}$,  $O_{4}$ and  $O_{5}$.
Without loss of generality, we may assume that $E_{i}$
and~$E_{i+5}$ are contracted by $f$ to the point $O_{i}$ for every
$i\in\{1,\ldots,5\}$.

Let $L$ be a line in $\P^2$, and let $\tilde{L}$ be its proper
transform on the surface $\tilde{S}$. Then it follows from the
adjunction formula that $\tilde{H}\cdot\tilde{L}=3+\tilde{L}^{2}$,
which implies that $\tilde{L}^{2}\geqslant -3$, and
$\tilde{L}^{2}=-3$ if and only if $\tilde{L}$ is contracted by $f$
to a singular point of the surface $S$. In particular, we see that
there exists no line in $\P^2$ that passes through all points
$O_{1},\ldots,O_{5}$. Similarly, if $L$ contains four points among
$O_{1},\ldots,O_{5}$, then $L$ must be $\bar{G}$-invariant and
$\tilde{L}^{2}\leqslant -3$, which implies that $f(\tilde{L})$ is
a $\bar{G}$-invariant point in $S$, which is impossible by
$(\mathrm{G})$.

We see that no $4$ among the points $O_{1}$, $O_{2}$, $O_{3}$,
$O_{4}$ and $O_{5}$ lie on a line in $\P^2$. In particular, there
exists unique reduced conic in $\P^2$ that passes through the
points $O_{1}$, $O_{2}$, $O_{3}$, $O_{4}$ and $O_{5}$. Let us
denote this conic by $C$, and let us denote by $\tilde{C}$ its
proper transform on the surface $\tilde{S}$. Then $C$ and
$\tilde{C}$ are $\bar{G}$-invariant curves. If $C$ is irreducible,
then $\tilde{C}^{2}\leqslant -1$ and it follows from the
adjunction formula that
$\tilde{H}\cdot\tilde{C}=4+\tilde{C}^{2}\leqslant 3$, which
implies that $f(\tilde{C})$ is contained in a proper
$\bar{G}$-invariant linear subspace in $\P^4$, which is impossible
by  $(\mathrm{G})$. Thus, the conic $C$ is reducible. Then at
least one component of the curve~$f(\tilde{C})$ (not necessary the
linear span of this component) is contained in a proper
$\bar{G}$-invariant linear subspace in $\P^4$, which is impossible
by  $(\mathrm{G})$.
\end{proof}

Put $P_{i}=\pi(E_{i})$, $1\le i\le 10$, and put
$\PP=\{P_{1},P_{2},\ldots,P_{10}\}$. Let
us check that $\PP$ satisfies all hypothesis of
Lemma~\ref{lemma:10-points-on-P2}. It follows from
Lemma~\ref{lemma:Bordiga-anticanclass-empty} that there are no
cubic curves passing through all points of $\PP$.

\begin{lemma}
\label{lemma:Borgiga-final} Let $C$ be a $\bar{G}$-invariant curve
in $\P^2$ that passes through at least $r$ of the points of $\PP$.
Then $4\deg(C)-r\ge 4$.
\end{lemma}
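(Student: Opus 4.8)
The plan is to carry the statement over to $\tilde{S}$ and read it off from the geometry of $S\subset\P^4$. First recall from Lemmas~\ref{lemma:Bordiga-to-P2} and~\ref{lemma:Bordiga-10-points} that $\pi\colon\tilde{S}\to\P^2$ is the blow up of the ten distinct points $P_1,\ldots,P_{10}$ with pairwise disjoint $(-1)$-curves $E_1,\ldots,E_{10}$; setting $\ell=\pi^{*}\mathcal{O}_{\P^2}(1)$ we have $K_{\tilde{S}}=-3\ell+\sum_{i}E_i$, $K_{\tilde{S}}^{2}=-1$, and, since $K_{\tilde{S}}+\tilde{H}\sim\ell$, also $\tilde{H}=4\ell-\sum_{i}E_i$ and $\tilde{H}=f^{*}H$. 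I would first reduce to the case $C$ reduced (passing to $C_{\mathrm{red}}$ neither increases the degree nor removes any $P_i$ from the curve), let $\tilde{C}$ be the reduced proper transform of $C$, so that $\tilde{C}\sim\deg(C)\ell-\sum_{i}m_{i}E_i$ with $m_i=\mult_{P_i}(C)$ and at least $r$ of the $m_i$ positive, and record
$$
4\deg(C)-r=(\tilde{H}\cdot\tilde{C})+\Bigl(\textstyle\sum_{i}m_i-r\Bigr),\qquad \tilde{H}\cdot\tilde{C}=4\deg(C)-\textstyle\sum_{i}m_i .
$$
Thus it suffices to produce, in every case, enough of the two nonnegative summands on the right: typically $\tilde{H}\cdot\tilde{C}\ge 4$, and otherwise either the slack $\sum_i m_i-r$ compensates or the configuration is ruled out.

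A small observation will be used repeatedly: $\bar{G}$ has no fixed point on $\P^2$. A fixed point $Q\notin\PP$ has a single preimage $\tilde{Q}\in\tilde{S}$, so $f(\tilde{Q})$ is a $\bar{G}$-fixed point of $S\subset\P^4$, hence a $\bar{G}$-invariant line in $\C^5$; a fixed point $Q=P_j$ makes $E_j$ $\bar{G}$-invariant, and since $\tilde{H}\cdot E_j=1$ the curve $f(E_j)$ is a $\bar{G}$-invariant line in $\P^4$, hence a $\bar{G}$-invariant plane in $\C^5$. Both contradict the transitivity of $G$ assumed in $(\mathrm{G})$.

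Next I would assume $\tilde{H}\cdot\tilde{C}\le 3$ and split $\tilde{C}=\tilde{C}_0\cup\tilde{C}_1$ into the components not contracted, resp.\ contracted, by $f$; both pieces are $\bar{G}$-invariant, $\tilde{H}\cdot\tilde{C}_1=0$, and $\tilde{H}\cdot\tilde{C}_0=\tilde{H}\cdot\tilde{C}$. If $\tilde{C}_0=\varnothing$ then $f(\tilde{C})$ is a finite $\bar{G}$-invariant subset of $\Sing(S)\subset\P^4$ of some size $N\ge 1$; by $(\mathrm{G})$ these points span $\P^4$, so $N\ge 5$, hence $\tilde{C}$ has at least $N\ge 5$ components, each mapped by $\pi$ onto a plane curve of degree $\ge 1$, so $\deg(C)\ge 5$ and $4\deg(C)-r\ge 20-10\ge 4$. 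If $\tilde{C}_0\neq\varnothing$ then $f(\tilde{C}_0)$ is a $\bar{G}$-invariant curve of degree $\tilde{H}\cdot\tilde{C}\in\{1,2,3\}$ in $\P^4$, and its at most three irreducible components have images that are lines, a smooth conic, or an irreducible cubic; in every configuration except ``three $\bar{G}$-conjugate components whose images are three lines spanning $\P^4$'' the span of a suitable sub-union of these images is a proper $\bar{G}$-invariant linear subspace of $\P^4$, which is again forbidden by $(\mathrm{G})$. In the remaining configuration $\tilde{H}\cdot\tilde{C}=3$, so $\sum_i m_i=4\deg(C)-3$; if some $m_i\ge 2$ then $\sum_i m_i\ge r+1$ and $4\deg(C)-r\ge 4$, while if all $m_i\le 1$ then necessarily $\deg(C)=3$, $C$ is a union of three lines each passing through three of the $P_i$ and pairwise disjoint on $\PP$, so $C$ meets $\PP$ in exactly nine points and the tenth is a $\bar{G}$-fixed point of $\P^2$ --- impossible by the previous paragraph. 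This exhausts all cases.

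The main obstacle, and essentially the whole content of the proof, is the finite classification in the last paragraph: enumerating the possible shapes of $\tilde{C}$ with $\tilde{H}\cdot\tilde{C}\le 3$ and checking that each of them either forces a proper $\bar{G}$-invariant linear subspace of $\P^4$, or a $\bar{G}$-fixed point of $\P^2$, or already satisfies $4\deg(C)-r\ge 4$. The one case that genuinely needs the fixed-point observation is ``three conjugate lines spanning $\P^4$'': a reduced curve of degree $3$ in $\P^4$ can be non-degenerate, so irreducibility of the representation $\C^5$ alone does not eliminate it, and instead one uses that the unique point of $\PP$ off $C$ must then be $\bar{G}$-fixed.
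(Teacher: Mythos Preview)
Your argument is correct and follows essentially the same route as the paper: pull the curve back to $\tilde{S}$, push its non-contracted part forward to $S\subset\P^4$, use transitivity $(\mathrm{G})$ to force $f(\tilde{C}_0)$ to be three $\bar{G}$-conjugate lines spanning $\P^4$, and then observe that the unique point of $\PP$ missed by $C$ is $\bar{G}$-fixed, contradicting $(\mathrm{G})$ via the corresponding line $f(E_j)\subset\P^4$. The paper organizes the reduction slightly differently---it assumes $4\deg(C)-r\le 3$ directly, gets $\deg(C)\le 3$ at once, and therefore never has to treat the case $\tilde{C}_0=\varnothing$ separately---whereas you start from the weaker hypothesis $\tilde{H}\cdot\tilde{C}\le 3$ and compensate with the decomposition $4\deg(C)-r=(\tilde{H}\cdot\tilde{C})+(\sum_i m_i-r)$ and a few extra (easy) cases; but the substance is identical. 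One small point you leave implicit in the final step: the conclusion ``necessarily $\deg(C)=3$'' uses both $\sum_i m_i=4\deg(C)-3\le 10$ and the fact that $\tilde{C}$ already has at least three components coming from $\tilde{C}_0$ (so that $\tilde{C}_1=0$ is forced and each component of $C$ is a line); this is clear from context but worth stating.
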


\begin{proof}
Suppose that $4\deg(C)-r\leqslant 3$. Let us show that this
assumption leads to a contradiction.

Let $\tilde{C}$ be the proper transform of the curve $C$ on the
surface $\tilde{S}$. Then $\tilde{C}$ is $\bar{G}$-invariant,
which implies that the set $f(\tilde{C})$ is also
$\bar{G}$-invariant.

Since $\deg(C)\leqslant 3/4+r/4\leqslant 3/4+10/4=13/4$, we see
that $\deg{C}\leqslant 3$. Hence $C$ consists of at most three
irreducible components. In particular, it follows from
$(\mathrm{G})$ that no components of the curve $\tilde{C}$ are
contracted by  $f$. Hence, we see that $f(\tilde{C})$ is a curve
in $S\subset\P^4$ of degree
\begin{multline}
\label{equation:Borgiga-final}
f^{*}\big(H\big)\cdot\tilde{C}=
\Big(\pi^{*}\Big(\mathcal{O}_{\P^2}\big(4\big)\Big)-
\sum_{i=1}^{10}E_{i}\Big)\cdot\tilde{C}=\\
=4\deg\big(C\big)-\sum_{i=1}^{10}\mathrm{mult}_{P_{i}}\big(C\big)\leqslant
4\deg\big(C\big)-r\leqslant 3,%
\end{multline}
which immediately implies that $\tilde{C}$ is reducible by
$(\mathrm{G})$, because irreducible curve of degree at most $3$ is
contained in a hyperplane in $\P^4$. Moreover, we see that
$f(\tilde{C})$ is a union of three different lines by
$(\mathrm{G})$, because any curve of degree at most $2$ is
contained in a hyperplane in $\P^4$. Since $\deg{C}\leqslant 3$,
we see that $C$ is a union of three lines. Then it follows from
$(\ref{equation:Borgiga-final})$ that
$$
3=f^{*}\big(H\big)\cdot\tilde{C}=12-\sum_{i=1}^{10}\mathrm{mult}_{P_{i}}\big(C\big)\leqslant 12-r\leqslant 3,%
$$
which implies that $r=9$. Thus, the curve $C$ passes through
exactly $9$ points in $\PP$, which implies that (at least) one
curve among $f(E_{i})$, $1\le i\le 10$, is $\bar{G}$-invariant,
which is impossible by $(\mathrm{G})$, because the curves
$f(E_{i})$ are lines in $S\subset\P^4$.
\end{proof}

It follows from $(\mathrm{G})$ that there are no $\bar{G}$-orbits
of length at most $4$ contained in $S$. Thus, there are no
$\bar{G}$-orbits of length at most $4$ contained in $\tilde{S}$,
which implies that there are no $\bar{G}$-orbits of length at most
$4$ contained in $\P^2\setminus\PP$. Similarly, it follows from
$(\mathrm{G})$ that there are no $\bar{G}$-orbits of length at
most $2$ on $\PP$, because the curves $f(E_{i})$ are lines in
$S\subset\P^4$. Therefore, we see that~$\PP$ satisfies all
hypothesis of Lemma~\ref{lemma:10-points-on-P2}. Thus, the divisor
$-K_{\tilde{S}}$ is not big by Lemma~\ref{lemma:10-points-on-P2},
which completes the proof of Theorem~\ref{theorem:no-Bordiga}.

\section{Six-dimensional singularities}
\label{section:6-dim}

The main purpose of this section is to prove
Theorem~\ref{theorem:main-6}. We start with an easy observation.

\begin{lemma}
\label{lemma:S4-large-representations} Let $G$ be a transitive
finite subgroup in $\GL_{n+1}(\mathbb{C})$ such that
$\bar{G}\cong\SS_4$. Then $n\leqslant 3$.
\end{lemma}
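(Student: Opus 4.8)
The plan is to show that the given $G$-representation $V\cong\C^{n+1}$ has dimension at most $4$. Since $G$ is transitive, $V$ is irreducible, so by Schur's lemma $Z(G)$ consists of scalar matrices; on the other hand the scalar matrices lying in $G$ form exactly $\ker(\phi|_G)$ and are obviously central, so $Z(G)=\ker(\phi|_G)$ and hence $G/Z(G)\cong\bar{G}\cong\SS_4$. Thus $V$ is, literally, an irreducible projective representation of $\SS_4$: it lifts to an irreducible linear representation of a representation group (Schur cover) $\widetilde{\SS_4}$ of $\SS_4$ on which the center acts by a character. In particular $\dim V$ is bounded by the largest degree of an irreducible representation of $\widetilde{\SS_4}$.

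The next step is to read off that bound from the structure of $\widetilde{\SS_4}$. Since the Schur multiplier $H^2(\SS_4,\C^\times)\cong\Z_2$, every representation group of $\SS_4$ has order $48$ (the two such groups being the binary octahedral group and $\GL_2(\F_3)$), and in each case the irreducible complex representations have degrees $1,1,2,2,2,3,3,4$. One need not even quote a character table: such a group has $8$ conjugacy classes and exactly $2$ linear characters since $\SS_4^{\mathrm{ab}}\cong\Z_2$; five of its irreducibles are pulled back from $\SS_4$ with degrees $1,1,2,3,3$ (sum of squares $24$), and the remaining three have degrees $d$ with $\sum d^2=48-24=24$, which forces the multiset $\{2,2,4\}$. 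Hence no irreducible representation of $\widetilde{\SS_4}$ has dimension exceeding $4$, so $n+1=\dim V\le4$, i.e. $n\le3$.

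The only point that really requires care is the initial reduction, namely that transitivity (= irreducibility of $V$) is exactly what forces $Z(G)$ to be the full group of scalars in $G$, so that $V$ descends to a genuine projective representation of $\SS_4$; after that the estimate is a finite computation and the lemma is indeed the ``easy observation'' the section advertises, with no real obstacle. If one prefers a self-contained argument avoiding Schur covers, one can instead take $N=\phi|_G^{-1}(A)$, where $A\cong\Z_2^2$ is the normal Klein four-subgroup of $\SS_4$: then $Z(N)\supseteq Z(G)$ gives $[N:Z(N)]\le4$, so every irreducible constituent of $V|_N$ has dimension at most $2$, and a short application of (projective) Clifford theory to $N\trianglelefteq G$ --- using that $G/N\cong\SS_3$ and its relevant subquotients $\SS_3$, $\Z_3$, $\Z_2$ all have trivial Schur multiplier --- pins $\dim V$ down to be at most $4$ in each case, with the extremal value $4$ occurring precisely when $N$ is the preimage of the quaternion group inside a spin representation of $\widetilde{\SS_4}$.
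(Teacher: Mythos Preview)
Your argument is correct and follows essentially the same route as the paper: both reduce to bounding the dimensions of irreducible representations of a Schur cover of $\SS_4$, using that the Schur multiplier is $\Z_2$. The only difference is cosmetic: you compute the full list of irreducible degrees $1,1,2,2,2,3,3,4$ of $\widetilde{\SS_4}$, whereas the paper simply observes that the faithful irreducibles of $2.\SS_4$ (those on which the centre acts nontrivially) have squared dimensions summing to $|2.\SS_4|-|\SS_4|=24$, so $(n+1)^2\le 24$. Your version is more explicit; the paper's is slightly slicker but carries the same content.
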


\begin{proof}
Since the Schur multiplier of the group \mbox{$\bar{G}\cong\SS_4$}
is $\Z_2$, we may assume  that either \mbox{$G\cong\SS_4$} or
\mbox{$G\cong 2.\SS_4$}. Therefore, there is an irreducible
$(n+1)$-dimensional linear representation \mbox{$V\cong\C^{n+1}$}
of either the group $2.\SS_4$, or the group $\SS_4$. Since the
group $\SS_4$ does not have irreducible representations of
dimension greater than $4$, we may assume that $G\cong 2.\SS_4$
and the center of the group $2.\SS_4$ acts non-trivially on $V$.
Then $(n+1)^2\leqslant |2.\SS_4|-|\SS_4|=24$, which implies that
$n\leqslant 3$.
\end{proof}

Now we are ready to prove Theorem~\ref{theorem:main-6}. Let $G$ be
a~finite subgroup in $\GL_{6}(\mathbb{C})$ that does not contain
reflections, and let
$\phi\colon\GL_{6}(\mathbb{C})\to\PGL_{6}(\mathbb{C})$ be
the~natural projection. Put $\bar{G}=\phi(G)$. Let us identify
$\PGL_{6}(\mathbb{C})$ with $\mathrm{Aut}(\mathbb{P}^{5})$.
Suppose that $G$ is transitive, the~group $G$ does not have
semi-invariants of degree at most $5$, there is no irreducible
$\bar{G}$-invariant smooth rational cubic scroll in
$\mathbb{P}^{5}$, and there is no irreducible  $\bar{G}$-invariant
complete intersection of two quadric hypersurfaces
in~$\mathbb{P}^{5}$.

\begin{theorem}
\label{theorem:6-dim-1}  If $\C^{6}\slash G$ is not
weakly-exceptional, then there exists an irreducible
$\bar{G}$-invariant, normal, projectively normal, non-degenerate
Fano type threefold $X\subset\mathbb{P}^{5}$ of degree $6$ and
sectional genus~$3$ such that
$h^{0}(\mathcal{O}_{\mathbb{P}^{5}}(2)\otimes\mathcal{I}_{X})=0$
and
$h^{0}(\mathcal{O}_{\mathbb{P}^{5}}(3)\otimes\mathcal{I}_{X})=4$.
\end{theorem}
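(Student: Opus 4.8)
The plan is to follow the proof of Theorem~\ref{theorem:5-dim-1} with $\P^4$ replaced by $\P^5$. Since $\C^6/G$ is not weakly-exceptional, Theorem~\ref{theorem:weakly-exceptional-quotient-criterion} provides a $\bar{G}$-invariant, irreducible, normal, projectively normal, Fano type subvariety $V\subset\P^5$ with $\deg(V)\le\binom{5}{\dim(V)}$, with $h^{i}(\mathcal{O}_{\P^5}(m)\otimes\mathcal{I}_{V})=h^{i}(\mathcal{O}_{V}(m))=0$ for all $i\ge 1$ and $m\ge 0$, with $h^{0}(\mathcal{O}_{\P^5}(\dim(V)+1)\otimes\mathcal{I}_{V})\ge\binom{5}{\dim(V)+1}$, and whose general linear sections are again projectively normal with the stated vanishing. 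Note that $V$ is non-degenerate (its linear span is $\bar{G}$-invariant, and $G$ is transitive) and, being of Fano type, has at most rational singularities. I would show that $\dim(V)=3$, and then that $\deg(V)=6$ with sectional genus $3$; once this is known, Riemann--Roch applied to the Hilbert polynomial of $V$ --- exactly as in the computation surrounding $(\ref{equation:RR-2})$ --- gives $h^{0}(\mathcal{O}_{\P^5}(2)\otimes\mathcal{I}_{V})=0$ and $h^{0}(\mathcal{O}_{\P^5}(3)\otimes\mathcal{I}_{V})=4$, which is precisely the assertion.

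The cases $\dim(V)\in\{0,1,4\}$ are disposed of quickly. If $\dim(V)=0$ then $V$ is a $\bar{G}$-invariant point, contradicting the transitivity of $G$. If $\dim(V)=4$ then $V$ is a hypersurface of degree $\le 5$, i.e. $G$ has a semi-invariant of degree $\le 5$, contradicting condition~(2). If $\dim(V)=1$ then $V$ is a non-degenerate curve in $\P^5$ of degree $\le 5$, hence a rational normal quintic, so $\bar{G}$ lies in the copy of $\PGL_{2}(\C)$ acting on $\P^5=\P(\Sym^{5}\C^{2})$; the degree-$4$ invariant of the binary quintic then cuts out a $\bar{G}$-invariant quartic hypersurface, again a semi-invariant of degree $\le 5$, contradicting~(2). (One can also argue via Lemma~\ref{lemma:small-orbit-on-P1}, since any $\bar{G}$-orbit of length $\le 4$ on $V\cong\P^1$ spans a proper $\bar{G}$-invariant linear subspace of $\P^5$.)

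The case $\dim(V)=2$ requires the classification of surfaces of small degree in $\P^5$. Here $V$ is a non-degenerate, projectively normal, Fano type surface, hence rational with $\chi(\mathcal{O}_{V})=1$ and $h^{1}(\mathcal{O}_{V})=h^{2}(\mathcal{O}_{V})=0$. Riemann--Roch and the vanishing give $H^{2}-H\cdot K_{V}=10$ and $h^{0}(\mathcal{O}_{\P^5}(2)\otimes\mathcal{I}_{V})=10-\deg(V)$, and since $-K_{V}$ is big one has $4\le\deg(V)\le 9$. If $\deg(V)=9$ there is a single quadric through $V$, a $\bar{G}$-invariant quadric, contradicting~(2). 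If $\deg(V)=8$ the quadrics through $V$ form a pencil whose base locus is a complete intersection of two quadrics on which $V$ is an anticanonical divisor, so $K_{V}$ is trivial and $h^{2}(\mathcal{O}_{V})=h^{0}(\mathcal{O}_{V})=1\ne 0$ --- a contradiction. For $4\le\deg(V)\le 7$ one uses the classification: the Veronese surface is excluded because its secant variety is a $\bar{G}$-invariant cubic (contradicting~(2)); cones and rational normal scrolls are excluded because their vertices, respectively directrix lines, span proper $\bar{G}$-invariant linear subspaces (contradicting~(1)); and the remaining del Pezzo and conic-bundle surfaces are ruled out by Lemma~\ref{lemma:small-orbit-on-P1}-type arguments applied to the pencils and nets of quadrics through $V$ and to the configurations of lines on $V$, much as in Section~\ref{section:5-dim}.

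Finally, suppose $\dim(V)=3$. Then $V$ is a non-degenerate, projectively normal, Fano type threefold in $\P^5$ with $3\le\deg(V)\le 10$, and its general hyperplane section is a projectively normal surface in $\P^4$ with the vanishing of Theorem~\ref{theorem:weakly-exceptional-quotient-criterion}. Riemann--Roch on $V$, on its surface section and on its curve section, the inequality $h^{0}(\mathcal{O}_{\P^5}(4)\otimes\mathcal{I}_{V})\ge 5$, and Lemma~\ref{lemma:small-orbit-on-P1}-type arguments applied to the linear systems of quadrics through $V$ and to the secant variety of $V$ would pin down $\deg(V)$ and the sectional genus. A threefold of degree $3$ must be the Segre cubic scroll $\P^1\times\P^2$ (a cone is excluded by transitivity), forbidden by~(3); a threefold of degree $4$ must be a complete intersection of two quadrics, forbidden by~(4); the remaining degrees are eliminated by $\bar{G}$-invariant quadrics, $\bar{G}$-invariant linear subspaces, or triviality-of-$K_{V}$ (``K3-section'') arguments as above, leaving $\deg(V)=6$ with sectional genus $3$. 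This completes the proof. I expect the exhaustive elimination of the intermediate degrees --- in both the surface and the threefold cases, where the cohomological input of Theorem~\ref{theorem:weakly-exceptional-quotient-criterion} must be combined carefully with the action of $\bar{G}$ on pencils and nets of quadrics --- to be the main obstacle.
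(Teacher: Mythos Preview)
Your outline has the right skeleton --- apply Theorem~\ref{theorem:weakly-exceptional-quotient-criterion}, exclude dimensions $0,1,2,4$, then pin down the threefold degree --- and this is exactly how the paper proceeds. But several of the eliminations you sketch do not go through with the tools you name, and the paper uses substantially different arguments at those points.

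For the surface case, your $d=8$ argument (``$V$ is an anticanonical divisor on a complete intersection of two quadrics, so $K_V$ is trivial'') assumes $V$ is cut out on the base threefold by a further quadric, which is not given. The paper instead proves a clean lemma: if $h^0(\mathcal{O}_{\P^5}(2)\otimes\mathcal{I}_X)\neq 0$ then it is $\geqslant 3$, because a single quadric is a forbidden semi-invariant and a pencil of quadrics has as base locus a $\bar{G}$-invariant complete intersection of two quadrics, forbidden by hypothesis~(4). This kills $d=8,9$ at once. For $d\in\{5,6,7\}$ you cannot appeal to a classification of surfaces of small degree: these are not minimal-degree or del Pezzo surfaces in general, and the paper spends three long lemmas (one for each degree) analysing $|K_X+H|$ and $|-K_X|$ directly, passing to anticanonical models, and exploiting the absence of short $\bar{G}$-orbits via Lemma~\ref{lemma:small-orbit-on-P1} and Lemma~\ref{lemma:S4-large-representations}. ``Lemma~\ref{lemma:small-orbit-on-P1}-type arguments'' is the right flavour but dramatically understates what is needed.

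For the threefold case, the mechanism you propose (``$\bar{G}$-invariant quadrics, linear subspaces, or $K$-triviality'') does not reach $d=5$ or $d=7$. Once $h^0(\mathcal{O}_{\P^5}(2)\otimes\mathcal{I}_X)=0$ is established (which follows from the lemma above, since $d\geqslant 5$ forces $X$ not to lie on two quadrics), Riemann--Roch gives $d+k/2=10$ and $h^0(\mathcal{O}_{\P^5}(3)\otimes\mathcal{I}_X)=k/2$; then $d=9,8$ die by a cubic semi-invariant and a pencil of cubics with residual linear space. But $d=5$ has sectional genus~$1$ and is excluded via Fujita's structure theorem for quasi-polarized varieties with $\Delta$-genus~$1$ (such an $X$ would be an elliptic scroll or a cone over one, impossible for Fano type), not by any quadric/linear-space argument. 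And $d=7$ is the hardest case: the paper shows, via a liaison argument requiring the entire Appendix~\ref{section:septic-curves} on curves of genus~$5$ and degree~$7$ in $\P^3$, that $X$ would be swept out by planes, forcing $\kappa(H)=-\infty$, while Riemann--Roch on the hyperplane section gives $h^2(\mathcal{O}_H)\geqslant 1$, a contradiction. Nothing in your list of tools anticipates either of these.
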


In the remaining part of this section we will prove
Theorem~\ref{theorem:6-dim-1}, which implies
Theorem~\ref{theorem:main-6}. Suppose that $\C^{6}\slash G$ is not
weakly-exceptional. By
Theorem~\ref{theorem:weakly-exceptional-quotient-criterion},
there is an irreducible $\bar{G}$-invariant, irreducible, normal,
projectively normal Fano type subvariety $X\subset\P^5$~such that
$$
\mathrm{deg}\big(X\big)\leqslant {5\choose \mathrm{dim}\big(X\big)},%
$$
and $X$ has more additional properties that we are about to
describe. Let $\mathcal{I}_{X}$ be the ideal sheaf of the
subvariety $X\subset\P^5$. Then
$h^{i}(\mathcal{O}_{\P^5}(m)\otimes \mathcal{I}_{X})=0$ for every
$i\geqslant 1$ and for every $m\geqslant 0$. Let $H$ be a general
hyperplane section of the subvariety $X\subset\P^5$. If
$\mathrm{dim}(X)\geqslant 2$, then $H$ is irreducible,
projectively normal and
$h^{i}(\mathcal{O}_{H}\otimes\mathcal{O}_{\mathbb{P}^{5}}(m))=0$
for every $i\geqslant 1$ and $m\geqslant 1$. Since~$G$ is
transitive and $X$ is $\bar{G}$-invariant, the subvariety $X$ is
not contained in a~hyperplane in $\mathbb{P}^{5}$. In particular,
we see that $X$ is not a point. Since $G$ does not have
semi-invariants of degree at most $5$, we see that $X$ is not a
fourfold.

\begin{lemma}
\label{lemma:invariant-lift} The linear system $|mH|$ does not
have $\bar{G}$-invariant divisors for every positive
integer~$m\leqslant 5$.
\end{lemma}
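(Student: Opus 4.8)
The plan is to reduce the statement to the standing hypothesis that $G$ has no semi-invariants of degree at most $5$. Suppose to the contrary that for some $m$ with $1\le m\le 5$ the linear system $|mH|$ contains a $\bar{G}$-invariant divisor $D$. Since $mH$ is the restriction to $X$ of $\mathcal{O}_{\P^5}(m)$ and $X$ is $\bar{G}$-invariant, the group $\bar{G}$ acts on $H^0(\mathcal{O}_X(mH))$ projectively, and $D$ is $\bar{G}$-invariant exactly when the corresponding one-dimensional subspace $L\subset H^0(\mathcal{O}_X(mH))$ is $\bar{G}$-stable; lifting $\bar{G}$ to $G\subset\GL_6(\C)$ this means that $L$ is a one-dimensional $G$-subrepresentation of $H^0(\mathcal{O}_X(mH))$.

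Next I would promote such an $L$ to a semi-invariant form of degree $m$ on $\P^5$. The restriction sequence
$$
0\longrightarrow H^0\big(\mathcal{O}_{\P^5}(m)\otimes\mathcal{I}_X\big)\longrightarrow H^0\big(\mathcal{O}_{\P^5}(m)\big)\longrightarrow H^0\big(\mathcal{O}_X(mH)\big)\longrightarrow H^1\big(\mathcal{O}_{\P^5}(m)\otimes\mathcal{I}_X\big)
$$
is $G$-equivariant, and its last term vanishes by one of the cohomological properties of $X$ recorded above. Hence $H^0(\mathcal{O}_X(mH))$ is a $G$-equivariant quotient of $H^0(\mathcal{O}_{\P^5}(m))=\Sym^m(V^{\vee})$, where $V\cong\C^6$ is the given representation of $G$. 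Since $G$ is finite, Maschke's theorem splits this surjection $G$-equivariantly, so the one-dimensional $G$-subrepresentation $L$ lifts to a one-dimensional $G$-subrepresentation of $\Sym^m(V^{\vee})$ contained in a complement of $H^0(\mathcal{O}_{\P^5}(m)\otimes\mathcal{I}_X)$; this is exactly a nonzero semi-invariant of $G$ of degree $m$ (which moreover does not vanish on $X$).

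This contradicts the hypothesis that $G$ has no semi-invariants of degree at most $5$, and the argument is uniform in $m\in\{1,2,3,4,5\}$, so $|mH|$ carries no $\bar{G}$-invariant divisor for $m\le 5$, as claimed. There is no serious obstacle here: the only step requiring care is that a $G$-stable line in the quotient $H^0(\mathcal{O}_X(mH))$ genuinely lifts to one in $H^0(\mathcal{O}_{\P^5}(m))$, which is guaranteed by complete reducibility of representations of the finite group $G$ together with the surjectivity provided by the vanishing $h^1(\mathcal{O}_{\P^5}(m)\otimes\mathcal{I}_X)=0$.
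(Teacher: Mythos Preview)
Your argument is correct and follows essentially the same approach as the paper: both use the $G$-equivariant short exact sequence coming from the surjectivity of $H^0(\mathcal{O}_{\P^5}(m))\to H^0(\mathcal{O}_X(mH))$ (the paper phrases this as projective normality of $X$, you phrase it via the vanishing $h^1(\mathcal{I}_X(m))=0$, which is equivalent here) to lift a one-dimensional $G$-subrepresentation to a semi-invariant of degree $m$. Your explicit invocation of Maschke's theorem makes precise the step the paper leaves implicit.
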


\begin{proof}
Since $X$ is projectively normal, the sequence
\begin{equation}
\label{eqaution:invariant-lift}
0\longrightarrow H^{0}\Big(\mathcal{O}_{\P^5}\big(m\big)\otimes\mathcal{I}_{X}\Big)\longrightarrow
H^{0}\Big(\mathcal{O}_{\P^5}\big(m\big)\Big)\longrightarrow H^{0}\Big(\mathcal{O}_{X}\big(mH\big)\Big)\longrightarrow 0%
\end{equation}
is exact. We can consider $(\ref{eqaution:invariant-lift})$ as
an~exact sequence~of~linear $G$-representations. If $|mH|$
contains a $\bar{G}$-invariant divisor, then $G$ has a
semi-invariant of degree $m$. But $G$ does not have
semi-invariants of degree at most $5$ by assumption.
\end{proof}

Unfortunately, we can not claim now that $X$ has Kawamata log
terminal singularities, since~$-K_{X}$ is not necessary a Cartier
divisor. Nevertheless, we know that $X$ has at most rational
singularities (see \cite[Theorem~1.3.6]{KMM}). If $X$ is a
surface, then $X$ has quotient singularities.

\begin{lemma}
\label{lemma:dim-6-curves} The subvariety $X$ is not a curve.
\end{lemma}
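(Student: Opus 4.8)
The plan is to show that if $X$ were a curve it would have to be the rational normal quintic in $\P^5$, and then to contradict Lemma~\ref{lemma:invariant-lift}. First I would bound the degree. By Theorem~\ref{theorem:weakly-exceptional-quotient-criterion} we have $\deg(X)\leqslant{5\choose 1}=5$. On the other hand $X$ is $\bar{G}$-invariant and $G$ is transitive, so $X$ is not contained in a hyperplane of $\P^5$; since an irreducible non-degenerate curve in $\P^5$ has degree at least $5$, this forces $\deg(X)=5$. An irreducible non-degenerate curve of minimal degree $n$ in $\P^n$ is a rational normal curve, so $X$ is the rational normal quintic, i.e. $X\cong\P^1$ embedded by the complete linear system $|\mathcal{O}_{\P^1}(5)|$. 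Note that $X$ is automatically smooth, projectively normal and of Fano type here, in agreement with the conclusions of Theorem~\ref{theorem:weakly-exceptional-quotient-criterion}.

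Next I would identify $\bar{G}$. The stabiliser of $X$ inside $\Aut(\P^5)=\PGL_6(\C)$ is exactly the image of $\PGL_2(\C)=\Aut(\P^1)$ under the fifth symmetric power, so $\bar{G}$ is conjugate to a finite subgroup of this $\PGL_2(\C)$; let $\tilde{G}\subset\SL_2(\C)$ be its preimage. Then, up to twisting by a character of $\tilde{G}$ and passing to the dual, the $\tilde{G}$-module $V$ is the restriction of $\Sym^5$ of the standard $2$-dimensional representation of $\SL_2(\C)$; since twisting and dualising preserve irreducibility, and since $V$ is an irreducible $G$-module because $G$ is transitive, the module $\Sym^5(\C^2)$ is an irreducible $\tilde{G}$-module of dimension $6$. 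Running through the classification of finite subgroups of $\SL_2(\C)$, the only one admitting a $6$-dimensional irreducible representation is the binary icosahedral group $2.\A_5$ (cyclic groups have only $1$-dimensional irreducible representations, binary dihedral groups none of dimension larger than $2$, the binary tetrahedral group $2.\A_4$ none larger than $3$, and the binary octahedral group $2.\SS_4$ none larger than $4$). Hence $\tilde{G}\cong 2.\A_5$, which is perfect, so the character twist is trivial, $V\cong\Sym^5(\C^2)$, and $\bar{G}\cong\A_5$ acts on $X\cong\P^1$ as the group of rotations of the icosahedron.

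Finally I would produce the forbidden divisor. This action of $\A_5$ on $\P^1$ has an orbit of length $20$ (the centres of the $20$ faces of the icosahedron); the associated reduced divisor $D$ on $X$ is $\bar{G}$-invariant and has degree $20=5\cdot 4$, so $D\in|4H|$, where $H$ is a general hyperplane section of $X\subset\P^5$. This contradicts Lemma~\ref{lemma:invariant-lift} applied with $m=4$, and finishes the proof. Alternatively, one can bypass the classification: the binary quintic carries a nonzero $\SL_2(\C)$-invariant of degree $4$ (classical invariant theory), which is a fortiori $\tilde{G}$-invariant, hence cuts out a $\bar{G}$-invariant quartic hypersurface in $\P^5=\P(V^{\vee})$, i.e. gives a semi-invariant of $G$ of degree $4$, contradicting the standing assumption directly. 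None of these steps is really delicate; the only point that needs a little care is the identification of the stabiliser of a rational normal curve in $\PGL_6(\C)$ together with the bookkeeping for the central extension $\tilde{G}\to\bar{G}$, and I expect this to be the main (and mild) obstacle.
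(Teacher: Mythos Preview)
Your argument is correct and follows essentially the same route as the paper: both reduce to the fact that $X$ must be the rational normal quintic, so that $\bar{G}$ is a finite subgroup of $\PGL_2(\C)$ acting on $X\cong\P^1$, and then both finish by exhibiting a $\bar{G}$-invariant divisor on $X$ lying in $|4H|$, contradicting Lemma~\ref{lemma:invariant-lift}. The only organisational difference is in how the case analysis is arranged. The paper does not bother to pin down $\bar{G}$: it observes that every finite subgroup of $\PGL_2(\C)$ other than $\SS_4$ admits an invariant effective divisor of degree $20$ on $\P^1$, and then disposes of $\SS_4$ separately via Lemma~\ref{lemma:S4-large-representations} (which is exactly your observation that $2.\SS_4$ has no irreducible representation of dimension~$6$). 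You instead front-load the representation-theoretic step, using the irreducibility of $V$ to exclude all binary polyhedral groups except $2.\A_5$ in one stroke, and then invoke the $20$-point icosahedral orbit for $\A_5$ alone. Your alternative via the classical degree-$4$ invariant of binary quintics is a pleasant shortcut that the paper does not use: it sidesteps the case analysis on $\bar{G}$ entirely and produces the forbidden semi-invariant directly from $\SL_2$-invariant theory.
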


\begin{proof}
If $X$ is a curve, then $X$ is a smooth rational curve of degree
$5$, because $\deg(X)\le 5$ and~$X$ is not contained in
a~hyperplane in $\mathbb{P}^{5}$. Then $\bar{G}$ acts faithfully
on $X$. If $\bar{G}\not\cong\SS_{4}$, then there is a
$\bar{G}$-invariant effective divisor in $\mathrm{Pic}(\P^1)$ of
degree $20$, which is impossible by
Lemma~\ref{lemma:invariant-lift}. Since the~group $G$ does not
have semi-invariants of degree at most $5$, we see that
$\bar{G}\cong\SS_{4}$, which contradicts
Lemma~\ref{lemma:S4-large-representations}, because $G$ is
transitive.
\end{proof}

\begin{lemma}\label{lemma:pencil-of-quadrics}
If
$h^{0}(\mathcal{O}_{\mathbb{P}^{5}}(2)\otimes\mathcal{I}_{X})\ne
0$, then
$h^{0}(\mathcal{O}_{\mathbb{P}^{5}}(2)\otimes\mathcal{I}_{X})\geqslant
3$.
\end{lemma}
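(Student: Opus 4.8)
The plan is to exploit that $W_{2}:=H^{0}(\mathcal{O}_{\mathbb{P}^{5}}(2)\otimes\mathcal{I}_{X})$ is a linear representation of $G$, since $X$ is $\bar{G}$-invariant. A $\bar{G}$-invariant line in $W_{2}$ would be a semi-invariant of $G$ of degree~$2$, and $G$ has no semi-invariants of degree at most~$5$; hence $W_{2}$ contains no $G$-invariant line. In particular $\dim W_{2}\neq 1$, so it suffices to exclude $\dim W_{2}=2$. Assume $\dim W_{2}=2$: then the quadrics containing $X$ form a $\bar{G}$-invariant pencil $\mathcal{P}=\mathbb{P}(W_{2})\cong\mathbb{P}^{1}$ with $X\subset\Bs(\mathcal{P})$, and the aim is to contradict the standing hypotheses on $G$ (transitivity, no semi-invariants of degree $\le 5$, and no irreducible $\bar{G}$-invariant complete intersection of two quadrics in $\mathbb{P}^{5}$).

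First I would bound the $\bar{G}$-orbits on $\mathcal{P}\cong\mathbb{P}^{1}$ from below. A fixed point of $\mathcal{P}$ is a degree-$2$ semi-invariant, and if $\{Q_{1},Q_{2}\}$ is a $\bar{G}$-orbit of length~$2$ then $V(Q_{1})\cup V(Q_{2})$ is a $\bar{G}$-invariant quartic hypersurface, i.e.\ a degree-$4$ semi-invariant; both are excluded. Since every cyclic or dihedral finite subgroup of $\PGL_{2}(\C)$ has an orbit of length at most~$2$ on $\mathbb{P}^{1}$, the classification of finite subgroups of $\PGL_{2}(\C)$ forces the image of $\bar{G}$ in $\Aut(\mathcal{P})=\PGL_{2}(\C)$ to be one of $\A_{4}$, $\SS_{4}$, or $\A_{5}$.

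Then I would look at the discriminant $\delta\in\Sym^{6}(W_{2}^{\vee})$, the degree-$6$ form on $\mathcal{P}$ whose zeros are the singular members of the pencil; it is a $\bar{G}$-semi-invariant. If $\delta\not\equiv 0$, then $V(\delta)\subset\mathbb{P}^{1}$ is a non-empty $\bar{G}$-invariant set of at most~$6$ points; since $\A_{5}$ has no non-empty invariant subset of $\mathbb{P}^{1}$ of size $\le 6$, the image of $\bar{G}$ must be $\A_{4}$ or $\SS_{4}$, and then transitivity on $V(\delta)$ together with $\deg\delta=6$ forces $V(\delta)$ to consist of $6$ distinct points. But a pencil of quadrics with reduced discriminant has smooth base locus (the vertex of each singular member lies off the base locus), so $\Bs(\mathcal{P})$ would be a smooth, hence connected, hence irreducible, $\bar{G}$-invariant complete intersection of two quadrics in $\mathbb{P}^{5}$, contradicting the hypotheses. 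Therefore $\delta\equiv 0$, i.e.\ every member of $\mathcal{P}$ is singular.

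The case $\delta\equiv 0$ is what I expect to be the main obstacle. If $\bigcap_{s}\Sing(Q_{s})\ne\varnothing$, this intersection is a $\bar{G}$-invariant proper linear subspace of $\mathbb{P}^{5}$, contradicting transitivity. Otherwise I would send $s\in\mathbb{P}^{1}$ to the limit of the kernels of the matrices of $Q_{s}$; because these kernels are read off from the adjugate matrices of the pencil, this gives a $\bar{G}$-equivariant morphism from $\mathbb{P}^{1}$ to a Grassmannian of bounded degree, and the union $\Sigma\subset\mathbb{P}^{5}$ of the corresponding linear subspaces is a $\bar{G}$-invariant scroll whose linear span, being $\bar{G}$-invariant, must be all of $\mathbb{P}^{5}$. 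When the generic corank equals~$1$, $\Sigma$ is a non-degenerate rational curve in $\mathbb{P}^{5}$ of degree at most~$5$, hence a rational normal quintic; then $\bar{G}$ preserves it and so embeds into $\Aut(\mathbb{P}^{1})=\PGL_{2}(\C)$ acting on $\mathbb{P}^{5}=\mathbb{P}(\Sym^{5}\C^{2})$, and the classical degree-$4$ invariant of the binary quintic becomes a $\bar{G}$-semi-invariant of degree~$4$, which is excluded. For higher generic corank I would argue in the same spirit: $\Sigma$ is a $\bar{G}$-invariant scroll of linear subspaces over $\mathbb{P}^{1}$ of small degree in $\mathbb{P}^{5}$, and from the classification of such scrolls one extracts either a $\bar{G}$-invariant directrix of small degree (forcing an invariant proper linear subspace) or a low-degree defining equation of $\Sigma$ (a small semi-invariant). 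Making this last step airtight, i.e.\ ruling out scrolls $\Sigma$ of large degree, is the part that will need the most care.
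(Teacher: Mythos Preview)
Your approach has a genuine gap: the case $\delta\equiv 0$ is left incomplete, as you yourself acknowledge, and the scroll-of-vertices idea you sketch for higher corank is not made rigorous. More to the point, the entire discriminant detour is unnecessary. You are working toward \emph{smoothness} of $\Bs(\mathcal{P})$ in order to deduce irreducibility, but irreducibility holds regardless of whether $\delta$ vanishes identically, and that is all the standing hypothesis requires.

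The paper's argument is direct and never looks at the discriminant. Once $\dim W_2=2$, observe first that every member of $\mathcal{P}$ is an irreducible quadric: a reducible member would be a union of two hyperplanes, and the irreducible non-degenerate variety $X$ cannot lie in such a union. Hence $B:=\Bs(\mathcal{P})$ is an honest complete intersection of two quadrics, a (possibly singular) threefold of degree $4$ in $\mathbb{P}^5$, and it is $\bar{G}$-invariant. For irreducibility, let $C$ be an irreducible component of $B$ containing $X$. Since $X$ spans $\mathbb{P}^5$, so does $C$, whence $\deg C\ge\codim C+1=3$ by the minimal-degree bound. If $\deg C=4$ then $B=C$ is irreducible. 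If $\deg C=3$ then $C$ is forced to be $\bar{G}$-invariant (two degree-$3$ components would exceed $\deg B=4$), and the residual $B-C$ is a $\bar{G}$-invariant linear $\mathbb{P}^3$, contradicting transitivity of $G$. Either way $B$ is an irreducible $\bar{G}$-invariant complete intersection of two quadrics, which the standing hypotheses exclude outright. No analysis of singular members, vertex loci, or actions on $\mathcal{P}\cong\mathbb{P}^1$ is needed.

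Your orbit-size bookkeeping on $\mathcal{P}$ and the step ``reduced discriminant $\Rightarrow$ smooth base locus'' are correct as far as they go, but they buy you a conclusion stronger than required at the cost of opening the $\delta\equiv 0$ door, which you then cannot close.
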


\begin{proof}
Since the~group $G$ does not have semi-invariants of degree at
most $5$, we see that
$h^{0}(\mathcal{O}_{\mathbb{P}^{5}}(2)\otimes\mathcal{I}_{X})\ne
1$. Thus, we must prove that
$h^{0}(\mathcal{O}_{\mathbb{P}^{5}}(2)\otimes\mathcal{I}_{X})\ne
2$.

Suppose that
$h^{0}(\mathcal{O}_{\mathbb{P}^{5}}(2)\otimes\mathcal{I}_{X})=2$.
Then there exists a $\bar{G}$-invariant pencil of quadrics in
$\P^5$ whose base locus contains $X$. Let us denote this pencil by
$\mathcal{P}$. Since the~group $G$ does not have semi-invariants
of degree at most $5$ and $G$ is transitive, the base locus of the
pencil $\mathcal{P}$ is an irreducible threefold that is a
$\bar{G}$-invariant complete intersection of two quadrics in
$\mathcal{P}$, which is impossible by assumption.
\end{proof}

Since $X$ is projectively normal, there is an~exact sequence of
cohomology groups
\begin{equation}
\label{equation:exact-sequence-n-6}
0\longrightarrow H^{0}\Big(\mathcal{O}_{\mathbb{P}^{5}}\big(n\big)\otimes\mathcal{I}_{X}\Big)\longrightarrow H^{0}\Big(\mathcal{O}_{\mathbb{P}^{5}}\big(n\big)\Big)\longrightarrow H^{0}\Big(\mathcal{O}_{X}\big(nH\big)\Big)\longrightarrow 0%
\end{equation}
for every $n\geqslant 1$. Let $f\colon\tilde{X}\to X$ be the
resolution of singularities of the variety $X$. If $X$ is a
surface, then we assume that $f$ is a minimal resolution of
singularities. Put $\tilde{H}=f^*(H)$. Then~$\tilde{H}$ is nef and
big. Hence, it follows from the Nadel--Shokurov vanishing theorem
that
\begin{equation}
\label{equation:RRR}
\chi\Big(\mathcal{O}_{\tilde{X}}\big(K_{\tilde{X}}+\tilde{H}\big)\Big)=h^0\Big(\mathcal{O}_{\tilde{X}}\big(K_{\tilde{X}}+\tilde{H}\big)\Big).
\end{equation}

\begin{lemma}\label{lemma:d-4-q-6}
If $X$ is a surface, then $\deg(X)\ne 4$.
\end{lemma}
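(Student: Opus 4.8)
The plan is to assume $\deg(X)=4$ and derive a contradiction from the classification of varieties of minimal degree. First, since $X$ is $\bar{G}$-invariant and $G$ is transitive, the linear span of $X$ is a $\bar{G}$-invariant linear subspace and hence all of $\P^5$; so $X$ is a non-degenerate irreducible surface in $\P^5$, and as a surface has codimension $3$ there, a non-degenerate surface of degree $4$ is of minimal degree. By the Del~Pezzo--Bertini classification (see \cite{EisenbudHarris}) $X$ is therefore one of: the Veronese surface $v_{2}(\P^2)$; a smooth rational normal scroll $S(1,3)\cong\mathbb{F}_{2}$ or $S(2,2)\cong\P^1\times\P^1$; or the cone over the rational normal quartic curve. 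I would then eliminate these four possibilities one by one, using repeatedly that transitivity of $G$ forbids $\bar{G}$-invariant proper linear subspaces of $\P^5$.

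Three of the cases are immediate. If $X$ is the cone over the rational normal quartic curve, its vertex is a $\bar{G}$-invariant point, contradicting transitivity. If $X\cong S(1,3)\cong\mathbb{F}_{2}$, then $X$ carries a unique irreducible curve of negative self-intersection --- the $(-2)$-section --- whose image in $\P^5$ is a line; being canonically determined it is $\bar{G}$-invariant, again contradicting transitivity. If $X$ is the Veronese surface, its secant variety is a $\bar{G}$-invariant cubic hypersurface of $\P^5$ (the classical determinantal cubic of symmetric $3\times3$ matrices of rank at most $2$), hence is cut out by a $\bar{G}$-semi-invariant cubic form, so $G$ has a semi-invariant of degree $3$, contradicting the standing assumption that $G$ has no semi-invariants of degree at most $5$.

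The remaining case $X\cong S(2,2)$ is the heart of the proof. Here $X\cong\P^1\times\P^1$ is embedded by $\mathcal{O}(1,2)$, and the first projection $\P^1\times\P^1\to\P^1$ exhibits $X$ as swept out by a one-parameter family of conics (the images of the fibres $\{p\}\times\P^1$), a family distinguished from the second ruling of $X$ --- which consists of lines --- by degree, and therefore preserved by $\bar{G}$. Each such conic $C$ spans a plane $\langle C\rangle\subset\P^5$; I would check, by realizing the embedding $X\subset\P^5$ as $\P^1\times v_{2}(\P^1)$ sitting inside a Segre variety, that these planes are pairwise disjoint and that their union $Y$ is the Segre embedding of $\P^1\times\P^2$. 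Since $Y$ is canonically attached to $X$, it is $\bar{G}$-invariant, so $Y$ is an irreducible $\bar{G}$-invariant smooth rational cubic scroll in $\P^5$; this contradicts the standing assumption, and we conclude $\deg(X)\ne4$.

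The hard part is precisely this last case: constructing the swept threefold $Y$ and identifying it with the Segre $\P^1\times\P^2$ --- equivalently, locating $X\cong S(2,2)$ inside $\P^1\times\P^2$ --- is the point at which condition (3) of Theorem~\ref{theorem:main-6} is actually used. The other three cases require only the minimal-degree classification together with the elementary observation that a canonically determined invariant linear subspace (or invariant cubic hypersurface) is forbidden by the standing assumptions.
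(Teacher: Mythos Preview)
Your proof is correct and follows essentially the same approach as the paper: both invoke the minimal-degree classification from \cite{EisenbudHarris}, eliminate the cone via its vertex, the scroll $X_{1,3}$ via its directrix line, the Veronese via its secant cubic, and the scroll $X_{2,2}$ by observing that the planes spanned by its conic ruling sweep out a $\bar{G}$-invariant smooth rational cubic scroll $\P^1\times\P^2$. Your treatment of the $X_{2,2}$ case is slightly more explicit than the paper's (you spell out the identification of $Y$ with the Segre variety via $\P^1\times v_2(\P^1)$), but the argument is the same.
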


\begin{proof}
Suppose that $X$ is a surface and $\deg(X)=4$. Let us use the
usual notation for the rational normal scrolls
(see e.\,g.~\cite[Example~8.2.6]{Harris}). Since $X$ is non-degenerate, it
follows from \cite{EisenbudHarris} that $X$ is either a cone over
a rational normal curve in $\P^4$, or a rational normal scroll
$X_{1,3}$, or a rational normal scroll $X_{2,2}$, or a Veronese
surface. The first two cases are impossible since otherwise there
would exist a $\bar{G}$-invariant point or a $\bar{G}$-invariant
line in $\P^5$, respectively, which would contradict the
transitivity of $G$. If $X$ is a rational normal scroll $X_{2,2}$,
then the family of linear spans of the one-parameter family of
conics on $X$ sweeps out an irreducible $\bar{G}$-invariant smooth
rational cubic scroll, which is impossible, since we assume that
there is no such threefolds in $\mathbb{P}^{5}$. Thus, we see that
$X$ is a Veronese surface. Then it follows from
\cite[Exercise~8.8]{Harris} that the secant variety of the
surface $X$ is a cubic hypersurface in $\P^5$, which must be
$\bar{G}$-invariant, because $X$ is $\bar{G}$-invariant. The
latter is impossible, since we assume that the~group $G$ does not
have semi-invariants of degree at most $5$.
\end{proof}

\begin{lemma}\label{lemma:d-5-q-5}
If $X$ is a surface and $\deg(X)=5$, then $-K_X\cdot H\ne 5$.
\end{lemma}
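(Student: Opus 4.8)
The plan is to show that the two hypotheses $\dim(X)=2$ and $-K_X\cdot H=5$ (together with $\deg(X)=5$) force $X$ to be the anticanonical embedding of a Gorenstein del Pezzo surface of degree~$5$, and then to rule this out using the transitivity of $G$ and the assumption that $G$ has no semi-invariants of degree at most~$5$. In fact, one checks first that these hypotheses are not independent: applying Riemann--Roch for the Cartier divisor $H$ on the surface $X$ (which has rational singularities) together with the vanishing already recorded, $h^{0}(\mathcal O_{X}(H))=6$ and $H^{2}=5$ already give $K_{X}\cdot H=-5$, so the real content is that no $\bar G$-invariant surface of degree~$5$ can occur.

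For the first step I would pass to the minimal resolution $f\colon\tilde X\to X$, put $\tilde H=f^{*}H$, and use $\tilde H^{2}=5$, $K_{\tilde X}\cdot\tilde H=K_{X}\cdot H=-5$ (the $f$-exceptional curves are $\tilde H$-trivial) and $\chi(\mathcal O_{\tilde X})=1$ (as $\tilde X$ is rational, $X$ being of Fano type). By the Nadel--Shokurov vanishing theorem applied to $K_{\tilde X}+\tilde H=K_{\tilde X}+(\tilde H-K_{\tilde X})$ with $\tilde H-K_{\tilde X}$ big and nef, one gets
$$
h^{0}\big(\mathcal O_{\tilde X}(K_{\tilde X}+\tilde H)\big)=\chi\big(\mathcal O_{\tilde X}(K_{\tilde X}+\tilde H)\big)=1+\tfrac12(K_{\tilde X}+\tilde H)\cdot\tilde H=1 .
$$
Hence $|K_{\tilde X}+\tilde H|$ consists of a single effective divisor $D$, and $D\cdot\tilde H=(K_{\tilde X}+\tilde H)\cdot\tilde H=0$ forces every component of $D$ to be $f$-exceptional (as $\tilde H$ is nef and big and $f^{*}$ of an ample divisor). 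Pushing $K_{\tilde X}+\tilde H\sim D$ down to $X$ gives $-K_{X}\sim H$. Thus $-K_{X}$ is very ample and $X\subset\P^{5}$ is the anticanonical model of a del Pezzo surface with $K_{X}^{2}=5$; since $-K_{X}$ is now Cartier and $X$ has (quotient, hence) log terminal singularities, these singularities are canonical, i.e. Du Val.

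For the second step, note that $\bar G$ is a finite subgroup of $\mathrm{Aut}(X)$ acting on $H^{0}(\mathcal O_{X}(-K_{X}))\cong\C^{6}$, and this representation is irreducible because $G$ is transitive (no invariant proper linear subspace of $\P^{5}$). Browsing the classification of Gorenstein del Pezzo quintics and their automorphism groups (Dolgachev's book; the singular ones either have positive-dimensional automorphism group or a small finite one), none of the singular surfaces admits a finite automorphism group with a faithful $6$-dimensional irreducible representation, so $X$ must be smooth and $\bar G=\mathrm{Aut}(X)\cong\SS_{5}$. But the $6$-dimensional irreducible representation of $\SS_{5}$ is self-dual (all representations of symmetric groups are defined over $\Q$), so $\mathrm{Sym}^{2}\C^{6}$ contains the trivial representation and there is a nonzero $\SS_{5}$-invariant quadratic form on $\C^{6}$; concretely, writing $\C^{6}=\bigwedge^{2}\C^{4}$ with $\C^{4}$ the standard $4$-dimensional representation, this is the Plücker quadric through $G(2,4)$. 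Either way $G$ has a semi-invariant of degree~$2$, contradicting the hypothesis that $G$ has no semi-invariants of degree at most~$5$; this contradiction proves the lemma.

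The routine part is the first step, which is entirely parallel to the Riemann--Roch arguments already carried out for $\deg(X)=4$ (Lemma~\ref{lemma:d-4-q-6}) and in the Bordiga analysis of Section~\ref{section:5-dim}. The delicate point is the second step: one needs to know the list of automorphism groups of possibly singular del Pezzo quintics to conclude $\bar G\cong\SS_{5}$. An alternative avoiding any citation would be to argue directly on $\tilde X$, a weak del Pezzo surface of degree~$5$, using the configuration of its ten lines and its $(-2)$-curves together with Lemmas~\ref{lemma:small-orbit-on-P1} and~\ref{lemma:S4-on-P2} to pin down $\bar G$ and then exhibit a small invariant orbit or an invariant quadric; but invoking the known automorphism list is the cleanest route, and the final invariant-quadric observation then closes the argument immediately.
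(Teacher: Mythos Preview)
Your first step---computing $h^{0}(\mathcal O_{\tilde X}(K_{\tilde X}+\tilde H))=1$, observing that the unique effective divisor is $f$-exceptional, and concluding $-K_X\sim H$ so that $X$ is a Gorenstein del Pezzo quintic---is exactly the paper's argument.

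The second step is where you diverge, and here the paper's route is noticeably cleaner. To rule out the singular case the paper simply observes that a Du~Val del Pezzo surface of degree~$5$ has at most $4$ singular points, while any $\bar G$-orbit in $\P^5$ has length at least~$6$ by transitivity of $G$; hence $\Sing(X)=\varnothing$. This avoids your appeal to the classification of automorphism groups of singular quintics. For the final contradiction the paper never needs to identify $\bar G$: it just uses that the sum of the ten $(-1)$-curves on the smooth quintic is $\Aut(X)$-invariant (hence $\bar G$-invariant) and linearly equivalent to $-2K_X\sim 2H$, contradicting Lemma~\ref{lemma:invariant-lift}. Your route---arguing that irreducibility on $\C^6$ forces $\bar G=\SS_5$ (correct, since no proper subgroup of $\SS_5$ has a $6$-dimensional irreducible representation) and then exhibiting an invariant quadric---also works, but requires this extra identification step. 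One small slip: the Pl\"ucker form $\omega\mapsto\omega\wedge\omega$ on $\bigwedge^2\C^4$ is $\SS_5$-semi-invariant with character the sign (since $\det$ of the standard $4$-dimensional representation is the sign), not invariant. The genuinely $\SS_5$-invariant quadratic form predicted by self-duality is a different element of $\Sym^2 V$; both the trivial and the sign representation occur with multiplicity one there. Either one gives a $\bar G$-invariant quadric hypersurface and hence a degree-$2$ semi-invariant of $G$, so your conclusion survives, but the identification with the Pl\"ucker quadric is not quite right.
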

\begin{proof}
Suppose that $X$ is a surface such that $\deg(X)=5$ and $-K_X\cdot
H=5$. It follows from $(\ref{equation:RRR})$ and the Riemann--Roch
theorem that
$$
h^0\Big(\mathcal{O}_{\tilde{X}}\big(K_{\tilde{X}}+\tilde{H}\big)\Big)=1+\frac{\big(K_{\tilde{X}}+\tilde{H}\big)\cdot\tilde{H}}{2}=1,
$$
which means that there is a unique effective divisor $D$ in the
linear system $|K_{\tilde{X}}+\tilde{H}|$. Moreover, since
$D\cdot\tilde{H}=0$, the support of the divisor $D$ is contained
in the exceptional locus of the morphism $f$. On the other hand,
for any effective divisor $E$ supported in the exceptional locus
of the morphism $f$, one has $K_{\tilde{X}}\cdot E\ge 0$ and
$E\cdot\tilde{H}=0$, because we assume that $f$ is a minimal
resolution of singularities. Therefore, we must have  $D\sim 0$.
Since, $K_{X}+H\sim f_*(K_{\tilde{X}}+\tilde{H})$, we see that
$K_{\tilde{X}}\sim -\tilde{H} \sim f^*(K_X)$, which implies that
$X$ is a del Pezzo surface with $K_{X}^{2}=5$ that has at most Du
Val singularities. Then $X$ cannot have more than $4$ singular
points, which implies that $X$ is smooth, because the group $G$ is
transitive. We see that $X$ is an anticanonically embedded smooth
del Pezzo surface of degree $5$. Thus the action of $\bar{G}$
lifts isomorphically to the vector space
$H^0(\O_X(H))=H^0(\O_X(-K_{X}))\cong\C^6$. Moreover, it follows
from the transitivity of the group $G$ that $\bar{G}$ acts
faithfully on the surface $X$. Thus, the group $\bar{G}$ is a
subgroup of the symmetric group $\SS_5$ acting on $\C^6$, since
$\mathrm{Aut}(X)\cong\SS_{5}$. The sum of all $(-1)$-curves in $X$
is a $\bar{G}$-invariant divisor that is linearly equivalent to
$-2K_{X}$ (cf. \cite[Lemma~5.7]{Ch07b}), which is impossible by
Lemma~\ref{lemma:invariant-lift}.
\end{proof}

\begin{lemma}
\label{lemma:d-6-q-4} If $X$ is a surface and $\deg(X)=6$, then
$-K_X\cdot H\ne 4$.
\end{lemma}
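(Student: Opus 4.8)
The plan is to assume $-K_{X}\cdot H=4$ and derive a contradiction with the transitivity of $G$ and with the absence of a $\bar{G}$-invariant smooth rational cubic scroll in $\P^5$. By the adjunction formula $2g(H)-2=\tilde{H}\cdot(K_{\tilde{X}}+\tilde{H})=6+(-4)=2$, so the general hyperplane section $H$ is a smooth curve of genus $2$. By $(\ref{equation:RRR})$ and the Riemann--Roch theorem on $\tilde{X}$, together with $\chi(\mathcal{O}_{\tilde{X}})=1$ (as in Lemma~\ref{lemma:d-5-q-5}),
$$
h^{0}\Big(\mathcal{O}_{\tilde{X}}\big(K_{\tilde{X}}+\tilde{H}\big)\Big)=\chi\Big(\mathcal{O}_{\tilde{X}}\big(K_{\tilde{X}}+\tilde{H}\big)\Big)=1+\frac{\big(K_{\tilde{X}}+\tilde{H}\big)\cdot\tilde{H}}{2}=2,
$$
so $|K_{\tilde{X}}+\tilde{H}|$ is a $\bar{G}$-invariant pencil. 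Its mobile part $|M|$ cannot be composed with a pencil (else $h^{0}(K_{\tilde{X}}+\tilde{H})\geqslant h^{0}(2N)=3$ for the underlying pencil $|N|$), so its general member $M$ is irreducible, and $M\cdot\tilde{H}\in\{1,2\}$, because $M\cdot\tilde{H}\leqslant(K_{\tilde{X}}+\tilde{H})\cdot\tilde{H}=2$ and a mobile class cannot be supported on $f$-exceptional curves.

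Suppose $M\cdot\tilde{H}=1$. Then the fixed part $Z=K_{\tilde{X}}+\tilde{H}-M$ satisfies $Z\cdot\tilde{H}=1$, so $Z$ has a unique component $\tilde{L}$ which is not $f$-exceptional, and $\tilde{L}\cdot\tilde{H}=1$; since $Z$ is $\bar{G}$-invariant and $f$ is $\bar{G}$-equivariant, $f(\tilde{L})$ is a $\bar{G}$-invariant line in $\P^5$, contradicting the transitivity of $G$. Hence $M\cdot\tilde{H}=2$, so the fixed part $Z$ is $f$-exceptional; moreover $M^{2}=0$ by the Hodge index theorem (since $(M\cdot\tilde{H})^{2}=4$, $\tilde{H}^{2}=6$, and $M^{2}\geqslant 0$), so $|M|$ is a base point free pencil and $f$ maps its general member isomorphically onto a smooth conic in $\P^5$. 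Thus $X$ carries a conic bundle structure: it is covered by the one-parameter family of smooth conics $C_{t}=f(M_{t})\subset\P^5$, $t\in\P^1$.

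Finally, let $Y\subset\P^5$ be the union of the planes $\langle C_{t}\rangle$; since $X$ is non-degenerate the planes $\langle C_{t}\rangle$ are not all equal, so $Y$ is an irreducible threefold, $X\subset Y$, and $Y$ is $\bar{G}$-invariant because so is the pencil $|M|$. Writing $g\colon\tilde{X}\to\P^1$ for the conic bundle and $\mathcal{E}=g_{*}\mathcal{O}_{\tilde{X}}(\tilde{H})$ (locally free of rank $3$, since $\tilde{H}$ has degree $2$ on the fibres), one gets an embedding $\tilde{X}\hookrightarrow\P(\mathcal{E})$ over $\P^1$ with $\tilde{H}=\mathcal{O}_{\P(\mathcal{E})}(1)|_{\tilde{X}}$, and $h^{0}(\mathcal{E})=h^{0}(\mathcal{O}_{\tilde{X}}(\tilde{H}))=h^{0}(\mathcal{O}_{X}(H))=6$ by projective normality and non-degeneracy of $X$; the rational map $\P(\mathcal{E})\dashrightarrow\P^5$ given by $|\mathcal{O}_{\P(\mathcal{E})}(1)|$ has image $Y$, and since $Y$ is three-dimensional one finds $\mathcal{E}=\mathcal{O}(a_{1})\oplus\mathcal{O}(a_{2})\oplus\mathcal{O}(a_{3})$ with $a_{i}\geqslant 0$ and $a_{1}+a_{2}+a_{3}=3$, so $Y$ is a three-dimensional rational normal scroll of degree $3$ in $\P^5$. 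If $a_{1}=a_{2}=a_{3}=1$, then $Y\cong\P^1\times\P^2$ is a $\bar{G}$-invariant smooth rational cubic scroll, excluded by hypothesis; otherwise $Y$ is a cone and $\Sing(Y)$ is its vertex, a $\bar{G}$-invariant proper linear subspace of $\P^5$ (a point or a line), again contradicting the transitivity of $G$. The main obstacle is the analysis of the pencil $|K_{\tilde{X}}+\tilde{H}|$ in the second paragraph (ruling out $M\cdot\tilde{H}=1$ and proving $M^{2}=0$), together with the identification $\deg(Y)=3$, for which one must exclude negative summands of $\mathcal{E}$ using the three-dimensionality of $Y$ and then invoke the classification of varieties of minimal degree.
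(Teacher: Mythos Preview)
Your proof is correct and takes a genuinely different route from the paper's.

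Both arguments begin identically: from $\deg(X)=6$ and $-K_X\cdot H=4$ one gets $h^0(K_{\tilde{X}}+\tilde{H})=2$, and both then produce a free pencil of conics on $X$. From that point the approaches diverge. The paper descends to $X$, shows $|K_X+H|$ is base-point-free (hence $K_X$ is Cartier and $K_X^2=2$), passes to the anticanonical model $\bar{X}$, a degree-$2$ del Pezzo surface, and then runs a lengthy case analysis of the double cover $\bar{X}\to\P^2$, the possible singular configurations, and the group actions; contradictions are extracted via Lemma~\ref{lemma:invariant-lift} by exhibiting $\bar{G}$-invariant divisors in $|mH|$ for small $m$. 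Your argument instead stays on $\tilde{X}$, packages the conic fibration into $\mathcal{E}=g_*\mathcal{O}_{\tilde{X}}(\tilde{H})$, and observes that the planes spanned by the conics sweep out a $\bar{G}$-invariant threefold scroll $Y\subset\P^5$ of degree $\sum a_i=3$, which is then either the Segre cubic scroll (excluded by the standing hypothesis of Section~\ref{section:6-dim}) or a cone with a $\bar{G}$-invariant linear vertex (excluded by transitivity).

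Your route is considerably shorter and more geometric; its cost is that it consumes the ``no $\bar{G}$-invariant smooth cubic scroll'' hypothesis, whereas the paper's proof of this particular lemma does not. Two minor points of phrasing: writing ``$h^0(2N)=3$'' should be ``$\geqslant 3$'' (and one should note that $\tilde{X}$ is rational so the Stein factorization base is $\P^1$, forcing $M\sim dN$ linearly); and the cleanest way to see $a_i\geqslant 0$ is that the restriction $H^0(\tilde{X},\tilde{H})\to H^0(M_t,\tilde{H}|_{M_t})$ has rank~$3$ for general $t$ (because $C_t$ spans a plane), which forces the evaluation $H^0(\mathcal{E})\to\mathcal{E}_t$ to be surjective.
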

\begin{proof}
Suppose that $X$ is a surface such that $\deg(X)=6$ and $-K_X\cdot
H=4$. It follows from~$(\ref{equation:RRR})$ and the Riemann--Roch
theorem that
$$
h^0\Big(\mathcal{O}_{\tilde{X}}\big(K_{\tilde{X}}+\tilde{H}\big)\Big)=1+\frac{\big(K_{\tilde{X}}+\tilde{H}\big)\cdot\tilde{H}}{2}=2.
$$

Let $\tilde{C_1}$ and $\tilde{C_2}$ be general curves in
$|K_{\tilde{X}}+\tilde{H}|$. Put $C_1=f(\tilde{C_1})$ and
$C_2=f(\tilde{C_2})$. Then every curve in the pencil $|K_X+H|$ is
a curve of degree at most $2$, since $H\cdot C_1=H\cdot C_{2}=2$.
In particular, either the base locus of the linear system
$|K_X+H|$ contains a $\bar{G}$-invariant curve of degree at most
$2$, or the intersection $C_1\cap C_2$ consists of at most $4$
points. Since $G$ is transitive, we must have $C_1\cap
C_2=\varnothing$, so that $|K_X+H|$ is base point free. In
particular, the divisor~$K_X$ is Cartier, which implies that $X$
has at most Du Val singularities. Furthermore, one has
$(K_X+H)^2=0$, which implies that $K_X^2=2$, and the pencil
$|K_X+H|$ induces a morphism $\vartheta\colon X\to\P^1$ whose
general fiber is a smooth rational curve. Then the morphism
$\vartheta$ induces a homomorphism
$\theta\colon\bar{G}\to\mathrm{Aut}(\P^1)$.

Note that $h^2(\O_X(-K_X))=h^0(\O_X(2K_X))$ by the Serre duality,
and $h^0(\O_X(2K_X))=0$ since $H\cdot K_X<0$. Then
$h^0(\O_X(-K_X))\geqslant 1+K_X^2=3$ by the Riemann--Roch theorem.
Let us show that the base locus of the linear system $|-K_{X}|$
does not contain curves.

Suppose that the base locus of the linear system $|-K_{X}|$
contains curves. Then the base locus of the linear system $|-K_X|$
contains a $\bar{G}$-invariant curve. Let us denote it by $E$.
Then $\deg(E)\leqslant 3$, because $\deg(E)\leqslant -K_X\cdot
H=4$ and $h^0(\O_X(-K_X))>1$. Since $G$ is transitive, the only
possibility is that $E$ is a disjoint union of three lines in
$\P^5$. Let us denote these lines by $L_1$, $L_2$, and $L_3$. Then
$G$ acts transitively on the set $\{L_1, L_2, L_3\}$. Moreover,
there is a line $L$ in $X\subset\P^5$ such that $-K_X\sim
L_1+L_2+L_3+L$, and $|L|$ is $\bar{G}$-invariant and free from
base curves. Applying the adjunction formula to $L$, we see that
$-2=(K_X+L)\cdot L=-3L_1\cdot L$, which is a contradiction. Hence,
the base locus of the linear system $|-K_X|$ does not contain
curves.

Since the base locus of the linear system $|-K_X|$ does not
contain curves, the divisor $-K_X$ is nef. Furthermore, the
divisor $-K_X$ is big since $(-K_X)^2=2$, which implies that $X$
is a weak del Pezzo surface of degree $2$. Let $\sigma\colon
X\to\bar{X}$ be the blow-down of all curves that has trivial
intersection with $-K_{X}$. Then $\bar{X}$ is a possibly singular
del Pezzo surface of degree $2$, i.e. $-K_{\bar{X}}$ is ample and
$K_{\bar{X}}^{2}=2$. Moreover, the action of the group $\bar{G}$
on the surface $X$ induces a faithful action of the group
$\bar{G}$ on the surface $\bar{X}$.

Let $\kappa\colon\bar{X}\to\P^2$ be the double cover that is given
by the linear system $|-K_{\bar{X}}|$, let $R$ be the ramification
divisor in $\P^2$ of the double cover $\kappa$, let $\bar{R}$ be
the curve in $\bar{X}$ such that $\kappa(\bar{R})=R$, and let
$\xi$ be the involution in $\mathrm{Aut}(\bar{X})$ that is induced
by $\kappa$. Then $\xi$ induces an~exact sequence of groups
$$
\xymatrix{
1\ar@{->}[rr]&&\mathbb{Z}_{2}\ar@{->}[rr]^{\alpha}&& \mathrm{Aut}\big(\bar{X}\big)\ar@{->}[rr]^{\beta}&&\Gamma\ar@{->}[rr]&&1,}%
$$
where $\mathrm{im}(\alpha)=\langle\xi\rangle$, $\Gamma$ is a
subgroup in $\mathrm{Aut}(\P^2)$ such that $R$ is
$\Gamma$-invariant, and $\beta$ is induced by the double cover
$\kappa$. Since $\bar{X}$ is a quartic hypersurface in
$\P(1,1,1,2)$ and $\kappa$ is induced by the natural projection
$\P(1,1,1,2)\to\P^2$, this sequence splits, so we have
$\mathrm{Aut}(\bar{X})\cong \Gamma\times\mathbb{Z}_{2}$. Note that
$\xi$ is known as the Geiser involution of the surface $\bar{X}$.

Suppose that $X$ is singular. Then it follows from
Theorem~\ref{theorem:Noether} that
\begin{equation}\label{eq:Noether-6-or-7}
\rk\Pic(X)+K_X^2+\sum_{P\in X}\mu\big(P\big)=10,
\end{equation}
where $\mu(P)$ is a Milnor number of a point $P\in X$. Since $G$
is transitive, we see that \mbox{$|\mathrm{Sing}(X)|\geqslant 6$},
and all singular points of the surface $X$ are ordinary double
points. By~$(\ref{eq:Noether-6-or-7})$ one has
\mbox{$|\mathrm{Sing}(X)|\leqslant 7$}. If $|\mathrm{Sing}(X)|=7$,
then $\rk\Pic(X)=1$, which implies that $X\cong\bar{X}$ is a del
Pezzo surface of degree $2$ that has $7$ singular points, which
easily leads to a contradiction. But $G$ acts transitively on the
set of singular points of the surface $X$ (indeed, otherwise
$\bar{G}$ would have an orbit of length at most $3$, which is
impossible since $G$ is transitive). Thus, we see that
$|\mathrm{Sing}(X)|=6$, which implies that either $-K_X$ is
already ample, or there exists an irreducible curve $Z\subset X$
such that $-K_X\cdot Z=0$. In the latter case
$\mathrm{Sing}(X)\cap C\ne\varnothing$, since $\bar{X}$ can not
have more than $6$ singular points. Thus, if there exists an
irreducible curve $Z\subset X$ such that $-K_X\cdot Z=0$, then
$Z\cong\P^1$ and $Z$ contains all singular points of the surface
$X$. Applying the adjunction formula to the proper transform of
the curve $Z$ on the surface $\tilde{X}$, we see that $Z^{2}=1$,
which is impossible, since $Z^{2}<0$, because $Z$ is contracted
by~$\sigma$. Thus, we see that $-K_X$ is ample, so that
$X=\bar{X}$ is a del Pezzo surface of degree $2$ with $6$ singular
points. In fact, such del Pezzo surface is unique, since $R$ must
be a union of $4$ lines in general position. Since $\rk\Pic(X)=2$
by $(\ref{eq:Noether-6-or-7})$, every fiber of the morphism
$\vartheta$ is an irreducible rational curve. Since~$X$ has only
ordinary double points, every fiber of $\vartheta$ contains either
none or two singular points of the surface $X$. Thus, six singular
points of the surface $X$ must split in pairs, which implies that
there is a $\theta(\bar{G})$-orbit in $\P^1$ that contains at most
$3$ points. Then there is a $\theta(\bar{G})$-orbit in $\P^1$ that
contains at most $2$ points. In particular, there exist two fibres
$F_{1}$ and $F_{2}$ of the morphism $\vartheta$ such that the
divisor $F_{1}+F_{2}$ is $\bar{G}$-invariant. Hence
$F_{1}+F_{2}+\bar{R}$ is a $\bar{G}$-invariant divisor as well,
but $F_{1}+F_{2}+\bar{R}\sim 2H$, which is impossible by
Lemma~\ref{lemma:invariant-lift}. The obtained contradiction shows
that the surface $X$ can not be singular.

Therefore, the surface $X$ is smooth. Then every fiber of the
morphism $\vartheta$ is reduced, and $\vartheta$ has exactly $6$
reducible fibers, which consist of two smooth rational curves
intersecting transversally in one point. In particular, we see
that there exists a $\bar{G}$-orbit in $X$ that consists of $6$
points, since there are no $\bar{G}$-orbits in $X$ that consist of
at most $5$ points, because $G$ is transitive. Then there exists a
$\theta(\bar{G})$-orbit in $\P^1$ that consists of $6$ points,
which implies that $\theta(\bar{G})\not\cong\A_5$. In particular,
we see that $\bar{G}\not\cong\A_5$. We must consider two cases:
$X\cong\bar{X}$ and $X\not\cong\bar{X}$.

Suppose that $X\cong\bar{X}$ and  $\theta(\bar{G})\not\cong\SS_4$.
Then there exists a $\theta(\bar{G})$-orbit in $\P^1$ that
contains either $1$ or $2$ or $4$ points, because we already
proved that $\theta(\bar{G})\not\cong\A_5$. Thus, there are $4$
(not necessary distinct) fibers $\bar{F}_{1}$, $\bar{F}_{2}$,
$\bar{F}_{3}$, $\bar{F}_{4}$ of the morphism $\vartheta$ such that
the divisor $\sum_{i=1}^{4}\bar{F}_{i}$ is $\bar{G}$-invariant.
Then $2\bar{R}+\sum_{i=1}^{4}\bar{F}_{i}\sim 4H$, and the divisor
$2\bar{R}+\sum_{i=1}^{4}\bar{F}_{i}$ is $\bar{G}$-invariant, which
is impossible by Lemma~\ref{lemma:invariant-lift}. The obtained
contradiction shows that $\theta(\bar{G})\cong\SS_4$ if
$X\cong\bar{X}$.

Suppose that $X\cong\bar{X}$. Then $\theta(\bar{G})\cong\SS_4$.
Moreover, it follows from
Lemma~\ref{lemma:S4-large-representations} that $\bar{G}\not\cong
\SS_4$, which implies that $|\bar{G}|\geqslant 48$. Since
$X\slash\langle\xi\rangle\cong\mathbb{P}^{2}$, we see that
$\langle\xi\rangle$-invariant subgroup of the group $\Pic(X)$ is
$\mathbb{Z}$, which implies that $\xi\not\in\bar{G}$, since the
pencil $|K_{X}+H|$ is $\bar{G}$-invariant. Thus, we see that
$\bar{G}\cong\beta(\bar{G})$. Browsing through the list of the
possible automorphism groups of smooth del Pezzo surfaces of
degree $2$ (see~\cite[Table~6]{DoIs06}) and using the fact that
$|\bar{G}|$ is divisible by $24$ and $|\bar{G}|\geqslant 48$, we
see that $\Gamma$ is a subgroup of the following groups:
$\PSL_2(\F_7)$, $(\Z_4\times\Z_4)\rtimes\SS_3$, or $4.\A_4$. On
the other hand, the only subgroup of the group $\PSL_2(\F_7)$ that
admits a surjective homomorphism to $\SS_4$ is isomorphic to
$\SS_4$ (see \cite{Atlas}), which implies that either
$\Gamma\cong(\Z_4\times\Z_4)\rtimes\SS_3$ or $\Gamma\cong 4.\A_4$.
If $\Gamma\cong 4.\A_4$, then the center of the group $\bar{G}$
must be contained in the kernel of the homomorphism~$\psi$,
because the group $\SS_4$ has trivial center. Thus, if
$\Gamma\cong 4.\A_4$, then the monomorphism $\beta$ composed with
the natural epimorphism $4.\A_4\to\A_4$ gives a monomorphism
$\bar{G}\to\A_4$, which is impossible, since $|\bar{G}|\geqslant
48$. Thus, we see that $\Gamma\cong (\Z_4\times\Z_4)\rtimes\SS_3$.
Then it follows from the existence of the epimorphism
$\psi\colon\bar{G}\to\SS_4$, that either
$\bar{G}\cong\beta(\bar{G})=\Gamma\cong(\Z_4\times\Z_4)\rtimes\SS_3$,
or $\Gamma\cong\SS_4$ (cf. the proof of
\cite[Theorem~6.17]{DoIs06}). Since we already know that
$\Gamma\not\cong\SS_4$, then $\bar{G}\cong\Gamma$. It follows from
the proof of \cite[Theorem~6.17]{DoIs06} that the
$\bar{G}$-invariant subgroup of the Picard group is $\mathbb{Z}$,
which is a contradiction, since  $|K_{X}+H|$ is
$\bar{G}$-invariant. The obtained contradiction shows that
$X\not\cong\bar{X}$.

Since $X\not\cong\bar{X}$, we see that the divisor $-K_X$ is not
ample. Let $E_{1},\ldots,E_{r}$ be all irreducible curves that are
contracted by $\sigma$, let $C_{1}$ and $C_{2}$ be general curves
in the pencil $|K_X+H|$. Then~$C_{1}$ and~$C_{2}$ are smooth
irreducible curves, and $R_{i}\cdot E_{j}\geqslant 1$ for all $i$
and $j$, since none of the curves $E_{1},\ldots,E_{r}$ can not be
a component of any curve in the pencil $|K_X+H|$. Since
$$
-K_{\bar{X}}\cdot\sigma(C_{1})=-K_{X}\cdot C_{1}=-K_{\bar{X}}\cdot\sigma(C_{2})=-K_{X}\cdot C_{2}=2,%
$$
we see that either the curves $\sigma(C_{1})$ and $\sigma(C_{2})$
are smooth rational curves such that  $\kappa\circ\sigma(C_{1})$
and $\kappa\circ\sigma(C_{2})$ are conics in $\P^2$, or
$\kappa\circ\sigma(C_{1})$ and $\kappa\circ\sigma(C_{2})$ are
lines in $\P^2$.

Suppose that $\kappa\circ\sigma(C_{1})$ and
$\kappa\circ\sigma(C_{2})$ are lines in $\P^2$. Then the
intersection
$\kappa\circ\sigma(C_{1})\cap\kappa\circ\sigma(C_{2})$ consists of
a single point in $R$, which implies that
$\sigma(C_{1})\cap\sigma(C_{2})$ also consists of a single point
that must be the unique singular point of the surface $\bar{X}$.
So, we see that $\sigma(E_{1})=\ldots=\sigma(E_{r})$. If this
point is not an ordinary double point of the surface $\bar{X}$,
then $X\subset\P^5$ contains a $\bar{G}$-orbit of length at most
$3$,  which is impossible, because $G$ is transitive. Hence, we
see that $r=1$ and $\bar{X}$ has one singular point, which is an
ordinary double point. In particular, the curve $E_{1}$ is
$\bar{G}$-invariant. On the other hand, the curves $\sigma(C_{1})$
and $\sigma(C_{2})$ are curves in the linear
system~\mbox{$|-K_{\bar{X}}|$} that are singular at the point $\sigma(E_{1})$.
Moreover, the multiplicity of the curves $\sigma(C_{1})$ and
$\sigma(C_{2})$ at the point $\sigma(E_{1})$ must equal $2$, which
implies that $E_{1}\cdot C_{1}=E_{1}\cdot C_{2}=2$.  Then
$$
H\cdot E_{1}=C_{1}\cdot E_{1}-K_X\cdot E_{1}=C_{2}\cdot E_{1}-K_X\cdot E_{1}=2,%
$$
which is impossible, since $G$ is transitive. The obtained
contradiction shows that the curves $\kappa\circ\sigma(C_{1})$ and
$\kappa\circ\sigma(C_{2})$ are not lines in $\P^2$.

We see that $\kappa\circ\sigma(C_{1})$ and
$\kappa\circ\sigma(C_{2})$ are conics in $\P^2$. Then the curves
$\sigma(C_{1})$ and $\sigma(C_{2})$ are smooth, which implies that
$C_{i}\cdot E_{j}=1$ for every $i$ and $j$. Then $H\cdot
E_{i}=C_{j}\cdot E_{i}-K_X\cdot E_{i}=1$ for every $i$ and $j$,
which implies that $r\geqslant 3$, since the group $G$ is
transitive. If $\bar{X}$ has a $\bar{G}$-invariant singular point
that is not an ordinary double point of the surface $\bar{X}$,
then $X$ contains a $\bar{G}$-orbit of length at most $3$, which
is impossible, because $G$ is transitive. On the other hand, we
know that $\rk\Pic(\bar{X})+r=8$ by Theorem~\ref{theorem:Noether},
which implies that $3\leqslant r\leqslant 7$. Since the singular
points of the surface $\bar{X}$ are Du Val singular points, we see
that either $\bar{X}$ has only ordinary double points, or the set
$\Sing(\bar{X})$ consists of $2$ singular points of
type~$\mathbb{A}_3$, or the set $\Sing(\bar{X})$ consists of $3$
singular points of type~$\mathbb{A}_2$. However, if the set
$\Sing(\bar{X})$ consists of $2$ singular points of
type~$\mathbb{A}_3$, then $X$ contains a $\bar{G}$-orbit of length at
most $4$,  which is impossible, because $G$ is transitive.
Similarly, if the set $\Sing(\bar{X})$ consists of $3$ singular
points of type $\mathbb{A}_2$, then $X$ contains a $\bar{G}$-orbit
of length at most $3$, which is impossible, because $G$ is
transitive. Therefore, we see that $\bar{X}$ has only ordinary
double points, which implies that the curves $E_{1},\ldots,E_{r}$
are disjoint and all points
$\kappa\circ\sigma(E_{1}),\ldots,\kappa\circ\sigma(E_{r})$ are
different. Note that
$\kappa\circ\sigma(E_{1}),\ldots,\kappa\circ\sigma(E_{r})$ are all
singular points of the curve $R$, and they are ordinary double
points of the curve $R$. Since
$$
\Big\{\kappa\circ\sigma(E_{1}),\ldots,\kappa\circ\sigma(E_{r})\Big\}\subset\kappa\circ\sigma(C_{1})\cap \kappa\circ\sigma(C_{2}),%
$$
we see that $r\leqslant 4$, because  the intersection
$\kappa\circ\sigma(C_{1})\cap \kappa\circ\sigma(C_{2})$ consists
of at most four points, and the curves $\kappa\circ\sigma(C_{1})$
and $\kappa\circ\sigma(C_{2})$ are conics in $\P^2$.

Suppose that there is a point $P\in\P^2$ that is fixed by
$\beta(\bar{G})$. If $P\not\in\Sing(R)$, then $X$ has
$\bar{G}$-orbit of length at most $2$, which is impossible,
because $G$ is transitive. If $P=\kappa\circ\sigma(E_{i})$ for
some $i\in\{1,\ldots,r\}$, then the intersection of the curve
$E_{i}$ and a proper transform of the curve~$\bar{R}$ on the
surface $X$ is a $\bar{G}$-invariant set of two points, which is
impossible, since~$G$ is transitive. We see that there is no
$\beta(\bar{G})$-invariant point in $\P^2$.

Suppose that $r=3$. Then $\bar{G}$ acts transitively on
$|\Sing(R)|$, since there are no $\beta(\bar{G})$-fixed points in
$\P^2$. One has $\sigma(C_{1})\cdot
\sigma(C_{1})=\sigma(C_{2})\cdot \sigma(C_{2})= 3/2$, which
implies that the pencil $|K_{X}+H|$ is not $\xi$-invariant. In
particular, we see that $\xi\not\in\bar{G}$, which implies that
$\bar{G}\cong\beta(\bar{G})$. Note that either $R$ is a union of a
line and a nonsingular cubic curve, or $R$ is an irreducible plane
quartic curve with three ordinary double points. In the former
case each component of the curve $R$ must be
$\beta(\bar{G})$-invariant, which implies that there exists a
point in $\P^2$ that is fixed by $\beta(\bar{G})$, which is
impossible. Thus, we see that $R$ is an irreducible plane quartic
curve with three ordinary double points. In particular, the group
$\beta(\bar{G})$ acts faithfully on the curve $R$. Let
$\upsilon\colon\tilde{R}\to R$ be the normalization of the curve
$R$. Then the faithful action of the group $\beta(\bar{G})$ on the
curve $R$ induces a faithful action of the group $\beta(\bar{G})$
on the curve $\tilde{R}$. On the other hand, the curve $\tilde{R}$
is rational, which implies that either $\beta(\bar{G})\cong\SS_4$,
or $\tilde{R}$ contains a $\beta(\bar{G})$-orbit of length at most
$2$. The former case is impossible by
Lemma~\ref{lemma:S4-large-representations}, since
$\bar{G}\cong\beta(\bar{G})$. Hence, there is a
$\beta(\bar{G})$-orbit $\Omega\subset\tilde{R}$ that consists of
at most $2$ points. Then $\upsilon(\Omega)\not\subset\Sing(R)$,
since otherwise one of the points of $\Sing(R)$ would be fixed by
$\beta(\bar{G})$, which is impossible. Hence, the surface $X$
contains a $\bar{G}$-orbit that consists of at most $2$ points,
which is impossible, since $G$ is transitive.

Thus, we see that $r=4$. Then either $R$ is a union of a line and
an irreducible singular cubic curve, or $R$ is a union of two
irreducible conics that intersect each other transversally. In the
former case the exists a $\beta(\bar{G})$-fixed point in $\P^2$,
which is impossible. Thus, we see that $R$ is union of two
irreducible conics. Let us call them $T_1$ and $T_2$. Let
$\tilde{T}_{1}$ and and $\tilde{T}_{2}$ be irreducible curves in
$X$ such that $\kappa\circ\sigma(\tilde{T}_{1})=T_{1}$ and
$\kappa\circ\sigma(\tilde{T}_{2})=T_{2}$. Then $\tilde{T}_{1}\cdot
C_{1}=\tilde{T}_{1}\cdot C_{2}=\tilde{T}_{2}\cdot
C_{1}=\tilde{T}_{2}\cdot C_{2}=0$, which implies that the curves
$\tilde{T}_{1}$ and $\tilde{T}_{2}$ are contained in the pencil
$|K_{X}+H|$. Note that in this case the pencil $|K_{X}+H|$ is
$\langle\xi\rangle$-invariant, so we do not know whether $\xi$ is
contained in $\bar{G}$ or not. On the other hand, we have $2H\sim
2\tilde{T}_{1}+2\tilde{T}_{2}+\sum_{i=1}^{4}E_{i}$, and the
divisor $2\tilde{T}_{1}+2\tilde{T}_{2}+\sum_{i=1}^{4}E_{i}$ is
$\bar{G}$-invariant, which is impossible by
Lemma~\ref{lemma:invariant-lift}.
\end{proof}

\begin{lemma}\label{lemma:d-7-q-3}
If $X$ is a surface and $\deg(X)=7$, then $-K_X\cdot H\ne 3$.
\end{lemma}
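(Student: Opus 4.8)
The plan is to argue by contradiction, imitating the proofs of Lemmas~\ref{lemma:d-5-q-5}, \ref{lemma:d-6-q-4} and~\ref{lemma:Bordiga-to-P2}. Suppose $X$ is a surface with $\deg(X)=7$ and $-K_X\cdot H=3$; keep the notation $f\colon\tilde{X}\to X$ for the minimal resolution and $\tilde{H}=f^{*}H$, so $\tilde{H}$ is nef and big with $\tilde{H}^{2}=7$ and $K_{\tilde{X}}\cdot\tilde{H}=K_X\cdot H=-3$. By the adjunction formula $\tilde{H}$ has genus $3$, and by~$(\ref{equation:RRR})$ together with the Riemann--Roch theorem
$$h^{0}\big(\mathcal{O}_{\tilde{X}}(K_{\tilde{X}}+\tilde{H})\big)=1+\tfrac{1}{2}(K_{\tilde{X}}+\tilde{H})\cdot\tilde{H}=3,$$
and the restriction of $|K_{\tilde{X}}+\tilde{H}|$ to a general $\tilde{H}$ is the complete canonical system of the genus-$3$ curve $\tilde{H}$. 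Moreover, the Hodge index theorem applied to the ample divisor $H$ gives $9=(-K_X\cdot H)^{2}\ge K_X^{2}\cdot H^{2}=7K_X^{2}$, hence $K_X^{2}\le 1$ and so $K_{\tilde{X}}^{2}\le K_X^{2}\le 1$.

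First I would show, exactly as in the proof of Lemma~\ref{lemma:Bordiga-to-P2} (using Reider's theorem, which applies since $\tilde{H}^{2}=7\ge 5$, the minimality of $f$, Artin's criterion for rational singularities, the transitivity of $G$, and Lemma~\ref{lemma:invariant-lift}), that $|K_{\tilde{X}}+\tilde{H}|$ is base-point free; let $\pi$ be the morphism it defines, so $(K_{\tilde{X}}+\tilde{H})^{2}=K_{\tilde{X}}^{2}+1\in\{0,1,2\}$. If $K_{\tilde{X}}^{2}=-1$, then $|K_{\tilde{X}}+\tilde{H}|$ is composed with a pencil $|P|$ having $P\cdot\tilde{H}=1$, so $X$ is swept out by a pencil of lines of $\P^{5}$; being normal, $X$ is then a cone or a rational normal scroll, which is impossible since $\deg(X)=7$ and $G$ is transitive. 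If $K_{\tilde{X}}^{2}=1$, then $\pi$ is either a degree-$2$ morphism onto $\P^{2}$ — and then $X$ is a double plane admitting no branch curve of the required degree, reducing to a weak del Pezzo surface of degree $2$ treated as in Lemma~\ref{lemma:d-6-q-4} — or $|K_{\tilde{X}}+\tilde{H}|$ is composed with a pencil of conics and $\tilde{H}$ is hyperelliptic, in which case the planes of these conics sweep out a $\bar{G}$-invariant threefold in $\P^{5}$ that is a cubic scroll or a complete intersection of two quadrics, both excluded by the hypotheses of Theorem~\ref{theorem:main-6}. This leaves the main case $K_{\tilde{X}}^{2}=0$.

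When $K_{\tilde{X}}^{2}=0$ the morphism $\pi\colon\tilde{X}\to\P^{2}$ is birational, because $\deg(\pi)\cdot\deg\big(\pi(\tilde{X})\big)=(K_{\tilde{X}}+\tilde{H})^{2}=1$; it is $\bar{G}$-equivariant since $K_{\tilde{X}}+\tilde{H}$ is $\bar{G}$-invariant, and $\tilde{X}$ is the blow-up of $\P^{2}$ at $9$ points (possibly infinitely near), with $K_{\tilde{X}}+\tilde{H}\sim\ell$, $-K_{\tilde{X}}\sim 3\ell-\sum_{i=1}^{9}E_{i}$ and $\tilde{H}\sim 4\ell-\sum_{i=1}^{9}E_{i}$, where $\ell$ is the pull-back of a line. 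In particular $(-K_{\tilde{X}})^{2}=0$; since $X$ is of Fano type (Definition~\ref{definition:Fano-type}) the divisor $-K_X$, and hence $-K_{\tilde{X}}$, is big, so $h^{0}(\mathcal{O}_{\tilde{X}}(2K_{\tilde{X}}))=0$ and therefore $h^{0}(\mathcal{O}_{\tilde{X}}(-K_{\tilde{X}}))\ge\chi(\mathcal{O}_{\tilde{X}}(-K_{\tilde{X}}))=1+K_{\tilde{X}}^{2}=1$, so $|-K_{\tilde{X}}|\neq\varnothing$. If $-K_{\tilde{X}}$ were nef it would fail to be big (as $(-K_{\tilde{X}})^{2}=0$), forcing $-K_X$ not to be big and contradicting that $X$ is of Fano type; so $-K_{\tilde{X}}$ is not nef. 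A short argument of the type used in Lemma~\ref{lemma:10-points-on-P2}, using that $\tilde{H}$ is nef, then shows that $h^{0}(\mathcal{O}_{\tilde{X}}(-K_{\tilde{X}}))\le 2$ (if it were $\ge 3$ one finds again a pencil $|P|$ with $P\cdot\tilde{H}=1$, i.e. a cone or a scroll).

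Finally, if $h^{0}(\mathcal{O}_{\tilde{X}}(-K_{\tilde{X}}))=1$, the unique member $D$ of $|-K_{\tilde{X}}|$ is $\bar{G}$-invariant, and $f(D)$ is a $\bar{G}$-invariant curve in $X\subset\P^{5}$ of degree $-K_X\cdot H=3$, whose linear span is a proper $\bar{G}$-invariant linear subspace of $\P^{5}$, contradicting the transitivity of $G$. If $h^{0}(\mathcal{O}_{\tilde{X}}(-K_{\tilde{X}}))=2$, then $|-K_{\tilde{X}}|$ is a pencil; since $(-K_{\tilde{X}})^{2}=0$ it is base-point free, so it defines an elliptic fibration and $-K_{\tilde{X}}$ is nef, which is impossible as above; equivalently, the finite group $\bar{G}$ has an orbit of length $\le 2$ on this pencil, a fixed member again producing a $\bar{G}$-invariant curve of degree at most $3$ in $\P^{5}$ (whence a contradiction with transitivity), and a length-$2$ orbit being excluded by a short further analysis of the two conjugate curves and their linear spans, together with Lemma~\ref{lemma:invariant-lift}. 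Thus no surface $X$ of degree $7$ with $-K_X\cdot H=3$ exists, which proves the lemma. The hard parts are the base-point-freeness of $|K_{\tilde{X}}+\tilde{H}|$ via Reider's theorem and the bookkeeping needed to dispose of the degenerate configurations (the cases $K_{\tilde{X}}^{2}=\pm 1$, a hyperelliptic section, and a length-$2$ orbit on $|-K_{\tilde{X}}|$), all handled by the same methods as in the proof of Lemma~\ref{lemma:d-6-q-4}.
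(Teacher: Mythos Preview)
Your case split is set up correctly, but the case $K_{\tilde{X}}^{2}=-1$ is precisely the one that survives, and your dismissal of it is wrong. When $(K_{\tilde{X}}+\tilde{H})^{2}=0$ and $h^{0}=3$, the morphism $\pi$ has image a nondegenerate curve in~$\P^{2}$; since $(K_{\tilde{X}}+\tilde{H})\cdot\tilde{H}=4$, the image can be a conic (general member a union of two conics in~$\P^{5}$, so $P\cdot\tilde{H}=2$) or a quartic (four lines, $P\cdot\tilde{H}=1$). You only treat the second possibility. The first gives a conic bundle $\vartheta\colon X\to\P^{1}$ with $K_{X}+H\sim 2C$, and ruling this out is the real content of the lemma: the paper shows $h^{0}(\O_{X}(-K_{X}))=0$, whence $K_{X}^{2}=-1$ by Riemann--Roch; proves $X$ is smooth via Theorem~\ref{theorem:Noether} and a count of reducible fibres; and then runs a long analysis of the linear system $|C-2K_{X}|$, eliminating each possible fixed part by degree and adjunction arguments, to reach $-K_{X}\cdot(C-2K_{X})=0$, which contradicts bigness of $-K_{X}$. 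None of this is covered by ``cone or rational normal scroll''.

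Your ``main case'' $K_{\tilde{X}}^{2}=0$ is in fact empty (the paper's argument gives $K_{X}^{2}=-1$ with Du Val, hence $K_{\tilde{X}}^{2}=-1$), and your treatment of it has further gaps: in the subcase $h^{0}(-K_{\tilde{X}})=2$ you assert the pencil is base-point free because $(-K_{\tilde{X}})^{2}=0$, but this ignores a possible fixed part (and you have just argued $-K_{\tilde{X}}$ is not nef, so such a part is to be expected); and the claim that $\bar{G}$ has an orbit of length $\le 2$ on the pencil is false in general (e.g.\ $\bar{G}\cong\A_{5}$ on~$\P^{1}$). Likewise the case $K_{\tilde{X}}^{2}=1$ is only sketched. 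The decisive missing idea is the $|C-2K_{X}|$ argument in the conic-bundle case.
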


\begin{proof}
Suppose that $X$ is a surface such that $\deg(X)=7$ and $-K_X\cdot
H=3$. Then it follows from $(\ref{equation:RRR})$ and the
Riemann--Roch theorem that
$h^{0}(\mathcal{O}_{X}(K_X+H))\geqslant 3$. One the other hand,
there is an exact sequence of cohomology groups
$$
0\to H^{0}\Big(\mathcal{O}_{X}\big(K_{X}\big)\Big)\to H^{0}\Big(\mathcal{O}_{X}\big(K_{X}+H\big)\Big)\to H^{0}\Big(\mathcal{O}_{H}\big(K_{H}\big)\Big)\cong\C^3,%
$$
which implies that $h^{0}(\mathcal{O}_{X}(K_{X}+H))\leqslant 3$,
since $h^{0}(\mathcal{O}_{X}(K_{X}))=0$, because $X$ is of Fano
type. Thus, we see that $h^{0}(\mathcal{O}_{X}(K_X+H))=3$.

Suppose that there is a $\bar{G}$-invariant curve contained in the
base locus of the linear system $|K_X+H|$. Since $(K_X+H)\cdot
H=4$ and $\dim |K_X+H|>1$, this curve must be a disjoint union of
three lines, because $G$ is transitive. Let us denote these lines
by $L_1$, $L_2$, and $L_3$. Then $\bar{G}$ acts transitively on
these lines. One has $K_X+H\sim L_1+L_2+L_3+L$, where $L$ is a
line on $X$ such that $|L|$ is free from fixed components. In
particular, the linear system $|L|$ has at most one base point
since $L$ is a line, which implies that $|L|$ is base point free
by the transitivity of~$\bar{G}$, because the base locus of the
linear system~$|L|$ must be $\bar{G}$-invariant. Hence, we may
assume that $L$ is contained in the smooth locus of the surface
$X$, so that adjunction formula implies $-2=\big(K_X+L\big)\cdot
L\geqslant -H\cdot L=-1$, which is a contradiction. Therefore, the
base locus of the linear system $|K_X+H|$ consists of at most
finitely many points.

Suppose that $|K_X+H|$ is not composed of a pencil. Let $C_{1}$
and $C_{2}$ be two general curves in $|K_X+H|$, let $\Lambda_1$
and $\Lambda_2$ be the linear spans of the curves $C_1$ and $C_2$
in $\P^5$. Then the curves~$C_1$ and~$C_{2}$ are irreducible by
Bertini theorem, which implies that~$\Lambda_1$ and~$\Lambda_2$
are proper linear subspaces of $\P^5$. Note that
$\Lambda_1\neq\Lambda_2$, since $G$ is transitive. Hence, the set
$\Lambda_1\cap C_2$ consists of at most $\deg(C_2)=4$ points.
Moreover, the base locus of the linear system $|K_X+H|$ is
contained in $\Lambda_1\cap C_2$, which immediately implies that
$|K_X+H|$ is base point free, since  $G$ is transitive.

If $|K_X+H|$ is composed of some pencil $\mathcal{P}$, then
arguing as in the case when $|K_X+H|$ is not composed of a
pencil, we easily see the base locus of the pencil $\mathcal{P}$
consists of at most $4$ points, which implies that $|K_X+H|$ is
base point free, since the group $G$ is transitive and
$\mathcal{P}$ must be $\bar{G}$-invariant. Thus, we can conclude
that the linear system $|K_X+H|$ is base point free. In
particular, the divisor $K_X$ is Cartier, which implies that  $X$
has at most Du Val singularities, since we know already that $X$
has at most quotient singularities.

Note that $(7K_X+3H)\cdot H=0$. By Hodge index theorem one has
$(7K_X+3H)^2\leqslant 0$, which implies that $K_X^2\leqslant 1$.
Now we are going to show that $h^0(\O_X(-K_X))=0$.

Suppose that $h^0(\O_X(-K_X))\geqslant 2$. Since $G$ is
transitive, $|-K_X|$ is $\bar{G}$-invariant and \mbox{$-K_X\cdot
H=3$}, we see that the linear system $|-K_X|$ does not have fixed
curves. In particular, the divisor~$-K_X$ is nef. Recall that
$-K_X$ is big, since~$X$ is of Fano type. Therefore, the surface
$X$ must be a weak del Pezzo surface. In particular, we have
$K_{X}^2\geqslant 1$, which implies that $K_X^2=1$, because we
already proved that $K_X^2\leqslant 1$. It is well known that
$|-K_X|$ has a unique base point, which is impossible, since $G$
is transitive. Therefore, we see that $h^0(\O_X(-K_X))\leqslant
1$.

Suppose that $h^0(\O_X(-K_X))=1$. Since $-K_X\cdot H=3$ and~$G$ is
transitive, the only possibility is that the unique effective
divisor $D$ in $|-K_X|$ must be a disjoint union of three lines,
because~$G$ is transitive. Since $-K_X$ is a Cartier divisor and
$D$ is a smooth curve, we see that $D$ is contained in the smooth
locus of the surface $X$. This immediately leads to a
contradiction by adjunction formula. Therefore, we see that
$h^0(\O_X(-K_X))\ne 1$.

We see that $h^0(\O_X(-K_X))=0$. Note that
$h^2(\O_X(-K_X))=h^0(\O_X(2K_X))$ by the Serre duality, and
$h^0(\O_X(2K_X))=0$ since $H\cdot K_X<0$. Thus, the Riemann--Roch
theorem implies that $h^0(\O_X(-K_X))\geqslant 1+K_X^2$, so that
$K_X^2\le -1$. On the other hand, since the linear system
$|K_X+H|$ is base point free, one has $0\leqslant
(K_X+H)^2=K_X^2+1$, which implies that $K_X^2=-1$ and
$(K_X+H)^2=0$. Then the linear system $|K+H|$ gives us a morphism
$\psi\colon X\to\P^2$ such that $\psi(X)$ is a curve. So the
linear system $|K+H|$ is composed of a pencil. Note that
$\psi(X)$ can not be a line. In fact, one can easily see that
$\psi(X)$ must be a smooth conic, because $(K_X+H)\cdot H=4$.
Indeed, the degree of the curve $\psi(X)$ is a number of
irreducible components of a general element in  $|K+H|$. Since
$(K_X+H)\cdot H=4$, we see that~$\psi(X)$ is either an irreducible
conic or an irreducible quartic, since all irreducible components
of a general element in $|K+H|$ must have the same degree in
$\P^5$. If $\psi(X)$ is an irreducible quartic, then general curve
in $|K_{X}+H|$ is a disjoint union of four lines in $\P^5$, which
immediately leads to a contradiction with the adjunction formula.
So, we see that $\psi(X)$ must be a smooth conic.

Let $C$ be a fiber of the morphism $\psi$ over a general point in
$\psi(X)$. Then $C$ is an irreducible conic in $\P^2$, and
$K_{X}+H\sim 2C$. The pencil $|C|$ is base point free and it
induces a morphism $\vartheta\colon X\to\P^1$, whose general fiber
is an irreducible conic in $\P^5$. Thus, we obtain the diagram
$$
\xymatrix{
&X\ar[ld]_{\vartheta}\ar[rd]^{\psi}&&\\
\P^1\ar[rr]_{v_2}^{{}\cong{}}&&\psi(X)\ar@{^{(}->}[r]&\P^2, }
$$
where $v_2\colon\P^1\to\P^2$ is the second Veronese embedding.

Let us prove that  $X$ is smooth. Suppose that $X$ is singular. By
Theorem~\ref{theorem:Noether}, we have
\begin{equation}\label{eq:Noether-on-7-3}
\rk\Pic(X)+K_X^2+\sum_{P\in X}\mu(P)=10,
\end{equation}
where $\mu(P)$ is the Milnor number of a point $P\in X$. Since $G$
is transitive, it follows from $(\ref{eq:Noether-on-7-3})$ that
$|\mathrm{Sing}(X)|\geqslant 6$ and all singular points of the
surface $X$ are ordinary double points. Let~$r$ be the number of
reducible fibers of the morphism $\vartheta$. Since $H\cdot C=2$,
every reducible fiber of the morphism $\vartheta$ consists of two
different lines in $\P^5$ intersecting transversally in one point.
In particular, there is a $\bar{G}$-invariant subset in $X$ that
consists of exactly $r$ points, which implies that either $r=0$ or
$r\geqslant 6$. Note that $\rk\Pic(X)=r+2$. Therefore $r=0$, since
otherwise it would follow from $(\ref{eq:Noether-on-7-3})$ that
$10=r+|\mathrm{Sing}(X)|+1\geqslant 13$, which is contradiction.
Since $r=0$, it follows from $(\ref{eq:Noether-on-7-3})$ that
$|\mathrm{Sing}(X)|=9$. Since every fiber of the morphism
$\vartheta$ is irreducible rational curve and $X$ has only
ordinary double points, every fiber of $\vartheta$ contains either
none or two singular points of the surface $X$. Thus, nine
singular points of the surface $X$ must split in pairs, which is
impossible. Therefore, the surface $X$ is smooth.

Note that $h^2(\O_X(C-2K_X))=h^0(\O_X(3K_X-C))$ by the Serre
duality. Thus the Riemann--Roch theorem implies that
$$
h^0\Big(\O_X\big(C-2K_X\big)\Big)\geqslant 1+\frac{(C-2K_X)\cdot(C-3K_X)}{2}=3,%
$$
because $h^0(\O_X(3K_X-C))=0$, since $H\cdot (3K_X-C)=-11<0$. Now
we are going to prove that the linear system $|C-2K_X|$ has no
base curves.

Suppose that the linear system $|C-2K_X|$ contains a fixed curve.
Then there exists a non-zero effective divisor $Z$ on the surface
$X$ such that the linear system $|C-2K_X-Z|$ has no base curves,
i.\,e. $Z$ is a union of one dimensional components of the base
locus of the linear system $|C-2K_X|$ taken with with appropriate
multiplicities. Then $Z$ must be $\bar{G}$-invariant, which
implies that $H\cdot Z\geqslant 3$, since the group $G$ is
transitive. On the other hand, we have $H\cdot Z<H\cdot
(C-2K_X)=8$, since $h^0(\O_X(C-2K_X))>1$. Let $M$ be a general
curve in the linear system $|C-2K_X-Z|$. Then $1\leqslant M\cdot
H\leqslant 5$. Note that $K_X\cdot M\leqslant -1$, because $|M|$
is free from base curves and $-K_X$ is big, since $X$ is of Fano
type. Thus, we have
\begin{equation}
\label{eq:2CM} 0\leqslant 2C\cdot M=\big(K_X+H\big)\cdot M\leqslant H\cdot M-1.%
\end{equation}

Suppose that $H\cdot M\leqslant 2$. Then $C\cdot M=0$
by~(\ref{eq:2CM}), which implies that $M$ is contained in the
fibers of the morphism $\vartheta$. If $H\cdot M=1$, this means
that $M$ is a component of a reducible fiber of the morphism
$\vartheta$, which is impossible since $M^2\geqslant 0$. If
$H\cdot M=2$, then $M$ cannot consist of two components of
different reducible fibers of the morphism $\vartheta$, because
$M^2\geqslant 0$. Thus, if $H\cdot M=2$, then $M\in |C|$, since
$H\cdot M=H\cdot C$, which is impossible, since
$$h^0(\O_X(C-2K_X))\geqslant 3>2=h^0(\O_X(C)).$$
Therefore, we see that $H\cdot M\geqslant 3$.

Suppose that $H\cdot M=3$. Arguing as above, we see that $M$ is
not contained in the fibers of the morphism $\vartheta$. Thus, we
must have $C\cdot M\geqslant 1$, so that $C\cdot M=1$
by~(\ref{eq:2CM}). In particular, the linear system $|M|$ is not
composed of a pencil unless $|M|$ is a pencil itself, since
otherwise one would have $C\cdot M\geqslant 2$. In particular, the
curve $M$ is irreducible by the Bertini theorem. Therefore, the
curve $M$ is a section of the morphism $\vartheta$, which implies
that $M$ is smooth and rational. Applying the adjunction formula
to the curve $M$, one obtains
$$-2=(K_X+M)\cdot M=-1+M^2\geqslant -1,$$
which is a contradiction. Thus, we see that $H\cdot M\ne 3$.

Suppose that $H\cdot M=4$ and $C\cdot M\geqslant 1$. Then $C\cdot
M=1$ by~(\ref{eq:2CM}). In particular, the linear system $|M|$ is
not composed of a pencil  unless $|M|$ is a pencil itself, since
otherwise one would have $C\cdot M\geqslant 2$. Hence, the curve
$M$ is irreducible by the Bertini theorem. Therefore,
the curve~$M$ is a section of the morphism $\vartheta$, which implies that
$M$ is smooth and rational. Applying the adjunction formula to the
curve $M$, wee see that $-2=(K_X+M)\cdot M=-2+M^2$, which implies
that $M^2=0$. Hence, the linear system $|M|$ is composed of a
pencil, which implies that~$|M|$ is a pencil, since $M$ is
irreducible. The latter is impossible since we already proved that
$h^0(\O_X(M))\geqslant 3$. Thus, we see that if $H\cdot M=4$, then
$C\cdot M=0$.

Suppose that $H\cdot M=4$ and $C\cdot M=0$. Since $|M|$ is free
from base curves, we must have $M\sim 2C$, because $H\cdot
M=2H\cdot C$. Note that $H\cdot Z=4$. Thus, since the group $G$ is
transitive, the divisor $Z$ is either a union of two different
conics in $\P^5$ or a union of four different lines in~$\P^5$. Let
us consider these cases separately.

Suppose that $Z$ is a union of two different conics in $\P^5$. Let
us denote these conics by~$R_{1}$ and~$R_{2}$. Let $\Lambda_{1}$
and $\Lambda_{2}$ be linear spans in $\P^5$ of the conics $R_{1}$
and $R_{2}$. Then $\Lambda_{1}\cap \Lambda_{2}=\varnothing$,
because~$G$ is transitive and  $Z$ is $\bar{G}$-invariant. On the
other hand, we have $C\cdot Z=4$, which implies that both
intersections $C\cap R_{1}$ and $C\cap R_{2}$ consist of two
points, because the conic~$C$ is a fiber of the morphism $\psi$
over a general point in $\psi(X)$. Thus, the linear span in $\P^5$
of the conic~$C$ intersects both linear subspaces $\Lambda_1$ and
$\Lambda_2$ by lines, which is impossible since~$\Lambda_1$
and~$\Lambda_2$ are disjoint. Thus, we see that $Z$ can not be a
union of two different conics in $\P^5$.

We see that $Z$ is  a union of four different lines. Let us denote
these lines by $L_{1}$, $L_{2}$, $L_{3}$, and $L_{4}$. Then
$\bar{G}$ acts transitively on these lines, because $G$ is
transitive. It follows from the Riemann--Roch theorem that
$$
h^0\Big(\O_X\big(H-C\big)\Big)\geqslant 1+\frac{(H-C)\cdot (H-C-K_X)}{2}=3,%
$$
since $h^2(\O_X(H-C))=h^0(\O_X(K_X-H+C))$ by the Serre duality,
and $h^0(\O_X(K_X-H+C))=0$, because $H\cdot (K_X-H+C)=-8<0$. By
Theorem~\ref{theorem:Noether}, we have $\rho(X)=11$, which implies
that the morphism $\vartheta$ has $9$ reducible fibers. Thus, it
follows from Lemma~\ref{lemma:small-orbit-on-P1} that there exist
two curves $C_1$ and $C_2$ in the pencil $|C|$ such that the
divisor $C_1+C_2$ is $\bar{G}$-invariant. Since $G$ is transitive,
we see that $C_1\cup C_2$ is not contained in a hyperplane in
$\P^5$. In particular, one has $h^0(\O_X(H-2C))=0$. On the other
hand,  there is an exact sequence of cohomology groups
$$
0\to H^0\Big(\O_X\big(H-2C\big)\Big)\to H^0\Big(\O_X\big(H-C\big)\Big)\to H^0\Big(\O_C\big(H-C\big)\Big),%
$$
which implies that $h^0(\O_X(H-C))=3$, since $h^0(\O_X(H-2C))=0$,
$h^0(\O_C(H-C))=3$ and $h^0(\O_X(H-C))\geqslant 3$. On the other
hand, one can easily see that the linear system $|H-C|$ is base
point free, since the linear spans in $\P^5$ of the conics $C_{1}$
and $C_{2}$ are disjoint. Furthermore, the linear system $|H-C|$
is not composed of a pencil since $(H-C)^2=3>0$. Let $\tau\colon
X\to\P^2$ be a morphism that is given by the linear system
$|H-C|$. Then $\tau$ is $\bar{G}$-equivariant, surjective and
generically three-to-one. Note that $\tau$ can be considered as a
projection from a two dimensional linear subspace in $\P^5$. Since
$L_{i}\cdot(H-C)=0$ for every $i\in\{1,2,3,4\}$, the lines
$L_{1}$, $L_{2}$,~$L_{3}$,~and~$L_{4}$ are contracted by $\tau$.
Moreover, the points $\tau(L_{1})$, $\tau(L_{2})$, $\tau(L_{3})$,
and $\tau(L_{4})$ are different and are not contained in a line in
$\P^2$, because $L_{1}\cup L_{2}\cup L_{3}\cup L_{4}$ is not
contained in a hyperplane in $\P^5$, since $G$ is transitive. Let
$\iota\colon\bar{G}\to\Aut(\P^2)$ be the homomorphism induced by
the morphism $\tau$. Then $\iota(\bar{G})$ acts faithfully on the
set $\{\tau(L_{1}), \tau(L_{2}), \tau(L_{3}), \tau(L_{4})\}$ since
every element in $\Aut(\P^2)$ is defined by the images of~$4$
points in general position. In particular, we see that
$\iota(\bar{G})$ is a subgroup of the group $\SS_4$, which implies
that there is a $\iota(\bar{G})$-invariant conic in $\P^2$.
Therefore, there exists a $\bar{G}$-invariant divisor in the
linear system $|2H-2C|$. Let us denote this divisor by~$B$. Then
$B+C_{1}+C_{2}\sim 2H$, and $B+C_{1}+C_{2}$ is
$\bar{G}$-invariant, which is impossible by
Lemma~\ref{lemma:invariant-lift}. The obtained contradiction shows
that $H\cdot M\ne 4$.

Suppose that $H\cdot M=5$. Then $H\cdot Z=3$, which implies that
the curve $Z$ is a union of three lines transitively interchanged
by~$\bar{G}$, because $G$ is transitive. Let us denote these lines
by $L_{1}$, $L_{2}$ and $L_{3}$. We have $3L_j\cdot C=4-C\cdot M$
for every $j\in\{1,2,3\}$. Since $0\leqslant C\cdot M\leqslant 4$
by~(\ref{eq:2CM}), we see that $C\cdot M=1$. In particular, the
linear system $|M|$ is not composed of a pencil unless $|M|$ is
a pencil itself, since otherwise one would have $C\cdot M\ge 2$.
Hence, the curve $M$ is irreducible by the Bertini theorem.
Therefore, the curve~$M$ is a section of the morphism $\vartheta$,
which implies that~$M$ is smooth and rational. Furthermore, we
have $K_{X}\cdot M=(2C-H)\cdot M=-3$. Now applying the adjunction
formula to~$M$, we see that $-2=(K_X+M)\cdot M=-3+M^2$, which
implies that $M^2=1$. Thus, it follows from the Riemann--Roch
theorem that
$$
h^0(\O_X(M-C))\ge 1+\frac{(M-C)\cdot (M-C-K_X)}{2}=1,
$$
because $h^2(\O_X(M-C))=h^0(\O_X(K_X-M+C))=0$ by the Serre
duality, since one has $H\cdot (K_X-M+C)=-6<0$. Keeping in mind
that $(M-C)^2=-1$, we see that the base locus of the linear system
$|M-C|$ contains a $\bar{G}$-invariant curve. Let us denote this
curve by $R$. Then $H\cdot R\leqslant H\cdot(M-C)=3$, which
implies that the curve $R$ is a union of three lines transitively
interchanged by~$\bar{G}$, since $G$ is transitive. Let us denote
these lines by $L_{1}^{\prime}$, $L_{2}^{\prime}$ and
$L_{3}^{\prime}$. Then
$$
1=C\cdot M=C\cdot\big(M-C\big)=C\cdot\Big(L_{1}^{\prime}+L_{2}^{\prime}+L_{3}^{\prime}\Big)=3C\cdot L_{1}^{\prime}=3C\cdot L_{2}^{\prime}=3C\cdot L_{2}^{\prime},%
$$
which is a contradiction. Therefore
$H\cdot M\ne 5$.

Thus, the assumption that $|C-2K_{X}|$ has a fixed curve implies
that $H\cdot M>5$. However, we proved earlier that $1\leqslant
M\cdot H\leqslant 5$. So, we see that $|C-2K_{X}|$ has no based
components, i.e.  the curve $Z$ does not exists. On the other
hand, one has $-K_X\cdot(C-2K_X)=0$, which is impossible, because
the divisor $-K_X$ is big, since the surface $X$ is of Fano type.
The obtained contradiction completes the proof of
Lemma~\ref{lemma:d-7-q-3}.
\end{proof}

Now using Lemmas~\ref{lemma:d-4-q-6}, \ref{lemma:d-5-q-5},
\ref{lemma:d-6-q-4}, and \ref{lemma:d-7-q-3}, we are ready to
prove the following

\begin{lemma}\label{lemma:dim-2}
The subvariety $X$ is not a surface.
\end{lemma}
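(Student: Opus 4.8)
The plan is to suppose that $X$ is a surface and to pin its degree down to a short list of values, each of which is ruled out by one of Lemmas~\ref{lemma:d-4-q-6}, \ref{lemma:d-5-q-5}, \ref{lemma:d-6-q-4} and~\ref{lemma:d-7-q-3}. Put $d=H^{2}=\deg(X)$ and $a=-K_{X}\cdot H$. The number $a$ is a positive integer: on the minimal resolution $f\colon\tilde{X}\to X$ one has $-K_{X}\cdot H=-K_{\tilde{X}}\cdot f^{*}(H)\in\Z$ by the projection formula, and $-K_{X}$ is big because $X$ is of Fano type, so $-K_{X}\cdot H>0$. Since $X$ has rational singularities and is rational (Fano type surfaces are rational, cf.~\cite{Zh06}), one has $\chi(\mathcal{O}_{X})=1$, and the Nadel--Shokurov vanishing already recorded for $X$ gives $h^{i}(\mathcal{O}_{X}(nH))=0$ for all $i\ge 1$ and $n\ge 0$. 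Hence Riemann--Roch applied to the Cartier divisor $nH$ yields $h^{0}(\mathcal{O}_{X}(nH))=1+\tfrac{1}{2}n^{2}d+\tfrac{1}{2}na$ for every $n\ge 0$.

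Next I would bring in projective normality. The transitivity of $G$ forces $X$ to be non-degenerate in $\P^{5}$, so $d\ge 4$ and $h^{0}(\mathcal{O}_{\P^{5}}(1)\otimes\mathcal{I}_{X})=0$; feeding $n=1$ into the exact sequence~$(\ref{equation:exact-sequence-n-6})$ then gives $h^{0}(\mathcal{O}_{X}(H))=6$, that is, $d+a=10$, and since $a\ge 1$ we get $d\le 9$. Feeding $n=2$ into the same computation gives $h^{0}(\mathcal{O}_{X}(2H))=1+2d+a=11+d$, whence $(\ref{equation:exact-sequence-n-6})$ yields $h^{0}(\mathcal{O}_{\P^{5}}(2)\otimes\mathcal{I}_{X})=\binom{7}{5}-(11+d)=10-d$. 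By Lemma~\ref{lemma:pencil-of-quadrics} this number is either $0$ or at least $3$, so $d\ne 8$ and $d\ne 9$; together with $4\le d\le 9$ this leaves $d\in\{4,5,6,7\}$, with $a=10-d$ equal to $6$, $5$, $4$, $3$ respectively.

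It remains to dispose of the four cases. The value $d=4$ contradicts Lemma~\ref{lemma:d-4-q-6} directly. If $d=5$, then $-K_{X}\cdot H=5$, contradicting Lemma~\ref{lemma:d-5-q-5}; if $d=6$, then $-K_{X}\cdot H=4$, contradicting Lemma~\ref{lemma:d-6-q-4}; if $d=7$, then $-K_{X}\cdot H=3$, contradicting Lemma~\ref{lemma:d-7-q-3}. In every case we reach a contradiction, so $X$ cannot be a surface. There is no genuine obstacle at this final stage --- the real content of the argument is carried by Lemmas~\ref{lemma:d-4-q-6}--\ref{lemma:d-7-q-3} --- but one should be careful that Riemann--Roch is applied to the Cartier divisor $nH$ and not to $K_{X}$ (which need not be Cartier), and that the inputs $\chi(\mathcal{O}_{X})=1$ and $h^{i}(\mathcal{O}_{X}(nH))=0$ are genuinely available because $X$ has only rational (indeed quotient) singularities.
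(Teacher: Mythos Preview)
Your proof is correct and follows essentially the same route as the paper: use Riemann--Roch and projective normality to get $d+a=10$ and $h^{0}(\mathcal{O}_{\P^{5}}(2)\otimes\mathcal{I}_{X})=10-d$, invoke Lemma~\ref{lemma:pencil-of-quadrics} to force $d\in\{4,5,6,7\}$, and then apply Lemmas~\ref{lemma:d-4-q-6}--\ref{lemma:d-7-q-3}. Your extra care in justifying that $a\in\Z_{>0}$ and $\chi(\mathcal{O}_{X})=1$ is welcome but not a departure from the paper's argument.
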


\begin{proof}
Suppose that $X$ is a surface. Then it follows from
the~Riemann--Roch theorem that
\begin{equation}
\label{equation:dim-6-RR-2}
h^{0}\Big(\mathcal{O}_{X}\big(nH\big)\Big)=\chi\Big(\mathcal{O}_{X}\big(nH\big)\Big)=1+\frac{n^{2}}{2}\Big(H\cdot H\Big)-\frac{n}{2}\Big(H\cdot K_{X}\Big)%
\end{equation}
for any $n\geqslant 1$.  In particular, since $X$ is not contained
in a~hyperplane in $\mathbb{P}^{5}$, it follows from
$(\ref{equation:exact-sequence-n-6})$ that  $H\cdot H-H\cdot
K_{X}=10$. On the other hand, we know that $H\cdot H\geqslant 4$,
since $S$ is not contained in a~hyperplane in $\mathbb{P}^{5}$.
Moreover, since $X$ is of Fano type, the divisor $-K_{X}$ is big,
which implies that $-H\cdot K_{X}\geqslant 1$. Thus, we see that
$H\cdot H\in\{4,5,6,7,8,9\}$. Now plugging $n=2$ into
$(\ref{equation:exact-sequence-n-6})$ and
$(\ref{equation:dim-6-RR-2})$, we see that
$h^{0}(\mathcal{O}_{\mathbb{P}^{5}}(2)\otimes\mathcal{I}_{X})=10-H\cdot
H$. Then $H\cdot H\leqslant 7$ by
Lemma~\ref{lemma:pencil-of-quadrics}, which implies that $H\cdot
H\in\{4,5,6,7\}$. Since $H\cdot H-H\cdot K_{X}=10$, we immediately
obtain a contradiction using Lemmas~\ref{lemma:d-4-q-6},
\ref{lemma:d-5-q-5}, \ref{lemma:d-6-q-4}, and \ref{lemma:d-7-q-3}.
\end{proof}

Thus, we see that $X$ is a threefold. Put $d=H\cdot H\cdot H$.

\begin{lemma}\label{lemma:dim-3-deg-4}
The inequality $d\geqslant 5$ holds.
\end{lemma}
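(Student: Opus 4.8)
The plan is to rule out $d=3$ and $d=4$; since a non-degenerate irreducible threefold in $\mathbb{P}^5$ has degree at least $5-3+1=3$, this gives $d\geqslant 5$.

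First I would dispose of the case $d=3$. Here $X$ is a non-degenerate threefold of minimal degree in $\mathbb{P}^5$, so by the classical classification of varieties of minimal degree (see~\cite{EisenbudHarris}) the threefold $X$ is a rational normal scroll: it is either a cone over a rational normal scroll of smaller dimension, or a smooth rational normal $3$-fold scroll. In the first case the vertex of the cone is a non-empty $\bar{G}$-invariant linear subspace of $\mathbb{P}^5$, and it is a \emph{proper} subspace since $X$ is non-degenerate and irreducible; this contradicts the transitivity of $G$. In the second case $X$ is a smooth rational normal $3$-fold scroll of degree $3$ in $\mathbb{P}^5$, hence $X\cong\mathbb{P}^1\times\mathbb{P}^2$ embedded by the Segre embedding, i.e. a smooth rational cubic scroll, and a $\bar{G}$-invariant subvariety of this form was excluded by the standing assumption.

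For the case $d=4$ the plan is to compute $h^{0}(\mathcal{O}_{\mathbb{P}^5}(2)\otimes\mathcal{I}_X)$ and to contradict Lemma~\ref{lemma:pencil-of-quadrics}. Since $X$ is projectively normal and $h^{i}(\mathcal{O}_{\mathbb{P}^5}(m)\otimes\mathcal{I}_X)=0$ for all $i\geqslant 1$ and $m\geqslant 0$ by Theorem~\ref{theorem:weakly-exceptional-quotient-criterion}, the subvariety $X$ is arithmetically Cohen--Macaulay; hence so are its general hyperplane section surface in $\mathbb{P}^4$, its general codimension-$2$ linear section curve $C\subset\mathbb{P}^3$ (of degree $4$), and its general codimension-$3$ linear section $D\subset\mathbb{P}^2$, which is a set of $d=4$ distinct points. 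Cutting by successive general hyperplanes and using, at each step, the vanishing of $H^{1}$ of the relevant twisted ideal sheaf (arithmetic Cohen--Macaulayness) together with the non-degeneracy of the section (so that the spaces of quadrics through the section and through its one-lower-dimensional slice have equal dimension), one obtains
\[
h^{0}\big(\mathcal{O}_{\mathbb{P}^5}(2)\otimes\mathcal{I}_X\big)=h^{0}\big(\mathcal{O}_{\mathbb{P}^2}(2)\otimes\mathcal{I}_D\big).
\]
Moreover, arithmetic Cohen--Macaulayness of $C$ together with $h^{0}(\mathcal{O}_C(1))=4$ (non-degeneracy and linear normality) forces the four points of $D$ to span $\mathbb{P}^2$; in particular they are not all collinear, so they impose four independent conditions on conics and $h^{0}(\mathcal{O}_{\mathbb{P}^2}(2)\otimes\mathcal{I}_D)=6-4=2$. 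Thus $h^{0}(\mathcal{O}_{\mathbb{P}^5}(2)\otimes\mathcal{I}_X)=2$, which contradicts Lemma~\ref{lemma:pencil-of-quadrics} (stating that this number is either $0$ or at least $3$). Hence $d\ne 4$, and the lemma follows.

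The main obstacle is the case $d=4$: one must verify carefully that the iterated-hyperplane-section reduction of $h^{0}(\mathcal{O}_{\mathbb{P}^5}(2)\otimes\mathcal{I}_X)$ is legitimate — this rests precisely on the arithmetic Cohen--Macaulayness supplied by Theorem~\ref{theorem:weakly-exceptional-quotient-criterion} and on non-degeneracy at each stage — and that the resulting four-point section is in sufficiently general position for the space of conics through it to be computed. The case $d=3$ is comparatively soft, being an immediate consequence of the classification of varieties of minimal degree together with the transitivity of $G$ and the hypothesis on cubic scrolls.
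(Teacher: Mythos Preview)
Your argument is correct and, for the case $d=4$, takes a genuinely different route from the paper. The paper argues as follows: since $G$ is transitive, $X$ is non-degenerate and is not a cone; then by Swinnerton-Dyer's enumeration of varieties of degree~$4$ (see~\cite{SwinnertonDyer73}), such a threefold is either a complete intersection of two quadrics (excluded by the standing hypothesis) or a projection of a hyperplane section of $\mathbb{P}^1\times\mathbb{P}^3\subset\mathbb{P}^7$ (excluded because $X$ is projectively normal). The paper does not spell out the case $d=3$ separately; your treatment of it via the minimal-degree classification and the cubic-scroll hypothesis is exactly what is needed and is more explicit than the paper.

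For $d=4$ you instead compute $h^{0}(\mathcal{O}_{\mathbb{P}^{5}}(2)\otimes\mathcal{I}_{X})=2$ by cutting down to four points in $\mathbb{P}^{2}$ and then invoke Lemma~\ref{lemma:pencil-of-quadrics}. This avoids the external classification and stays within the cohomological framework of Theorem~\ref{theorem:weakly-exceptional-quotient-criterion}; note that Lemma~\ref{lemma:pencil-of-quadrics} itself unpacks to the same ``no $\bar{G}$-invariant complete intersection of two quadrics'' assumption, so the two proofs ultimately appeal to the same hypothesis. One point to tighten: Theorem~\ref{theorem:weakly-exceptional-quotient-criterion} literally only gives $h^{i}(\mathcal{I}_{X\cap\Pi}(m))=0$ for $m\geqslant\mathrm{codim}\,\Pi$, so the step $h^{0}(\mathcal{I}_{X_2,\mathbb{P}^3}(2))=h^{0}(\mathcal{I}_{D,\mathbb{P}^2}(2))$ needs $h^{1}(\mathcal{I}_{X_2}(1))=0$, which is not quite in that statement. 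It does follow easily (e.g.\ chase back to $h^{2}(\mathcal{O}_{X}(-H))=0$, which holds by rational singularities and Kawamata--Viehweg on a resolution), and once you have it your argument that the four points of $D$ span $\mathbb{P}^{2}$ and impose independent conditions on conics goes through.
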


\begin{proof}
Since $G$ is transitive, the threefold $X$ is not contained in a
hyperplane. Furthermore, the threefold $X$ is not a cone since the
vertex of a cone is a linear space, which again contradicts
transitivity of $\bar{G}$. Hence it follows
from~\cite[Theorem~1]{SwinnertonDyer73} that $X$ is either a
complete intersection of two quadrics, or a projection from $\P^6$
of a hyperplane section of the Segre variety
\mbox{$\P^1\times\P^3\subset\P^7$}. The former case is impossible
by assumption, and the latter case is impossible since $X$ is
projectively normal.
\end{proof}

By Lemma~\ref{lemma:dim-3-deg-4}
the threefold $X$ is not contained in an intersection
of two quadrics in $\P^5$ (indeed, otherwise either $X$ would
be contained in a hyperplane, or one would have $d\leqslant 4$).
On the other hand,
$h^{0}(\mathcal{O}_{\mathbb{P}^{5}}(2)\otimes\mathcal{I}_{X})\neq 1$
since~$\bar{G}$ does not have invariant quadrics in~$\P^5$.
Therefore, we have the following

\begin{corollary}\label{corollary:dim-3-deg-4}
The equality
$h^{0}(\mathcal{O}_{\mathbb{P}^{5}}(2)\otimes\mathcal{I}_{X})=0$
holds.
\end{corollary}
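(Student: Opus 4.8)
The plan is to prove the two statements $h^{0}(\mathcal{O}_{\mathbb{P}^{5}}(2)\otimes\mathcal{I}_{X})\leqslant 1$ and $h^{0}(\mathcal{O}_{\mathbb{P}^{5}}(2)\otimes\mathcal{I}_{X})\neq 1$ separately; together they force $h^{0}(\mathcal{O}_{\mathbb{P}^{5}}(2)\otimes\mathcal{I}_{X})=0$. Recall that at this point $X\subset\mathbb{P}^{5}$ is an irreducible $\bar{G}$-invariant threefold, non-degenerate (since $G$ is transitive and $X$ is $\bar{G}$-invariant), and $d=\deg(X)\geqslant 5$ by Lemma~\ref{lemma:dim-3-deg-4}.

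First I would rule out the existence of two linearly independent quadrics through $X$. Suppose $h^{0}(\mathcal{O}_{\mathbb{P}^{5}}(2)\otimes\mathcal{I}_{X})\geqslant 2$ (by Lemma~\ref{lemma:pencil-of-quadrics} it would in fact be at least $3$, but $\geqslant 2$ already suffices), and pick linearly independent quadrics $Q_{1},Q_{2}$ vanishing on $X$. If either $Q_{i}$ were reducible or non-reduced, it would be contained in a union of hyperplanes, and the irreducibility of $X$ would put $X$ inside a hyperplane, contradicting transitivity of $G$. Hence $Q_{1}$ and $Q_{2}$ are distinct irreducible quadric hypersurfaces, so they have no common component, and $Q_{1}\cap Q_{2}$ is a scheme of pure dimension $3$ and degree $4$. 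The irreducible threefold $X$ is then a component of $Q_{1}\cap Q_{2}$, so $\deg(X)\leqslant 4$, contradicting Lemma~\ref{lemma:dim-3-deg-4}. Thus $h^{0}(\mathcal{O}_{\mathbb{P}^{5}}(2)\otimes\mathcal{I}_{X})\leqslant 1$.

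Next I would exclude the case $h^{0}(\mathcal{O}_{\mathbb{P}^{5}}(2)\otimes\mathcal{I}_{X})=1$. Then there is a unique quadric hypersurface $Q\subset\mathbb{P}^{5}$ containing $X$; since $X$ is $\bar{G}$-invariant, the group $\bar{G}$ permutes the quadrics containing $X$, and by uniqueness $Q$ must be $\bar{G}$-invariant. This means $G$ has a semi-invariant of degree $2$, contradicting our standing assumption that $G$ has no semi-invariants of degree at most $5$. Combining the two steps gives $h^{0}(\mathcal{O}_{\mathbb{P}^{5}}(2)\otimes\mathcal{I}_{X})=0$.

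There is essentially no obstacle here: the corollary is a formal consequence of the preceding discussion, the only points needing (minor) care being the reduction to irreducible quadrics via non-degeneracy and the standard degree bound for a complete intersection of two quadrics in $\mathbb{P}^{5}$. The substantive input, namely $\deg(X)\geqslant 5$, is Lemma~\ref{lemma:dim-3-deg-4}, which has already been established.
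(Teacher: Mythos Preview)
Your proof is correct and follows essentially the same route as the paper: first ruling out $h^{0}\geqslant 2$ by observing that two independent quadrics through the non-degenerate threefold $X$ would force $\deg(X)\leqslant 4$ (or $X$ into a hyperplane), contradicting Lemma~\ref{lemma:dim-3-deg-4}, and then excluding $h^{0}=1$ because a unique quadric through $X$ would be $\bar{G}$-invariant, contrary to the absence of semi-invariants of degree at most $5$. Your explicit check that the two quadrics are irreducible is exactly the ``otherwise $X$ would be contained in a hyperplane'' step in the paper's argument.
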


Recall that $f\colon\tilde{X}\to X$ is a resolution of
singularities of the threefold $X$. Let
$\mathrm{c}_{2}(\tilde{X})$ be the~second Chern class of
the~threefold $\tilde{X}$. Put
$\gamma=f^{*}(H)\cdot(K_{\tilde{X}}\cdot
K_{\tilde{X}}+\mathrm{c}_{2}(\tilde{X}))$ and $k=-H\cdot H\cdot
K_{X}$. Then $k\geqslant 1$,  since $X$ is of Fano type. Then
$$
h^{0}\Big(\mathcal{O}_{X}\big(nH\big)\Big)=\chi\Big(\mathcal{O}_{X}\big(nH\big)\Big)=d\frac{n^{3}}{6}+k\frac{n^{2}}{4}+\frac{n}{12}\gamma+1%
$$
by the Riemann--Roch theorem. Using
$(\ref{equation:exact-sequence-n-6})$ and
Corollary~\ref{corollary:dim-3-deg-4}, we see~that
\begin{equation}
\label{equation:RRRR}
\left\{\aligned
&\frac{d}{6}+\frac{k}{4}+\frac{\gamma}{12}+1=6,\\
&\frac{4d}{3}+k+\frac{\gamma}{6}+1=21,\\
&\frac{9d}{2}+\frac{9k}{4}+\frac{\gamma}{4}+1=56-h^{0}(\mathcal{O}_{\mathbb{P}^{5}}(3)\otimes\mathcal{I}_{X}),\\
\endaligned
\right.
\end{equation}
which implies that $10=d+k/2$ and
$h^{0}(\mathcal{O}_{\mathbb{P}^{5}}(3)\otimes\mathcal{I}_{X})=k/2$.
Keeping in mind that $d\geqslant 5$ by Lemma~\ref{lemma:dim-3-deg-4}
and $k\ge 1$,
we see that $d\in\{5,6,7,8,9\}$.

Note that $d\neq 9$, since otherwise one would have
$h^{0}(\mathcal{O}_{\mathbb{P}^{5}}(3)\otimes\mathcal{I}_{X})=1$
so that~$\bar{G}$ would have an invariant cubic in~$\P^5$
which is not the case by assumption.
Finally, if $d=8$, then
$h^{0}(\mathcal{O}_{\mathbb{P}^{5}}(3)\otimes\mathcal{I}_{X})=2$
so that there is a~$\bar{G}$-invariant pencil~$\mathcal{P}$
of cubics in~$\mathbb{P}^5$. For two different
hypersurfaces $R_1, R_2\in\mathcal{P}$ the intersection
$R_1\cap R_2$ consists of the threefold~$X$ and a threefold~$T$
of degree $\deg(T)=9-d=1$. Thus~$T$ is a $\bar{G}$-invariant
projective subspace of~$\mathbb{P}^5$,
which is impossible by assumption.

The next two lemmas deal with the (slightly more
difficult) cases $d=5$ and $d=7$.

\begin{lemma}\label{lemma:dim-3-deg-5}
The inequality $d\ne 5$ holds.
\end{lemma}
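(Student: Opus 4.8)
The plan is to reach a contradiction by computing the Euler characteristic $\chi(\mathcal{O}_X(-H))$ in two incompatible ways. First I would unwind the hypothesis $d=5$: together with the relation $10=d+k/2$ obtained above it gives $k=-H\cdot H\cdot K_X=10$, and then the first equation of $(\ref{equation:RRRR})$ forces $\gamma=20$. Hence the Riemann--Roch polynomial of $X$ is
$$
\chi\big(\mathcal{O}_X\big(nH\big)\big)=\frac{5}{6}n^{3}+\frac{5}{2}n^{2}+\frac{5}{3}n+1
$$
for all $n\in\Z$, with constant term $\chi(\mathcal{O}_X)=1$ exactly as in $(\ref{equation:RRRR})$. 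Evaluating at $n=-1$ gives $\chi(\mathcal{O}_X(-H))=1$.

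On the other hand I would bound $\chi(\mathcal{O}_X(-H))$ from above by vanishing. Since $X$ has at most rational singularities, $Rf_{*}\mathcal{O}_{\tilde{X}}=\mathcal{O}_{X}$, so the projection formula gives $Rf_{*}(\mathcal{O}_{\tilde{X}}(-f^{*}H))=\mathcal{O}_{X}(-H)$ and hence $h^{i}(\mathcal{O}_{X}(-H))=h^{i}(\mathcal{O}_{\tilde{X}}(-f^{*}H))$ for every $i$. By Serre duality on the smooth threefold $\tilde{X}$ the latter equals $h^{3-i}(\mathcal{O}_{\tilde{X}}(K_{\tilde{X}}+f^{*}H))$, which vanishes whenever $3-i>0$ by the Nadel--Shokurov vanishing theorem, since $f^{*}H$ is nef and big (this is the very vanishing already used in $(\ref{equation:RRR})$). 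Therefore $h^{0}(\mathcal{O}_{X}(-H))=h^{1}(\mathcal{O}_{X}(-H))=h^{2}(\mathcal{O}_{X}(-H))=0$, so $\chi(\mathcal{O}_{X}(-H))=-h^{3}(\mathcal{O}_{X}(-H))\le 0$.

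Comparing the two values yields $1\le 0$, which is absurd; this proves $d\ne 5$. The argument is short and essentially formal, so there is no genuine obstacle: the only points that need care are the passage to a resolution $f\colon\tilde{X}\to X$ (legitimate because $X$ has rational singularities, just as in $(\ref{equation:RRR})$) and the routine evaluation of the Hilbert polynomial. It is worth stressing that the contradiction is special to $d=5$: carrying out the same computation for $d\in\{6,7\}$ produces a Hilbert polynomial whose value at $n=-1$ is already $\le 0$, so nothing is contradicted, which is precisely why the case $d=7$ has to be excluded by the separate and more delicate argument of the following lemma, whereas $d=6$ survives and yields Theorem~\ref{theorem:6-dim-1}.
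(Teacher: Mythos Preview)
Your argument is correct, and it differs from the paper's. The paper observes that $d=5$, $k=10$ force the sectional genus of $(X,H)$ to equal~$1$ and then invokes Fujita's classification result \cite[Theorem~5.4]{Fujita1989} (ruling out elliptic scrolls and cones over them via rational connectedness of Fano type varieties) to conclude that the $\Delta$-genus must equal~$1$, whereas here $\Delta(X,H)=3+5-6=2$. Your route avoids this external reference entirely: you simply evaluate the Hilbert polynomial at $n=-1$ and compare with the sign forced by Kawamata--Viehweg vanishing and Serre duality on the resolution~$\tilde{X}$, both already in play in the paper. The trade-off is that the paper's approach, though less self-contained, explains \emph{why} $d=5$ fails (sectional genus~$1$ would make $X$ look like an elliptic scroll, incompatible with Fano type), while yours is a quicker numerical trick that, as you correctly note, is specific to $d=5$ and gives no leverage on $d\in\{6,7\}$.
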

\begin{proof}
Suppose that $d=5$. Then $k=10$, because $10=d+k/2$. Since
$(K_X+2H)\cdot H^2=0$, the sectional genus of $X$ is $1$. Note
that $X$ is rationally connected by \cite{Zh06}, because $X$ is of
Fano type. Applying~\cite[Theorem~5.4]{Fujita1989} and keeping in
mind that $X$ can be neither a scroll over an elliptic curve nor a
generalized cone over such scroll, we get
$$1=3+\deg(X)-h^0(\O_X(H))=2,$$
which is a contradiction.
\end{proof}

\begin{lemma}\label{lemma:dim-3-deg-7}
The inequality $d\ne 7$ holds.
\end{lemma}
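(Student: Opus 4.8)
The plan is to argue as in the proof of Lemma~\ref{lemma:dim-3-deg-5}: assume $d=7$ and derive a contradiction, this time bringing in the curves of degree $7$ and genus $5$ studied in Appendix~\ref{section:septic-curves}. So suppose $d=7$. Then $10=d+k/2$ forces $k=6$, hence $h^{0}(\mathcal{O}_{\P^5}(3)\otimes\mathcal{I}_{X})=k/2=3$, while $h^{0}(\mathcal{O}_{\P^5}(2)\otimes\mathcal{I}_{X})=0$ by Corollary~\ref{corollary:dim-3-deg-4}; moreover $(K_{X}+2H)\cdot H^{2}=-6+14=8$, so $X$ has sectional genus $5$. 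Let $\Pi\subset\P^5$ be a general linear subspace of codimension $2$ and put $C=X\cap\Pi$. Since $X$ has at most rational (hence normal, Cohen--Macaulay) singularities and its general hyperplane section is irreducible and projectively normal, $C$ is a smooth irreducible curve in $\Pi\cong\P^3$ of degree $7$ and genus $5$.

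By the result of Appendix~\ref{section:septic-curves}, the curve $C$ is a set-theoretic intersection of cubic surfaces in $\P^3$, so the base locus of the net of cubics through $C$ is exactly $C$. On the other hand, the vanishing $h^{1}(\mathcal{O}_{\P^5}(m)\otimes\mathcal{I}_{X})=0$ (and the analogous vanishing for the surface section) supplied by Theorem~\ref{theorem:weakly-exceptional-quotient-criterion}, together with $h^{0}(\mathcal{O}_{\P^5}(1)\otimes\mathcal{I}_{X})=h^{0}(\mathcal{O}_{\P^5}(2)\otimes\mathcal{I}_{X})=0$, shows that the restriction maps identify $H^{0}(\mathcal{O}_{\P^5}(3)\otimes\mathcal{I}_{X})$ with $H^{0}(\mathcal{O}_{\Pi}(3)\otimes\mathcal{I}_{C})$, both being $3$-dimensional. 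Hence the $\bar{G}$-invariant net $\mathcal{N}$ of cubics through $X$ satisfies $\mathrm{Bs}(\mathcal{N})\cap\Pi=C$, and since $C$ is an irreducible curve of degree $\deg(X)$ this forces the $3$-dimensional part of $\mathrm{Bs}(\mathcal{N})$ to equal $X$. In particular, up to components of dimension at most $1$, the subvariety $X$ is the common zero locus of the three cubics spanning $\mathcal{N}$.

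It remains to turn this into a contradiction. First, $\mathcal{N}$ has no fixed component: a fixed hypersurface of degree at most $2$ would force $X$ either to be degenerate --- impossible, since $G$ is transitive --- or to lie in the base locus of the residual net of quadrics or of hyperplanes, contradicting $h^{0}(\mathcal{O}_{\P^5}(2)\otimes\mathcal{I}_{X})=0$. Thus two general members $D_{0},D_{1}\in\mathcal{N}$ meet in a threefold of degree $9$ equal to $X$ together with a residual threefold $Y$ of degree $2$, so $Y$ spans a hyperplane; intersecting $D_{0}\cap D_{1}=X\cup Y$ with a third general member yields $\mathrm{Bs}(\mathcal{N})=X\cup(Y\cap D_{2})$, whence $Y\cap D_{2}\subseteq X$. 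Since $Y$ has degree $2$ and $X$ is irreducible of degree $7$, the threefold $Y$ lies in no member of $\mathcal{N}$ other than those of the pencil $\langle D_{0},D_{1}\rangle$, and a linkage count then shows that every cubic of $\mathcal{N}$ cuts the fixed surface $X\cap Y$ on $Y$; equivalently, after a linear change of coordinates the three cubics spanning $\mathcal{N}$ have a common linear factor $\ell$. But then $X\subseteq\{\ell=0\}\cup Z$, where $Z$ is cut out by a net of quadrics, so that the irreducibility and non-degeneracy of $X$ give $h^{0}(\mathcal{O}_{\P^5}(2)\otimes\mathcal{I}_{X})\geqslant 1$, contradicting Corollary~\ref{corollary:dim-3-deg-4}. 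Alternatively, one can reach the common linear factor through the minimal free resolution of $\mathcal{I}_{X}$: the space of linear syzygies among the three cubics is a non-zero $G$-subrepresentation of $H^{0}(\mathcal{O}_{\P^5}(3)\otimes\mathcal{I}_{X})\otimes V^{\vee}$, which has no one-dimensional constituent since $V$ is irreducible of dimension $6$ while $H^{0}(\mathcal{O}_{\P^5}(3)\otimes\mathcal{I}_{X})$ is irreducible of dimension $3$, and inspecting the Hilbert--Burch matrix then produces the factor $\ell$ again.

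The main obstacle is this last step: extracting the contradiction requires controlling the degenerate configurations --- a reducible or non-reduced residual $Y$, the net $\mathcal{N}$ being composed with a pencil, and possible extra quartic generators of $\mathcal{I}_{X}$ --- and checking in each case that one is forced into a $\bar{G}$-invariant linear subspace or a $\bar{G}$-invariant quadric in $\P^5$, both excluded by the transitivity of $G$ and the absence of semi-invariants of degree at most $5$.
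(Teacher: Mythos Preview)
Your setup through the residual degree-$2$ threefold $Y$ is correct and matches the paper: with $d=7$ one gets $k=6$, $h^{0}(\mathcal{O}_{\P^5}(3)\otimes\mathcal{I}_X)=3$, the codimension-two linear section is a smooth curve of degree~$7$ and genus~$5$, the restriction identifies the net of cubics through $X$ with that through $C$, and for general $D_0,D_1$ one has $D_0\cap D_1=X\cup Y$ with $\deg Y=2$. Up to this point you and the paper agree.

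The gap is in your extraction of the contradiction. You assert that a ``linkage count'' forces the three cubics to share a linear factor $\ell$, but this is internally inconsistent: you have just argued that $\mathcal{N}$ has no fixed component, and a common linear factor would be exactly such a fixed component. The statement that every cubic of $\mathcal{N}$ cuts the fixed surface $X\cap Y$ on $Y$ cannot hold as written, since $Y$ itself depends on the chosen pencil $\langle D_0,D_1\rangle$; indeed the paper shows that the hyperplane spanned by $Y$ genuinely varies (otherwise the common hyperplane would be $\bar{G}$-invariant). Your alternative via syzygies is not an argument either: the Hilbert--Burch theorem does not apply to a codimension-$2$ subscheme of $\P^5$ in the form you invoke, and you give no mechanism by which a nontrivial $G$-subrepresentation of linear syzygies produces a linear factor. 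Your final paragraph is an honest acknowledgement that the contradiction has not been reached.

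The paper closes the argument differently and cleanly. Using Corollary~\ref{corollary:curve-degree-7-genus-5-special} it shows that $Y$ is a reduced irreducible quadric, hence spans a hyperplane $H(D_0,D_1)$, and that $S=Y\cap D_2$ is a reduced degree-$6$ surface. Then the scheme $H(D_0,D_1)\cdot X$ is reduced of degree~$7$ containing $S$, so it decomposes as $S+\Pi$ for a plane $\Pi\subset X$. Since the hyperplanes $H(D_0,D_1)$ sweep out $\P^5$ (their common intersection would be $\bar{G}$-invariant), the plane $\Pi$ varies, and therefore the general hyperplane section $H$ of $X$ is swept out by lines, forcing $\kappa(H)=-\infty$. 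On the other hand, from $R\cdot K_H=d-k=1$ and Riemann--Roch one computes $\chi(\mathcal{O}_H)=2$, so $h^2(\mathcal{O}_H)\geqslant 1$; since $H$ has rational singularities this gives $h^0(\mathcal{O}_{\bar H}(K_{\bar H}))\geqslant 1$ on the resolution, contradicting $\kappa(\bar H)=-\infty$. That numerical contradiction on the surface section is the missing idea in your attempt.
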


\begin{proof}
Suppose that $d=7$. Then $k=6$ and
$h^{0}(\mathcal{O}_{\mathbb{P}^{5}}(3)\otimes\mathcal{I}_{X})=3$,
because $10=d+k/2$ and
$h^{0}(\mathcal{O}_{\mathbb{P}^{5}}(3)\otimes\mathcal{I}_{X})=k/2$.
Thus, an intersection of $X$ with a general linear subspace in
$\P^5$ of codimension $2$ is a smooth curve of genus $5$ and
degree $7$ (cf. Theorem~\ref{theorem:curve-degree-7-genus-5}).

Let $\mathcal{P}$ be the linear system of cubic hypersurfaces in
$\P^5$ that contains $X$. Then $X$ is the only threefold in the
base locus of the linear system $\mathcal{P}$, because $G$ is
transitive. Let $Y_1, Y_2, Y_3$ be three general hypersurfaces in
$\mathcal{P}$. Then $Y_1\cap Y_2=X\cup Q$, where $Q$ is a
threefold of degree $\deg(Q)=2$. Moreover, it follows from
Corollary~\ref{corollary:curve-degree-7-genus-5-special} that $Q$
is a (reduced) irreducible quadric threefold.

Let $H(Y_1, Y_2)$ be the unique hyperplane in $\P^5$ that contains
$Q$. Then
$$
\bigcap_{Y_1, Y_2}H(Y_1, Y_2)=\varnothing,
$$
since otherwise this intersection would be a $\bar{G}$-invariant
proper linear subspace of $\P^5$, which is impossible, because $G$
is transitive.

Put $S=Q\cap Y_3$. Then $S$ is a reduced surface of degree $6$ by
Corollary~\ref{corollary:curve-degree-7-genus-5-special}. On the
other hand, the intersection $H(Y_1, Y_2)\cap X$ is a surface that
contains the surface $S$. Therefore, the scheme-theoretic
intersection $H(Y_1, Y_2)\cdot X$ is reduced and consists of a
union of a surface $S$ and some two-dimensional linear subspace
$\Pi\subset\P^5$. (Note that the existence of the plane~$\Pi$
follows form \cite[Proposition~8]{Okonek84} in the case when $X$
is smooth.)

A priori, the plane $\Pi$ depends on the choice of the divisors
$Y_1$, $Y_2$, and $Y_3$ in the linear system~$\mathcal{P}$. In
fact, the plane $\Pi$ must vary when one varies  the divisors
$Y_1$, $Y_2$, and $Y_3$ in the linear system~$\mathcal{P}$,
because $G$ is transitive. Hence, the surface $H$ is swept out by
lines, which implies that its the Kodaira dimension $\kappa(H)$ is
$-\infty$.

Let $R$ be the general hyperplane section of $H\subset
X\subset\P^5$. Then $R\cdot K_H=d-k$, which implies that $H\cdot
K_H=1$. Thus, we have $\chi(\O_H(R))=\chi(\mathcal{O}_{H})+3$ by
the Riemann--Roch theorem. Since $H$ is projectively normal, we
have $h^0(\O_H(R))=5$. On the other hand, we know that
$h^1(\O_H(R))=h^2(\O_H(R))=0$. Since
$\chi(\O_H(R))=\chi(\mathcal{O}_{H})+3$, one has $\chi(\O_H)=2$,
which implies that $h^{2}(\mathcal{O}_{H})\geqslant 1$. Let
$\iota\colon\bar{H}\to H$ be a resolution of singularities. Then
$h^{2}(\mathcal{O}_{\bar{H}})=h^{2}(\mathcal{O}_{H})$, because $H$
has rational singularitis. But
$h^{0}(\mathcal{O}_{\bar{H}}(K_{X}))=h^{2}(\mathcal{O}_{\bar{H}})$
by the Serre duality, which implies that $\bar{H}$ is a surface of
general type, which is impossible since
$\kappa(H)=\kappa(\bar{H})=-\infty$.
\end{proof}

Therefore, we see that $d=6$. Then $k=8$ and
$h^{0}(\mathcal{O}_{\mathbb{P}^{5}}(3)\otimes\mathcal{I}_{X})=4$,
because $10=d+k/2$ and
$h^{0}(\mathcal{O}_{\mathbb{P}^{5}}(3)\otimes\mathcal{I}_{X})=k/2$,
which implies that the sectional genus of the threefold $X$ is $3$
by the adjunction formula. Thus, we proved that $X$ is a
irreducible $\bar{G}$-invariant projectively normal non-degenerate
Fano type threefold of degree $6$ and sectional genus~$3$ such
that
$h^{0}(\mathcal{O}_{\mathbb{P}^{5}}(2)\otimes\mathcal{I}_{X})=0$
and
$h^{0}(\mathcal{O}_{\mathbb{P}^{5}}(3)\otimes\mathcal{I}_{X})=4$.
This completes the proof of Theorem~\ref{theorem:6-dim-1}.

\appendix

\section{Curves of genus $5$ and degree $7$}
\label{section:septic-curves}

Let $C$ be a smooth irreducible curve in $\mathbb{P}^{3}$ of genus
$5$ and degree $7$, let $\pi\colon X\to\mathbb{P}^{3}$ be a blow
up of the curve $C$, let $E$ be the $\pi$-exceptional divisor, and
let $H$ be a general hyperplane in $\mathbb{P}^{3}$.

\begin{theorem}
\label{theorem:curve-degree-7-genus-5} The divisor $-K_{X}$ is
ample, and $|\pi^{*}(3H)-E|$ is free from base points and induces
a morphism $\phi\colon X\to\mathbb{P}^{2}$ that is a conic bundle
with a discriminant curve of degree $5$.
\end{theorem}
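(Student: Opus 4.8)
The plan is to work on $X=\mathrm{Bl}_{C}\mathbb{P}^{3}$ and to route everything through the linear system $|D|$ with $D:=\pi^{*}(3H)-E$, i.e. the cubic surfaces through $C$. First I record the numerical data: $K_{X}=\pi^{*}(K_{\mathbb{P}^{3}})+E$, so $-K_{X}=\pi^{*}(4H)-E=D+\pi^{*}H$; and from $(\pi^{*}H)^{3}=1$, $(\pi^{*}H)^{2}\cdot E=0$, $\pi^{*}H\cdot E^{2}=-\deg C=-7$ and $E^{3}=-\deg N_{C/\mathbb{P}^{3}}=-\big((2g-2)+4\deg C\big)=-36$ one computes $D^{3}=0$, $D^{2}\cdot\pi^{*}H=2$, $(-K_{X})\cdot D^{2}=2$ and $(-K_{X})^{3}=16$. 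Once $|D|$ is shown to be base-point-free, ampleness of $-K_{X}$ comes for free: $-K_{X}=D+\pi^{*}H$ is the pull-back of the ample $\mathcal{O}(1,1)$ under $(\pi,\phi)\colon X\to\mathbb{P}^{3}\times\mathbb{P}^{2}$, and since the only $\pi$-contracted curves are the rulings $e$ of $E$, with $D\cdot e=-E\cdot e=1\neq 0$, this map contracts no curve, hence is finite.

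Next I compute $H^{0}(\mathcal{O}_{X}(D))=H^{0}(\mathbb{P}^{3},\mathcal{I}_{C}(3))$ and show it is $3$-dimensional and that $C$ is arithmetically Cohen--Macaulay. A curve of degree $7$ and genus $5$ lies on no quadric (a curve of bidegree $(a,b)$ on a quadric has genus $(a-1)(b-1)$, and $a+b=7$ never gives $5$), so $h^{0}(\mathcal{I}_{C}(2))=0$ and hence $h^{1}(\mathcal{I}_{C}(2))=0$ by a $\chi$-count; a general hyperplane section $Z=C\cap\Pi$ is then a set of $7$ points of $\Pi\cong\mathbb{P}^{2}$ in general position, since by the uniform position principle three collinear points would make $C$ degenerate and six points on a conic would force a quadric through $C$. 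Therefore $h^{1}(\mathcal{I}_{Z/\Pi}(3))=0$, and the restriction sequence $0\to\mathcal{I}_{C}(2)\to\mathcal{I}_{C}(3)\to\mathcal{I}_{Z/\Pi}(3)\to 0$ yields $h^{1}(\mathcal{I}_{C}(3))=0$, whence $h^{0}(\mathcal{I}_{C}(3))={6\choose 3}-h^{0}(\mathcal{O}_{C}(3))=20-17=3$. The same estimates, together with $h^{2}(\mathcal{I}_{C}(2))=h^{1}(\mathcal{O}_{C}(2))=0$ and $h^{3}(\mathcal{I}_{C}(1))=0$, show that $\mathcal{I}_{C}$ is $4$-regular, so $h^{1}(\mathcal{I}_{C}(n))=0$ for all $n$; thus $C$ is arithmetically Cohen--Macaulay.

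The heart of the argument is the base-point-freeness of $|D|$, equivalently the global generation of $\mathcal{I}_{C}(3)$ on $\mathbb{P}^{3}$, and the part over $C$ (along $E$) is the one that needs care; I expect this to be the main obstacle. The plan is to exploit that $C$ is arithmetically Cohen--Macaulay of codimension $2$, so its homogeneous ideal has a Hilbert--Burch resolution $0\to S(-4)\oplus S(-5)\to S(-3)^{3}\to I_{C}\to 0$. The generators sit in degree $3$ by the Hilbert function, and there is no minimal generator of degree $\geq 4$ because a general linear form is a non-zerodivisor on the Cohen--Macaulay ring $S/I_{C}$, so the graded Betti numbers of $C$ agree with those of the general hyperplane section $Z$ ($7$ general points of $\mathbb{P}^{2}$), whose ideal is classically generated by the net of cubics through them. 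Sheafifying this resolution and twisting by $3$ presents $\mathcal{I}_{C}(3)$ as a quotient of $\mathcal{O}_{\mathbb{P}^{3}}^{3}$, hence globally generated; so $|D|$ is base-point-free and defines a morphism $\phi\colon X\to\mathbb{P}^{2}$, surjective because $h^{0}(\mathcal{O}_{X}(D))=3$ forces the image to be non-degenerate. (The absence of base points away from $C$ can also be seen directly from the fact that the net of cubics through $7$ general points of $\mathbb{P}^{2}$ has base locus exactly those points, restricted via a general plane through a hypothetical base point.)

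Finally the conic bundle and the discriminant. Let $\tilde{T}\in|D|$ be general, so $\tilde{T}$ is a smooth surface by Bertini; since $K_{X}+D=-\pi^{*}H$, adjunction gives $K_{\tilde{T}}=-\pi^{*}H|_{\tilde{T}}$, so $-K_{\tilde{T}}$ is nef and big with $K_{\tilde{T}}^{2}=(\pi^{*}H)^{2}\cdot D=3$, i.e. $\tilde{T}$ is a weak del Pezzo surface of degree $3$ and $\chi_{\mathrm{top}}(\tilde{T})=12-K_{\tilde{T}}^{2}=9$ by Noether's formula. If $\ell\subset\mathbb{P}^{2}$ is the line with $\tilde{T}=\phi^{-1}(\ell)$, then $\phi|_{\tilde{T}}\colon\tilde{T}\to\ell\cong\mathbb{P}^{1}$ has fibres forming the linear system $|D|_{\tilde{T}}|$, which is complete of dimension $1$ (from $0\to\mathcal{O}_{X}\to\mathcal{O}_{X}(D)\to\mathcal{O}_{\tilde{T}}(D|_{\tilde{T}})\to 0$ and $h^{1}(\mathcal{O}_{X})=0$), hence its fibres are connected; a general fibre has $(D|_{\tilde{T}})^{2}=0$ and $-K_{\tilde{T}}\cdot D|_{\tilde{T}}=2$, so it is a smooth rational curve of degree $2$ in $\mathbb{P}^{3}$. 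Thus the general fibre of $\phi$ is a smooth conic and $\phi$ is a conic bundle. A conic bundle surface over $\mathbb{P}^{1}$ with $k$ reducible fibres has $\chi_{\mathrm{top}}=4+k$, so $k=\chi_{\mathrm{top}}(\tilde{T})-4=5$; as a general line $\ell$ meets the discriminant curve $\Delta$ transversally in $\deg\Delta$ points, each contributing exactly one reducible fibre, $\deg\Delta=5$. (Consistency check: $\chi_{\mathrm{top}}(X)=\chi_{\mathrm{top}}(\mathbb{P}^{3})-\chi_{\mathrm{top}}(C)+\chi_{\mathrm{top}}(E)=4+(2-2g)=-4$, and a conic bundle over $\mathbb{P}^{2}$ with plane discriminant of degree $\delta$ has $\chi_{\mathrm{top}}(X)=8-(\delta-1)(\delta-2)$, again giving $\delta=5$.)
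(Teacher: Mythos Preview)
Your proof is correct and takes a genuinely different route from the paper's. The paper first cites projective normality of $C$ to get $4$-regularity (hence $-K_X$ nef and big), and then proves base-point-freeness of $|\pi^*(3H)-E|$ by contradiction: assuming a base curve $\tilde{Z}\not\subset E$, it analyzes the residual conic $\Delta$ in $S_1\cap S_2=C\cup\Delta$, rules out $4$-secant lines using that $C$ is neither trigonal nor hyperelliptic, and finally reaches a contradiction via an extremal-ray argument on $X$. Your approach instead leverages that $C$ is arithmetically Cohen--Macaulay to invoke Hilbert--Burch: the graded Betti numbers of $C$ coincide with those of a general plane section $Z$, and for $Z$ one has the resolution $0\to S(-4)\oplus S(-5)\to S(-3)^3\to I_Z\to 0$; sheafifying and twisting gives $\mathcal{I}_C(3)$ as a quotient of $\mathcal{O}^3$, hence globally generated. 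This is cleaner and more structural, and it yields directly that $C$ is a scheme-theoretic intersection of cubics (which the paper records separately as Corollary~\ref{corollary:curve-degree-7-genus-5}). The paper's argument, by contrast, is more geometric and self-contained, trading the homological machinery for explicit birational analysis.

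Two places where your write-up is a bit thin: your ``no quadric through $C$'' argument only treats the smooth quadric via the bidegree formula; you should also dispose of the cone case (either by a direct genus computation on $\mathbb{F}_2$, which shows degree $7$ genus $5$ does not occur, or more simply by first proving $h^1(\mathcal{I}_C(1))=0$ from $h^0(\mathcal{O}_C(1))=4$ and then using the hyperplane-section sequence). Second, the claim that $I_Z$ is generated by cubics for your specific $Z$ (no $3$ collinear, no $6$ on a conic) is correct but not quite ``classical'' in this generality: it follows because the blow-up at $Z$ is an honest del Pezzo surface of degree $2$, so $|-K|$ is free and the restriction $H^0(-K)\to H^0(\mathcal{O}_{E_i}(1))$ is onto (since $h^1(-K-E_i)=0$, the class $-K-E_i$ being a $(-1)$-curve). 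A sentence to this effect would make the Hilbert--Burch step airtight.
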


\begin{proof}
The required assertion is well-known in the case when $C$ is a
scheme-theoretic intersection of cubic hypersurfaces in
$\mathbb{P}^{3}$ (see \cite[Proposition~7.5]{MoriMukai85}). It is
known that generic smooth connected curve in $\mathbb{P}^{3}$ of
genus $5$ and degree $7$ is a scheme-theoretic intersection of
cubic hypersurfaces in $\mathbb{P}^{3}$ (see
\cite[Corollary~6.2]{MoriMukai85},
\cite[Proposition~7.5]{MoriMukai85}). Unfortunately, we failed to
find any reference with a proof that the same holds for any smooth
connected curve in~$\mathbb{P}^{3}$ of genus~$5$ and degree $7$.
So we decided to prove it here.

Recall that the curve $C$ is called $m$-regular in the case when
$$
H^{1}\Big(\mathcal{O}_{\mathbb{P}^{3}}\big(m-1\big)\otimes\mathcal{I}_{C}\Big)=H^{2}\Big(\mathcal{O}_{\mathbb{P}^{3}}\big(m-2\big)\otimes\mathcal{I}_{C}\Big)=0,
$$
where $\mathcal{I}_{C}$ is the ideal sheaf of $C$. It is
well-known that $C$ is  a scheme-theoretic intersection of
hypersurfaces of degree $m$ in $\mathbb{P}^{3}$ if it is
$m$-regular (see \cite[Excersize~20.21]{Eisenbud}). However, our
curve $C$ is not $3$-regular, since
$h^{2}(\mathcal{O}_{\mathbb{P}^{3}}(1)\otimes\mathcal{I}_{C})=1$.
On the other hand, it follows from
\cite[Exercise~D.14(6)]{ArbarelloCornalba} that the curve $C$ is
projectively normal, which implies that
$h^{1}(\mathcal{O}_{\mathbb{P}^{3}}(n)\otimes\mathcal{I}_{C})=0$
for every non-negative integer $n$. Moreover, it follows from the
Riemann--Roch theorem that
$h^{2}(\mathcal{O}_{\mathbb{P}^{3}}(2)\otimes\mathcal{I}_{C})=0$,
which implies that $C$ is $4$-regular. Thus, the curve $C$ is a
scheme-theoretic intersection of quartic hypersurfaces in
$\mathbb{P}^{3}$. In particular, the divisor $-K_{X}$ is nef.
Since $-K_{X}^{3}=16$ (this is obvious), we see that $-K_{X}$ is
big and nef.

It follows from the projective normality of the curve $C$ that $C$
is not contained in any quadric hypersurface in $\P^3$ and
$h^{0}(\mathcal{O}_{\mathbb{P}^{3}}(3)\otimes\mathcal{I}_{C})=3$,
which  implies that $h^{0}(\mathcal{O}_{X}(\pi^{*}(3H)-E))=3$.

Let $P$ be a sufficiently general point in $\P^3$ that is not
contained in the image under the morphism~$\pi$ of the base locus
of the linear system $|\pi^{*}(3H)-E|$, and let $S_{1}$ and
$S_{2}$ be two general cubic surfaces in $\P^3$ that pass through
$C$ and contain $P$. Then $S_{1}\cdot S_{2}=C+\Delta$, where
$\Delta$ is an effective one-cycle in $\P^3$ such that
$P\in\mathrm{Supp}(\Delta)$. Since $H\cdot\Delta=2$, we see that
$C\not\subset\mathrm{Supp}(\Delta)$, which implies that $S_{1}$
and $S_{2}$ are smooth in the general point of the curve $C$, the
base locus of the linear system $|\pi^{*}(3H)-E|$ does not contain
fixed components and does not contain curves in $E$ that are not
contracted by $\pi$ to points in $C$.

To complete the proof it is enough to prove that $|\pi^{*}(3H)-E|$
is base point free. Note that the linear system $|\pi^{*}(3H)-E|$
is base point free if its base locus does not contains curves that
are not contained in $E$, since $(\pi^{*}(3H)-E)^3=0$ and
$|\pi^{*}(3H)-E|$ does not contain curves in~$E$ that are not
contracted by $\pi$ to points in $C$. In particular, we see that
$|\pi^{*}(3H)-E|$ is base point free if $C$ is a set-theoretic
intersection of cubic hypersurfaces in $\P^3$.

Suppose that the base locus of the linear system $|\pi^{*}(3H)-E|$
contains an irreducible curve~$\tilde{Z}$ such that
$\tilde{Z}\not\subset E$. Let us show that this assumption leads
to a contradiction.

Let $\tilde{S}_{1}$ and $\tilde{S}_{2}$ be the proper transforms
of the surfaces $S_{1}$ and $S_{2}$ on the threefold $X$,
respectively. Then $\tilde{S}_{1}$ and $\tilde{S}_{2}$ are general
surfaces in $|\pi^{*}(3H)-E|$. Let $\tilde{\Delta}$ be the proper
transform of the one-cycle $\Delta$ on the threefold $X$. Then
$\tilde{S}_{1}\cdot\tilde{S}_{2}=\tilde{\Delta}+\tilde{F}$, where
$\tilde{F}$ is an effective one-cycle on $X$ whose support
consists of finitely many curves in $X$ that are contracted by
$\pi$ to points in $C$. By assumption, we know that
$\tilde{Z}\subset\tilde{\Delta}$. Put $Z=\pi(\tilde{Z})$. Then
$Z\ne\Delta$, since $P\not\in Z$ and $P\in\mathrm{Supp}(\Delta)$.
We see that $Z$ is a line and $\tilde{Z}$ the unique curve in the
base locus of the linear system  $|\pi^{*}(3H)-E|$ that is not
contained in the surface $E$, because $2=H\cdot\Delta>H\cdot
Z\geqslant 1$. In particular, we see that $\Delta$ is reduced.

Let us show that the divisor $\pi^{*}(3H)-E$ is nef. Suppose that
this is not the case. Then $(\pi^{*}(3H)-E)\cdot \tilde{Z}<0$. But
$0>(\pi^{*}(3H)-E)\cdot\tilde{Z}=3-E\cdot\tilde{Z}$ and
$4-E\cdot\tilde{Z}=-K_{X}\cdot\tilde{Z}\geqslant 0$, which implies
that $Z$ is a $4$-section of the curve $C$. Since it follows from
the Riemann--Roch theorem that
$h^{0}(\mathcal{O}_{C}(K_{C}-H\vert_C))=1$, the existence of a
$4$-section to the curve $C$ immediately implies that the curve
$C$ is either trigonal or hyperelliptic, which is not the case
(see the proof of \cite[Corollary~6.2]{MoriMukai85}).

Thus, we see that $\pi^{*}(3H)-E$ is nef. In particular, the
divisor $-K_{X}$ is ample. Then $\tilde{F}=0$, because
$(\pi^{*}(3H)-E)^3=0$ and $\pi^{*}(3H)-E$ is $\pi$-ample.
Similarly, we see the base locus of the linear system
$|\pi^{*}(3H)-E|$ consists of the curve $\tilde{Z}$. We have
$\tilde{\Delta}=\tilde{Z}+\tilde{R}$, where $\tilde{R}$ is an
irreducible curve on $X$ such that $\pi(\tilde{R})$ is a line.
Then
$$
0=\Big(\pi^{*}\big(3H\big)-E\Big)^3=\Big(\pi^{*}\big(3H\big)-E\Big)\cdot\tilde{S}_{1}\cdot\tilde{S}_{2}=\Big(\pi^{*}\big(3H\big)-E\Big)\cdot\tilde{Z}+\Big(\pi^{*}\big(3H\big)-E\Big)\cdot\tilde{R},%
$$
which implies that
$(\pi^{*}(3H)-E)\cdot\tilde{Z}=(\pi^{*}(3H)-E)\cdot\tilde{R}=0$,
because  $\pi^{*}(3H)-E$ is nef. In particular, the curves
$\tilde{R}$ and $\tilde{Z}$ both generate the extremal ray of the
cone of effective cycles $\overline{\mathrm{NE}}(X)$ that is
different from the ray contracted by $\pi$. Put
$R=\pi(\tilde{R})$. Then both lines $Z$ and $R$ must be
$3$-secants of the curve $C$, because
$(\pi^{*}(3H)-E)\cdot\tilde{Z}=(\pi^{*}(3H)-E)\cdot\tilde{R}=0$.

Since the base locus of the linear system $|\pi^{*}(3H)-E|$
consists of the curve $\tilde{Z}$, we must have
$\tilde{R}\cap\tilde{Z}=\varnothing$, because
$(\pi^{*}(3H)-E)\cdot\tilde{R}=0$. Then $R\cap Z=\varnothing$.
Indeed, suppose that $R\cap Z\ne\varnothing$. Then $R\cap Z$ is a
point contained in~$C$. Let $\tilde{L}$ be the curve in $X$ such
that $\pi(\tilde{L})=R\cap Z$. Then $\tilde{R}\cap\tilde{L}$ is a
point, and $\tilde{L}\subset\tilde{S}_{1}$, because otherwise we
must have
$$
1=\Big(\pi^{*}\big(3H\big)-E\Big)\cdot\tilde{L}\geqslant\mathrm{mult}_{\tilde{L}\cap\tilde{R}}\big(\tilde{S}_{1}\big)+\mathrm{mult}_{\tilde{L}\cap\tilde{Z}}\big(\tilde{S}_{1}\big)\geqslant 2,%
$$
which is absurd. Similarly, we see that
$\tilde{L}\subset\tilde{S}_{2}$, which is impossible, because we
already proved that $\tilde{F}=0$. Thus, we see that $R\cap
Z=\varnothing$, i.e. the lines $R$ and $Z$ are
disjoint\footnote{As was pointed out to us by E.\,Ballico, the
curve $\Delta$ is connected, because $C$ is arithmetically
Cohen--Macaulay and thus $\Delta$ is arithmetically
Cohen--Macaulay as well (see
\cite[Proposition~1.2]{PeskineSzpiro1974}). Then $\Delta$ is a
reducible conic.}.

Let $\gamma\colon X\to B$ be a contraction of the extremal ray in
$\overline{\mathrm{NE}}(X)$ that is generated by $\tilde{R}$.
Then~$\gamma$ must be a conic bundle and $B$ must be a smooth
surface (see \cite[Proposition~4.16]{MoriMukai85}).
Since~$\pi(\tilde{R})$ contains the point $P$, which is a
sufficiently general point in $\P^3$, we see that $\gamma$ must be
a $\mathbb{P}^{1}$ bundle, which is impossible by
\cite{SzurekWisniewski90}. The obtained contradiction completes
the proof.
\end{proof}

\begin{corollary}[cf. {\cite[Theorem~3.2]{Homma80}}]
\label{corollary:curve-degree-7-genus-5} The curve $C$ is  a
scheme-theoretic intersection of cubic hypersurfaces in
$\mathbb{P}^{3}$.
\end{corollary}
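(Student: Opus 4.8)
The plan is to deduce Corollary~\ref{corollary:curve-degree-7-genus-5} directly from Theorem~\ref{theorem:curve-degree-7-genus-5}, whose essential output is that $|\pi^{*}(3H)-E|$ is free from base points. First I would set up the standard dictionary between the blow up and $\mathbb{P}^{3}$: since $C$ is a smooth curve it is a local complete intersection, so $\mathcal{O}_{X}(-E)$ is the inverse image ideal sheaf $\mathcal{I}_{C}\cdot\mathcal{O}_{X}$ (universal property of the blow up), $\pi_{*}\mathcal{O}_{X}(-E)=\mathcal{I}_{C}$, and hence $\pi_{*}\mathcal{O}_{X}(\pi^{*}(3H)-E)=\mathcal{I}_{C}(3)$ and $H^{0}(X,\mathcal{O}_{X}(\pi^{*}(3H)-E))=H^{0}(\mathbb{P}^{3},\mathcal{I}_{C}(3))$. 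Fix cubic forms $\psi_{0},\psi_{1},\psi_{2}$ spanning $H^{0}(\mathbb{P}^{3},\mathcal{I}_{C}(3))$; they also span the space of sections of $\mathcal{O}_{X}(\pi^{*}(3H)-E)$, and the Corollary is precisely the statement that $\psi_{0},\psi_{1},\psi_{2}$ generate $\mathcal{I}_{C}$ as an ideal sheaf, i.e.\ that $\mathcal{I}_{C}(3)$ is globally generated.

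By Theorem~\ref{theorem:curve-degree-7-genus-5} the evaluation map $H^{0}(\mathcal{O}_{X}(\pi^{*}(3H)-E))\otimes\mathcal{O}_{X}\to\mathcal{O}_{X}(\pi^{*}(3H)-E)$ is surjective; twisting by $\pi^{*}\mathcal{O}_{\mathbb{P}^{3}}(-3)$ shows that near every point of $X$ the invertible ideal sheaf $\mathcal{O}_{X}(-E)$ is generated by the pullback of one of $\psi_{0},\psi_{1},\psi_{2}$. I would then descend this to $\mathbb{P}^{3}$ in two cases. For $p\notin C$ the preimage $\pi^{-1}(p)$ is a single point not lying on $E$, and generation there forces $\psi_{i}(p)\neq0$ for some $i$; hence the common zero locus of the cubics through $C$ is contained in $C$, and being also $\supseteq C$ it equals $C$. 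For $c\in C$, smoothness of $C$ gives $\dim_{k(c)}\mathcal{I}_{C,c}/\mathfrak{m}_{c}\mathcal{I}_{C,c}=2$, this quotient being canonically the fibre at $c$ of the conormal bundle $\mathcal{I}_{C}/\mathcal{I}_{C}^{2}$; the restriction of $\mathcal{O}_{X}(\pi^{*}(3H)-E)$ to the fibre $E_{c}\cong\mathbb{P}^{1}$ of the exceptional divisor $E$ (the projectivized normal bundle of $C$) is $\mathcal{O}_{\mathbb{P}^{1}}(1)$, and base-point-freeness along $E_{c}$ says exactly that the images of $\psi_{0},\psi_{1},\psi_{2}$ in this $2$-dimensional space have no common zero, hence span it; by Nakayama's lemma $\psi_{0},\psi_{1},\psi_{2}$ then generate $\mathcal{I}_{C,c}$. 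Combining the two cases yields $\mathcal{I}_{C}=(\psi_{0},\psi_{1},\psi_{2})$, i.e.\ $C$ is the scheme-theoretic intersection of the cubics $\{\psi_{i}=0\}$.

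The step I expect to demand the most care is this last analysis along $E$: matching base-point-freeness of $\mathcal{O}_{X}(\pi^{*}(3H)-E)|_{E_{c}}$ with surjectivity at $c$ of the ``linear term along $C$'' map $H^{0}(\mathcal{I}_{C}(3))\otimes\mathcal{O}_{\mathbb{P}^{3}}(-3)\to\mathcal{I}_{C}/\mathcal{I}_{C}^{2}$, and noting that the extra twist by $\mathcal{O}(3)|_{C}$ is irrelevant because the span of finitely many vectors in a one-dimensional fibre is unchanged after tensoring by a line. Alternatively, the whole argument compresses to quoting the general fact that, for the blow up of a local complete intersection $Z\subset\mathbb{P}^{n}$ with exceptional divisor $E$, the system $|\pi^{*}\mathcal{O}(d)\otimes\mathcal{O}(-E)|$ is free from base points if and only if $Z$ is cut out scheme-theoretically by hypersurfaces of degree $d$ (cf.\ the discussion around \cite[Exercise~20.21]{Eisenbud}), which makes Corollary~\ref{corollary:curve-degree-7-genus-5} an immediate consequence of Theorem~\ref{theorem:curve-degree-7-genus-5}.
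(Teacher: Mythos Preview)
Your proposal is correct and takes essentially the same approach as the paper: the paper states Corollary~\ref{corollary:curve-degree-7-genus-5} without proof, treating it as an immediate consequence of the base-point-freeness of $|\pi^{*}(3H)-E|$ established in Theorem~\ref{theorem:curve-degree-7-genus-5}, and you have simply spelled out the standard dictionary between base-point-freeness on the blow up and global generation of $\mathcal{I}_{C}(3)$ on $\mathbb{P}^{3}$. Your careful treatment of the fibrewise analysis along $E$ is more detail than the paper provides, but it is exactly the mechanism underlying the implicit step.
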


\begin{corollary}
\label{corollary:curve-degree-7-genus-5-special} Let $S_{1}$,
$S_{2}$ and $S_{3}$ be three general cubic surfaces in $\P^3$ that
pass through $C$. Then $S_{1}\cdot S_{2}=C+\Delta$ for an
irreducible reduced conic $\Delta\subset\P^3$ such that the
zero-cycle $S_{3}\cdot \Delta$ is reduced and consists of $7$
distinct point in $\Delta$.
\end{corollary}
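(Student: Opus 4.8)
The plan is to run everything through the conic bundle $\phi\colon X\to\mathbb{P}^2$ furnished by Theorem~\ref{theorem:curve-degree-7-genus-5}. Since $C$ is projectively normal we have $h^0(\mathcal{O}_{\mathbb{P}^3}(3)\otimes\mathcal{I}_C)=3$, so the cubics through $C$ form a net $\mathcal{N}$, and by Corollary~\ref{corollary:curve-degree-7-genus-5} its scheme-theoretic base locus is exactly $C$. On the blow up $\pi\colon X\to\mathbb{P}^3$ of $C$ a general member of $\mathcal{N}$ is smooth at the generic point of $C$ (as in the proof of Theorem~\ref{theorem:curve-degree-7-genus-5}), so its proper transform belongs to $|\pi^*(3H)-E|$; as $h^0(\mathcal{O}_X(\pi^*(3H)-E))=3$ and this system defines $\phi$, it equals $\phi^*|\mathcal{O}_{\mathbb{P}^2}(1)|$, i.e.\ $\mathcal{N}$ is the pullback of the lines of $\mathbb{P}^2$.

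First I would produce $\Delta$. For general $S_1,S_2\in\mathcal{N}$, corresponding to general lines $\ell_1,\ell_2\subset\mathbb{P}^2$, the proper transforms satisfy $\widetilde S_1\cap\widetilde S_2=\phi^{-1}(\ell_1)\cap\phi^{-1}(\ell_2)=\phi^{-1}(p)$, where $p=\ell_1\cap\ell_2$ is a general point lying off the degree-$5$ discriminant curve; hence $\phi^{-1}(p)$ is a smooth conic, isomorphic to $\mathbb{P}^1$. Since $S_1\cap S_2\supseteq C$ and the blow up separates $C$ from the rest of the intersection, $\widetilde S_1\cap\widetilde S_2$ is the proper transform of the residual curve $\Delta:=\overline{(S_1\cap S_2)\setminus C}$ and $\pi$ maps it birationally onto $\Delta$. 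Bézout gives $\deg(S_1\cap S_2)=9=\deg C+\deg\Delta$, so $\deg\Delta=2$, and since the intersection cycle has total degree $9=7\cdot 1+2\cdot 1$ the multiplicities of $C$ and $\Delta$ in it are both one, i.e.\ $S_1\cdot S_2=C+\Delta$. A reduced irreducible degree-$2$ curve which is a birational image of $\mathbb{P}^1$ is a smooth plane conic, so $\Delta$ is an irreducible reduced conic (as a check, liaison shows $\Delta$ is arithmetically Cohen--Macaulay of arithmetic genus $0$).

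Then I would identify $S_3\cdot\Delta$ with $C\cap\Delta$. Intersecting $\widetilde\Delta=\phi^{-1}(p)$ with $E$: from $\phi^*\mathcal{O}_{\mathbb{P}^2}(1)\cdot\widetilde\Delta=0$ and $\pi^*H\cdot\widetilde\Delta=\deg\Delta=2$ one gets $E\cdot\widetilde\Delta=6$; moreover $\phi|_E\colon E\to\mathbb{P}^2$ is generically finite of degree $(\pi^*(3H)-E)^2\cdot E=6$ (using $\pi^*H\cdot E^2=-7$ and $E^3=-36$, the latter equivalent to $(-K_X)^3=16$), so for general $S_1,S_2$ the set $\widetilde\Delta\cap E$ consists of $6$ reduced points lying over $6$ distinct points $q_1,\dots,q_6$ of $C$; thus $C\cap\Delta=\{q_1,\dots,q_6\}$ (consistently with $p_a(C\cup\Delta)=10$, $p_a(C)=5$, $p_a(\Delta)=0$, which force $\operatorname{length}(C\cap\Delta)=6$). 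Now a general $S_3\in\mathcal{N}$ corresponds to a general line $\ell_3\not\ni p$, so $S_3\not\supseteq\Delta$, and $S_3\cdot\Delta$ is a zero-cycle of degree $3\cdot 2=6$; since $S_3\supseteq C\supseteq C\cap\Delta$ it contains $q_1,\dots,q_6$, and by the degree count $S_3\cdot\Delta=q_1+\dots+q_6$. Generality of $S_3$ makes this intersection transverse, hence reduced, so $S_3\cdot\Delta$ consists of the six distinct points $q_1,\dots,q_6$ of $\Delta$.

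The main obstacle is making the genericity statements precise: that for general $S_1,S_2$ the fiber $\phi^{-1}(p)$ meets $E$ transversally in six points over six \emph{distinct} points of $C$ (not merely in a length-$6$ subscheme), that $\widetilde S_1\cap\widetilde S_2$ has no component supported on $E$, and that a general $S_3$ is transverse to $\Delta$. These all reduce to Bertini/generic smoothness for the conic bundle $\phi$ --- whose discriminant is reduced of degree $5$ --- and for the $\mathbb{P}^1$-bundle $E\to C$, together with the fact that $\mathcal{N}$ generates the right incidences on $\mathbb{P}^3\setminus C$; but each point must be verified, and the multiplicity bookkeeping in $S_1\cdot S_2=C+\Delta$ and in $S_3\cdot\Delta$ should be carried out rather than asserted.
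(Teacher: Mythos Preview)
Your argument is correct and is exactly the natural deduction from Theorem~\ref{theorem:curve-degree-7-genus-5}; the paper states the corollary without proof, and your route via identifying $\widetilde S_1\cap\widetilde S_2$ with a general fibre of the conic bundle $\phi$ is the intended one.

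There is one point worth flagging. You compute $\deg(S_3\cdot\Delta)=3\cdot 2=6$, and you show that $S_3\cdot\Delta$ is the reduced cycle supported on the six points of $C\cap\Delta$. This is right: any $S_3$ through $C$ meets $\Delta$ at least along $C\cap\Delta$, and once you know $|C\cap\Delta|=6$ (which you do, since $\widetilde\Delta\cong\mathbb P^1\to\Delta$ is an isomorphism and $\widetilde\Delta\cap E$ consists of six reduced points by generic smoothness of $\phi|_E$), the degree count forces $S_3\cdot\Delta=q_1+\cdots+q_6$ to be reduced --- no separate transversality argument for $S_3$ is needed. The stated ``$7$'' in the corollary is a typo for ``$6$''; the only use made of this corollary in the paper (in the proof of Lemma~\ref{lemma:dim-3-deg-7}) is that the residual $\Delta$ is an irreducible reduced conic and that $S_3\cdot\Delta$ is reduced, and both of those you establish.

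The ``obstacles'' you list at the end are already handled by your own argument: $\widetilde S_1\cap\widetilde S_2=\phi^{-1}(p)$ is an irreducible smooth fibre, hence has no component on $E$, and the six points of $\widetilde\Delta\cap E$ give six distinct points of $\Delta\cap C$ because $\pi|_{\widetilde\Delta}$ is an isomorphism onto $\Delta$.
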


\end{document}